\newcommand{\deff}{\stackrel{\textup{def}}{=}}
\tikzset{
	dot/.style={circle,fill=black,draw=black,inner sep=0pt,minimum size=0.5mm},
	>=stealth,
	}
\tikzset{
	dot2/.style={circle,fill=black,draw=black,inner sep=0pt,minimum size=0.2mm},
	>=stealth,
	}
\tikzset{
	ddot/.style={circle,fill=black,draw=black,inner sep=0pt,minimum size=0.8mm},
	>=stealth,
	}
\tikzset{decision/.style={ 
        draw,
        diamond,
        aspect=1.5
    }}
\tikzset{dia2/.style
={diamond,fill=white,draw=black,inner sep=0pt,minimum size=1mm},
	>=stealth,
	}
\tikzset{dia/.style
={star,fill=black,draw=black,inner sep=0pt,minimum size=1mm},
	>=stealth,
	}
\tikzset{dia/.style
={diamond,fill=black,draw=black,inner sep=0pt,minimum size=1.3mm},
	>=stealth,
	}
\def\DeclareSymbol#1#2#3{\xsavebox{#1}{\tikz[baseline=#2,scale=0.15]{#3}}}
\def\<#1>{\xusebox{#1}}
\newsavebox{\peA}
\newsavebox{\pneA}
\newsavebox{\plA}
\newsavebox{\pgA}
\newsavebox{\pleA}
\newsavebox{\pgeA}
\newsavebox{\pezA}
\savebox{\peA}{\tikz \draw (0,0) node[shape=circle,draw,inner sep=0pt,minimum size=8.5pt] {\scriptsize  $=$};}
\savebox{\pneA}{\tikz \draw (0,0) node[shape=circle,draw,inner sep=0pt,minimum size=8.5pt] {\footnotesize $\neq$};}
\savebox{\plA}{\tikz \draw (0,0) node[shape=circle,draw,inner sep=0pt,minimum size=8.5pt] {\scriptsize $<$};}
\savebox{\pgA}{\tikz \draw (0,0) node[shape=circle,draw,inner sep=0pt,minimum size=8.5pt] {\scriptsize $>$};}
\savebox{\pleA}{\tikz \draw (0,0) node[shape=circle,draw,inner sep=0pt,minimum size=8.5pt] {\scriptsize $\leqslant$};}
\savebox{\pgeA}{\tikz \draw (0,0) node[shape=circle,draw,inner sep=0pt,minimum size=8.5pt] {\scriptsize $\geqslant$};}
\savebox{\pezA}{\tikz \draw (0,0) node[shape=circle,draw,
fill=white, 
inner sep=0pt,minimum size=8.5pt]{} ;}
\def \peB{\mathchoice
{\scalebox{.7}{{\usebox{\peA}}}}
{\scalebox{.7}{{\usebox{\peA}}}}
{\scalebox{.7}{{\usebox{\peA}}}}
{}
}
\def \pezB{\mathchoice
{\scalebox{.7}{{\usebox{\pezA}}}}
{\scalebox{.7}{{\usebox{\pezA}}}}
{\scalebox{.7}{{\usebox{\pezA}}}}
{}
}
\newcommand{\pe}{\mathbin{{\peB}}}
\newcommand{\pez}{\mathbin{{\pezB}}}
\tikzset{>=stealth',
         cvertex/.style={circle,draw=black,inner sep=1pt,outer sep=3pt},
         vertex/.style={circle,fill=black,inner sep=1pt,outer sep=3pt},
         star/.style={circle,fill=yellow,inner sep=0.75pt,outer sep=0.75pt},
         tvertex/.style={inner sep=1pt,font=\scriptsize},
         gap/.style={inner sep=0.5pt,fill=white}}
\tikzstyle{mybox} = [draw=black, fill=blue!10, very thick,
\tikzstyle{boxtitle} =[fill=blue!50, text=white,rectangle,rounded corners]
\tikzstyle{decision} = [diamond, draw, fill=blue!20,
\tikzstyle{block} = [rectangle, draw, fill=blue!20,
\tikzstyle{line} = [draw, very thick, color=black!50, -latex']
\tikzstyle{cloud} = [draw, ellipse,fill=red!40,
\tikzstyle{cloud2} = [draw, ellipse,fill=red!30, text=white,text width=10em, node distance=2.5cm, text centered, minimum height=4em]
\tikzstyle{cloud3} = [draw, ellipse, fill=cyan!30,
\tikzstyle{cloud4} = [draw, ellipse,fill=orange!70, node distance=2.5cm,
\tikzstyle{cloud5} = [draw, ellipse,fill=red!20, node distance=2.5cm,
\tikzstyle{cloud6} = [draw, ellipse,fill=red!20, node distance=2.5cm,
\tikzset{
    position/.style args={#1:#2 from #3}{
        at=(#3.#1), anchor=#1+180, shift=(#1:#2)
    }
}
\newtheorem{theorem}{Theorem} [section]
\newtheorem{lemma}[theorem]{Lemma}
\newtheorem{proposition}[theorem]{Proposition}
\newtheorem{remark}[theorem]{Remark}
\DeclareMathOperator{\Id}{Id}
\newcommand{\noi}{\noindent}
\newcommand{\Z}{\mathbb{Z}}
\newcommand{\R}{\mathbb{R}}
\newcommand{\T}{\mathbb{T}}
\let\P= \undefined
\newcommand{\P}{\mathbf{P}}
\newcommand{\E}{\mathbb{E}}
\renewcommand{\L}{\mathcal{L}}
\newcommand{\F}{\mathcal{F}}
\newcommand{\al}{\alpha}
\newcommand{\be}{\beta}
\newcommand{\dl}{\delta}
\newcommand{\nb}{\nabla}
\newcommand{\Dl}{\Delta}
\newcommand{\eps}{\varepsilon}
\newcommand{\g}{\gamma}
\newcommand{\G}{\Gamma}
\newcommand{\ld}{\lambda}
\newcommand{\Ld}{\Lambda}
\newcommand{\s}{\sigma}
\newcommand{\Si}{\Sigma}
\newcommand{\ft}{\widehat}
\newcommand{\wt}{\widetilde}
\newcommand{\cj}{\overline}
\newcommand{\dt}{\partial_t}
\newcommand{\ta}{\theta}
\renewcommand{\l}{\ell}
\renewcommand{\o}{\omega}
\renewcommand{\O}{\Omega}
\newcommand{\les}{\lesssim}
\newcommand{\ges}{\gtrsim}
\newcommand{\jb}[1]
{\langle #1 \rangle}
\newcommand{\jbb}[1]
{[\hspace{-0.6mm}[ #1 ]\hspace{-0.6mm}]}
\newcommand{\ind}{\mathbf 1}
\newcommand{\N}{\mathbb{N}}
\renewcommand{\H}{\mathcal{H}}
\newcommand{\Sep}{\<70>_{\hspace{-1.7mm} N}}
\newtheorem*{ackno}{Acknowledgements}
\newcommand{\I}{\mathcal{I}}
\newcommand{\If}{\mathfrak{I}}
\numberwithin{equation}{section}
\numberwithin{theorem}{section}
\newcommand{\PP}{\mathbb{P}}
\DeclareMathOperator{\Law}{Law}
\newcommand{\ZZ}{\mathfrak{Z}}
\newcommand{\rhoo}{\vec{\rho}}
\newcommand{\Dr}{\Theta}
\newcommand{\NN}{\mathcal{N}}
\newcommand{\D}{\mathcal{D}}
\newcommand{\dia}{\diamond}
\newcommand{\too}{\longrightarrow}
\newcommand{\Ups}{\Upsilon}
\begin{document}
\baselineskip = 14pt

\title[Global dynamics of the fractional hyperbolic $\Phi^4_3$-model]{Existence, uniqueness, and universality of global dynamics for the fractional hyperbolic $\Phi^4_3$-model}

\author[R.~Liu, N.~Tzvetkov, and Y.~Wang]
{Ruoyuan Liu, Nikolay Tzvetkov, and Yuzhao Wang}

\address{
Ruoyuan Liu, School of Mathematics,
The University of Edinburgh,
and The Maxwell Institute for the Mathematical Sciences,
James Clerk Maxwell Building,
The King's Buildings,
Peter Guthrie Tait Road,
Edinburgh,
EH9 3FD,
United Kingdom\\
and Mathematical Institute\\
University of Bonn\\
Endenicher Allee 60\\
53115\\
Bonn\\
Germany}
\email{ruoyuanl@math.uni-bonn.de}

\address{Nikolay Tzvetkov, Ecole Normale Sup\'erieure de Lyon, UMPA, UMR CNRS-ENSL 5669, 46, all\'ee d'Italie, 69364-Lyon Cedex 07, France}
\email{nikolay.tzvetkov@ens-lyon.fr}

\address{Yuzhao Wang, School of Mathematics, Watson Building, University of Birmingham, Edgbaston, Birmingham, B15 2TT, United Kingdom}
\email{y.wang.14@bham.ac.uk}

\subjclass[2020]{60H15, 81T08, 35L71, 35R60}

\keywords{fractional $\Phi^4_3$-measure;
stochastic quantization;
stochastic fractional nonlinear wave equation; fractional nonlinear wave equation;
Gibbs measure;
weak universality}

\begin{abstract}
We study the fractional $\Phi^4_3$-measure (with order $\al > 1$) and the dynamical problem of its canonical stochastic quantization: the three-dimensional stochastic damped fractional nonlinear wave equation with a cubic nonlinearity, also called the fractional hyperbolic $\Phi^4_3$-model. We first construct the fractional $\Phi^4_3$-measure via the variational approach by Barashkov-Gubinelli (2020). When $\al \leq \frac 98$, this fractional $\Phi^4_3$-measure turns out to be singular with respect to the base Gaussian measure. We then prove almost sure global well-posedness of the fractional hyperbolic $\Phi^4_3$-model and invariance of the fractional $\Phi^4_3$-measure for all $\al > 1$ by further developing the globalization framework due to Oh-Okamoto-Tolomeo (2024) on the hyperbolic $\Phi^3_3$-model. Furthermore, when $\al > \frac 98$, we prove weak universality of the fractional hyperbolic $\Phi^4_3$-model by utilizing the convergence of Gibbs measures.
\end{abstract}

%
\maketitle
\tableofcontents

\section{Introduction}
\label{SEC:1}

\subsection{Overview}
\label{SUB:1.1}
Since Bourgain's seminal work in \cite{BO94, BO96}, there has been extensive studies in invariant Gibbs measures for nonlinear Hamiltonian dispersive PDEs. A typical example is the following stochastic damped cubic nonlinear wave equation:
\begin{align*}
\dt^2 u + \dt u + (1 - \Dl) u + u^3 = \xi
\end{align*}

\noi
with $\xi$ denoting a space-time white noise forcing, which formally leaves the following Gibbs measure (also called the $\Phi^4$-measure from the constructive quantum field theory) invariant:
\begin{align*}
\text{``} \, d \vec \rho (\vec u) = Z^{-1} \exp \bigg( - \frac 14 \int u^4 dx - \frac 12 \int |\jb{\nb} u|^2 dx - \frac 12 \int (\dt u)^2 dx \bigg) d \vec u \, \text{''},
\end{align*}

\noi
where $\vec u = (u, \dt u)$, $\jb{\cdot} = (1 + |\cdot|^2)^{\frac 12}$, and $Z > 0$ is a normalizing factor. In particular, there has been a huge progress in the study of global well-posedness and invariant Gibbs measures for deterministic or stochastic (damped) nonlinear wave equations (NLW) with a power type nonlinearity on various domains. We refer the readers to \cite{BB12, BB14, Bring1, Bring2, BDNY, BTz07, BTz08, GKOT, OOTol1, OOT2, ORTz, OTh2, OTWZ} and the references therein. Many of these works addressed the case where the solution is not a function but merely a distribution, in which case we say that the equation is singular. We also would like to mention that the work by Oh, Robert, and the second author in \cite{ORTz} on stochastic NLW on compact surfaces is the first time when Bourgain's invariant measure argument \cite{BO94} was applied to a singular dispersive PDE with a stochastic forcing. We also refer the readers to \cite{BOP2, Deya1, Deya2, GKO, GKO2, OOcomp, OOR, OPTz, OWZ, Tolomeo2} for more well-posedness results for stochastic NLW or NLW with rough random initial data.

A natural and interesting question is how the strength of dispersion can affect well-posedness or invariance results of wave equations. This leads to the consideration of the NLW with a fractional Laplacian $(1 - \Dl)^\al$, which we refer to as the fractional NLW. When $\al = 1$, one recovers the usual NLW. By varying the value of $\al > 0$, one can find the limitations for certain well-established methods in a quantitative manner. In addition, one can see interesting phase transitions of the equation as $\al > 0$ meets some certain threshold.
Fractional NLW was first studied by Sun, Xu, and the second author in \cite{STzX} and also by Forcella-Pocovnicu in \cite{FP} on the two-dimensional torus $\T^2$ and with dispersion $(1 - \Dl)^\al$ for some range of $\al < 1$. 
In the special case when $\al = 2$, the fractional NLW becomes the nonlinear beam equation, which has been studied in \cite{BMS, BOS, CLL, MPTW}. We also mention that nonlinear PDEs with a fractional Laplacian has been studied in elliptic models \cite{Duch22}, parabolic models \cite{Duch21, DGR, EW}, and other dispersive models such as the Schr\"odinger equations \cite{LW1, LW2, STz20, STz21}.

Another interesting topic regarding the aforementioned models is weak universality. The goal of weak universality is to show that the models can be identified as a universal limit of microscopic models with general nonlinear interactions satisfying some structural assumptions. Weak universality was first initiated by Hairer-Quastel \cite{HQ} in deriving the KPZ equation from a large class of microscopic growth models. It has then been studied in various settings of singular stochastic parabolic equations. We refer the readers to \cite{EX, FG, GP16, GP17, HX18, HX19, KWX, SX}. Recently, there has been some important development for weak university of dispersive equations; see \cite{GKO, GKO2, OTh2, STzX}. In particular in \cite{STzX}, in the context of the two-dimensional fractional nonlinear wave equation, Sun, Xu, and the second author established for the first time a weak universality result for a dispersive equation in the presence of probabilistically super-critical nonlinearity. Due to the lack of $L^\infty$ based estimates in the dispersive setting, they exploited the invariant Gibbs measure for the equation and obtained a key probabilistic a priori bound on the solution.

Our work in this paper contributes to the above lines of research. Specifically, we study the following stochastic damped fractional nonlinear wave equation (SdfNLW)
with a cubic nonlinearity on
the three-dimensional torus $\T^3 = (\R/2\pi\Z)^3$, formally given by
\begin{align}
\dt^2 u + \dt u + (1 -  \Dl)^\al  u  +  u^3  = \sqrt{2} \xi,
\qquad (x, t) \in \T^3\times \R_+,
\label{SNLW0}
\end{align}
where $\al \geq 1$, $u$ is real-valued, and
$\xi$ denotes a Gaussian space-time white noise on $\T^3\times \R_+$.
The SdfNLW \eqref{SNLW0} is equipped with an invariant Gibbs measure, formally given by
\begin{align}
\text{``} \, d \vec \rho (\vec u) = Z_\al^{-1} \exp \bigg( - \frac 14 \int_{\T^3} u^4 dx - \frac 12 \int_{\T^3} |\jb{\nb}^\al u|^2 dx - \frac 12 \int_{\T^3} (\dt u)^2 dx \bigg) d \vec u \, \text{''}.
\label{rho0}
\end{align}

\noi
From the constructive quantum field theory's point of view, we also refer to $\vec \rho$ in \eqref{rho0} as the {\it fractional $\Phi^4_3$-measure}, where the ``4'' refers to the quartic interaction and the ``3'' refers to the dimension. We also point out that from the stochastic quantization point of view, SdfNLW \eqref{SNLW0} is the canonical stochastic quantization equation or the Hamiltonian stochastic quantization for the fractional $\Phi^4_3$-measure $\vec \rho$ (see \cite{PW, Simon}). Thus, we refer to SdfNLW \eqref{SNLW0} as the {\it fractional hyperbolic $\Phi^4_3$-model}.

In fact, there is a phase transition happening for the fractional $\Phi^4_3$-measure \eqref{rho0}. When $\al > \frac 98$, the constructed Gibbs measure $\vec \rho$ is equivalent to the base Gaussian measure $\vec \mu_\al$ formally defined as
\begin{align}
\text{``} \, d \vec \mu_\al (\vec u) = Z_\al^{-1} \exp \bigg( - \frac 12 \int_{\T^3} |\jb{\nb}^\al u|^2 dx - \frac 12 \int_{\T^3} (\dt u)^2 dx \bigg) d \vec u \, \text{''},
\label{mu0}
\end{align}

\noi
which is given by \eqref{rho0} with only the quadratic terms.
Nevertheless, when $1 < \al \leq \frac 98$, the Gibbs measure $\vec \rho$ turns out to be mutually singular with respect to the base Gaussian measure $\vec \mu_\al$. This singularity poses additional challenges in several places of the analysis, since Bourgain's invariant measure argument in \cite{BO94, BO96} cannot be directly applied. See Subsection~\ref{SUBSEC:Gibbs} and Subsection~\ref{SUBSEC:1.3} for more details.

Invariance of the Gibbs measure then lays the foundation for the weak universality property.
In this paper, we also show weak universality for the fractional hyperbolic $\Phi^4_3$-model \eqref{SNLW0} in the range $\al > \frac 98$. More precisely, we show that the fractional hyperbolic $\Phi^4_3$-model \eqref{SNLW0} can be identified as a global-in-time universal limit of corresponding microscopic models with a general polynomial interaction. 
In particular, we extend the weak universality result in \cite{STzX} from the two-dimensional setting to the three-dimensional setting and also to the stochastic setting. 
See Subsection~\ref{SUBSEC:wu} for details.

\subsection{Construction of the fractional $\Phi^4_3$-measure}
\label{SUBSEC:Gibbs}

In this subsection, we describe the construction of the fractional $\Phi^4_3$-measure \eqref{rho0} in a rigorous way and mention the required renormalization procedure.

Let us start with a rigorous definition of the base Gaussian measure $\vec \mu_\al$ in \eqref{mu0}. For $s \in \R$, we define the measure $\mu_s$ as
\begin{align}
d \mu_s (u) \deff \prod_{n \in \Z^3} Z_{s, n}^{-1} e^{- \frac 12 \jb{n}^{2 s} |\ft u (n)|^2} d \ft u (n),
\label{gauss2}
\end{align}

\noi
which can be formally written as $d \mu_s (u) = Z_s^{-1} e^{- \frac 12 \| u \|_{H^s}^2} du$ with $H^s (\T^3)$ being the Cameron-Martin space of $\mu_s$. Here, the infinite product makes sense as an infinite product of probability measures and $Z_{s, n}$ are a normalizing factors ($u$ is identified as its Fourier coefficients $\ft u (n)$). Equivalently, we can define
\begin{align}
\mu_s = \Law \Big( \frac{1}{(2 \pi)^{\frac 32}} \sum_{n \in \Z^3} \frac{g_n (\o)}{\jb{n}^s} e^{i n \cdot x} \Big),
\label{gauss3}
\end{align}

\noi
where $\Law (X)$ denotes the law of a random variable $X$ and $\{g_n\}_{n \in \Z^3}$ is a sequence of standard i.i.d complex-valued Gaussian random variables conditioned such that $g_n = \cj{g_{-n}}$ for each $n \in \Z^3$ (in particular, $g_0$ is real-valued).
In this paper, we mainly care about the cases $s = \al$ and $s = 0$, where in the latter case $\mu_0$ corresponds to the white noise measure on $L^2 (\T^3)$. From \cite[Lemma~B.1]{BTz08-1}, we see that the measure $\mu_\al$ is supported on $H^{\al - \frac 32 - \eps} (\T^3) \setminus H^{\al - \frac 32} (\T^3)$ for any $\eps > 0$.

We now take $\vec \mu_\al = \mu_\al \otimes \mu_0$ and try to define the Gibbs measure $\vec \rho$ in \eqref{rho0}. When $\al \leq \frac 32$, the support of $\mu_\al$ defined in \eqref{gauss2} excludes $L^2 (\T^3)$, so that quartic power $u^4$ in \eqref{rho0} needs a proper renormalization. Given $N \in \N$, we define $\pi_N$ as the sharp frequency cutoff onto spatial frequencies $\{n \in \Z^3: |n| \leq N\}$:
\begin{align}
\ft{\pi_N f} (n) = \ind_{\{ |n| \leq N \}} \ft f (n).
\label{defpiN}
\end{align} 

\noi
We also define the variance parameter $\s_N$ as
\begin{align}
\s_N = \E_{\mu_\al} \big[ \| \pi_N u \|_{L^2}^2  \big] = \sum_{\substack{n \in \Z^3 \\ |n| \leq N }} \frac{1}{\jb{n}^{2 \al}},
\label{sigmaN}
\end{align}

\noi
where $\E_{\mu_\al}$ denotes the expectation with respect to the Gaussian measure $\mu_\al$ in \eqref{gauss2}. We observe that $\s_N$ diverges as $N \to \infty$ when $\al \leq \frac 32$ (from now on, we focus on this range of $\alpha$).
In order to tame the divergence of $u^4$, by writing $u_N = \pi_N u$, we introduce the Wick power
\begin{align}
:\! (\pi_N u)^4 \!: \, \deff (\pi_N u)^4 -  6 \s_N (\pi_N u)^2 + 3 \s_N^2.
\label{uN4}
\end{align}

\noi
It turns out that in \eqref{uN4}, there is a compensation among divergent terms; see Subsection~\ref{SUBSEC:mul} for more details. We set
\begin{align}
R_N (u) \deff \frac 14 \int_{\T^3} :\! (\pi_N u)^4 \!: dx
\label{RNu}
\end{align}

\noi
and the frequency truncated fractional $\Phi^4_3$-measure
\begin{align}
d \rho_N (u) \deff Z_N^{-1} e^{-R_N (u)} d \mu_\al (u),
\label{GibbsN1}
\end{align}

\noi
where 
\begin{align*}
Z_N = \int e^{-R_N (u)} d \mu_\al (u)
\end{align*} 

\noi
is a well-defined constant.

We now state our theorem regarding the construction of the fraction $\Phi^4_3$-measure.

\begin{theorem}
\label{THM:Gibbs}
Let $1 < \al \leq \frac 32$.

\smallskip \noi
\textup{(i)} Let $\al > \frac 98$. Then, the sequence of truncated fractional $\Phi^4_3$-measures $\{ \rho_N \}_{N \in \N}$ converges in total variation to a limiting fractional $\Phi^4_3$-measure $\rho$, which is mutually absolutely continuous with respect to the base Gaussian measure $\mu_\al$.

\smallskip \noi
\textup{(ii)} Let $1 < \al \leq \frac 98$. Then, there exists a sequence of divergent constants $\{ \al_N \}_{N \in \N}$ such that the sequence of truncated fractional $\Phi^4_3$-measures $\{ \wt \rho_N \}_{N \in \N}$ defined by
\begin{align}
d \wt \rho_N (u) \deff \wt{Z}_N^{-1} e^{- R_N (u) - \al_N} d \mu_\al (u)
\label{GibbsN2}
\end{align}

\noi
with 
\begin{align*}
\wt{Z}_N \deff \int e^{- R_N (u) - \al_N} d \mu_\al (u)
\end{align*}

\noi
converges weakly to a limiting fractional $\Phi^4_3$-measure $\wt \rho$, which is singular with respect to the base Gaussian measure $\mu_\al$.
\end{theorem}

In the sequel, we write $\wt \rho_N = \rho_N$, $\wt Z_N = Z_N$, and $\wt \rho = \rho$ in the case $1 < \al \leq \frac 98$ to simplify our notations.

Let us explain the intuition behind the phase transition for the fractional $\Phi^4_3$-measure at $\al = \frac 98$ stated in Theorem~\ref{THM:Gibbs}. For simplicity, we ignore the normalizing factor $Z_N$ in our heuristic explanation below. We know from Jensen's inequality that
\begin{align*}
\int e^{- R_N (u)} d \mu_\al (u) \geq  \exp \Big( - \int R_N (u) d \mu_\al (u) \Big),
\end{align*}

\noi
which can be used to obtain a lower bound on the total variation of $\rho_N$. In order to obtain convergence of $\rho_N$ as $N \to \infty$, we need an upper bound for its total variation uniformly in $N \in \N$. This upper bound can be obtained from the Bou\'e-Dupuis variational formula (see Lemma~\ref{LEM:BD}):
\begin{align}
\int e^{- R_N (u)} d \mu_\al (u) \leq \exp \bigg( \E_{\mu_\al} \bigg[ \sup_{\Dr \in H^\al (\T^3)} \bigg\{ \frac 14 \int_{\T^3} :\! (\pi_N u + \pi_N \Dr)^4 \!: dx - \frac 12 \| \Dr \|_{H^\al}^2 \bigg\} \bigg] \bigg),
\label{BD0}
\end{align}

\noi
which can be viewed as a reversed version of Jensen's inequality with the cost of a drift term $\Dr$ belonging to the Cameron-Martin space $H^\al (\T^3)$ of the base Gaussian measure $\mu_\al$. We recall that the support of $\mu_\al$ is on $H^{\al - \frac 32 - \eps} (\T^3)$ for any $\eps > 0$, so that on the right-hand-side of \eqref{BD0}, $u$ is a rougher object than $\Dr$. After expanding the quartic power  (see \eqref{HkX} and \eqref{herm_decomp1}) on the right-hand-side of \eqref{BD0}, we see that the worst term $:\! (\pi_N u)^4 \!:$ has mean zero (see \eqref{mean0X}) and so vanishes thanks to the appearance of the expectation $\E_{\mu_\al}$. The second worst term is 
\begin{align*}
\int_{\T^3} :\! (\pi_N u)^3 \!: \pi_N \Dr dx \quad \text{with } :\! (\pi_N u)^3 \!: \, \deff (\pi_N u)^3 - 3 \s_N \pi_N u.
\end{align*}

\noi
We know from the support of $\mu_\al$ that $:\! (\pi_N u)^3 \!:$ has regularity $3 (\al - \frac 32 - \eps)$ (see Lemma~\ref{LEM:YNk}) and the drift term $\pi_N \Dr$ has regularity $\al$, both uniformly in $N \in \N$. Thus, by standard analysis in Besov spaces, the limit of the product $:\! (\pi_N u)^3 \!: \pi_N \Dr$ as $N \to \infty$ converges only when
\begin{align*}
3 \Big( \al - \frac 32 - \eps \Big) + \al > 0,
\end{align*}

\noi
which implies that $\al > \frac 98$. This is exactly where the phase transition occurs. In the case $1 < \al \leq \frac 98$, we need to introduce the divergent constants $\{ \al_N \}_{N \in \N}$ as stated in Theorem~\ref{THM:Gibbs}~(ii) as a further renormalization that allows us to deal with the problematic product. Note that this additional renormalization in \eqref{GibbsN2} appears only at the level of the measure, and associated dynamical problem in Subsection~\ref{SUBSEC:1.3} does not see it.

As mentioned above, 
our proof of Theorem~\ref{THM:Gibbs} is based on the Bou\'e-Dupuis varational formula, which was first applied by Barashkov-Gubinelli \cite{BG} to the construction of Gibbs measures. In this paper, we follow the approach in \cite{OOTol1, OOT2} and also use the simplified version of the Bou\'e-Dupuis varational formula as in \cite{FT}.

We would like to remark that the $\al = 1$ case of Theorem~\ref{THM:Gibbs} corresponds to the hyperbolic $\Phi^4_3$-model, whose corresponding $\Phi^4_3$-measure was constructed in \cite{GJ, BG} and was one of the early achievements in constructive quantum field theory. We also refer the readers to the introductions in \cite{AK, GH18b}. The $\Phi^4_3$-measure is also known to be mutually singular with respect to the base Gaussian measure (i.e.~the Gaussian free field).

\begin{remark} \rm
\label{RMK:quad}
It would be of interest to see whether the measure can be constructed in the $\al < 1$ range. In fact, it is reasonable to conjecture that the measure construction can be done for all $\al > \frac 34$, since this is the range where one can renormalize the square of a random variable distributed by the Gaussian measure $\mu_\al$ in \eqref{gauss2}. Namely, by letting $u_\al^\o$ be a random variable with $\Law (u_\al^\o) = \mu_\al$ and $\s_N$ be as defined in \eqref{sigmaN}, we can compute that
\begin{align*}
\E \Big[ \big\| |\pi_N u_\al^\o|^2 - \s_N \big\|_{H^s}^2 \Big] \sim \sum_{|n| \leq N} \frac{1}{\jb{n}^{4 \al}} + \sum_{\substack{|n_1|, |n_2| \leq N \\ n_1 \neq n_2}} \frac{\jb{n_1 - n_2}^s}{\jb{n_1}^{2 \al} \jb{n_2}^{2 \al}},
\end{align*}

\noi
which is convergent as $N \to \infty$ if $\al > \frac 34$ and say $s \leq -3$ (see Lemma~\ref{LEM:SUM} below). If we are able to renormalize the square, then it is likely that we can deal with higher powers for the measure construction. On the other hand, if the renormalization of the square does not work, there might be some triviality or non-construction results.
\end{remark}

\subsection{Fractional hyperbolic $\Phi^4_3$-model}
\label{SUBSEC:1.3}

In this subsection, we consider the dynamical problem for the (formal) fractional hyperbolic $\Phi^4_3$-model \eqref{SNLW0}. More precisely, 
we study the process $u$ in \eqref{SNLW0} as a limit of $u_N$ satisfying the following frequency truncated cubic SdfNLW for $N \in \N$:
\begin{align}
\begin{split}
\dt^2 & u_N + \dt u_N  + (1 -  \Dl)^\al  u_N +  \pi_N  :\!  (\pi_N u_N)^3 \!: \, = \sqrt{2} \xi,
\end{split}
\label{SNLW2}
\end{align}

\noi
where $\pi_N$ is the frequency truncation defined in \eqref{defpiN} and the renormalized cubic nonlinearity is defined as
\begin{align}
:\!  (\pi_N u_N)^3 \!:  \, \deff
(\pi_N u_N)^3 - 3 \s_N \pi_N u_N.
\label{cub_renorm}
\end{align}

\noi
Here, we note that \eqref{cub_renorm} is the G\^{a}teaux derivative of $R_N (u_N)$ in \eqref{RNu} with respect to $u_N$. Note that the equation \eqref{SNLW2} can be solved essentially as a finite dimensional problem due to the frequency cut-off. See Lemma~\ref{LEM:GWP4} below, which guarantees that the solution $u_N$ to \eqref{SNLW2} exists globally-in-time.

We now provide the statement of almost sure global well-posedness
of the fractional hyperbolic $\Phi^4_3$-model and also invariance of the Gibbs measure under the resulting dynamics.

\begin{theorem}
\label{THM:GWP}
Let $1 < \al \leq \frac 32$, $T > 0$, and $\eps > 0$. Then, the sequence of solutions $\{ (u_N, \dt u_N) \}_{N \in \N}$ to the truncated cubic SdfNLW \eqref{SNLW2}, with initial data distributed by the fractional $\Phi^4_3$-measure $\vec \rho = \rho \otimes \mu_0$ constructed in Theorem~\ref{THM:Gibbs}, converges in $C ([0, T]; H^{\al - \frac 32 - \eps} (\T^3) \times H^{- \frac 32 - \eps} (\T^3))$. Furthermore, the limit at any time $t \in [0, T]$ is distributed by the fractional $\Phi^4_3$-measure $\vec \rho$.
\end{theorem}

The proof of Theorem~\ref{THM:GWP} is based on  \cite{OPTz, OWZ} for the solution ansatz and \cite{OOT2, BDNY} for the globalization procedure. In \cite{OOT2} (see also \cite{Bring2}), the Gibbs measure is represented via the varational approach due to Barashkov-Gubinelli \cite{BG}, while in \cite{BDNY}, it is represented via a heat flow. In this paper, we choose to mainly follow the variational approach, more particularly the work \cite{OOT2} by Oh-Okamoto-Tolomeo, by incorporating a nonlinear smoothing argument from \cite{BDNY}. Similar to \cite{OOT2}, our globalization argument is mainly consisted of the following three components:
\begin{enumerate}
\item[1.] Uniform (in $N$) exponential integrability of the truncated enhanced data set (see \eqref{data3x} below) with respect to the truncated Gibbs measure $\rhoo_N$.

\item[2.] A stability result established by adding an exponentially decaying weight in time and modifying the local well-posedness argument.

\item[3.] A uniform (in $N$) control of the solution with large probability by exploiting the invariance of the truncated Gibbs measure $\rhoo_N$.
\end{enumerate}

\noi
Compared to \cite{OOT2}, the first three components above are highly non-trivial in our setting. The main difference between \cite{OOT2} and this paper is that the authors in \cite{OOT2} consider a quadratic nonlinearity, but in this paper we study a cubic nonlinearity. As a result, we need to deal with more involved stochastic objects (for component 1) and, more importantly, incorporate space-time analysis via the $X^{s,b}$-spaces defined in Subsection \ref{SUBSEC:Xsb} (for components 2 and 3). Our solution to the above issues turns out to contain some new and interesting ingredients: 
\begin{enumerate}
\item[1.]
For the uniform exponential integrability in component 1, we design a sum space for a cubic stochastic object in order to control the drift terms coming from the application of the variation formula in Lemma \ref{LEM:BD}. In addition, we group together various terms of similar regularity properties, which allows us to significantly reduce the number of estimates originally required for dealing with all drift terms.

\item[2.]
For the stability result in component 2, we exploit the decay property from the exponentially decaying temporal weight by  slightly losing some slight time regularity (see Lemma~\ref{LEM:nhomo2}). 

\item[3.]
For the uniform bound with large probability in component 3, we use the idea from \cite{BDNY} to exploit global nonlinear smoothing from invariance of the truncated Gibbs measure $\rhoo_N$. In addition, we also establish an improved trilinear estimate (see \eqref{tri2} in Lemma \ref{LEM:str4}), which is important for establishing a tame estimate in component 3.
\end{enumerate}

We remark that our analysis works well for the cubic nonlinearity. However, for more complicated nonlinearities such as a quintic nonlinearity, one would need to use more sophisticated tools and analysis.

We also would like to remark that, instead of the approximation procedure in \eqref{SNLW2}, one can proceed with the following cubic fractional SdfNLW with a truncated noise:
\begin{align}
\dt^2 u_N + \dt u_N + (1 - \Dl)^\al u_N + :\! u_N^3 \!: \, = \sqrt{2} \pi_N \xi.
\label{SNLW3}
\end{align}

\noi
While the approximation in \eqref{SNLW3} appears more natural than \eqref{SNLW2}, the advantage for using \eqref{SNLW2} is that it is invariant under the frequency truncated fractional $\Phi^4_3$-measure $\rho_N$ in \eqref{GibbsN1}, which is needed for our globalization argument. Nevertheless, we point out that our local well-posedness argument also works for \eqref{SNLW3}.

\begin{remark} \rm
In \cite{DNYprob}, Deng-Nahmod-Yue proposed the probabilistic scaling paradigm for nonlinear heat, wave, and Schr\"odinger equations. Their scaling heuristic is based on the observation that, to obtain probabilistic well-posedness, the second iterate should not be rougher than the linear evolution, in terms of both ``high-high to high'' interaction and ``high-high to low'' interaction. By using the same heuristics, we can also obtain a critical value of $\al$ for our fractional hyperbolic $\Phi^4_3$-model \eqref{SNLW0}: $\al = \frac 56$ (for both interactions). It would be of interest to see if probabilistic well-posedness and invariance of the Gibbs measure of the fractional hyperbolic $\Phi^4_3$-model \eqref{SNLW0} hold in the range $\frac 56 < \al \leq 1$.
\end{remark}

Before moving on the next subsection, let us also point out that
one should be able to prove Theorem~\ref{THM:GWP} by the method developed in 
\cite{BDNY} (in particular the 1533-cancellation used in \cite{BDNY} seems to work in our setting), but we believe 
that our proof is still of interest for two reasons. The first reason is that
it is much simpler than \cite{BDNY}, in particular no refine resolution 
ansatz or complicated cancellations are used. 
The second and more important reason is that our proof gives the base of the weak universality result stated in the next section, while it is not clear whether the proof in \cite{BDNY} is a 
good base for a week universality result.

\subsection{Weak universality}
\label{SUBSEC:wu}
In this subsection, we discuss a weak universality result for the fractional hyperbolic $\Phi_3^4$-model \eqref{SNLW0} as mentioned at the end of Subsection~\ref{SUB:1.1}.

Following the set up in \cite{STzX}, let us start with the microscopic process $U_N$ on the dilated torus $\T_N^3 = (\R / 2 \pi N \Z)^3$ that satisfies
\begin{align}
\dt^2 U_N + N^{- \al} \dt U_N + (1 - \Delta_N)^\al U_N + N^{- \ta} \Pi_N V' (\Pi_N U_N) = \sqrt{2} N^{- \frac{\al}{2}} \xi_N.
\label{micro}
\end{align}

\noi
Here, we restrict our attention to the range $1 < \al < \frac 32$ (see Remark \ref{RMK:wu} below for a discussion on weak universality for $\al = \frac 32$). Also, $\ta > 0$ is a parameter to be fixed later, $V$ is an even polynomial satisfying some conditions to be specified below, $\xi_N$ is a space-time white noise on $\T_N^3 \times \R_+$, and $\Pi_N$ is the Fourier projection on $\T_N^3$ given by
\begin{align*}
\Pi_N f (x) = \sum_{\substack{ n \in \Z^3 \\ |n| \leq N }} (\F_N f) (n) e^{i n \cdot \frac{x}{N}}
\end{align*}

\noi
where
\begin{align*}
(\F_N f) (n) \overset{\text{def}}{=} \frac{1}{(2 \pi N)^3} \int_{\T_N^3} f(x) e^{- i n \cdot \frac{x}{N}} dx.
\end{align*}

\noi
Also, the differential operator $(1 - \Delta_N)^\al$ is given by\footnote{Here, we choose to work with this form of the operator $(1 - \Dl_N)^\al$ in order to be consistent with well-posedness results from previous sections. One can also start with a homogeneous operator $(- \Dl_N)^\al$ (Fourier multiplier with $\frac{|n|^{2 \al}}{N^{2 \al}}$) as in \cite{STzX} and insert (later in the macroscopic model) a linear correction term coming from the nonlinearity. The proof of weak university of this setting follows from essentially the same way as the proof we present in this paper.}
\begin{align*}
\F_N (( 1 - \Dl_N )^\al f) (n) \overset{\text{def}}{=} \Big( \frac{1 + |n|^2}{N^2} \Big)^\al (\F_N f) (n).
\end{align*}

\noi
In this microscopic model, we choose $(U_N, \dt U_N)|_{t = 0} = (U_0^\o, U_1^\o)$, where $(U_0^\o, U_1^\o)$ are the following random initial data 
\begin{align*}
U_0^\o (x) = \frac{1}{(2 \pi)^{\frac 32}} N^{- (\frac 32 - \al)} \sum_{n \in \Z^3} \frac{g_n (\o)}{\jb{n}^\al} e^{in \cdot \frac{x}{N}}, \quad U_1^\o (x) = \frac{1}{(2 \pi)^{\frac 32}} N^{- \frac 32} \sum_{n \in \Z^3} h_n (\o) e^{in \cdot \frac{x}{N}},
\end{align*}

\noi
where $\{g_n\}_{n \in \Z^3}$ and $\{h_n\}_{n \in \Z^3}$ are standard i.i.d complex-valued Gaussian random variables conditioned such that $g_{-n} = \cj{g_n}$ and $h_{-n} = \cj{h_n}$ for all $n \in \Z^3$. This choice of initial data is natural since the Gaussian measure induced by the initial data is invariant under the linear dynamics of \eqref{micro}.

By letting 
\begin{align*}
\s \overset{\text{def}}{=} \int_{\R^3} \frac{\ind_{\{|\xi| \leq 1\}}}{|\xi|^{2 \al}} d\xi,
\end{align*}

\noi
we define the averaged potential $\jb V$ as
\begin{align*}
\jb V (z) \overset{\text{def}}{=} \frac{1}{\sqrt{2 \pi \s}} \int_\R V(z + y) e^{- \frac{y^2}{2 \s}} dy.
\end{align*}

\noi
As in \cite{STzX}, we assume that $V$ is an even polynomial of degree $2m \geq 4$ with the form
\begin{align*}
V (z) = \sum_{j = 0}^m a_j z^{2 j},
\end{align*}

\noi
so that the averaged potential $\jb V$ is of the form
\begin{align*}
\jb V (z) = \sum_{j = 0}^m \cj{a}_j z^{2 j},
\end{align*}

\noi
where for each $0 \leq j \leq m$, we have
\begin{align*}
\cj{a}_j = \frac{1}{(2j)!} \E \big[ V^{(2j)} (\NN (0, \s)) \big] = \frac{1}{(2j)!} \sum_{\l = j}^m \frac{(2 \l)! }{(2 \l - 2 j)!!} \cdot a_\l \s^{2 (\l - j)}.
\end{align*}

\noi
We assume the criticality condition $\cj{a}_1 = 0$ and the positivity condition
\begin{align}
\sum_{j = 2}^m \cj{a}_j z^{2 (j - 2)} > 0 \quad \text{for all } z \in \R,
\label{positive}
\end{align}

\noi
which are equivalent to $\jb{V}'' (0) = 0$ and 
\begin{align*}
\jb{V} (z) - \jb{V} (0) > 0 \quad \text{for all } z \neq 0.
\end{align*}

\noi
In particular, \eqref{positive} implies that $\cj{a}_2 > 0$.

\medskip

We now switch from the microscopic model \eqref{micro} to a macroscopic model. For $(x, t) \in \T^3 \times \R_+$, we define the macroscopic process 
\begin{align*}
u_N (x, t) = N^{\frac 32 - \al} U_N (N x, N^\al t),
\end{align*}
which satisfies the equation
\begin{align*}
\dt^2 u_N + \dt u_N + (1 - \Dl)^\al u_N + N^{\frac 32 + \al - \ta} \pi_N V' \Big( \frac{\pi_N u_N}{N^{\frac 32 - \al}} \Big) = \sqrt{2} \xi
\end{align*}

\noi
with initial data $(u_N, \dt u_N)|_{t = 0} = (u_0^\o, u_1^\o)$ being defined by 
\begin{align}
u_0^\o (x) = \frac{1}{(2 \pi)^{\frac 32}} \sum_{n \in \Z^3} \frac{g_n (\o)}{\jb{n}^\al} e^{in \cdot x}, \quad u_1^\o (x) = \frac{1}{(2 \pi)^{\frac 32}} \sum_{n \in \Z^3} h_n (\o) e^{in \cdot x}.
\label{rand_init}
\end{align}

\noi
Here, $\xi$ is now a space-time white noise on $\T^3 \times \R_+$. We also note that $\Law (u_0^\o, u_1^\o) = \mu_\al \otimes \mu_0$ in \eqref{gauss2}. In order for $u_N$ to converge to an equation with a cubic nonlinearity, we choose $\ta = 4 \al - 3 $ so that $\frac 32 + \al - \ta = 3 (\frac 32 - \al)$. We remark that the threshold $\al = \frac 34$ corresponds to the case where the scaling factor $N^{- \ta}$ in \eqref{micro} vanishes, and it somehow coincides with the threshold we identified in Remark~\ref{RMK:quad} on the prediction of the Gibbs measure constructions.

For each $N \in \N$, we let
\begin{align*}
V_N (z) = N^{4 (\frac 32 - \al)} V \Big( \frac{z}{N^{\frac 32 - \al}} \Big),
\end{align*}

\noi
so that
\begin{align}
V_N (\pi_N u) = \sum_{j = 0}^m \overline{a}_{j, N} N^{- (2j - 4) (\frac 32 - \al)} : \! (\pi_N u)^{2 j} \! :,
\label{defVN}
\end{align}

\noi
where $: \! (\pi_N u)^{2 j} \! :$ denotes a renormalization for $(\pi_N u)^{2 j}$ (see \eqref{HkX} and \eqref{herm_decomp1}). The coefficients $\cj{a}_{j, N}$ can be explicitly computed as
\begin{align*}
\cj{a}_{j, N} = \frac{1}{(2 j)!} \E \big[ V^{(2j)} (\NN (0, \wt{\s}_N)) \big],
\end{align*}

\noi
where
\begin{align*}
\wt \s_N \overset{\text{def}}{=} \E [(\pi_N U_0^\o)^2 (x)] = N^{- (3 - 2 \al)} \sum_{n \in \Z^3} \frac{1}{\jb{n}^{2 \al}} = \s + O (N^{-2 (\frac 32 - \al)}).
\end{align*}

\noi
We can disregard the 0-degree term (i.e.~$j = 0$) in $V_N (z)$ in our later discussion, since it does not appear in our equation. Note that for each $j = 2, \dots, m$, we have 
\begin{align}
\cj{a}_{j, N} \to \cj{a}_j
\label{ajconv}
\end{align}

\noi
as $N \to \infty$. For $j = 1$, we have the following more precise approximation, which can be shown is the same way as \cite[Proposition 1.4]{STzX}:
\begin{align}
\cj{a}_{1, N} = \cj{a}_1 + \frac{\kappa}{2} N^{-2 (\frac 32 - \al)} + O(N^{-1}) + O(N^{-4 (\frac 32 - \al)}),
\label{a1conv}
\end{align}

\noi
where $\kappa \in \R$. Here, we see that the criticality condition $\cj{a}_1 = 0$ exactly cancels out the divergence coming from the $j = 1$ term.

The macroscopic model now becomes
\begin{align}
\dt^2 u_N + \dt u_N + (1 - \Dl)^\al u_N + \pi_N V_N' (u_N) = \sqrt{2} \xi,
\label{fNLWw}
\end{align}

\noi
Our goal is to show that, as $N \to \infty$, $u_N$ converges in some function space to the same limit as $u_N^{\dagger}$, which satisfies the following cubic fractional stochastic damped wave equation:
\begin{align}
\dt^2 u_N^\dagger + \dt u_N^\dagger + (1 - \Dl)^\al u_N^\dagger + \kappa \pi_N u_N^\dagger + 4 \cj{a}_2  \pi_N \big( :\! (\pi_N u_N^\dagger)^3 \!: \big) = \sqrt{2} \xi.
\label{fNLWw_v}
\end{align}

\noi
The limit of $u_N^\dagger$ as $N \to \infty$ can be established as in Theorem~\ref{THM:GWP} with minor modifications.

\medskip
We now state our weak universality result.
\begin{theorem}
\label{THM:wu}
Let $\frac 98 < \al < \frac 32$, $T \geq 1$, and $\eps > 0$ be sufficiently small. Let $(u_N, \dt u_N)$ be the solution to \eqref{fNLWw} with the random initial data $(u_N, \dt u_N)|_{t = 0} = (u_0^\o, u_1^\o)$ as defined in \eqref{rand_init}. 
Then, there exists $\kappa \in \R$ such that, if $u_N^\dagger$ satisfies \eqref{fNLWw_v} with the same random initial data, then $u_N$ converges almost surely in $C ([0, T]; \H^{\al - \frac 32 - \eps} (\T^3))$ to the same limit as $u_N^\dagger$ converges to.
\end{theorem}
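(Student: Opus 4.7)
\smallskip
\noindent\textbf{Step 1: Hermite decomposition of the microscopic nonlinearity.} The plan is to start by differentiating the explicit expansion \eqref{defVN} using $\partial_z H_{2j}(z;\s) = 2j\, H_{2j-1}(z;\s)$ to write
\begin{align*}
V_N'(z) = 2\cj{a}_{1,N}\, N^{2(3/2 - \al)}\, z \;+\; 4\cj{a}_{2,N}\, H_3(z;\s_N^2) \;+\; \sum_{j=3}^{m} 2j\, \cj{a}_{j,N}\, N^{-(2j-4)(3/2 - \al)}\, H_{2j-1}(z;\s_N^2).
\end{align*}
The criticality condition $\cj{a}_1 = 0$ combined with \eqref{a1conv} turns the first piece into $\kappa z + o_N(1)\cdot z$, matching the linear correction in \eqref{fNLWw_v}; the second piece equals $4\cj{a}_{2,N}:\!z^3\!:$ with $\cj{a}_{2,N} \to \cj{a}_2$, matching the cubic; and since $\al < 3/2$, each piece with $j \geq 3$ comes with a strict negative power of $N$.

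\smallskip
\noindent\textbf{Step 2: Short-time convergence in the $X^{s,b}$-framework.} I would then run the local theory for \eqref{fNLWw} in parallel with that for \eqref{fNLWw_v}, following Section \ref{SEC:LWP}: decompose $u_N = \Psi_N + w_N$, with $\Psi_N$ the truncated linear stochastic evolution launched from the Gaussian data $(u_0^\o, u_1^\o)$, and similarly $u_N^\dagger = \Psi_N + w_N^\dagger$, and seek $w_N, w_N^\dagger$ as fixed points in the same $X^{s,b}$-type space. Using Step 1, the equation for $w_N - w_N^\dagger$ has source terms of three kinds: (a) the scalar-error pieces $(4\cj{a}_{2,N} - 4\cj{a}_2)\pi_N :\!(\pi_N u_N)^3\!:$ and $(2\cj{a}_{1,N} N^{2(3/2-\al)} - \kappa)\pi_N u_N$, which vanish by the convergences $\cj{a}_{j,N} \to \cj{a}_j$ and \eqref{a1conv}; (b) the higher-Hermite pieces $\pi_N H_{2j-1}(\pi_N u_N; \s_N^2)$ carrying the explicit $N^{-(2j-4)(3/2-\al)}$ prefactor; and (c) multilinear couplings in $w_N - w_N^\dagger$ that are already controlled by the stability theory for \eqref{fNLWw_v}. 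Combining $L^p(\vec\mu)$-bounds on Wick powers of $\Psi_N$ with the counting estimates of Section~\ref{SEC:count} shows that (a) and (b) are $o_N(1)$ in the relevant norms with high probability, and a contraction/continuity argument then yields almost sure convergence $u_N - u_N^\dagger \to 0$ on a short random time interval.

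\smallskip
\noindent\textbf{Step 3: Global extension via Gibbs measure invariance.} To reach an arbitrary $T \geq 1$, I would invoke invariance of the truncated Gibbs measure $\vec\nu_N$ from \eqref{vec_nuN} under \eqref{fNLWw}, by Liouville's theorem applied to the finite-dimensional Galerkin system on the low frequencies and Ornstein--Uhlenbeck invariance of $\mu_0$ on the high frequencies. Since $\al > 9/8$, Theorem \ref{THM:Gibbs2} provides uniform-in-$N$ $L^p(\vec\mu)$-bounds on $d\vec\nu_N/d\vec\mu$ together with $\vec\nu_N \to \vec\nu$, so a standard change-of-measure converts invariance under $\vec\nu_N$ into high-probability, uniform-in-$N$ control on the $\H^{\al - 3/2 - \eps}$-norm of $(u_N, \dt u_N)$ when the initial data is sampled from $\vec\mu$. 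Running Bourgain's invariant measure argument, iterating the short-time stability of Step~2 across $O(T)$ intervals while remaining in the large-probability set, then yields convergence in $C([0,T]; \H^{\al - 3/2 - \eps}(\T^3))$ to the same limit established for $u_N^\dagger$ via the analogue of Theorem \ref{THM:GWP}.

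\smallskip
\noindent\textbf{Expected main obstacle.} I expect the hard part to be item (b) in Step~2: verifying that the deterministic decay $N^{-(2j-4)(3/2-\al)}$ in the high-degree Hermite contributions beats the combinatorial and $L^p(\vec\mu)$ growth of $:\!\Psi_N^{2j-1}\!:$ once these objects are coupled with the $w$-remainder in the $X^{s,b}$-estimates. The margin is smallest at $j = m$ and near the endpoint $\al \to 9/8^+$, where the Sobolev regularity supplied by the base measure is tightest, and any loss incurred when iterating the stability across a time interval of length $O(1)$ in Step~3 must still be absorbed by the prefactor.
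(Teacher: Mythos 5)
Your overall skeleton is the right one and matches the paper's: first-order expansion around the stochastic convolution, identification of the linear and cubic pieces through \eqref{a1conv} and $\cj{a}_{2,N}\to\cj{a}_2$, treatment of the degree-$(2j-1)$ Hermite remainders as perturbations carrying the prefactor $N^{-(2j-4)(\frac32-\al)}$, and invariance of $\vec\nu_N$ together with Theorem \ref{THM:Gibbs2} to handle $\vec\mu$-distributed data. The gap is in Step 2, exactly at the point you flag as the ``main obstacle'': a short-time contraction for the terms containing $\ell\ge 4$ factors of $\pi_N w_N$ (times a Wick power of the truncated stochastic convolution and the prefactor $N^{-(2j-4)(\frac 32-\al)}$) cannot be closed with only Wick-power moments, the counting estimates of Section \ref{SEC:count}, and $X^{s,b}$-norms of $w_N$. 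The dispersive multilinear estimates admit at most three factors in $X^{s,\frac12+\dl}$; every additional factor of $\pi_N w_N$ must be placed in $L^\infty_x$, and since the admissible range forces $s<\min(\frac{13}{10}\al-\frac35,\,2\al-\frac32,\,3\al-3)<\al$, Bernstein costs $N^{\frac32-s}$ per factor, which strictly exceeds the admissible budget of $N^{\frac32-\al}$ per factor dictated by the prefactor (take $\ell=2j-1$ to see the bookkeeping). So the deterministic estimate loses a definite positive power of $N$ for every $\al\in(\frac98,\frac32)$, and the contraction does not close.

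The paper's resolution is to feed the invariance input into the convergence estimate itself, not only into the globalization: Lemma \ref{LEM:uN_bdd} (invariance of $\vec\nu_N$ plus equivalence of $\vec\nu$ and $\vec\mu$ for $\al>\frac98$) yields $\|u_N\|_{L^p_T W^{\al-\frac32-\eps,q}_x}\le N^{\g}$ with high probability, whence $\|w_N\|_{L^{1/\eps}_T L^\infty_x}\les N^{\frac32-\al+\eps+\g}$ as in \eqref{apriori} --- exactly the right per-factor cost --- while the residual $\eps,\g$-losses are absorbed by the refined estimate of Lemma \ref{LEM:str_ext}, which puts $3+\eta$ factors in $X^{s,\frac12+\dl}$ and thereby generates the extra margin $N^{-(\frac32-\al)\eta}$, together with the smoothing bounds for the generalized stochastic objects and operators (Lemmas \ref{LEM:sto_k2} and \ref{LEM:sto_k3}). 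Consequently the argument is not ``local contraction first, invariance later'': Proposition \ref{PROP:conv} is a global-in-time bootstrap on $[0,T]$ driven by these invariance-based a priori bounds (plus Lemma \ref{LEM:u_bdd} for $w^\dagger$, obtained from invariance of $\vec\nu$ and nonlinear smoothing), glued across subintervals via Lemma \ref{LEM:glue}. You do have the needed ingredient in your Step 3, but without importing it (or a Bourgain--Bulut-type high-frequency bound as in \cite{STzX}, which this paper deliberately avoids by tolerating $N^\g$ growth) into Step 2, the short-time stability you plan to iterate is never established. Two smaller points: use the full, untruncated stochastic convolution in the ansatz, since with a truncated $\Psi_N$ the remainder inherits the rough high-frequency linear evolution and leaves the $X^{s,b}$ framework; and note the paper compares $w_N$ directly with the limit $w^\dagger$ rather than with $w_N^\dagger$, which is a harmless difference of implementation.
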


As in \cite{STzX}, our proof of Theorem~\ref{THM:wu} uses a first order expansion as in \cite{McK, BO96, DPD2} and the invariance and convergence of Gibbs measures for the macroscopic model \eqref{fNLWw}.
The convergence of Gibbs measures require the positivity condition \eqref{positive}, which is almost necessary in the sense of \cite[Proposition~1.12]{STzX}.
We point out that the range $\frac 98 < \al < \frac 32$ stated in Theorem~\ref{THM:wu} exactly corresponds to the situation where we have strong convergence of the Gibbs measure for the macroscopic model \eqref{fNLWw} as $N \to \infty$ (see Proposition~\ref{PROP:Gibbs2}).

 Nevertheless, compared to \cite{STzX}, our proof of Theorem \ref{THM:wu} has two layers of novelties. Firstly, in \cite{STzX}, the authors allowed a logarithmic growth in $N$ in their estimates, which required them to establish an additional probabilistic a priori bound on high frequencies of the solution in the spirit of Bourgain-Bulut \cite{BB14}. In our case, we allow a growth in $N$ in our estimate which is slightly more than a logarithmic growth (see Proposition~\ref{PROP:uN_bdd} below). As a result, our approach does not require this additional a priori bound on high frequencies of the solution. Secondly, we exploit extra multilinear dispersive smoothing from all second-order stochastic objects (see Lemma \ref{LEM:sto_k2}), which allows us to cover the whole range $\frac 98 < \al < \frac 32$ by using only the first order expansion. Specifically, a standard scaling analysis shows that the scaling critical regularity for the deterministic cubic fractional NLW is $\frac 32 - \al$, and well-posedness for all subcritical regularity can be achieved thanks to the Strichartz estimate established by Bourgain-Demeter's $\l^2$-decoupling theorem in \cite{BD15}. Moreover, the second-order stochastic terms can achieve regularity $3 \al - 3 - \eps$ for any $\eps > 0$, which lies in the scaling subcritical regime if
\begin{align*}
3 \al - 3 > \frac 32 - \al,
\end{align*}

\noi
which is equivalent to $\al > \frac 98$.

\begin{remark} \rm
\label{RMK:wu}
When $\al = \frac 32$, the fractional hyperbolic $\Phi^4_3$-model just becomes a singular stochastic PDE (i.e.~the stochastic convolution has regularity $-\eps$ for $\eps > 0$ arbitrarily small), and so the setting is similar to that in \cite{GKO} by Gubinelli-Koch-Oh. Thus, we expect that for the fractional hyperbolic $\Phi^4_3$-model with $\al = \frac 32$, a similar weak universality result as in \cite{GKO} holds.
\end{remark}

\begin{remark} \rm
For the fractional hyperbolic $\Phi^4_3$-model, weak universality in the range $1 < \al \leq \frac 98$ has additional difficulties. Firstly, as in the case of the fractional $\Phi^4_3$-measure in Theorem~\ref{THM:Gibbs}, the Gibbs measure for the macroscopic model \eqref{fNLWw} requires an additional renormalization procedure, and we expect only weak convergence of the truncated Gibbs measure. Secondly, our method based on the first order expansion does not seem to be enough due to the worse regularity of the second-order stochastic objects. As such, a higher order expansion is probably needed to overcome this issue. We plan to address this in a forthcoming work.
\end{remark}

\subsection{Organization of the paper}

This paper is organized as follows. In Section~\ref{SEC:2}, we introduce notations and recall and prove some lemmas that will be used in the proof of our theorems.
In Section~\ref{SEC:count}, we establish lattice point counting estimates for regularity estimates of stochastic objects and random operators, both for our global well-posedness result and for our weak universality result for the fractional hyperbolic $\Phi^4_3$-model.
In Section~\ref{SEC:meas}, we construct the fractional $\Phi^4_3$-measure as stated in Theorem~\ref{THM:Gibbs} and prove the convergence of Gibbs measures for our weak universality result.
In Section~\ref{SEC:LWP}, we present a local well-posedness result for the fractional hyperbolic $\Phi^4_3$-model \eqref{SNLW0} with Gaussian initial data. 
In Section~\ref{SEC:GWP}, we prove Theorem~\ref{THM:GWP}, global well-posedness for the fractional hyperbolic $\Phi^4_3$-model \eqref{SNLW0} and invariance of the fractional $\Phi^4_3$-measure.
Lastly, in Section~\ref{SEC:wu}, we prove Theorem~\ref{THM:wu}, weak universality of the fractional hyperbolic $\Phi^4_3$-model.

\section{Notations and basic lemmas}
\label{SEC:2}

In this section, we introduce notations and preliminary facts.

Throughout the paper, for two quantities $a, b > 0$, we use the notation $a \les b$ if $a \leq C b$ for some constant $C > 0$ uniform with respect the set where $a$ and $b$ are allowed to vary. We also write $a \sim b$ if we have both $a \les b$ and $b \les a$. 

Let $N \in 2^{\N}$ be a dyadic number. If $N \geq 2$ we write $|n| \sim N$ to denote $\frac{N}{2} < |n| \leq N$. If $N = 1$, we write $|n| \sim N$ to denote $n \in \{n \in \Z^3: |n| \leq N \}$. We also denote $\pi_N$ as the sharp frequency cutoff onto frequencies $\{n \in \Z^3: |n| \leq N \}$.

For any distribution $f$, we denote by $\ft f$ the Fourier transform of $f$ and $f^\vee$ the inverse Fourier transform of $f$.

\subsection{Sobolev spaces and convolution inequalities}
\label{SUBSEC:21}

Let us first introduce Sobolev spaces in this subsection.

For any $s \in \R$ and $1 \leq p \leq \infty$, we define the $L^p$-based Sobolev space $W^{s, p} (\T^3)$ via the norm
\begin{align*}
\| f \|_{W^{s, p}} \deff \| \jb{\nb}^s f \|_{L^p} = \big\| \big( \jb{n}^s \ft f (n) \big)^\vee \big\|_{L^p}.
\end{align*}

\noi
When $p = 2$, we write $H^s (\T^3) = W^{s, 2} (\T^3)$. By Plancherel's theorem, we have 
\begin{align*}
\| f \|_{H^s} = \big\| \jb{n}^s \ft f (n) \big\|_{\ell^2_n}.
\end{align*}

Let us now mention some useful properties of Sobolev spaces. We first record the following interpolation result.

\begin{lemma}
\label{LEM:interp}
Let $s, s_1, s_2 \in \R$ and $1 < p, p_1, p_2 < \infty$ be such that
\begin{align*}
s = \ta s_1 + (1 - \ta) s_2 \quad \textup{and} \quad \frac{1}{p} = \frac{\ta}{p_1} + \frac{1 - \ta}{p_2}
\end{align*}

\noi
for some $0 < \ta < 1$. Then, we have
\begin{align*}
\| f \|_{W^{s, p}} \les \| f \|_{W^{s_1, p_1}}^\ta \| f \|_{W^{s_2, p_2}}^{1 - \ta}.
\end{align*}
\end{lemma}

The proof of Lemma~\ref{LEM:interp} follows immediately from the Littlewood-Paley characterization of Sobolev spaces; see \cite[Theorem~1.3.6]{Graf}.

\medskip
We now record the following product estimates. 
For a proof, see \cite[Lemma~3.4]{GKO} and also \cite{BOZ} for the endpoint case.

\begin{lemma}
\label{LEM:gko}
Let $s > 0$.

\smallskip \noi
\textup{(i)} Let $1 \leq r \leq \infty$ and $1 < p_1, p_2, q_1, q_2 \leq \infty$ be such that
\begin{align*}
\frac{1}{r} = \frac{1}{p_1} + \frac{1}{q_1} = \frac{1}{p_2} + \frac{1}{q_2}.
\end{align*}

\noi
Then, we have
\begin{align*}
\| fg \|_{W^{s, r}} \les \| f \|_{W^{s, p_1}} \| g \|_{L^{q_1}} + \| f \|_{L^{p_2}} \| g \|_{W^{s, q_2}}.
\end{align*}

\smallskip \noi
\textup{(ii)} Let $1 < p, q \leq \infty$ and $1 < r \leq p$ be such that
\begin{align*}
\frac{s}{3} + \frac{1}{r} = \frac{1}{p} + \frac{1}{q} \leq 1.
\end{align*}

\noi
Then, we have
\begin{align*}
\| fg \|_{W^{-s, r}} \les \| f \|_{W^{-s, p}} \| g \|_{W^{s, q}}.
\end{align*}
\end{lemma}

We now recall the following convolution inequality. For a proof, see \cite[Lemma 4.2]{GTV}.
\begin{lemma}
\label{LEM:conv}
Let $0 \leq \be \leq \gamma$ be such that $\gamma > 1$. Then, for any $a \in \R$, we have
\begin{align*}
\int_\R \frac{1}{\jb{x}^\be \jb{x - a}^\gamma} dx \les \frac{1}{\jb{a}^\be}.
\end{align*} 
\end{lemma}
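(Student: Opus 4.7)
The plan is to prove Lemma 2.2 by a standard region-splitting argument, treating separately the bounded case $|a| \lesssim 1$ (where $\jb{a} \sim 1$) and the regime $|a| \gg 1$. In the bounded case, the integral is finite uniformly in $a$ because $\gamma > 1$ gives integrability at infinity and the singularities at $x = 0$ and $x = a$ are integrable once combined (since $\be \le \gamma$ and one can simply bound $\jb{x}^{-\be} \lesssim 1$ on compact sets), so the conclusion is trivial there. From now on I focus on $|a| \gtrsim 1$.

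For $|a|$ large, I would decompose $\R = A \cup B \cup C$, where
\[
A = \{|x| \le \tfrac{|a|}{2}\}, \qquad B = \{|x-a| \le \tfrac{|a|}{2}\}, \qquad C = \{|x| > \tfrac{|a|}{2},\ |x-a| > \tfrac{|a|}{2}\}.
\]
On $B$ and $C$, one automatically has $\jb{x} \gtrsim \jb{a}$, so pulling $\jb{a}^{-\be}$ out and using $\gamma > 1$ for integrability of $\jb{x-a}^{-\gamma}$ over $\R$ (or over $C$, where we additionally have $\jb{x-a} \gtrsim \jb{a}$) immediately gives contributions bounded by $\jb{a}^{-\be}$. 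These two regions cost nothing beyond the hypothesis $\gamma > 1$.

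The only subtle region is $A$, where $\jb{x-a} \gtrsim \jb{a}$ but the remaining factor is $\int_A \jb{x}^{-\be}\, dx$, which behaves differently depending on $\be$. I would split into three subcases: for $\be > 1$ the integral is $O(1)$, yielding $\jb{a}^{-\gamma} \le \jb{a}^{-\be}$ since $\gamma \ge \be$; for $\be = 1$ the integral is $O(\log\jb{a})$, yielding $\jb{a}^{-\gamma}\log\jb{a} \lesssim \jb{a}^{-\be}$ because $\gamma > 1 = \be$ absorbs the logarithm; and for $\be < 1$ the integral is $O(\jb{a}^{1-\be})$, giving $\jb{a}^{1-\be-\gamma} \le \jb{a}^{-\be}$ precisely because $\gamma \ge 1$.

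There is no real obstacle here; the only point worth being careful with is the boundary case $\be = 1$, where one must extract a logarithmic factor and absorb it into the gain $\jb{a}^{-(\gamma-1)}$ coming from $\gamma > 1$. Everything else is routine, and combining the three regions yields the claimed bound $\jb{a}^{-\be}$ uniformly in $a$.
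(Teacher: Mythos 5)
Your proof is correct. The paper does not prove this lemma itself but cites \cite[Lemma 4.2]{GTV}, and your region-splitting argument (splitting at $|x|\le |a|/2$, $|x-a|\le |a|/2$, and the complement, with the three subcases $\be>1$, $\be=1$, $\be<1$ on the first region) is exactly the standard elementary proof of this convolution estimate; the only cosmetic remark is that there are no actual singularities at $x=0$ or $x=a$ since $\jb{\cdot}\ge 1$, but the bound $\jb{x}^{-\be}\les 1$ you actually use is all that is needed there.
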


Next, we present the following basic lemma on a discrete convolution.

\begin{lemma}\label{LEM:SUM}
Let $d \geq 1$.

\smallskip \noi
\textup{(i)}
Let $\be, \gamma > 0$ be such that $\be + \g > 3$ and $\be < d$.
Then, for any $n \in \Z^3$, we have
\begin{align}
 \sum_{n = n_1 + n_2} \frac{1}{\jb{n_1}^\be \jb{n_2}^\gamma}
\les 
\begin{cases}
\jb{n}^{3 - \be - \g} & \text{if } \g < 3 \\
\jb{n}^{- \be + \eps} & \text{if } \g = 3 \\
\jb{n}^{- \be} & \text{if } \g > 3
\end{cases}
\label{convsum1}
\end{align}

\noi
for some $\eps > 0$ arbitrarily small. Moreover, when $\g < d$, we have
\begin{align}
\sum_{n = n_1 + n_2} \frac{1}{\jb{n_1}^\be \jb{n_2}^\g} \sim \jb{n}^{d - \be - \g}.
\label{convsum1-1}
\end{align}

\smallskip \noi
\textup{(ii)}
Let $k \in \N$ and $\be \in \R$ satisfy
\[ \frac{3 (k - 1)}{k} < \be < 3. \] 

\noi
Then, we have
\begin{align}
\sum_{n = n_1 + \cdots + n_k} \frac{1}{\jb{n_1}^\be \cdots \jb{n_k}^\be} \les \jb{n}^{3 (k - 1) - k \be}.
\label{convsum2}
\end{align}
\end{lemma}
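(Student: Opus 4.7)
The plan is to prove part (i) by splitting the summation into three disjoint regions according to the relative sizes of $|n_1|$, $|n_2|$, and $|n|$, and then to deduce part (ii) by iterating part (i). No deep obstacle is anticipated; the only delicate point is keeping track of the three cases $\gamma < d$, $\gamma = d$, $\gamma > d$ when reading off the exponent $\ld$ from \eqref{convsum1}.

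For part (i), I would write
\[
\sum_{n = n_1 + n_2} = \sum_{A} + \sum_{B} + \sum_{C},
\]
with $A = \{|n_1| \le \tfrac14 |n|\}$, $B = \{|n_2| \le \tfrac14 |n|\}$, and $C = \{|n_1|, |n_2| \ges |n|\}$ the complement (these three regions partition $\Z^d$ for $|n|$ bounded away from $0$; the small-$n$ case is trivial). On $A$ one has $|n_2| \sim |n|$, so pulling out $\jb{n_2}^{-\gamma} \sim \jb{n}^{-\gamma}$ and using $\be < d$ to estimate $\sum_{|n_1| \les |n|} \jb{n_1}^{-\be} \les \jb{n}^{d - \be}$ yields a contribution of size $\jb{n}^{d - \be - \gamma}$, which matches $\jb{n}^{-\be + \ld}$ when $\gamma < d$ and is strictly better when $\gamma > d$ (a harmless logarithm is absorbed into $\jb{n}^{\eps}$ at $\gamma = d$). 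Region $B$ is symmetric and directly gives $\jb{n}^{-\be + \max(d - \gamma, 0)}$. On $C$ I would split further into the subregion $|n_1| \les |n|$, on which the summand is $\les \jb{n}^{-\be - \gamma}$ and the number of lattice points is $\les \jb{n}^d$, and the subregion $|n_1| \gg |n|$, on which $|n - n_1| \sim |n_1|$ reduces the summand to $\jb{n_1}^{-(\be + \gamma)}$; the hypothesis $\be + \gamma > d$ then gives $\sum_{|n_1| \ges |n|} \jb{n_1}^{-(\be + \gamma)} \les \jb{n}^{d - \be - \gamma}$. Both sub-contributions are again absorbed in the target bound. For the matching lower bound in \eqref{convsum1-1} when $\gamma < d$, I would restrict the sum to the $\sim |n|^d$ lattice points with $|n_1|, |n_2| \sim |n|$, on which the summand is uniformly $\sim \jb{n}^{-\be - \gamma}$, yielding $\ges \jb{n}^{d - \be - \gamma}$.

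Part (ii) then follows by induction on $k$. Let $\be_k$ denote the effective exponent on $\jb{n}$ after summing out the first $k - 1$ variables, so that $\be_1 = \be$ and the target is $\be_k = k\be - (k-1)d$. At each step both exponents $\be_j$ and $\be$ lie strictly below $d$ (the former because $\be_j < d \Leftrightarrow \be < d$), and the admissibility condition $\be_j + \be > d \Leftrightarrow \be > jd/(j+1)$ is guaranteed for all $j \le k - 1$ by the standing hypothesis $\be > (k-1)d/k$. Hence one may apply \eqref{convsum1-1} at each step to obtain the recursion $\be_{j+1} = \be_j + \be - d$, and solving this with $\be_1 = \be$ gives $\be_k = k\be - (k-1)d$, establishing \eqref{convsum2}. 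The main source of bookkeeping is checking that the chain of inequalities $\be_j + \be > d$ survives the iteration, which reduces to the single hypothesis on $\be$; after that, the argument is mechanical.
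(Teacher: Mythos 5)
Your proof is correct and follows essentially the same route as the paper: the paper's proof simply cites references for the upper bound \eqref{convsum1} (whose standard proof is exactly your region decomposition), proves the lower bound in \eqref{convsum1-1} by restricting to an annulus where $|n_1|\sim|n_2|\sim|n|$ just as you do, and obtains (ii) by the same iteration of part (i), with your exponent bookkeeping $\be_j = j\be-(j-1)d$ checking out.
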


\begin{proof}
The inequality \eqref{convsum1} and the upper bound in \eqref{convsum1-1} follows
from elementary  computations.
See, for example,
 \cite[Lemma 4.2]{GTV} and \cite[Lemma 4.1]{MWX}. The lower bound in \eqref{convsum1-1} follows from
\begin{align*}
\sum_{n = n_1 + n_2} \frac{1}{\jb{n_1}^\be \jb{n_2}^\g} \geq \sum_{ \frac{|n|}{4} < |n_1| \leq \frac{|n|}{2} } \frac{1}{\jb{n_1}^\be \jb{n - n_1}^\g} \ges \frac{\jb{n}^3}{\jb{n}^{\be + \g}} = \jb{n}^{3 - \be - \g}.
\end{align*} 

\noi
The inequality \eqref{convsum2} follows from applying \eqref{convsum1} repetitively.
\end{proof}

\subsection{Fourier restriction norm method}
\label{SUBSEC:Xsb}
We consider the following nonhomogeneous linear damped fractional wave equation:
\begin{align}
\begin{cases}
\dt^2 u + \dt u + (1 - \Dl)^\al u = F \\
(u, \dt u)|_{t = 0} = (u_0, u_1).
\end{cases}
\label{wave_lin}
\end{align}

\noi
The solution to \eqref{wave_lin} is given by
\begin{align*}
u (t) = S(t) (u_0, u_1) + \int_0^t \D (t - t') F(t') dt'
\end{align*}

\noi
for $t \geq 0$, where the operator $\D (t)$ is defined by
\begin{align*}
\D(t) = e^{-\frac{t}{2} }\frac{\sin\Big(t\sqrt{(1-\Dl)^\al -\tfrac14}\Big)}{\sqrt{(1-\Dl)^\al -\tfrac14}}
\end{align*}

\noi
and the operator $S(t)$ is defined by
\begin{align}
S(t) (f, g) = \dt\D(t)f +  \D(t) (f + g).
\label{St0}
\end{align}

\noi
By setting
\begin{align*}
\jbb{\nabla} = \sqrt{(1-\Dl)^\al -\frac14} \quad \text{and} \quad \jbb{n} = \sqrt{\jb{n}^{2\al}-\frac 14},
\end{align*}

\noi
we have
\begin{align}
\D (t) = e^{-\frac{t}{2}} \frac{\sin (t \jbb{\nb})}{\jbb{\nb}} \quad \text{or} \quad \D(t) f  =  \frac{1}{(2 \pi)^{\frac 32}} e^{-\frac{t}{2}} \sum_{n \in \Z^3}  \frac{\sin (t \jbb{n})}{\jbb{n}} \ft f (n) e^{in \cdot x}.
\label{W3}
\end{align}

\noi
For convenience, we define the Duhamel integral operator $\I$ as
\begin{align}
\I (F) (t) \deff \int_0^t \D(t - t') F(t') dt'
= \int_0^t e^{-\frac{t-t'}{2} }\frac{\sin\big((t-t')\jbb{\nabla} \big)}{\jbb{\nabla}}
F(t') dt'.
\label{lin1}
\end{align}

We now recall the Fourier restriction norm method, which was first introduced by Klainerman-Machedon \cite{KM93} in the setting of the wave equation and by Bourgain \cite{Bour93} in the setting of the Schr\"odinger equation. Following their ideas, we can define the $X^{s, b}$-spaces for the fraction wave equation as follows:
\begin{align}
\| u \|_{X^{s, b} (\T^3 \times \R)} \overset{\textup{def}}{=} \big\| \jb{n}^s \jb{|\tau| - \jbb{n}}^b \ft u (n, \tau) \big\|_{\l_n^2 L_\tau^2 (\Z^3 \times \R)}.
\label{Xsb}
\end{align}

\noi
When $b > \frac 12$, by Sobolev's embedding, we have $X^{s, b} \subset C(\R; H^s (\T^3))$. Note that the following equivalence holds (see \cite[Lemma 4.2]{Bring2}):
\begin{align}
\| u \|_{X^{s, b} (\T^3 \times \R)} \sim \min_{\substack{u_+, u_- \in X^{s, b} \\ u = u_+ + u_-}} \max_\pm \big\| \jb{\nb}^s e^{\mp it \jbb{\nb}} u_\pm \big\|_{L_x^2 H_t^b (\T^3 \times \R)}.
\label{equi_norm}
\end{align}

 Given an interval $I \subset \R$, we can define the local-in-time version of the $X^{s, b}$-space as follows:
\begin{align*}
\| u \|_{X_I^{s, b}} = \inf \big \{ \| v \|_{X^{s, b}}: v |_I = u \}.
\end{align*}

\noi
If $I = [0, T]$, we write $X^{s, b}_T = X_{[0, T]}^{s, b}$.

We now consider relevant estimates of the $X^{s, b}$-norm or the $X^{s, b}_T$-norm. We first record the following embedding lemma. See, for example, in \cite[Corollary~4.7]{Bring2}.
\begin{lemma}
\label{LEM:HsXsb}
Let $s \in \R$, $b > \frac 12$, and $I \subset \R$ be a closed interval. Then, we have the following embedding:
\begin{align*}
\|(u, \dt u) \|_{C_I \H^s} \les \| u \|_{X^{s, b}_I}.
\end{align*}
\end{lemma}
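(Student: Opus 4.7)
The plan is standard Fourier analysis. I would first use $\| u \|_{X^{s,b}_I} = \inf_{\wt u|_I = u} \| \wt u \|_{X^{s,b}}$ to pick an extension $\wt u$ with $\| \wt u \|_{X^{s,b}} \leq 2\| u \|_{X^{s,b}_I}$; it then suffices to show the global embedding $X^{s,b}(\R) \hookrightarrow C(\R; \H^s)$, so I drop tildes from now on.

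For the $H^s$-component, at each fixed $t$ I would invert the time Fourier transform, $\ft_x u(n, t) = (2\pi)^{-1/2} \int_\R e^{it\tau} \ft u(n, \tau)\, d\tau$, and apply Cauchy--Schwarz in $\tau$ against the weight $\jb{|\tau|-\jbb{n}}^{-b}$. Since $b > \tfrac12$, $\int \jb{|\tau|-\jbb{n}}^{-2b}\, d\tau \les 1$ uniformly in $n$, yielding
\begin{align*}
|\ft_x u(n, t)|^2 \les \int \jb{|\tau|-\jbb{n}}^{2b} |\ft u(n, \tau)|^2\, d\tau.
\end{align*}
Multiplying by $\jb{n}^{2s}$ and summing over $n \in \Z^3$ gives $\sup_t \| u(\cdot, t)\|_{H^s} \les \| u \|_{X^{s,b}}$, and continuity of $t \mapsto u(\cdot, t) \in H^s$ follows by dominated convergence applied first mode-by-mode and then globally using the above bound as majorant.

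For the $H^{s-\al}$-component of $\dt u$, I would use the equivalent norm \eqref{equi_norm}, decomposing $u = u_+ + u_-$ with $v_\pm := e^{\mp it\jbb{\nabla}} u_\pm$ satisfying $\| \jb{\nabla}^s v_\pm \|_{L^2_x H^b_t} \les \| u \|_{X^{s,b}}$. Differentiating $u_\pm = e^{\pm it\jbb{\nabla}} v_\pm$ in $t$ gives
\begin{align*}
\dt u_\pm = \pm i\jbb{\nabla} u_\pm + e^{\pm it\jbb{\nabla}} \dt v_\pm;
\end{align*}
the first summand has $H^{s-\al}$-norm $\les \| u_\pm(\cdot, t)\|_{H^s}$, already bounded uniformly in $t$ by the preceding step. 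The main obstacle lies in the second summand: one must convert the time-derivative cost on $\dt v_\pm$ into the $\al$ spatial derivatives separating $H^s$ from $H^{s-\al}$. This relies on the fact that the $X^{s,b}$-mass of $\ft u(n, \cdot)$ is concentrated near the dispersion surface $|\tau| \sim \jbb{n} \sim \jb{n}^\al$, so that there the Fourier multiplier $i\tau$ behind $\dt$ acts essentially as $i\jbb{\nabla}$; combining this localisation (via the pointwise inequality $|\tau| \les \jbb{n} + \jb{|\tau|-\jbb{n}}$) with Minkowski's inequality $\| \cdot \|_{L^\infty_t L^2_x} \leq \| \cdot \|_{L^2_x L^\infty_t}$ and the time Sobolev embedding $H^b_t \hookrightarrow L^\infty_t$ (valid for $b > \tfrac12$) closes the estimate.
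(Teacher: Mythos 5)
The paper itself offers no argument for this lemma --- it is quoted from Corollary 4.7 in \cite{Bring2} --- so your proposal has to stand on its own. Its first half does: extending $u$ off $I$, writing the spatial Fourier coefficient $\ft{u}(n,t)$ as an inverse time-Fourier integral and applying Cauchy--Schwarz against $\jb{|\tau|-\jbb{n}}^{-b}$, which is square-integrable in $\tau$ uniformly in $n$ precisely because $b>\frac12$, is the standard route to $\sup_t\|u(t)\|_{H^s}\les\|u\|_{X^{s,b}}$, and the continuity claim is fine.

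The second half has a genuine gap, and it sits exactly at the step you declare closed. After writing $\dt u_\pm = \pm i\jbb{\nb}u_\pm + e^{\pm it\jbb{\nb}}\dt v_\pm$, you must bound $\sup_t\|\jb{\nb}^{s-\al}\dt v_\pm(t)\|_{L^2_x}$ by $\|\jb{\nb}^s v_\pm\|_{L^2_x H^b_t}$. On the Fourier side $\dt v_\pm$ carries a full factor $\jb{\tau\mp\jbb{n}}$, while the norm on the right only supplies the weight $\jb{\tau\mp\jbb{n}}^{b}$ with $b<1$; the inequality $|\tau|\les\jbb{n}+\jb{|\tau|-\jbb{n}}$ merely trades $|\tau|$ for $\jb{|\tau|-\jbb{n}}$ and leaves the loss $\jb{\tau\mp\jbb{n}}^{1-b}$ untouched, and nothing in the $X^{s,b}$ norm forces the mass to concentrate near the characteristic surface $|\tau|\sim\jbb{n}$ --- it may sit arbitrarily far from it at the price of only the $b$-th power of the distance. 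In fact the bound on the second component fails for general $u\in X^{s,b}$ when $\frac12<b<1$: take $u(x,t)=e^{in_0\cdot x}\eta(t)e^{i\ld t}$ with $n_0$ fixed, $\eta$ a bump equal to $1$ near $I$, and $\ld\to\infty$; then $\|u\|_{X^{s,b}_I}\les\jb{n_0}^{s}\jb{\ld}^{b}$ while $\|\dt u(t)\|_{H^{s-\al}}\sim\ld\,\jb{n_0}^{s-\al}$ on $I$. So no Minkowski/Sobolev bookkeeping can rescue the argument as you set it up: controlling $\dt u$ requires more than membership of $u$ in $X^{s,b}$, namely the first-order (half-wave/Duhamel) structure of the functions to which the lemma is applied; in that representation the time derivative is $\pm i\jbb{\nb}u_\pm$ up to terms with the same regularity as $u$, so the problematic $\dt v_\pm$ never appears, and this is the setting of the cited Corollary 4.7 in \cite{Bring2}. (A minor further point: your bound for the first summand should go through $\sup_t\|u_\pm(t)\|_{H^s}\les\|\jb{\nb}^s v_\pm\|_{L^2_xH^b_t}$ rather than through the bound on $u$ itself, but that is easily repaired.)
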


We now record the following gluing lemma. For a proof, see \cite[Lemma~4.5]{Bring2}.
\begin{lemma}
\label{LEM:glue}
Let $s \in \R$, $-\frac 12 < b_1 < \frac 12$, and $\frac 12 < b_2 < 1$. Let $I_1, I_2 \subset \R$ be two closed intervals such that $I_1 \cap I_2 \neq \varnothing$. Then, we have
\begin{align*}
\| u \|_{X^{s, b_1}_{I_1 \cup I_2}} \les \| u \|_{X^{s, b_1}_{I_1}} + \| u \|_{X^{s, b_1}_{I_2}}.
\end{align*}

\noi
Furthermore, we have
\begin{align*}
\| u \|_{X^{s, b_2}_{I_1 \cup I_2}} \les |I_1 \cap I_2|^{\frac 12 - b_2} \big( \| u \|_{X^{s, b_2}_{I_1}} + \| u \|_{X^{s, b_2}_{I_2}} \big).
\end{align*}
\end{lemma}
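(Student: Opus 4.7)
The plan is to extend $u$ on $I_1$ and $I_2$ separately and then glue the two extensions via a smooth partition of unity, reducing the $X^{s, b}$ gluing problem to a multiplication-by-cutoff estimate on $H^b_t$.

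Concretely, given $\varepsilon > 0$, I would pick extensions $v_1, v_2 \in X^{s, b}(\T^3 \times \R)$ with $v_i|_{I_i} = u|_{I_i}$ and $\| v_i \|_{X^{s, b}} \leq \| u \|_{X^{s, b}_{I_i}} + \varepsilon$. Writing $w := |I_1 \cap I_2|$, I would construct smooth cutoffs $\chi_1, \chi_2$ on $\R$ with $\chi_1 + \chi_2 \equiv 1$ on $I_1 \cup I_2$, $\chi_i \equiv 1$ on $I_i \setminus I_{3-i}$, $\chi_i \equiv 0$ outside a small enlargement of $I_i$, and all transitions occurring inside $I_1 \cap I_2$ over intervals of length comparable to $w$, so that $|\dd_t^k \chi_i| \les w^{-k}$. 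Then $\widetilde u := \chi_1 v_1 + \chi_2 v_2$ agrees with $u$ on $I_1 \cup I_2$ (using $v_1 = v_2 = u$ on $I_1 \cap I_2$), so
\[
\| u \|_{X^{s, b}_{I_1 \cup I_2}} \leq \| \widetilde u \|_{X^{s, b}} \leq \| \chi_1 v_1 \|_{X^{s, b}} + \| \chi_2 v_2 \|_{X^{s, b}}.
\]

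The heart of the proof is a multiplication-by-cutoff estimate for each term. Using the equivalent norm \eqref{equi_norm} and splitting $v_i = v_i^+ + v_i^-$, one reduces to a purely one-dimensional time multiplication estimate of the form $\| \chi_i V \|_{L^2_x H^b_t}$ in terms of $\| V \|_{L^2_x H^b_t}$, where $V = e^{\mp it\jbb{\nb}} v_i^\pm$. For the first assertion, with $b_1 \in (-\tfrac12, \tfrac12)$, I would invoke the classical fact that multiplication by the characteristic function of a half-line (and hence by any bounded smooth cutoff) is bounded on $H^{b_1}(\R)$ with operator norm controlled by $\|\chi_i\|_\infty \les 1$, uniformly in the transition scale. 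For the second assertion, with $b_2 \in (\tfrac12, 1)$, $H^{b_2}(\R)$ is a multiplicative algebra, so $\| \chi_i V \|_{L^2_x H^{b_2}_t} \les \| \chi_i \|_{H^{b_2}_t} \| V \|_{L^2_x H^{b_2}_t}$. A direct computation using $|\dd_t \chi_i| \les w^{-1} \mathbf{1}_{\text{transition}}$ and interpolation between $\|\chi_i'\|_{L^2_t} \les w^{-1/2}$ and $\|\chi_i\|_{L^2(\text{transition})} \les w^{1/2}$ yields $\|\chi_i\|_{\dot H^{b_2}_t} \les w^{1/2 - b_2}$.

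The subtle point I expect to be the main obstacle is that $\chi_i$ is not globally localized---it equals $1$ on the possibly much larger set $I_i \setminus I_{3-i}$---so a naive application of the algebra estimate gives a global $\|\chi_i\|_{H^{b_2}_t}$ that fails to scale like $w^{1/2 - b_2}$. The resolution is to split the analysis of $\chi_i v_i$ into the plateau part (where $\chi_i \equiv 1$ and $\chi_i v_i = v_i$ contributes trivially $\les \| v_i \|_{X^{s, b_2}}$) and the transition part (a bump of scale $w$, whose $H^{b_2}_t$ norm is controlled by $w^{1/2 - b_2}$). In the meaningful regime $w \les 1$, the factor $w^{1/2 - b_2} \geq 1$ absorbs both contributions into a single bound; in the regime $w \ges 1$, the second estimate reduces to a uniform multiplier bound and follows a fortiori. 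Taking $\varepsilon \to 0$ then completes the proofs of both inequalities.
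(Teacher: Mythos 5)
Your overall strategy (near-optimal extensions on $I_1$, $I_2$, a partition of unity with transition at scale $w=|I_1\cap I_2|$ inside the overlap, and reduction via \eqref{equi_norm} to a one-dimensional multiplication estimate in $H^{b}_t$) is exactly the standard route, which is what the paper relies on by citing Bringmann's Lemma 4.5; the first inequality and the scaling computation $\|\chi_i\|_{\dot H^{b_2}_t}\les w^{\frac12-b_2}$ are essentially fine. One small repair in the $b_1$ part: the multiplier norm of $\chi_i$ on $H^{b_1}(\R)$ is \emph{not} controlled by $\|\chi_i\|_{L^\infty}$ alone (oscillating bounded multipliers are a counterexample); what you should say is that your cutoffs have total variation $O(1)$, hence are averages of translated Heaviside functions, so the sharp-cutoff bound for $|b_1|<\frac12$ transfers with a constant uniform in $w$.

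The genuine gap is in your resolution of the obstacle you correctly identified for $b_2>\frac12$. Splitting $\chi_i v_i$ into a ``plateau part'' and a ``transition part'' and declaring the plateau part trivial does not work: the plateau contribution is $\ind_{P}\, v_i$ with a sharp indicator, and multiplication by a sharp cutoff is \emph{not} bounded on $H^{b_2}_t$ for $b_2>\frac12$ (indeed $\ind_P V$ is generically discontinuous, hence not even in $H^{b_2}$), and the $X^{s,b_2}$-norm cannot be localized in time region by region without cutoffs. The correct mechanism, which your sketch gestures at but never states, is a product (fractional Leibniz) estimate in which the plateau enters only through $\|\chi_i\|_{L^\infty_t}$ and the transition only through the seminorm: for $\frac12<b_2<1$, $\|\chi_i V\|_{H^{b_2}_t}\les \|\chi_i\|_{L^\infty_t}\|V\|_{H^{b_2}_t}+\|\chi_i\|_{\dot H^{b_2}_t}\|V\|_{L^\infty_t}$, combined with the Sobolev embedding $H^{b_2}_t\hookrightarrow L^\infty_t$ and $\|\chi_i\|_{\dot H^{b_2}_t}\sim w^{\frac12-b_2}$; with $w\les1$ this yields the factor $w^{\frac12-b_2}$, and no decomposition of $\chi_i$ into plateau and bump is needed (nor possible). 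Finally, your treatment of the regime $w\ges1$ is backwards: there $w^{\frac12-b_2}\le1$, so a uniform multiplier bound does \emph{not} imply the stated estimate a fortiori; in fact for $I_1=I_2$ a long interval the stated inequality cannot hold with an absolute constant, so the second estimate must be understood (and is only used in the paper) with overlap of size $\les1$, equivalently with $\min(1,|I_1\cap I_2|)$ in place of $|I_1\cap I_2|$.
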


Next, recall that $S(t)$ defined in \eqref{St0} are only defined for $t \geq 0$. Thus, to make use of the $X^{s,b}$-space, we need to extend these two linear operators to the whole real line. For this purpose, for $t \in \R$, we define
\begin{align*}
S'(t) (f, g) \overset{\textup{def}}{=} S(|t|) (f, g)
\end{align*}

\noi
as a substitute for $S(t)$. The idea of this extension come from \cite{MR}.

We now show the following homogeneous linear estimate.
\begin{lemma}
\label{LEM:homo}
Let $s \in \R$, $b < \frac 32$, and $I \subset \R$ be a closed interval. Then, we have
\begin{align*}
\| S'(t) (f, g) \|_{X_I^{s, b}} \les (1 + |I|) \| (f, g) \|_{\H^{s}}.
\end{align*}
\end{lemma}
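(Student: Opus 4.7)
The plan is to bound the $X^{s,b}_I$ norm by constructing an explicit extension of $S'(\cdot)(f,g)|_I$ to all of $\R$ and estimating the $X^{s,b}$ norm of that extension. Fix a smooth even cutoff $\eta\in C_c^\infty(\R)$ with $\eta\equiv 1$ on $I$ and $\supp\eta$ contained in an interval of length comparable to $1+|I|$, whose derivatives are bounded uniformly in $|I|$. I take the extension to be
\[
v(t)\;\overset{\textup{def}}{=}\;\eta(t)\,S'(t)(f,g)\;=\;\eta(t)\,S(|t|)(f,g),
\]
so that $v\big|_I = S'(\cdot)(f,g)\big|_I$ and hence $\|S'(\cdot)(f,g)\|_{X^{s,b}_I}\leq \|v\|_{X^{s,b}}$. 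It remains to show $\|v\|_{X^{s,b}}\les (1+|I|)\|(f,g)\|_{\H^s}$.

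Using \eqref{W3}, expand
\[
S(|t|)(f,g)\;=\;e^{-|t|/2}\cos(t\jbb{\nb})\,f\;+\;\sgn(t)\,e^{-|t|/2}\,\tfrac{\sin(t\jbb{\nb})}{\jbb{\nb}}\Bigl(\tfrac{1}{2}f+g\Bigr),
\]
and rewrite the trigonometric factors via $\cos(t\jbb{n})=\tfrac{1}{2}(e^{it\jbb{n}}+e^{-it\jbb{n}})$ and $\sgn(t)\sin(t\jbb{n})=\sin(|t|\jbb{n})=\tfrac{1}{2i}(e^{i|t|\jbb{n}}-e^{-i|t|\jbb{n}})$. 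Then $\widehat{v}(n,\tau)$ splits into two kinds of building blocks at each spatial frequency $n$: Fourier transforms of $\eta(t)\,e^{-|t|/2}e^{\pm it\jbb{n}}$ (smooth modulation, coming from the cosine) and of $\eta(t)\,e^{-|t|/2\pm i|t|\jbb{n}}$ (the $|t|$-modulated terms coming from the sine). For the smooth modulations, $\widehat{v}(n,\tau)$ is a translate of $\widehat{\eta e^{-|\cdot|/2}}$, and shifting by $\pm\jbb{n}$ in the weight converts the weighted $L^2_\tau$ estimate into
\[
\int_\R \jb{|\tau|-\jbb{n}}^{2b}\bigl|\widehat{\eta e^{-|\cdot|/2}}(\tau\mp\jbb{n})\bigr|^2\,d\tau\;\les\;\|\eta\,e^{-|\cdot|/2}\|_{H^b_t}^2\;\les\;1+|I|,
\]
where the last bound holds because $\eta\,e^{-|\cdot|/2}$ is compactly supported with $|I|$-sized support and has at most a (size-$O(1)$) first-derivative jump at $t=0$, hence lies in $H^b_t$ for every $b<\tfrac{3}{2}$.

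For the $|t|$-modulated blocks I use the crucial cancellation structure: although each individual function $\eta(t)\,e^{-|t|/2\pm i|t|\jbb{n}}$ has a first-derivative jump of size $\sim\jbb{n}$ at $t=0$, the linear combination appearing in $S(|t|)(f,g)$ — namely $\sgn(t)e^{-|t|/2}\sin(t\jbb{n})/\jbb{n}=e^{-|t|/2}\sin(|t|\jbb{n})/\jbb{n}$ — produces a function whose derivative jump is of size $O(1)$, independent of $\jbb{n}$. Computing the Fourier transform explicitly via
\[
\widehat{e^{-|\cdot|/2\pm i|\cdot|\jbb{n}}}(\tau)\;=\;\frac{1}{\tfrac{1}{2}-i(\pm\jbb{n}+\tau)}+\frac{1}{\tfrac{1}{2}-i(\pm\jbb{n}-\tau)},
\]
I extract the concentration near the cone $|\tau|=\jbb{n}$ and combine with the multiplier $1/\jbb{n}$ to bound the weighted $L^2_\tau$ norm by $(1+|I|)\jbb{n}^{-2}$ for the sine contribution. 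When one multiplies by the coefficient $\tfrac{1}{2}\widehat{f}(n)+\widehat{g}(n)$, the factor $\jbb{n}^{-2}\sim\jb{n}^{-2\al}$ exactly matches the regularity gap between $H^s$ and $H^{s-\al}$ in the $\H^s$ norm.

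Putting the two kinds of blocks together, summing in $n$ with the weight $\jb{n}^{2s}$, and taking square roots, I obtain
\[
\|v\|_{X^{s,b}}\;\les\;(1+|I|)^{1/2}\,\|(f,g)\|_{\H^s}\;\les\;(1+|I|)\,\|(f,g)\|_{\H^s},
\]
which proves the claim. The main technical obstacle is Step 3, the analysis of the $|t|$-modulated blocks: one has to perform the Fourier computation carefully enough to see that all the spurious $\jbb{n}$ factors coming from individual derivative jumps are absorbed by the $1/\jbb{n}$ in $T_2=\sgn(t)e^{-|t|/2}\sin(t\jbb{n})/\jbb{n}$, so that no loss in the regularity index $s$ occurs. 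The constraint $b<\tfrac{3}{2}$ is precisely what guarantees that the residual contribution from the derivative jump at $t=0$ remains integrable against the $\jb{|\tau|-\jbb{n}}^{2b}$ weight.
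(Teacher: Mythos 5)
Your overall strategy (cut off in time, compute the temporal Fourier transform of the damped propagator explicitly, and use $b<\tfrac32$) is the same as the paper's, but the decisive step fails. The problem is the claimed bound ``$(1+|I|)\jbb{n}^{-2}$ for the sine contribution'' in your treatment of the $|t|$-modulated blocks. The cancellation you invoke only says that $h_n(t)=\eta(t)e^{-|t|/2}\sin(|t|\jbb{n})/\jbb{n}$ has an $n$-independent derivative jump at $t=0$, which gives $|\ft h_n(\tau)|\les\jb{\tau}^{-2}$; but the $X^{s,b}$ weight measures decay in $\big||\tau|-\jbb{n}\big|$, not in $|\tau|$. Your own partial-fraction formula yields $|\ft h_n(\tau)|\sim\big(\jbb{n}\,\big||\tau|-\jbb{n}\big|\big)^{-1}$ throughout the range $1\les\big||\tau|-\jbb{n}\big|\les\jbb{n}$ (e.g.\ $|\ft h_n(\tau)|\sim\jbb{n}^{-2}$ at $\tau\sim\jbb{n}/2$), so
\[
\int_\R \jb{|\tau|-\jbb{n}}^{2b}\,|\ft h_n(\tau)|^2\,d\tau\;\sim\;\jbb{n}^{2b-3},
\]
which exceeds $\jbb{n}^{-2}$ as soon as $b>\tfrac12$. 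Feeding this into your final summation, the $g$-component of the data produces $\jb{n}^{s}\jbb{n}^{b-\frac32}|\ft g(n)|$ instead of the required $\jb{n}^{s-\al}|\ft g(n)|$, a loss of $\jbb{n}^{b-\frac12}$; hence the argument fails for every $b\in(\tfrac12,\tfrac32)$, which is precisely the range the paper needs ($b=\tfrac12+\dl$).

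The defect is structural, not merely computational: since your extension is the fixed function $v=\eta(t)S(|t|)(f,g)$, its $X^{s,b}$ norm is exactly the weighted integral above, so no sharper Fourier analysis of this $v$ can recover the stated bound whenever $t=0$ lies in the region where $\eta\simeq1$ — already for $I=[0,T]$. The kink of $\sin(|t|\jbb{n})$ at $t=0$, even though its size is $O(1)$, genuinely costs $\jbb{n}^{b-\frac32}$ per unit of $\|g\|_{L^2}$ in $X^{0,b}$ for $b>\tfrac12$. The paper avoids this by first reducing to unit-length intervals via the gluing lemma (Lemma \ref{LEM:glue}) — which is also where the factor $1+|I|$ arises — and then estimating an extension in which only genuine linear phases $e^{\pm it\jbb{\nb}}$ appear, multiplied by the fixed profile $\eta(t)e^{-|t|/2}$ whose transform decays like $\jb{\cdot}^{-2}$ \emph{around the correct modulation}; Young's inequality together with $b<\tfrac32$ then closes the estimate with no loss in $n$. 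If you prefer to keep a single cutoff adapted to $I\subset[0,\infty)$, you must extend across $t=0$ by the sign-free formula $e^{-t/2}\big(\cos(t\jbb{\nb})f+\jbb{\nb}^{-1}\sin(t\jbb{\nb})(\tfrac12 f+g)\big)$ rather than by $S(|t|)$ (the exponential growth for $t<0$ is harmless on an $O(1)$ transition region); with that kink-free extension your analysis of the ``smooth modulation'' blocks does give the lemma.
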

\begin{proof}
From Lemma \ref{LEM:glue}, it suffices to show 
\begin{align*}
\| \eta (t) S'(t) (f, g) \|_{X^{s, b}} \les \| (f, g) \|_{\H^{s}},
\end{align*}

\noi
where $\eta : \R \to [0,1]$ is a smooth cut-off function on an interval with unit length. We can assume that $s = 0$, since otherwise we can run the argument for $(f', g') = (\jb{\nb}^s f, \jb{\nb}^s g)$. From \eqref{St0} and \eqref{W3}, we only need to show
\begin{align*}
\big\| \eta(t) e^{-\frac{|t|}{2}} e^{\pm i t \jbb{\nb}} f \big\|_{X^{0, b}} \les \| f \|_{L^2}.
\end{align*}

\noi
By \eqref{Xsb}, the triangle inequality, the fact that $(e^{-\frac{|t|}{2}})^\wedge (\tau) \sim \frac{1}{1 + |\tau|^2}$, Young's convolution inequality, and the fact that $b < \frac 32$, we have
\begin{align*}
\big\| \eta(t) e^{-\frac{|t|}{2}} e^{\pm i t \jbb{\nb}} f \big\|_{X^{0, b}} &= C \Big\| \Big( \ft{\eta} * \frac{1}{1 + |\cdot|^2} \Big) (\tau) \jb{|\tau  \pm \jbb{n}| - \jbb{n}}^b \ft f (n) \Big\|_{\l^2_n L^2_\tau} \\
&\les \Big\| \Big( \ft{\eta}  * \frac{1}{1 + |\cdot|^2} \Big) (\tau) \jb{\tau}^b \ft f (n) \Big\|_{\l^2_n L^2_\tau} \\
&\les \big\| \ft \eta (\tau) \jb{\tau}^{|b|} \big\|_{L_\tau^1} \Big\| \frac{\jb{\tau}^b}{1 + |\tau|^2} \Big\|_{L_\tau^2} \| f \|_{L^2} \\
&\les \| f \|_{L^2},
\end{align*}
so the desired estimate follows.
\end{proof}

For simplicity, we will abuse notations and write $S(t) = S'(t)$ later in this paper.

\medskip
We then consider the Duhamel integral operator $\I$ in \eqref{lin1}. Again, we need to extend $\I$ to the full real line. For this purpose, we consider the following substitute for the operator $\I$:
\begin{align}
\I' (F) (t) \overset{\textup{def}}{=} \frac{1}{2i} \big( \I_+ (F) (t) - \I_{-} (F) (t) \big),
\label{defI}
\end{align}

\noi
where
\begin{align*}
\I_\pm (F) (t) = \sum_{n \in \Z^3} \frac{e^{i n \cdot x}}{\jbb{n}} \int_\R \frac{e^{i t \mu} - e^{- \frac{|t|}{2} \pm i t \jbb{n}}}{\frac 12 + i \mu \mp i \jbb{n}} \ft F (n, \mu) d \mu .
\end{align*}

\noi
Note that for $t \geq 0$, $\I' (F) (t) = \I (F) (t)$. Indeed, by taking the Fourier transform in space, we have
\begin{align*}
\ft{\I (F)} (n, t) &= \int_0^t e^{-\frac{t - t'}{2}} \frac{e^{i (t - t') \jbb{n}} - e^{- i (t - t') \jbb{n}}}{2i \jbb{n}} \ft F (n, t') dt' \\
&= \int_0^t e^{-\frac{t - t'}{2}} \frac{e^{i (t - t') \jbb{n}} - e^{- i (t - t') \jbb{n}}}{2i \jbb{n}} \int_\R e^{i t' \mu} \ft F (n, \mu) d\mu dt' \\
&= \frac{1}{2i \jbb{n}} \int_\R \bigg( \frac{e^{i t \mu} - e^{-\frac{t}{2} + i t \jbb{n}}}{\frac 12 + i \mu - i \jbb{n}} - \frac{e^{i t \mu} - e^{-\frac{t}{2} - i t \jbb{n}}}{\frac 12 + i \mu + i\jbb{n}} \bigg) \ft F (n, \mu) d\mu \\
&= \ft{\I' (F)} (n, t).
\end{align*}
We now show the following inhomogeneous linear estimate.
\begin{lemma}
\label{LEM:nhomo}
Let $\al \in \R$, $s \in \R$, $\frac 12 < b < 1$, and $I \subset \R$ be a closed interval. Then, we have
\begin{align*}
\| \I' (F) \|_{X_I^{s, b}} \les (1 + |I|) \| F \|_{X_I^{s - \al, b - 1}}.
\end{align*}
\end{lemma}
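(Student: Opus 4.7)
\emph{Plan.} Following the strategy of Lemma~\ref{LEM:homo}, the plan is to first reduce to a localized (unit-length-in-time) estimate via the gluing Lemma~\ref{LEM:glue}(ii), and then to prove the localized estimate through a direct Fourier-side computation combined with the equivalence of norms~\eqref{equi_norm}. Covering $I$ by $\lesssim 1+|I|$ overlapping closed intervals of unit length with pairwise overlaps of size comparable to $1$ and invoking Lemma~\ref{LEM:glue}(ii) with $b\in(\tfrac12,1)$ absorbs the factor $(1+|I|)$ entirely into the gluing step. It therefore suffices to show, for a fixed bump $\eta\in C_c^\infty(\R)$ of unit-size support,
\[
\|\eta(t)\,\I'(F)\|_{X^{s,b}} \lesssim \|F\|_{X^{s-\al,b-1}}.
\]

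\emph{Reduction to a scalar inequality.} Writing $\I'(F)=\tfrac{1}{2i}(\I_+(F)-\I_-(F))$ as in \eqref{defI} and splitting naturally along the corresponding half-waves $e^{\pm it\jbb\nb}$, the equivalence \eqref{equi_norm} reduces the task to bounding $\|\jb\nb^{s} e^{\mp it\jbb\nb}\eta(t)\I_\pm(F)\|_{L^2_xH^b_t}$. Taking the spatial Fourier transform at $n\in\Z^3$ and substituting $\mu = \lambda\pm\jbb n$, the $\jbb n^{-1}\sim\jb n^{-\al}$ prefactor appearing in the formula for $\I_\pm$ supplies the $\al$-derivative smoothing built into the right-hand side, and the problem reduces to proving, at each fixed $n$,
\[
\left\|\eta(t)\int_\R\frac{e^{it\lambda}-e^{-|t|/2}}{\tfrac12+i\lambda}\,h(\lambda)\,d\lambda\right\|_{H^b_t(\R)} \lesssim \|\jb\lambda^{b-1} h\|_{L^2_\lambda(\R)},
\]
where $h(\lambda)=\widehat F(n,\lambda\pm\jbb n)$. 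The elementary inequality $\jb{|\mu|-\jbb n}\le\jb{\mu\pm\jbb n}$ (checked case-by-case on the two signs of $\mu$), together with $b-1<0$, guarantees that $\|\jb\lambda^{b-1}h\|_{L^2_\lambda}\lesssim \|\jb{|\mu|-\jbb n}^{b-1}\widehat F(n,\cdot)\|_{L^2_\mu}$, so this scalar estimate will reassemble into the desired $X^{s-\al,b-1}$-norm of $F$ after summation over $n$.

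\emph{Proof of the scalar inequality.} I would split the $\lambda$-integral into the low-modulation region $|\lambda|\le 1$, in which $(\tfrac12+i\lambda)^{-1}$ is smooth and bounded and $\jb\lambda^{b-1}\sim 1$, and the high-modulation region $|\lambda|>1$, in which $|(\tfrac12+i\lambda)^{-1}|\lesssim\jb\lambda^{-1}$ delivers the needed derivative gain. In both regions the $e^{it\lambda}$-piece, after multiplication by $\eta$, is controlled in $H^b_t$ by writing it as the inverse Fourier transform of an explicit multiplier and using that $\eta\in H^b(\R)$ and $H^b(\R)$ is an algebra for $b>\tfrac12$; the $e^{-|t|/2}$-piece factors as $\eta(t)e^{-|t|/2}$ times a constant in $t$, and is bounded by a direct $H^b$-estimate on $\eta(t)e^{-|t|/2}$ (finite for $b<\tfrac32$) combined with a Cauchy-Schwarz bound on the scalar factor, where the integrability of $\jb\lambda^{-2b}$ on $|\lambda|>1$ yields the sharp requirement $b>\tfrac12$.

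\emph{Main obstacle.} The principal delicate point is matching the modulation weight $\jb{|\tau|-\jbb n}$ that defines the $X^{s,b}$-norm with the shifted weight $\jb{\tau\mp\jbb n}$ that arises naturally from the translation $\mu\mapsto\lambda$; the asymmetry of $|\tau|$ on the two halves of the $\tau$-axis is what forces one to go through the equivalence \eqref{equi_norm}, splitting $\I'(F)$ according to the sign of the underlying dispersion. Once this is sorted out, the remaining estimates are a routine multiplier analysis, and the $(1+|I|)$ factor is merely the cost of iterating the gluing of Lemma~\ref{LEM:glue}(ii) across $\sim (1+|I|)$ unit subintervals.
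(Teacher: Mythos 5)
Your proposal is correct, but after the common first step (reduction via Lemma~\ref{LEM:glue} and \eqref{defI} to the unit-time-cutoff estimate $\|\eta(t)\I_\pm(F)\|_{X^{s,b}}\les\|F\|_{X^{s-\al,b-1}}$) you prove that estimate by a genuinely different mechanism than the paper. The paper stays on the $\tau$-side: it writes the claim as an $L^2_\mu\to L^2_\tau$ bound for the explicit kernel $\mathcal{K}(n,\tau,\mu)$ carrying the weights $\jb{|\tau|-\jbb{n}}^{b}\jb{|\mu|-\jbb{n}}^{1-b}$, and handles $|\tau-\mu|\le 1$ by a direct kernel bound plus Schur's test, and $|\tau-\mu|>1$ by two integrations by parts in $t$ (gaining $\jb{\tau-\mu}^{-2}$), Schur's test, H\"older, and Lemma~\ref{LEM:conv}. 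You instead exploit \eqref{equi_norm}: taking the decomposition $u_\pm=\pm\tfrac1{2i}\eta\,\I_\pm(F)$ and conjugating by $e^{\mp it\jbb{\nb}}$, the problem at each fixed $n$ collapses to the one-dimensional estimate $\big\|\eta(t)\int(\tfrac12+i\lambda)^{-1}(e^{it\lambda}-e^{-|t|/2})h(\lambda)\,d\lambda\big\|_{H^b_t}\les\|\jb{\lambda}^{b-1}h\|_{L^2_\lambda}$, i.e.\ a damped version of the standard inhomogeneous Duhamel lemma for restriction norms, which is in fact simpler here because the damping keeps the denominator away from zero, so no cancellation at $\lambda=0$ is needed; your treatment (algebra property of $H^b_t$ for $b>\tfrac12$, Cauchy-Schwarz using $\int\jb{\lambda}^{-2b}d\lambda<\infty$, and $\eta(t)e^{-|t|/2}\in H^b_t$ for $b<\tfrac32$) checks out, as does the key conversion $\jb{\mu\mp\jbb{n}}^{b-1}\le\jb{|\mu|-\jbb{n}}^{b-1}$ from $b-1<0$, which is exactly where the asymmetry of $|\tau|$ versus the shifted modulation is absorbed and plays the role of the paper's pointwise kernel bounds. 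What each route buys: yours outsources the analysis to a classical 1D lemma plus the half-wave equivalence \eqref{equi_norm} (quoted from Bringmann), making the bookkeeping at fixed $n$ nearly mechanical; the paper's kernel/Schur argument is self-contained and never needs \eqref{equi_norm}, at the cost of the integration-by-parts case analysis. One shared looseness, not a gap relative to the paper: attributing the $(1+|I|)$ factor to iterated pairwise gluing of unit intervals does not literally yield a linear constant without further care, but the paper's own proof is equally brief on this point.
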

\begin{proof}
From Lemma \ref{LEM:glue} and \eqref{defI}, it suffices to show
\begin{align*}
\big\| \eta(t) \I_\pm (F) \big\|_{X^{s, b}} \les \| F \|_{X^{s - \al, b - 1}} ,
\end{align*}

\noi
where $\eta : \R \to [0,1]$ is a smooth cut-off function on an interval with unit length. We can assume without loss of generality that $s = 0$, since otherwise we can run the argument for $G = \jb{\nb}^s F$. By taking the Fourier transform in time, we have
\begin{align*}
\ft{\I_\pm (F)} (n, \tau) = \frac{1}{\jbb{n}} \int_\R \eta (t) e^{- i t \tau} \int_\R \frac{e^{i t \mu} - e^{-\frac{|t|}{2} \pm i t \jbb{n}}}{\frac 12 + i \mu \mp i \jbb{n}} \ft{F} (n, \mu) d\mu dt
\end{align*}
Our goal is to show that
\begin{align*}
\Big\| \jb{|\tau| - \jbb{n}}^b \ft{\I_\pm (F)} (n, \tau) \Big\|_{\l_n^2 L_\tau^2} \les \big\| \jb{n}^{- \al} \jb{|\mu| - \jbb{n}}^{b-1} \ft F (n, \mu) \big\|_{\l_n^2 L_\mu^2},
\end{align*}

\noi
and so it suffices to show
\begin{align}
\bigg\| \int_\R \mathcal{K} (n, \tau, \mu) G(\mu) d \mu \bigg\|_{L_\tau^2} \les \| G (\mu) \|_{L_\mu^2}
\label{goal}
\end{align}

\noi
uniformly in $n \in \Z^3$, where
\begin{align*}
\mathcal{K} (n, \tau, \mu) = \jb{|\tau| - \jbb{n}}^b \jb{|\mu| - \jbb{n}}^{1 - b} \int_\R \eta (t) \frac{e^{i t (\mu - \tau)} - e^{-\frac{|t|}{2} - i \tau t \pm i t \jbb{n}}}{\frac 12 + i \mu \mp i \jbb{n}}  dt 
\end{align*}

We first consider the situation when $|\tau - \mu| \leq 1$. Note that we have
\begin{align*}
|\mathcal{K} (n, \tau, \mu)| \ind_{\{|\tau - \mu| \leq 1\}}  \les \frac{\jb{|\tau| - \jbb{n}}^b \jb{|\mu| - \jbb{n}}^{1 - b}}{\jb{\mu \mp \jbb{n}}} \ind_{\{|\tau - \mu| \leq 1\}} \les 1.
\end{align*}

\noi
Thus, by Schur's test, we have
\begin{align*}
\big\| \mathcal{K} (n, \tau, \mu) \ind_{\{|\tau - \mu| \leq 1\}} \big\|_{L_\mu^2 \to L_\tau^2}^2 &\leq \sup_{\tau \in \R} \int_{|\tau - \mu| \leq 1} |\mathcal{K} (n, \tau, \mu)| d\mu \cdot \sup_{\mu \in \R} \int_{|\tau - \mu| \leq 1} |\mathcal{K} (n, \tau, \mu)| d\tau \\
&\les 1,
\end{align*}

\noi
so that \eqref{goal} holds when $|\tau - \mu| \leq 1$.

To deal with the situation when $| \tau - \mu | > 1$, we use integration by parts to obtain
\begin{align*}
\mathcal{K}(&n, \tau, \mu) \ind_{\{|\tau - \mu| > 1\}} \\
&= \ind_{\{|\tau - \mu| > 1\}} \jb{|\tau| - \jbb{n}}^b \jb{|\mu| - \jbb{n}}^{1 - b} \bigg( \int_\R \eta'' (t) \frac{e^{i t (\mu - \tau)}}{-(\mu - \tau)^2 (\frac 12 + i \mu \mp i \jbb{n})} dt \\
&\quad -\frac{\eta(0)}{\big( \frac 14 + (\tau \mp \jbb{n})^2 \big) (\frac 12 + i \mu \mp i \jbb{n})}   - \int_0^\infty \eta' (t) \frac{e^{-\frac{t}{2} - i\tau t \pm i t \jbb{n}}}{(\frac 12 + i \tau \mp i \jbb{n}) (\frac 12 + i \mu \mp i \jbb{n})} dt \\
&\quad + \int_{-\infty}^0 \eta' (t) \frac{e^{\frac{t}{2} - i \tau t \pm i t \jbb{n}}}{(\frac 12 - i \tau \pm i \jbb{n}) (\frac 12 + i \mu \mp i \jbb{n})} dt \bigg).
\end{align*}

\noi
Using integration by parts again and triangle inequalities, we can deduce that
\begin{align*}
|\mathcal{K} (n, \tau, \mu)| \ind_{\{|\tau - \mu| > 1\}} &\les \frac{\jb{|\tau| - \jbb{n}}^b \jb{|\mu| - \jbb{n}}^{1 - b}}{\jb{\mu - \tau}^2 \jb{\mu \mp \jbb{n}}} + \frac{\jb{|\tau| - \jbb{n}}^b \jb{|\mu| - \jbb{n}}^{1 - b}}{\jb{\tau \mp \jbb{n}}^2 \jb{\mu \mp \jbb{n}}} \\
&\leq \frac{\jb{|\tau| - \jbb{n}}^b}{\jb{\mu - \tau}^2 \jb{\mu \mp \jbb{n}}^b} + \frac{1}{\jb{\tau \mp \jbb{n}}^{2 - b} \jb{\mu \mp \jbb{n}}^b} \\
&= \mathcal{K}_1 (n, \tau, \mu) + \mathcal{K}_2 (n, \tau, \mu).
\end{align*}

\noi
For $\mathcal{K}_2$, by H\"older's inequality, we have
\begin{align*}
\bigg\| \int_\R \mathcal{K}_2 (n, \tau, \mu) G(\mu) d\mu \bigg\|_{L_\tau^2} &\les \| \mathcal{K}_2 (n, \tau, \mu) \|_{L_{\tau, \mu}^2} \| G(\mu) \|_{L_\mu^2} \\
&\les  \bigg( \int_\R \int_\R \frac{1}{\jb{\tau \mp \jbb{n}}^{4 - 2b} \jb{\mu \mp \jbb{n}}^{2b}} d \tau d \mu \bigg)^{1/2} \| G(\mu) \|_{L_\mu^2} \\
&\les \| G(\mu) \|_{L_\mu^2},
\end{align*}

\noi
as desired. For $\mathcal{K}_1$, we note that
\begin{align*}
\mu - \tau = (\mu \mp \jbb{n}) - (\tau \mp \jbb{n}).
\end{align*}

\noi
If $|\tau \mp \jbb{n}| \gg |\mu \mp \jbb{n}|$, we can show \eqref{goal} for $\mathcal{K}_1$ in the same way as $\mathcal{K}_2$. If $|\tau \mp \jbb{n}| \les |\mu \mp \jbb{n}|$, by Schur's test, Lemma \ref{LEM:conv}, and triangle inequalities, we have
\begin{align*}
\| \mathcal{K}_1 (n, \tau, \mu) \|_{L_\mu^2 \to L_\tau^2} &\leq \sup_{\tau \in \R} \int_\R \frac{\jb{|\tau| - \jbb{n}}^b}{\jb{\mu - \tau}^2 \jb{\mu \mp \jbb{n}}^b} d \mu \cdot \sup_{\mu \in \R} \int_\R \frac{\jb{|\tau| - \jbb{n}}^b}{\jb{\mu - \tau}^2 \jb{\mu \mp \jbb{n}}^b} d \tau \\
&\les \sup_{\tau \in \R} \frac{\jb{|\tau| - \jbb{n}}^b}{\jb{\tau \mp \jbb{n}}^b} \cdot \sup_{\mu \in \R} \int_\R \frac{1}{\jb{\mu - \tau}^2} d\tau \\
&\les 1,
\end{align*}

\noi
as desired.
\end{proof}

Again, we will abuse notations and write $\I = \I'$ later in this paper.

\medskip
We also record the following time localization estimate. 
\begin{lemma}
\label{LEM:time}
Let $s \in \R$ and $\frac 12 < b_1 \leq b_2 < 1$. Let $\varphi$ be a function that satisfies $|\ft \varphi (\tau)|, |\ft{\varphi} ' (\tau)| \les \jb{\tau}^{-2}$. Define $\varphi_T (t) = \varphi (t / T)$ for $0 < T \leq 1$. Let $u$ be a distribution-valued function satisfying $u(x, 0) = 0$ for all $x \in \T^3$. Then, we have
\begin{align}
\| \varphi_T u \|_{X^{s, b_1}} \les T^{b_2 - b_1} \| u \|_{X^{s, b_2}}.
\label{time1}
\end{align}

\noi
Furthermore, let $-\frac 12 < b_1' \leq b_2' < \frac 12$ and $I \subset \R$ be a closed interval with $0 < |I| \leq 1$. Then, we have
\begin{align}
\| u \|_{X_I^{s, b_1'}} \les |I|^{b_2' - b_1'} \| u \|_{X_I^{s, b_2'}}
\label{time2}
\end{align}
\end{lemma}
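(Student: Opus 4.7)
I would prove \eqref{time1} first, then deduce \eqref{time2} by a standard extension argument. By Plancherel the $X^{s,b}$-norm is diagonal in the spatial Fourier variable,
\begin{align*}
\|v\|_{X^{s,b}}^2 = \sum_{n\in\Z^3} \jb{n}^{2s} \bigl\|\jb{|\tau|-\jbb{n}}^b \ft v(n, \tau)\bigr\|_{L^2_\tau}^2,
\end{align*}

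\noi so, since $\varphi_T(t)$ acts only in time, \eqref{time1} reduces to the uniform-in-$n$ claim
\begin{align*}
\bigl\|\jb{|\tau|-\jbb{n}}^{b_1}(\ft{\varphi_T}*_\tau \ft f)(\tau)\bigr\|_{L^2_\tau} \les T^{b_2-b_1} \bigl\|\jb{|\tau|-\jbb{n}}^{b_2}\ft f(\tau)\bigr\|_{L^2_\tau}
\end{align*}

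\noi for $f(t) = \ft u(n,t)$, where the condition $u(x,0)=0$ transfers to $f(0)=0$. Splitting $\ft f = \ft f \ind_{\{\tau>0\}} + \ft f \ind_{\{\tau<0\}}$ and applying the unitary shifts $g(t)\mapsto e^{\mp it\jbb{n}} g(t)$ converts $\jb{|\tau|-\jbb{n}}$ into $\jb{\tau}$ on each half-line; boundedness of the Hilbert projector on $H^b$ then reduces matters to the scalar inequality
\begin{align*}
\|\varphi_T g\|_{H^{b_1}(\R)} \les T^{b_2-b_1}\|g\|_{H^{b_2}(\R)}, \qquad g\in H^{b_2}(\R),\ g(0)=0.
\end{align*}

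\noi The vanishing on each half-line piece is not automatic from $f(0)=0$, but is enforced by a short correction controlled in norm via the Sobolev trace $H^b_t\hra L^\infty_t$, valid for $b>\tfrac 12$.

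For the scalar inequality I would use the Gagliardo--Slobodeckij characterisation $\|g\|_{H^b}^2 \sim \|g\|_{L^2}^2 + \iint |g(t)-g(s)|^2 |t-s|^{-1-2b}\,dt\,ds$ (valid for $0<b<1$) together with the Morrey embedding $H^{b_2}(\R)\hra C^{b_2-\frac 12}(\R)$ and $g(0)=0$ to obtain the key pointwise bound
\begin{align*}
|g(t)| = |g(t) - g(0)| \les \min\bigl(|t|^{b_2-\frac 12},\,1\bigr)\|g\|_{H^{b_2}}.
\end{align*}

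\noi The $L^2$-part gives $\|\varphi_T g\|_{L^2}^2 \les T^{2b_2}\|g\|_{H^{b_2}}^2 \le T^{2(b_2-b_1)}\|g\|_{H^{b_2}}^2$ using $T\le 1$ and $b_1\ge 0$. For the Gagliardo seminorm, split $\varphi_T(t)g(t)-\varphi_T(s)g(s) = \varphi_T(t)[g(t)-g(s)] + [\varphi_T(t)-\varphi_T(s)]g(s)$ and partition the $(t,s)$-integration into $|t-s|\le T$ and $|t-s|>T$: on the former one trades $|t-s|^{2(b_2-b_1)}\le T^{2(b_2-b_1)}$ to land in the $H^{b_2}$-seminorm, while on the latter one uses the pointwise Sobolev bound on $g$ together with the $L^1$-integrability and weak decay of $\varphi$ implied by $|\ft\varphi(\tau)|,|(\ft\varphi)'(\tau)|\les \jb{\tau}^{-2}$; each resulting one-variable integral is evaluated directly to yield the factor $T^{2(b_2-b_1)}$. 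The main obstacle is the $\{|t-s|>T,\,|s|\gg T\}$ contribution from the $[\varphi_T(t)-\varphi_T(s)]g(s)$ term, where the vanishing condition $g(0)=0$ is decisive: the pointwise Sobolev bound on $|g(s)|$ is what converts an otherwise uncontrollable $T^{-2b_1}$ blow-up into the desired $T^{2(b_2-b_1)}$.

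For \eqref{time2}, the range $-\tfrac 12 < b_1' \le b_2' < \tfrac 12$ is the classical regime of $X^{s,b}$ time-localisation, which requires no vanishing hypothesis since no pointwise evaluation of $u$ is involved. Given $u\in X^{s,b_2'}_I$, pick an extension $\wt u\in X^{s,b_2'}(\T^3\times\R)$ with $\wt u|_I = u$ and $\|\wt u\|_{X^{s,b_2'}} \le 2\|u\|_{X^{s,b_2'}_I}$, together with a smooth bump $\eta_I$ equal to $1$ on $I$ and supported in a slight enlargement of $I$ at scale $|I|$. The scalar estimate $\|\eta_I h\|_{H^{b_1'}} \les |I|^{b_2'-b_1'}\|h\|_{H^{b_2'}}$ in this range is proved by the same Gagliardo--Slobodeckij splitting as above, but with no pointwise bound required. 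Combined with translation invariance in time of the $X^{s,b}$-norm and the same reduction to 1D used for \eqref{time1}, this yields $\|\eta_I \wt u\|_{X^{s,b_1'}} \les |I|^{b_2'-b_1'}\|\wt u\|_{X^{s,b_2'}}$. Since $\eta_I\wt u = u$ on $I$, taking the infimum over extensions yields \eqref{time2}.
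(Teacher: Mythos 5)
The paper does not actually prove this lemma: it cites \cite[Proposition 2.7]{DNY2} for \eqref{time1} and \cite[Lemma 2.11]{Tao} for \eqref{time2}. So your proposal is a genuinely independent attempt, and the question is whether it closes.

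For \eqref{time1} there is a real gap at your central reduction. After splitting $\ft f = \ft f\,\ind_{\{\tau>0\}}+\ft f\,\ind_{\{\tau<0\}}$ the hypothesis $f(0)=0$ is lost, and the ``short correction'' you invoke cannot be made to work at the required order: if you subtract correctors $c_\pm\psi_\pm$ with $\psi_\pm(0)=1$ to restore the vanishing of each branch, the correctors re-enter multiplied by $\varphi_T$, and $\|\varphi_T\psi_\pm\|$ in the relevant (weighted) $H^{b_1}$-norm is of size $|c_\pm|\,T^{\frac12-b_1}$, not $|c_\pm|\,T^{b_2-b_1}$; since $b_2>\frac12$ this is strictly larger as $T\to0$, and once the two branches are estimated separately there is no cancellation left to exploit (and if you force the two correctors to cancel identically, they cannot both retain half-line Fourier support uniformly in $n$, so the modulation reduction breaks). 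Moreover the obstruction is intrinsic to the two-characteristic weight $\jb{|\tau|-\jbb{n}}$, not to your bookkeeping: take $u(x,t)=e^{in\cdot x}\sin(t\jbb{n})\chi(t)$ with $\chi$ Schwartz, $\chi(0)=1$. Then $u(x,0)=0$ and $\|u\|_{X^{s,b_2}}\les\jb{n}^{s}\|\chi\|_{H^{b_2}}$ uniformly in $n$, but (say for Gaussian $\varphi$) the sharp time truncation creates modulations of size $T^{-1}$ with amplitude $\sim T$ on each branch, giving $\|\varphi_T u\|_{X^{s,b_1}}\ges\jb{n}^{s}T^{\frac12-b_1}$ once $\jbb{n}\gg T^{-1}$, which is incompatible with the factor $T^{b_2-b_1}$ as $T\to 0$ because $b_2>\frac12$. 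Hence no argument that uses only $u(x,0)=0$ and treats the two modulation branches separately can produce \eqref{time1}; one needs either the additional vanishing $\dt u(x,0)=0$ (which holds for the Duhamel-type functions to which the lemma is applied in this paper, and which kills the above enemy), or the single-characteristic setting of the cited \cite[Proposition 2.7]{DNY2}, where the cancellation $\int_\R\ft u(n,\lambda)\,d\lambda=0$ is exploited directly in the convolution $\ft{\varphi_T}*_\tau\ft u$ by replacing $\ft{\varphi_T}(\tau-\lambda)$ with $\ft{\varphi_T}(\tau-\lambda)-\ft{\varphi_T}(\tau)$. Note in this respect that your argument never uses the hypothesis $|\ft{\varphi}'(\tau)|\les\jb{\tau}^{-2}$, which is there precisely for that cancellation step; its absence is a symptom of the gap. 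Your scalar estimate (Gagliardo--Slobodeckij plus the Morrey bound $|g(t)|\les\min(|t|^{b_2-\frac12},1)\|g\|_{H^{b_2}}$) is fine on its own, but the passage from it to the two-branch wave norm is exactly the missing (and, as stated, unbridgeable) step.

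For \eqref{time2} your sketch is essentially the standard argument behind the cited \cite[Lemma 2.11]{Tao} (extension, rescaled bump, one-dimensional estimate with no vanishing needed), and it is correct in substance. The one loose end is that for $b_1'<0$ the pointwise comparison $\jb{|\tau|-\jbb{n}}\le\jb{\tau\mp\jbb{n}}$ goes the wrong way after multiplication by the cutoff, so the reduction to one dimension should be run through the decomposition characterization \eqref{equi_norm} (decompose $\wt u=u_++u_-$ optimally for the $b_2'$-norm and cut each branch) or by duality, rather than through the crude weight comparison.
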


\begin{proof}
For the proof of \eqref{time1}, see \cite[Proposition 2.7]{DNY2}. For the proof of \eqref{time2}, see \cite[Lemma 2.11]{Tao}.
\end{proof}

\subsection{Strichartz estimates}
\label{SUBSEC:str}
Having established basic linear estimates of the $X^{s,b}$-space, we now discuss the Strichartz estimate and its consequences.
We first record the following Strichartz estimate. See \cite[Proposition 1.1]{Sch}, whose proof is based on the $\l^2$-decoupling theorem in \cite{BD15}.
\begin{lemma}
\label{LEM:str1}
Let $\al > 1$, $p \geq \frac{10}{3}$, $\eps > 0$, and $\phi \in L^2 (\T^3)$ with $\ft \phi$ supported on $\{ \frac{N}{2} \leq |n| \leq N \}$. Then, for each closed interval $I \subset \R$, we have
\begin{align*}
\big\| e^{\pm it \jbb{\nb}} \phi \big\|_{L_I^p L_x^p} \les N^{\frac{3}{2} - \frac{3 + \min (\al, 2)}{p} + \eps} (1 + |I|)^{\frac{1}{p}} \| \phi \|_{L^2}.
\end{align*}
\end{lemma}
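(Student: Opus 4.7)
The proof uses the Bourgain-Demeter $\l^2$-decoupling theorem \cite{BD15} applied to the characteristic hypersurface
\begin{align*}
S_N = \{(\xi, \jbb{\xi}) : |\xi| \sim N\} \subset \R^4.
\end{align*}
Since $\jbb{\xi} = \sqrt{\jb{\xi}^{2\al} - 1/4}$ behaves like $|\xi|^\al$ for large $|\xi|$, a direct Hessian computation shows that for $\al > 1$ all three principal curvatures of $S_N$ are non-zero --- of order $N^{\al - 2}$ when $1 < \al \leq 2$ and bounded below when $\al \geq 2$ --- so $S_N$ is a non-degenerate hypersurface to which decoupling applies.

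The plan is first to reduce to the unit frequency scale via the rescaling $\xi = N\eta$ and $t = N^{-\min(\al,2)} s$, which normalizes the principal curvatures and produces a surface $\{(\eta, F(\eta)) : |\eta| \sim 1\}$ with $F(\eta) \approx |\eta|^\al$ of bounded curvature. Applying Bourgain-Demeter decoupling on a space-time ball $B_R$ in $\R^4$ then yields
\begin{align*}
\|g\|_{L^p(B_R)} \les_\eps R^{\eps} \Big( \sum_\theta \|g_\theta\|_{L^p(B_R)}^2 \Big)^{1/2}
\end{align*}
for $p \leq 2(d+2)/d = 10/3$ (with $d = 3$), where $\theta$ runs over caps of radius $R^{-1/2}$. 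Estimating each cap $g_\theta$ by Bernstein together with $L^2$-orthogonality (counting the $O(R^{3/2})$ integer lattice points in a cap after rescaling) and then undoing the rescaling produces the factor $N^{3/2 - (3+\min(\al,2))/p + \eps}$ at the endpoint $p = 10/3$; the $(1+|I|)^{1/p}$ factor comes from partitioning $I$ into sub-intervals of length comparable to $N^{-\min(\al,2)}$ and summing via the triangle inequality in $L^p_t$.

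For the full range $p \geq 10/3$, I would then interpolate the endpoint $L^{10/3}_{t,x}$ Strichartz estimate with the trivial Bernstein bound $\|\pi_N \phi\|_{L^\infty_{t,x}} \les N^{3/2}\|\phi\|_{L^2}$, which preserves the same form of the exponent. The main obstacle is the case $1 < \al < 2$: the principal curvatures of $S_N$ degenerate as $N^{\al - 2} \to 0$, so one must choose the temporal rescaling by $N^{-\al}$ (rather than the naive $N^{-2}$) and carefully track how this scaling interacts with the decoupling scale $R^{-1/2}$ and the integer-lattice counting in order to recover the correct power $\min(\al,2) = \al$ in the $N$-exponent. For $\al \geq 2$ the surface is uniformly strictly convex (paraboloid-like when $\al = 2$) and the argument reduces essentially to the standard Strichartz estimate for the Schr\"odinger equation on $\T^3$.
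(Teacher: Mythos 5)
The paper itself does not prove this lemma; it cites Schippa [Sch, Proposition~1.1], whose proof is exactly the route you take (rescaling plus Bourgain--Demeter $\l^2$-decoupling at the critical exponent $p=\tfrac{2(d+2)}{d}=\tfrac{10}{3}$, then interpolation with the Bernstein $L^\infty$ bound for larger $p$), so your overall strategy matches the intended one, and the interpolation step does reproduce the exponent $\tfrac32-\tfrac{3+\min(\al,2)}{p}$ together with the $(1+|I|)^{1/p}$ factor.

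However, two quantitative points in your sketch do not work as written, and the second is a genuine gap. First, after the rescaling the lattice points are $1/N$-separated, and with the decoupling scale $R\sim N^{2}$ each $R^{-1/2}$-cap contains $O(1)$ of them (not $O(R^{3/2})$, which is the \emph{total} number of frequencies); this is precisely why each $g_\theta$ reduces to a single exponential, and the exponent then comes from comparing the weighted ball norms of $g$ and $g_\theta$ using spatial periodicity (the ball of radius $N^2$ contains $\sim N^{3}$ spatial periods), not from a per-cap lattice count. Second, the claim that the $(1+|I|)^{1/p}$ factor comes from partitioning $I$ into sub-intervals of length $\sim N^{-\min(\al,2)}$ and summing in $L^p_t$ fails: on an interval of length $N^{-\al}$ the trivial Bernstein-in-$x$ bound already gives $N^{\frac32-\frac{3+\al}{p}}\|\phi\|_{L^2}$ with no dispersion used, and summing $\sim N^{\min(\al,2)}|I|$ such pieces by the triangle inequality returns only $N^{\frac32-\frac3p}|I|^{1/p}\|\phi\|_{L^2}$, i.e.\ it destroys the whole decoupling gain. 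The gain comes from time scales of length at least $1$: for $1<\al<2$ one enlarges the unit time interval to length $N^{2-\al}$ so that the rescaled space-time box fits a ball of radius $N^{2}$ (for $\al\ge2$ a ball of radius $N^{\al}$ suffices, which is why the exponent saturates at $\min(\al,2)$), proves the estimate there, and only then glues \emph{unit-length} intervals by the triangle inequality in $L^p_t$ to produce the factor $(1+|I|)^{1/p}$.
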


Using Lemma \ref{LEM:str1}, \eqref{equi_norm}, and the transference principle \cite[Lemma 2.9]{Tao}, we obtain the following $X^{s,b}$-version of the Strichartz estimate.
\begin{lemma}
\label{LEM:str2}
Let $1 < \al \leq 2$, $b > \frac 12$, $p \geq \frac{10}{3}$, and $s \geq 0$ satisfying
\begin{align*}
s > \frac 32 - \frac{3 + \al}{p}.
\end{align*}
\noi
Then, for each closed interval $I \subset \R$, we have
\begin{align*}
\| u \|_{L_I^p L_x^p} \les (1 + |I|)^{\frac{1}{p}} \| u \|_{X_I^{s, b}}.
\end{align*}
\end{lemma}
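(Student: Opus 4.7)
The plan is to reduce the claim to Lemma \ref{LEM:str1} via a standard transference principle, followed by a dyadic Littlewood--Paley summation. Since $\|u\|_{X^{s,b}_I}$ is defined via extensions to $\R$, it suffices to establish
\begin{align*}
\|u\|_{L^p_I L^p_x} \les (1+|I|)^{1/p}\|u\|_{X^{s,b}(\T^3\times\R)}
\end{align*}
for any $u\in X^{s,b}(\T^3\times \R)$, and then pass to the local norm by taking an infimum over extensions of $u$.

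First I would decompose $u=\sum_N \P_N u$ into spatial dyadic Littlewood--Paley pieces. For each dyadic $N$, I invoke \eqref{equi_norm} applied to $\P_N u$ to write $\P_N u = u_{N,+} + u_{N,-}$, where each $u_{N,\pm}$ is spatially frequency-localized on $\{|n|\sim N\}$ and
\begin{align*}
\max_{\pm}\big\|\jb{\nb}^s e^{\mp it\jbb{\nb}} u_{N,\pm}\big\|_{L^2_x H^b_t}\les \|\P_N u\|_{X^{s,b}}.
\end{align*}
Setting $v_{N,\pm}:=e^{\mp it\jbb{\nb}}u_{N,\pm}$ and expanding via temporal Fourier inversion, $v_{N,\pm}(t,x)=\int_\R e^{it\mu}\ft v_{N,\pm}(\mu,x)\,d\mu$, I obtain the representation
\begin{align*}
u_{N,\pm}(t,x)=\int_\R e^{it\mu}\big(e^{\pm it\jbb{\nb}}\ft v_{N,\pm}(\mu,\cdot)\big)(x)\,d\mu.
\end{align*}

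Next, applying Minkowski's inequality in $\mu$ (noting that the phase $|e^{it\mu}|=1$ does not affect any $L^p_t L^p_x$ norm) and Lemma \ref{LEM:str1} pointwise in $\mu$ (using $\min(\al,2)=\al$ since $\al\le 2$), followed by Cauchy--Schwarz in $\mu$ (valid because $b>\tfrac12$ makes $\jb{\mu}^{-b}\in L^2_\mu$), one obtains
\begin{align*}
\|u_{N,\pm}\|_{L^p_I L^p_x}\les N^{\frac{3}{2}-\frac{3+\al}{p}+\eps}(1+|I|)^{1/p}\|v_{N,\pm}\|_{L^2_x H^b_t}.
\end{align*}
Since $v_{N,\pm}$ is spatially supported on $\{|n|\sim N\}$, we have $\|v_{N,\pm}\|_{L^2_x H^b_t}\sim N^{-s}\|\jb{\nb}^s v_{N,\pm}\|_{L^2_x H^b_t}\les N^{-s}\|\P_N u\|_{X^{s,b}}$, which yields
\begin{align*}
\|\P_N u\|_{L^p_I L^p_x}\les N^{\frac{3}{2}-\frac{3+\al}{p}-s+\eps}(1+|I|)^{1/p}\|\P_N u\|_{X^{s,b}}.
\end{align*}

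Finally, I would sum over dyadic $N$ using the triangle inequality followed by Cauchy--Schwarz in $N$, combined with the standard identity $\|u\|_{X^{s,b}}^2\sim\sum_N\|\P_N u\|_{X^{s,b}}^2$. The resulting geometric series $\sum_N N^{2(3/2-(3+\al)/p-s+\eps)}$ converges precisely when the strict hypothesis $s>\tfrac{3}{2}-\tfrac{3+\al}{p}$ holds, with $\eps$ chosen accordingly small, which closes the estimate. I do not anticipate any substantive obstacle: every ingredient is a standard tool---\eqref{equi_norm} converts the $X^{s,b}$ norm into a modulated free evolution, Minkowski combined with Lemma \ref{LEM:str1} performs the transference at each dyadic scale, and Cauchy--Schwarz handles the final dyadic summation. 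The only care required is bookkeeping to ensure that the single factor $(1+|I|)^{1/p}$ from the linear Strichartz bound survives intact through the summation and is not compounded into $(1+|I|)^{O(1)}$.
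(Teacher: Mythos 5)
Your argument is correct and is essentially the paper's proof: the paper simply cites Lemma \ref{LEM:str1}, \eqref{equi_norm}, and the transference principle of \cite[Lemma 2.9]{Tao}, and your temporal Fourier inversion plus Minkowski and Cauchy--Schwarz in $\mu$ is exactly the standard proof of that transference, with the dyadic summation closed by the strict hypothesis $s > \frac 32 - \frac{3+\al}{p}$ just as intended.
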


Note that by Lemma \ref{LEM:str2} and interpolation with the following $L^2$-bound:
\begin{align*}
\| u \|_{L_I^2 L_x^2} \leq \| u \|_{X_I^{0, 0}},
\end{align*}

\noi
we obtain that 
\begin{align}
\| u \|_{L_I^p L_x^p} \les (1 + |I|)^{\frac 1p} \| u \|_{X_I^{s, \frac 12 - \dl}},
\label{Lp_str}
\end{align}

\noi
where $s$ and $p$ satisfy the same conditions as in Lemma \ref{LEM:str2} and $\dl = \dl (s, \al, p) > 0$ is sufficiently small.

\medskip
We now present some useful multilinear estimates using the Strichartz estimate \eqref{Lp_str}. The following lemma is a (slightly stronger) counterpart of \cite[Lemma~2.6]{OWZ}.
\begin{lemma}
\label{LEM:str_var}
Let $1 < \al \leq 2$ and $\eps, \dl > 0$ be sufficiently small. Let $s \in \R$ be such that
\begin{align*}
\max \Big( - \al + \frac 32, - \frac{3}{10} \al + \frac 35 \Big) < s < \frac{13}{10} \al - \frac 35.
\end{align*}

\noi
Then, for each closed interval $I \subset \R$, we have
\begin{align}
\| u_1 u_2 u_3 v \|_{X_I^{s - \al + \eps, -\frac 12 + 4 \dl}} \les (1 + |I|) \Big( \prod_{j = 1}^3 \| u_j \|_{X_I^{s, \frac 12 + \dl}} \Big) \| v \|_{L_I^\infty W_x^{-\eps, \infty}}.
\label{str123v}
\end{align}
\end{lemma}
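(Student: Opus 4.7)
Proof plan.

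The strategy is to reduce this 4-linear $X^{s,b}$-estimate to the linear Strichartz bound \eqref{Lp_str} via $L^2_{t,x}$-duality and a paraproduct/Littlewood–Paley decomposition that absorbs the rough factor $v$. The estimate should be viewed as an extension of the trilinear estimate obtained by setting $v\equiv 1$ (the analogue of Lemma 2.6 in \cite{OWZ}), with an additional $\eps$-loss in the spatial regularity coming from $v \in L^\infty_t W_x^{-\eps,\infty}$ and a tiny time-regularity budget ($3\delta$ vs $4\delta$) used to absorb paraproduct summability losses.

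\smallskip
\noindent\emph{Step 1 (duality and extension).} By the $L^2_{t,x}$-duality of the $X^{s,b}$ spaces together with Lemma~\ref{LEM:glue} to extend $u_j,w$ from $I$ to $\R$ with comparable norms, the claim reduces to
\[
\mathcal{I}:=\bigg|\iint_{\R\times\T^3} u_1 u_2 u_3 \, v \, w\, dx\,dt\bigg|\ \lesssim\ (1+|I|)\prod_{j=1}^3\|u_j\|_{X^{s,\,1/2+\delta}}\,\|v\|_{L^\infty W_x^{-\eps,\infty}}\,\|w\|_{X^{-s+\al-\eps,\,1/2-4\delta}},
\]
where the supremum is over all $w$ normalised in the $X$-norm indicated.

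\smallskip
\noindent\emph{Step 2 (Hölder and Strichartz on the $u_j$'s).} Applying Hölder with exponents $(10/3,10/3,10/3,\infty,10)$ in both time and space yields
\[
\mathcal{I}\ \leq\ \|u_1 u_2 u_3\|_{L^{10/9}_{t,x}}\,\|v w\|_{L^{10}_{t,x}}\ \leq\ \Big(\prod_{j=1}^3\|u_j\|_{L^{10/3}_{t,x}}\Big)\|vw\|_{L^{10}_{t,x}}.
\]
By \eqref{Lp_str} at $p=10/3$ we gain $\|u_j\|_{L^{10/3}_{t,x}}\lesssim (1+|I|)^{3/10}\|u_j\|_{X^{s,1/2+\delta}}$ exactly under the hypothesis $s>3/5-3\al/10$, which accounts for one of the two lower bounds on $s$. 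The other lower bound $s>3/2-\al$ will appear in Step~3 as a Sobolev-type condition needed so that $w$'s negative regularity $-s+\al-\eps$ is within reach of Besov embeddings.

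\smallskip
\noindent\emph{Step 3 (paraproduct of $vw$ and upper bound).} Decompose $vw = v\pl w + v\pe w + v\pg w$ (Lemma~\ref{LEM:para}) and dyadically $v=\sum_M P_M v$, $w=\sum_K P_K w$, using $\|P_M v\|_{L^\infty}\lesssim M^\eps\|v\|_{W^{-\eps,\infty}}$. The low-high piece $v\pl w$ is handled by Lemma~\ref{LEM:para}:
\[
\|v\pl w\|_{L^{10}_x}\ \lesssim\ \|v\|_{W^{-\eps,\infty}}\|w\|_{B^{\eps_0}_{10,1}},\qquad \eps_0>\eps,
\]
combined with \eqref{Lp_str} for $w$ at $p=10$. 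The resonant and high-low pieces, whose output frequency matches the rough frequency of $v$, are not directly in $L^{10}_x$; instead we dualise these pieces back onto $u_1u_2u_3$ via the Kato–Ponce inequality (Lemma~\ref{LEM:gko}\,(i)), paying the $\eps$-loss in regularity of one $u_j$ and thereby re-using \eqref{Lp_str} with a smaller Strichartz exponent on $w$. This re-pairing is the mechanism that produces the sharp upper bound $s<13\al/10-3/5$. The $4\delta$ time regularity budget on $w$ (vs.~$\delta$ on each $u_j$) provides the room to absorb the small $b$-losses coming from dyadic summation via Schur's test.

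\smallskip
The principal technical obstacle will be Step~3: a straightforward pairing $\|vw\|_{L^{10}_{t,x}}$ plus \eqref{Lp_str} for $w$ at $p=10$ only yields the weaker upper bound $s<11\al/10-6/5$. Reaching the stated range $s<13\al/10-3/5$ forces us to work with the paraproduct decomposition and re-distribute the factors across the estimates, exploiting the $\eps$-gap in the hypothesis on $s$ and the slightly larger $b$-exponent budget on $w$. The time factor $(1+|I|)$ is tracked throughout using Lemmas~\ref{LEM:homo}–\ref{LEM:time} and the time-dependence in \eqref{Lp_str}.
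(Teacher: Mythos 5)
Your Steps 1 and 2 start along the same lines as the paper (duality against $w \in X^{-s+\al-\eps,\frac12-4\dl}$, then H\"older and the Strichartz bound \eqref{Lp_str}), but the crucial Step 3 is a genuine gap, and the mechanism you propose there cannot repair the loss created in Step 2. The upper bound on $s$ in this lemma is governed entirely by the choice of Lebesgue exponents in H\"older: once you fix $u_1,u_2,u_3$ in $L^{10/3}_{t,x}$, the remaining factor must go (essentially) into $L^{10}_{t,x}$, and since $v$ carries no Strichartz structure at all (it only lies in $L^\infty_t W^{-\eps,\infty}_x$), no paraproduct decomposition of $vw$, Besov refinement, or redistribution of the $\eps$-derivative changes the fact that \eqref{Lp_str} must then be applied to $w$ at exponent $10$, which forces $s<\frac{11}{10}\al-\frac65$ exactly as you note --- a range that does not even contain the value $s=\al-\frac12$ for which the lemma is later used. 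Moreover, the ``re-pairing back onto $u_1u_2u_3$'' of the resonant and high-low pieces is not a well-defined move after Step 2, where H\"older has already separated $\|u_1u_2u_3\|_{L^{10/9}}$ from $\|vw\|_{L^{10}}$: to place $w$ in a smaller exponent you must simultaneously place the $u_j$'s in a larger one, i.e.\ revisit Step 2 itself, and nothing in your Step 3 specifies how the exponents are rebalanced or verifies that the resulting constraints close.

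The paper's proof is precisely this rebalancing, and it needs no paraproducts: it treats $p$ (for the $u_j$'s) and $q$ (for $w$) as free parameters with $\frac3p+\frac1q=\frac{1}{1+\dl'}$ and $p,q\ge\frac{10}{3}$, absorbs $v$ in a single stroke by the duality/fractional Leibniz bound of Lemma \ref{LEM:gko},
\begin{align*}
\bigg|\int_I\int_{\T^3}u_1u_2u_3\,v\,w\,dx\,dt\bigg| \les \|v\|_{L_I^\infty W_x^{-\eps,\infty}} \Big(\prod_{j=1}^3\|\jb{\nb}^\eps u_j\|_{L_I^pL_x^p}\Big)\|\jb{\nb}^\eps w\|_{L_I^qL_x^q},
\end{align*}
and then applies \eqref{Lp_str} to each smooth factor under $s-\eps>\frac32-\frac{3+\al}{p}$ and $-s+\al-2\eps>\frac32-\frac{3+\al}{q}$. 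The requirement $p\ge\frac{10}{3}$ yields the lower bound $s>\frac35-\frac{3}{10}\al$, the scaling constraint $\frac3p+\frac1q\le1$ yields $s>\frac32-\al$, and $q\ge\frac{10}{3}$ yields exactly the upper bound $s<\frac{13}{10}\al-\frac35$; near that endpoint the $u_j$'s sit in $L^p$ with $p$ close to $\frac{30}{7}>\frac{10}{3}$, which is precisely the flexibility your fixed choice $p=\frac{10}{3}$ discards. If you rewrite your argument with these variable exponents, your Step 3 becomes unnecessary.
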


\begin{proof}
We let $p \geq \frac{10}{3}$ be such that
\begin{align*}
s - \eps > \frac 32 - \frac{3 + \al}{p} \quad \Leftrightarrow \quad \frac 1p > \frac{3 - 2s + 2 \eps}{6 + 2\al}.
\end{align*}

\noi
We also let $q \geq \frac{10}{3}$ be such that
\begin{align*}
\frac{3}{p} + \frac{1}{q} = \frac{1}{1 + \dl'} \quad \text{and} \quad -s + \al - 2 \eps > \frac{3}{2} - \frac{3 + \al}{q}
\end{align*}

\noi
for some $\dl' > 0$ arbitrarily small. These conditions can be satisfied given $\max (-\al + \frac 32, - \frac{3}{10} \al + \frac 35) < s < \frac{13}{10} \al - \frac 35$ and $\eps > 0$ sufficiently small. 
By duality, Lemma \ref{LEM:gko}, H\"older's inequality, and the Strichartz estimate \eqref{Lp_str}, we have
\begin{align*}
\| u_1 u_2 u_3 v \|_{X_I^{s - \al + \eps, -\frac 12 + 4 \dl}} 
&\leq \sup_{\| w \|_{X^{-s + \al - \eps, \frac 12 - 4 \dl}} \leq 1} \bigg| \int_I \int_{\T^3} u_1 u_2 u_3 v w dx dt \bigg| \\
&\leq \sup_{\| w \|_{X^{-s + \al - \eps, \frac 12 - 4 \dl}} \leq 1}  \int_I \int_{\T^3} \big| \jb{\nabla}^\eps (u_1 u_2 u_3 w) \big| dx dt \| v \|_{L_I^\infty W_x^{- \eps, \infty}}  \\
&\leq \sup_{\| w \|_{X^{-s + \al - \eps, \frac 12 - 4 \dl}} \leq 1} (1 + |I|)^{\frac{1}{1 + \dl'}} \bigg( \prod_{j = 1}^3 \| \jb{\nabla}^\eps u_j \|_{L_I^p L_x^p} \bigg) \\
&\quad \times \| \jb{\nabla}^\eps w \|_{L_I^q L_x^q} \| v \|_{L_I^\infty W_x^{- \eps, \infty}} \\
&\les (1 + |I|) \Big( \prod_{j = 1}^3 \| u_j \|_{X_I^{s, \frac 12 + \dl}} \Big) \| v \|_{L_I^\infty W_x^{-\eps, \infty}},
\end{align*}

\noi
which proves the desired estimate \eqref{str123v}.
\end{proof}

We also show the following multilinear estimate, which can be viewed as an extended version of Lemma~\ref{LEM:str_var}. This extended multilinear estimate is important in proving our weak universality result.
\begin{lemma}
\label{LEM:str_ext}
Let $1 < \al \leq 2$, $\l \geq 4$, and $\dl > 0$ be sufficiently small. Let $s \in \R$ be such that 
$$\max \Big( -\al + \frac 32, - \frac{3}{10} \al + \frac 35 \Big) < s < \frac{13}{10} \al - \frac 35.$$ 
Then, for each closed interval $I \subset \R$, we have
\begin{align}
\| u^\l v \|_{X_I^{s - \al, - \frac 12 + 4 \dl}} \les (1 + |I|) \| u \|_{X_I^{s, \frac 12 + \dl}}^{3 + \eta} \| u \|_{L_I^{\frac{1}{\eta}} L_x^{\frac{1}{\eta}}}^{\l - 3 - \eta} \| v \|_{L_I^{\frac{1}{\eta}} L_x^{\frac{1}{\eta}}}
\label{str_l}
\end{align}

\noi
for some $\eta > 0$ sufficiently small.
\end{lemma}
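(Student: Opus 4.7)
\smallskip
\noindent\textbf{Proof proposal.} The strategy is to adapt the proof of Lemma~\ref{LEM:str_var}: the $3+\eta$ factors of $u$ going into the strong norm $X_I^{s,\frac12+\dl}$ will be handled by the same H\"older--Strichartz combination used there, while the remaining $\l-3-\eta$ factors of $u$ together with $v$ are placed in the weak spacetime norm $L^{1/\eta}_{I,x}$. The small $\eta$-slack is precisely what allows the H\"older and Strichartz constraints to match.

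By duality,
\[
\|u^\l v\|_{X_I^{s-\al,-\frac12+4\dl}} = \sup_{\|w\|_{X_I^{-s+\al,\frac12-4\dl}}\leq 1}\bigg|\int_I\int_{\T^3} u^\l v w\,dx\,dt\bigg|.
\]
Mimicking the proof of Lemma~\ref{LEM:str_var}, we use Lemma~\ref{LEM:gko}\,(ii) to trade an $\eps$-smoothing from $u^{\l-3}v$ against an $\eps$-derivative on $u^3 w$, and H\"older's inequality in spacetime with conjugate exponents $1+\dl'$ and $(1+\dl')/\dl'$ then yields
\[
\bigg|\int_I\int_{\T^3} u^\l v w\,dx\,dt\bigg|\les \big\|\jb{\nabla}^\eps(u^3 w)\big\|_{L^{1+\dl'}_{I,x}}\cdot\big\|\jb{\nabla}^{-\eps}(u^{\l-3}v)\big\|_{L^{(1+\dl')/\dl'}_{I,x}}
\]
for a small parameter $\dl'>0$ to be chosen.

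The first factor is bounded by the fractional Leibniz rule Lemma~\ref{LEM:gko}\,(i) followed by Strichartz~\eqref{Lp_str}, giving
\[
\big\|\jb{\nabla}^\eps(u^3 w)\big\|_{L^{1+\dl'}_{I,x}}\les(1+|I|)\,\|u\|_{X_I^{s,\frac12+\dl}}^3\,\|w\|_{X_I^{-s+\al,\frac12-4\dl}}
\]
exactly as at the end of the proof of Lemma~\ref{LEM:str_var}; it is in this step that the constraints $\max(-\al+\frac32,-\frac{3\al}{10}+\frac35)<s<\frac{13\al}{10}-\frac35$ enter. For the second factor, we drop $\jb{\nabla}^{-\eps}$ (which is bounded on $L^r(\T^3)$ for $1<r<\infty$), write $|u|^{\l-3}=|u|^{\l-3-\eta}\cdot|u|^{\eta}$, and apply H\"older in spacetime with three slots: $|u|^{\l-3-\eta}$ and $|v|$ into $L^{1/\eta}_{I,x}$, and $|u|^{\eta}$ into the same $L^{p}_{I,x}$ used for the $u^3$ piece in Step~1. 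A further application of Strichartz~\eqref{Lp_str} to the $|u|^{\eta}$ slot supplies the extra $\|u\|_{X_I^{s,\frac12+\dl}}^{\eta}$. The exponents balance exactly when $\dl'/(1+\dl')=\eta(\l-2-\eta)+\eta/p$, which is a consistent choice since for $\eta,1/p$ small the right-hand side is small and positive. Combining the two bounds yields~\eqref{str_l}.

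\smallskip
\noindent\textbf{Main obstacle.} The delicate step is verifying that the spacetime H\"older constraint $\frac{3+\eta}{p}+\frac{1}{q}=1-\eta(\l-2-\eta)$ is compatible with the Strichartz admissibility conditions on $u$ (namely $p\geq\tfrac{10}{3}$ and $s>\tfrac32-\frac{3+\al}{p}$) and on $w$ (namely $q\geq\tfrac{10}{3}$ and $-s+\al>\tfrac32-\frac{3+\al}{q}$, with an $\eps$-loss that is shifted to the $\jb{\nabla}^\eps w$ factor via Lemma~\ref{LEM:gko}\,(i)). This is the same delicate compatibility that underlies Lemma~\ref{LEM:str_var}, and the $\eta$-factor introduced here keeps the H\"older constraint strictly below $1$, preserving the full admissible range of $s$ as $\eta,\eps,\dl\to0$.
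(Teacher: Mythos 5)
Your proposal is correct and follows essentially the same route as the paper: duality, a single spacetime H\"older splitting with $3+\eta$ copies of $u$ sent through the Strichartz estimate \eqref{Lp_str} (requiring $s>\tfrac32-\tfrac{3+\al}{p}$), the remaining $\l-3-\eta$ copies of $u$ and $v$ placed in $L^{1/\eta}_{I,x}$, and the dual function $w$ handled by Strichartz with $-s+\al>\tfrac32-\tfrac{3+\al}{q}$, with the same exponent bookkeeping $\tfrac{3+\eta}{p}+\tfrac1q=1-\eta(\l-2-\eta)$. The only deviation is your $\jb{\nabla}^{\pm\eps}$ transfer via Lemma~\ref{LEM:gko}, borrowed from Lemma~\ref{LEM:str_var}; it is unnecessary here since no factor carries negative regularity (the paper applies H\"older directly with no derivatives), but it is harmless, as the $\eps$-loss only consumes slack in the strict inequalities defining the admissible range of $s$.
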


\begin{proof}
We let $p \geq \frac{10}{3}$ be such that
\begin{align*}
s > \frac 32 - \frac{3 + \al}{p + \eta} \quad \Leftrightarrow \quad \frac{1}{p + \eta} < \frac{3 - 2s}{6 + 2\al}.
\end{align*}

\noi
We also let $q \geq \frac{10}{3}$ be such that
\begin{align*}
\frac{3}{p + \eta} + \frac{\eta}{p + \eta} + (1 - \eta) \eta + \eta(\l - 3) + \frac{1}{q} = 1 \quad \text{and} \quad -s + \al > \frac{3}{2} - \frac{3 + \al}{q}.
\end{align*}

\noi
These conditions can be satisfied given $\max (-\al + \frac 32, - \frac{3}{10} \al + \frac 35) < s < \frac{13}{10} \al - \frac 35$ and $\eta > 0$ sufficiently small.
By duality, H\"older's inequality, interpolation, and the Strichartz estimate \eqref{Lp_str}, we have
\begin{align*}
\| u^\l v \|_{X_I^{s - \al, -\frac 12 + 4 \dl}} 
&\leq \sup_{\| w \|_{X^{-s + \al, \frac 12 - 4 \dl}} \leq 1} \bigg| \int_I \int_{\T^3} u^\l v w dx dt \bigg| \\
&\leq \sup_{\| w \|_{X^{-s + \al, \frac 12 - 4 \dl}} \leq 1} \| u \|_{L_I^{p + \eta} L_x^{p + \eta}}^3 \| u \|_{L_I^{\frac{1}{\frac{\eta}{p + \eta} + (1 - \eta)\eta}} L_x^{\frac{1}{\frac{\eta}{p + \eta} + (1 - \eta)\eta}}} \| u \|_{L_I^{\frac{1}{\eta}} L_x^{\frac{1}{\eta}}}^{\l - 4} \\
&\quad \times \| v \|_{L_I^{\frac{1}{\eta}} L_x^{\frac{1}{\eta}}} \| w \|_{L_I^q L_x^q} \\
&\les \| u \|_{L_I^{p + \eta} L_x^{p + \eta}}^{3 + \eta} \| u \|_{L_I^{\frac{1}{\eta}} L_x^{\frac{1}{\eta}}}^{\l - 3 - \eta} \| v \|_{L_I^{\frac{1}{\eta}} L_x^{\frac{1}{\eta}}} \\
&\les \| u \|_{X_I^{s, \frac 12 + \dl}}^{3 + \eta} \| u \|_{L_I^{\frac{1}{\eta}} L_x^{\frac{1}{\eta}}}^{\l - 3 - \eta} \| v \|_{L_I^{\frac{1}{\eta}} L_x^{\frac{1}{\eta}}},
\end{align*}

\noi
which proves the desired estimate \eqref{str_l}.
\end{proof}

We also show the following trilinear estimates, which extends \cite[Lemma~2.7]{OWZ} (see also \cite[Proposition~8.6]{Bring2}). The estimates will be useful when one of the term is the stochastic convolution. In particular, the estimate \eqref{tri2} below plays a crucial role in our globalization argument.
\begin{lemma}
\label{LEM:str4}
Let $1 < \al \leq \frac 32$ and $\eps, \dl > 0$ be sufficiently small. Let $s \in \R$ be such that
\begin{align*}
\max \Big( -\al + \frac 32, - \frac{3}{10} \al + \frac 35 \Big) < s < \min \Big( \frac{13}{10} \al - \frac 35, 2 \al - \frac 32 \Big).
\end{align*}

\noi
Then, there exists $r \geq \frac{10}{3}$ satisfying $s > \frac 32 - \frac{3 + \al}{r}$ such that for each closed interval $I \subset \R$, we have
\begin{align}
\big\| u v^2 \big\|_{X_I^{s - \al + \eps, -\frac 12 + 4 \dl }} \les (1 + |I|) \| u \|_{L_I^\infty W_x^{\al - \frac 32 - \eps, \infty}} \| v \|_{L^r_I L^r_x} \| v \|_{X_I^{s, \frac 12 + \dl}}.
\label{tri2}
\end{align}

\noi
Furthermore, we have
\begin{align}
\| u_1 u_2 u_3 \|_{X_I^{s - \al + \eps, -\frac 12 + 4 \dl}} \les (1 + |I|) \| u_1 \|_{L_I^\infty W_x^{\al - \frac 32 - \eps, \infty}} \| u_2 \|_{X_I^{s, \frac 12 + \dl}} \| u_3 \|_{X_I^{s, \frac 12 + \dl}}.
\label{tri3}
\end{align}
\end{lemma}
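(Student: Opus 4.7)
The plan is to establish both estimates by duality and careful use of the Strichartz estimates from Lemma \ref{LEM:str2} and \eqref{Lp_str}, following the strategy of Lemma \ref{LEM:str_var} but adapted to the much lower regularity of the first factor: note that under the hypothesis $1 < \al \leq \frac 32$ one has $\al - \frac 32 - \eps < -\eps$ strictly, so Lemma \ref{LEM:str_var} does not directly apply.

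By duality of the $X^{s,b}$-spaces, \eqref{tri2} reduces to showing that for $w$ in the unit ball of $X_I^{-s+\al-\eps, \frac 12 - 4\dl}$,
\[
\bigg| \int_I \int_{\T^3} u v^2 w \, dx dt \bigg|
\les (1+|I|) \|u\|_{L_I^\infty W^{\al-\frac 32-\eps, \infty}_x}
\|v\|_{L^r_I L^r_x} \|v\|_{X_I^{s, \frac 12+\dl}}.
\]
Using the duality between $W^{\al-\frac 32-\eps, \infty}_x$ and $W^{\frac 32-\al+\eps, 1}_x$, I first pull $u$ out in space and move the derivative onto $v^2 w$:
\[
\bigg| \int_{\T^3} u v^2 w \, dx \bigg|
\leq \|u\|_{W^{\al-\frac 32-\eps, \infty}_x} \|v^2 w\|_{W^{\frac 32-\al+\eps, 1}_x}.
\]
Since $\frac 32 - \al + \eps > 0$, the fractional Leibniz rule (Lemma \ref{LEM:gko}(i)) then distributes $\jb{\nb}^{\frac 32-\al+\eps}$ across the three factors $v, v, w$, producing terms of the form $\|\jb{\nb}^{\theta_1} v\|_{L^{a_1}_x} \|\jb{\nb}^{\theta_2} v\|_{L^{a_2}_x} \|\jb{\nb}^{\theta_3} w\|_{L^{a_3}_x}$ with $\theta_1+\theta_2+\theta_3 = \frac 32 - \al + \eps$ and $\frac 1{a_1} + \frac 1{a_2} + \frac 1{a_3} = 1$.

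Next, after H\"older in time, I apply Strichartz (Lemma \ref{LEM:str2} and \eqref{Lp_str}) to each factor. A convenient assignment is $(t_1, a_1) = (r, r)$ for the first $v$-factor (bounded by the given $L^r L^r$-norm); a diagonal Strichartz bound for the second $v$-factor against $\|v\|_{X^{s, \frac 12+\dl}}$; and the full derivative $\theta_3 = \frac 32 - \al + \eps$ placed on $w$, the resulting norm being bounded by $\|w\|_{X^{-s+\al-\eps, \frac 12-4\dl}}$. Admissibility of the last requires $-s + \al - \eps - (\frac 32 - \al + \eps) > \frac 32 - \frac{3+\al}{a_3}$, i.e., $a_3 < \frac{3+\al}{3-2\al+s+2\eps}$, which is precisely where the upper bound $s < 2\al - \frac 32$ enters. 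The choice of $r$ is made to satisfy $r \geq \frac{10}{3}$ together with $s > \frac 32 - \frac{3+\al}{r}$, consistent with the other assumed lower bounds on $s$. For \eqref{tri3}, the same argument works verbatim after replacing the given $\|v\|_{L^r_I L^r_x}$-norm by an additional Strichartz bound against $\|u_3\|_{X_I^{s, \frac 12+\dl}}$.

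The main obstacle lies in the H\"older admissibility in the last step. If one insists on the diagonal $L^p_t L^p_x$-Strichartz with $p \geq \frac{10}{3}$ for all three factors, then $\sum \frac{1}{a_j} \leq \frac{9}{10} < 1$, violating H\"older. I expect this to be overcome by mixed-norm estimates: specifically, combining the bound $\|w\|_{L^{q_t}_t L^2_x} \les \|w\|_{X^{0, \frac 12-4\dl}}$ (derived via Minkowski's inequality and the equivalent-norm representation \eqref{equi_norm}, for $q_t$ large depending on $\dl$) with Sobolev embedding in space, and an analogous mixed-norm handling of the $v$-factors when the diagonal $(t_j, a_j)$ choice is not admissible. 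This flexibility is what realizes the full range $s < \min\bigl(\frac{13\al}{10} - \frac 35, 2\al - \frac 32\bigr)$ declared in the statement, rather than the more restrictive range yielded by the naive diagonal choice.
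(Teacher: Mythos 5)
Your route (global duality in space to peel off $u$, fractional Leibniz to move $\jb{\nb}^{\frac 32-\al+\eps}$ onto $vvw$, then Strichartz on each factor) is genuinely different from the paper's proof, which instead runs a Littlewood--Paley case analysis on the output frequency $N_{123}$ versus $\max(N_1,N_{23})$: when they are comparable, no duality is used at all -- the output factor $N_{123}^{\al-\frac32-2\eps}$ absorbs the negative regularity of $u$, and this is exactly where $s<2\al-\frac32$ enters; when $N_{123}$ is much smaller, the paper dualizes against a frequency-localized $w$ in $X^{0,\frac12-4\dl}$, places the high-frequency $v$-factor in $L^2_{t,x}$, and uses $s>\frac32-\al$ together with $N_1\les N_3$ to sum. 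The frequency localization is what lets the paper trade regularity between the output and the inputs; your global argument has to compensate with non-diagonal (mixed-norm) estimates, which is possible in principle but is precisely the part you do not carry out.

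That is the genuine gap: the decisive step is only conjectured (``I expect this to be overcome by mixed-norm estimates\dots''), and your diagnosis of why the diagonal choice fails is incorrect. A H\"older application with $\sum_j \frac{1}{a_j}<1$ is harmless on $\T^3$ and on a bounded time interval (pad with constants, losing at most a power of $|I|$); the true obstruction is that the admissibility conditions force some exponent \emph{below} the Strichartz threshold $\frac{10}{3}$: for the term with the full derivative on $w$ one needs $\frac{1}{a_3}>\frac{3-2\al+s+2\eps}{3+\al}$, which exceeds $\frac{3}{10}$ as soon as $s>\frac{23\al-21}{10}$, a constraint violated for $s$ near $2\al-\frac32$ when $\al\le\frac32$. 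Your proposed remedy ($\|w\|_{L^{q}_tL^2_x}\les\|w\|_{X^{0,\frac12-4\dl}}$ for $q\les\dl^{-1}$ plus spatial Sobolev) does target this, and one can check that the spatial H\"older bookkeeping then closes exactly under $s>\frac32-\al$ (the sum of the forced lower bounds is $\frac{3-2s}{3+\al}+\frac{3-2\al+s}{3}\le 1$ iff $s\ge\frac{9-6\al-2\al^2}{3-\al}$, which is below $\frac32-\al$ for $\al>1$); but you verify none of this, nor the Leibniz terms where the derivative lands on a $v$-factor, where the diagonal Strichartz choice also fails for part of the range ($s<\frac{21-13\al}{10}$) and yet another non-diagonal assignment is required. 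Two further points to repair if you complete this route: Lemma \ref{LEM:gko}\,(i) requires exponents strictly between $1$ and $\infty$, so the $W^{\frac32-\al+\eps,1}_x$ endpoint must be replaced by an $L^{1+}$ space-time norm, as is done in the proof of Lemma \ref{LEM:str_var}; and the Strichartz bound \eqref{Lp_str} for the dual factor at modulation exponent $\frac12-4\dl$ needs $\dl$ chosen after the regularity gaps, which should be stated.
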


\begin{proof}
We mainly consider \eqref{tri2} and indicate the modifications needed for \eqref{tri3}. For a dyadic number $N \geq 1$, we denote by $\P_N$ the Littlewood-Paley projection onto frequencies $\{ |n| \sim N \}$. By using the Littlewood-Paley decompositions, we have
\begin{align*}
\big\| u v^2 \big\|_{X_I^{s - \al + \eps, -\frac 12 + 4\dl}} \leq \sum_{N_1, N_{23}, N_{123} \geq 1} \big\| \P_{N_{123}} \big( \P_{N_1} u \cdot \P_{N_{23}} (v^2) \big) \big\|_{X_I^{s - \al + \eps, - \frac 12 + 4 \dl}}.
\end{align*}

\noi
Since for $N_{123} \gg N_1 + N_{23}$ the terms in the above sum are vanishing, we can separately discuss the two cases: (i) $N_{123} \sim \max(N_1, N_{23})$ and (ii) $N_{123} \ll \max (N_1, N_{23})$.

\smallskip \noi
\textbf{Case 1:} $N_{123} \sim \max (N_1, N_{23})$.

We let $\frac 12 \leq \frac{1}{p} \leq \frac 35$ be such that
\begin{align*}
\frac{-s + 2\al - \frac 32 - 3 \eps}{3} \geq \frac 1p - \frac 12 \quad \text{and} \quad s > \frac 32 - \frac{3 + \al}{2p}.
\end{align*}

\noi
Such $p$ can be found given $\eps > 0$ sufficiently small, $-\frac{3}{10} \al + \frac 35 < s < 2 \al - \frac 32$, and $s > \frac{-2 \al^2 - 6 \al + 9}{3 - \al}$. The last condition is automatically satisfied given $s > -\al + \frac 32 > \frac{-2 \al^2 - 6 \al + 9}{3 - \al}$ since $\al > 1$.

By Sobolev's embedding in time and in space, H\"older's inequality, the fact that $\al - \frac 32 - \eps < 0$ given $1 < \al \leq \frac 32$, and the Strichartz estimate \eqref{Lp_str}, we have
\begin{align*}
\big\| &\P_{N_{123}} \big( \P_{N_1} u \cdot \P_{N_{23}} (v^2) \big) \big\|_{X_I^{s - \al + \eps, -\frac 12 + 4 \dl}} \\
&\les N_{123}^{\al - \frac 32 - 2 \eps} \big\| \P_{N_1} u \cdot \P_{N_{23}} (v^2) \big\|_{L_I^p H_x^{s - 2\al + \frac 32 + 3\eps}} \\
&\les N_{123}^{\al - \frac 32 - 2 \eps} \big\| \P_{N_1} u \cdot \P_{N_{23}} (v^2) \big\|_{L_I^p L_x^p} \\
&\les N_{123}^{-\eps} \| \P_{N_1} u \|_{L_I^\infty W_x^{\al - \frac 32 - \eps, \infty}} \| v \|_{L_I^{2p} L_x^{2p}}^2 \\
&\les N_{123}^{-\eps} (1 + |I|)^{\frac{1}{2p}} \| u \|_{L_I^\infty W_x^{\al - \frac 32 - \eps, \infty}} \| v \|_{L_I^{2p} L_x^{2p}} \| v \|_{X_I^{s, \frac 12 + \dl}}.
\end{align*}

\noi
By summing up the above inequality in dyadic $N_1, N_{23}, N_{123} \geq 1$, we obtain \eqref{tri2} in this case. Note that in this case, \eqref{tri3} follows directly from the above inequality and the Strichartz estimate \eqref{Lp_str}.

\smallskip \noi
\textbf{Case 2:} $N_{123} \ll \max (N_1, N_{23})$.

In this case, we further apply the Littlewood-Paley decompositions for the two $v$'s and write
\begin{align*}
v^2 = \sum_{N_2, N_3 \geq 1} (\P_{N_2} v) (\P_{N_3} v).
\end{align*}

\noi
By symmetry, we can assume that $N_2 \leq N_3$. Then, we have
\begin{align*}
N_{123} \les N_1 \sim N_{23} \les N_3.
\end{align*}

We let $q_1 \geq \frac{10}{3}$ be such that
\begin{align*}
-s + \al - 2 \eps > \frac 32 - \frac{3 + \al}{q_1}.
\end{align*}

\noi
We also let $q_2 \geq \frac{10}{3}$ be such that $\frac{1}{q_2} = \frac 12 - \frac{1}{q_1}$ and
\begin{align*}
s > \frac 32 - \frac{3 + \al}{q_2}.
\end{align*}

\noi
Thus, we require $\frac 15 \leq \frac{1}{q_1} \leq \frac{3}{10}$, and such $q_1$ can be found given $-\frac{3}{10} \al + \frac 35 < s < \frac{13}{10} \al - \frac 35$, $\eps > 0$ sufficiently small, and $\al > 1$.

With the understanding that $\P_N = 0$ for $N < 1$, by duality, H\"older's inequality twice, and the Strichartz estimate \eqref{Lp_str}, we have
\begin{align*}
\big\| &\P_{N_{123}} \big( \P_{N_1} u \cdot \P_{N_{23}} (\P_{N_2} v \P_{N_3} v) \big) \big\|_{X_I^{s - \al + \eps, -\frac 12 + 4 \dl}}  \\
&\les N_{123}^{s - \al + \eps} \sup_{\| w \|_{X^{0, \frac 12 - 4 \dl}} \leq 1} \bigg| \int_I \int_{\T^3} \P_{N_{123}} \big( \P_{N_1} u \cdot \P_{N_{23}} (\P_{N_2} v \P_{N_3} v) \big) \\
&\quad \times (\P_{N_{123} / 2} + \P_{N_{123}} + \P_{2 N_{123}}) w dx dt \bigg| \\
&\les N_{123}^{s - \al + \eps} \big\| \P_{N_1} u \cdot \P_{N_{23}} (\P_{N_2} v \P_{N_3} v) \big\|_{L_I^{q_1'} L_x^{q_1'}} \\
&\quad \times \sup_{\| w \|_{X^{0, \frac 12 - 4 \dl}} \leq 1} \| (\P_{N_{123} / 2} + \P_{N_{123}} + \P_{2 N_{123}}) w \|_{L_I^{q_1} L_x^{q_1}} \\
&\les N_{123}^{-\eps} (1 + |I|)^{\frac{1}{q_1}} \| \P_{N_1} u \|_{L_I^\infty L_x^\infty} \| \P_{N_2} v \|_{L_I^{q_2} L_x^{q_2}} \| \P_{N_3} v \|_{L_I^2 L_x^2} \\
&\les N_{123}^{-\eps} N_1^{-\al + \frac 32 + \eps} N_3^{- s + \eps} (1 + |I|) \| u \|_{L_I^\infty W_x^{\al - \frac 32 - \eps, \infty}} \| v \|_{L_I^{q_2} L_x^{q_2}} \| v \|_{X_I^{s, \frac 12 + \dl}}.
\end{align*}

\noi
Since $N_3$ is the largest frequency scale and $s > -\al + \frac 32$, we can sum up the above inequality in dyadic $N_1, N_2, N_3, N_{23}, N_{123} \geq 1$, we obtain \eqref{tri2} in this case.

For \eqref{tri3}, due to the lack of symmetry of $u_2$ and $u_3$, we need to put both of them in the $X_I^{s, \frac 12 + \dl}$-space, which can be done by similar steps as above along with the Strichartz estimate \eqref{Lp_str}.
\end{proof}

\subsection{Tools from stochastic analysis and multiple stochastic integrals}
\label{SUBSEC:mul}

Let us first define the Hermite polynomials $H_k (x; \s)$ with variance $\s > 0$ via the following generating function:
\begin{align}
e^{tx - \frac 12 \s t^2} = \sum_{k = 0}^\infty \frac{t^k}{k!} H_k (x; \s)
\label{herm_gen}
\end{align}

\noi
for $t, x \in \R$. From \eqref{herm_gen}, we obtain the following identities for any $k \in \N$ and $\s, \s_1, \s_2 > 0$: 
\begin{align}
H_k (x + y; \s) &= \sum_{\l = 0}^k \binom{k}{\l} H_{k - \l} (x; \s) y^\l, \label{herm_decomp1} \\
H_k (x + y; \s_1 + \s_2) &= \sum_{\l = 0}^k \binom{k}{\l} H_{k - \l} (x; \s_1) H_\l (y; \s_2).
\label{herm_decomp}
\end{align}



\medskip
We now recall basic definitions and properties of multiple stochastic integrals. We mainly follow the notation in \cite[Appendix B]{OWZ}. See \cite{Nua} and also \cite{Bring2, OWZ, BDNY} for further discussion.

Let $\{B_n\}_{n \in \Z^3}$ be a sequence of Gaussian processes on a probability space $(\O, \mathcal{E}, \PP)$ with the following properties:

\smallskip \noi
(i) $B_n$ is a complex-valued standard Brownian motion for $n \in \Z^3 \setminus \{0\}$ and $B_0$ is a real-valued standard Brownian motion. 
Specifically, for all $t \geq 0$ and $n \in \Z^3$, we have $\E [|B_n(t)|^2] = t$.

\smallskip \noi
(ii) For all $n_1, n_2 \in \Z^3$ with $n_1 \neq \pm n_2$, the processes $B_{n_1}$ and $B_{n_2}$ are independent.

\smallskip \noi
(iii) For all $n \in \Z^3$, $\cj{B_n} = B_{-n}$.

\smallskip
Given $f \in L^2 ((\Z^3 \times \R_+)^k)$, we define the multiple stochastic integral $I_k[f]$ by
\begin{align*}
I_k [f] \deff \sum_{n_1, \dots, n_k \in \Z^3} \int_{\R_+} \cdots \int_{\R_+} f(n_1,t_1, \dots, n_k, t_k) d B_{n_1} (t_1) \cdots d B_{n_k} (t_k).
\end{align*}

\noi
We recall that all multiple stochastic integrals have mean zero: for any $k \in \N$ and $f \in L^2 ( (\Z^3 \times \R_+)^k )$, we have
\begin{align}
\E \big[ I_k [f] \big] = 0.
\label{mean0}
\end{align}

\noi
When we work on standard Gaussian random variables, say $\{ g_n \}_{n \in \Z^3}$, we write them as a stochastic integral against a sequence of standard Brownian motions $\{ B_n \}_{n \in \Z^3}$ as
\begin{align}
g_{n'} = \int_0^1 1 \, d B_{n'} (t) = I_1 [\ind_{\{ n = n' \}} \ind_{\{ 0 \leq t \leq 1 \}}]
\label{gn_mul}
\end{align}

\noi
for any $n' \in \Z^3$. This notation allows us to treat Gaussian processes and Gaussian random variables uniformly and is convenient for us to discuss Wick orderings in \eqref{HkIk} below for Gaussian random variables.

By using a rescaled version of \cite[Proposition~1.1.4]{Nua}, we have the following relation between Hermite polynomials and multiple stochastic integrals: given $k \in \N$, $f \in L^2 (\Z^3 \times \R_+)$, and $\s_f = \| f \|_{L^2 (\Z^3 \times \R_+)}^2$, we have the Wick ordering
\begin{align}
:\! (I_1 [f])^k \!: \, \deff H_k (I_1 [f]; \s_f) = I_k [f^{\otimes k}],
\label{HkIk}
\end{align}

\noi
where $f^{\otimes k}$ denotes the $k$-tensor product of $f$ with itself and $H_k$ is the $k$th Hermite polynomial as in \eqref{herm_gen}. If $X$ is a Gaussian random variable with variance $\s_X$, we define the Wick ordering
\begin{align}
:\! X^k \!: \, \deff H_k (X; \s_X),
\label{HkX}
\end{align}

\noi
which aligns with the definition in \eqref{HkIk} in view of the notation \eqref{gn_mul}. Also, from \eqref{mean0}, we have
\begin{align}
\E [ :\! X^k \!: ] = 0
\label{mean0X}
\end{align}

\noi
for any $k \in \N$.

Given $f \in L^2 ((\Z^3 \times \R_+)^k)$, we  define the symmetrization $\text{Sym} (f)$ by
\begin{align*}
\text{Sym} (f) (z_1, \dots, z_k) = \frac{1}{k!} \sum_{\phi \in S_k} f (z_{\phi(1)}, \dots, z_{\phi (k)}),
\end{align*}

\noi
where $z_j = (n_j, t_j)$ and $S_k$ is the group of permutations on $\{ 1, \dots, k \}$.  Let us recall the following Ito isometry from \cite[Property~(iii)~in~page~9]{Nua}: given $k, \l \in \N$, $f \in L^2 ((\Z^3 \times \R_+)^k)$, and $g \in L^2 ((\Z^3 \times \R_+)^\l)$, we have
\begin{align}
\begin{split}
\E \big[ I_k [f] \cj{I_\l [g]} \big] = \ind_{\{ k = \l \}} \cdot k! \sum_{n_1, \dots, n_k \in \Z^3} &\int_{\R_+} \cdots \int_{\R_+} \text{Sym} (f) (z_1, \dots, z_k) \\
&\quad \times \cj{\text{Sym} (g)} (z_1, \dots, z_k) d t_1 \cdots d t_k.
\end{split}
\label{ito}
\end{align}

\noi
From the Cauchy-Schwarz inequality, we have
\begin{align}
| \text{Sym} (f) (z_1, \dots, z_k) |^2 \leq \frac{1}{k!} \sum_{\phi \in S_k} | f (z_{\phi(1)}, \dots, z_{\phi (k)}) |^2,
\label{jensen}
\end{align}

\noi
which will be used to simplify estimates after the use of the Ito isometry \eqref{ito}. If $X$ and $Y$ are jointly Gaussian random variables with variances $\s_X$ and $\s_Y$, respectively, we obtain the following version of the Ito isometry from the notation \eqref{gn_mul}, \eqref{HkX}, \eqref{HkIk}, and \eqref{ito}:
\begin{align}
\E [ :\! X^k \!: \, :\! Y^\l \!: ] = \ind_{\{ k = \l \}} \cdot k! (\E [X Y])^k
\label{itoX}
\end{align}

\noi
for any $k, \l \in \N$.

Lastly, we recall the notion of contraction and the product formula for multiple stochastic integrals. For $k, \l, r \in \N$ with $0 \leq r \leq \min(k, \l)$ and $f \in L^2 ((\Z^3 \times \R_+)^k)$ and $g \in L^2 ((\Z^3 \times \R_+)^\l)$, the contraction $f \otimes_r g$ of $r$ indices is defined by
\begin{align*}
(f \otimes_r g) (z_1, \dots, z_{k + \l - 2r}) &\deff \sum_{m_1, \dots, m_r \in \Z^3} \int_{\R_+^r} f(z_1, \dots, z_{k-r}, \zeta_1, \dots, \zeta_r) \\
&\quad \times g (z_{k + 1 - r}, \dots, z_{k + \l - 2r}, \wt{\zeta}_1, \dots, \wt{\zeta}_r) ds_1 \cdots ds_r,
\end{align*}

\noi
where $\zeta_j = (m_j, s_j)$ and $\wt{\zeta}_j = (- m_j, s_j)$. When two spatial frequencies $m_1$ and $m_2$ are contracted, we say that $(m_1, m_2)$ is a \textit{pairing}, for which we must have $m_1 + m_2 = 0$.
From \cite[Proposition 1.1.3]{Nua}, we have the following product formula given that $f$ and $g$ are symmetric in the sense that $\textup{Sym} (f) = f$ and $\textup{Sym} (g) = g$:
\begin{align}
I_k [f] \cdot I_\l [g] = \sum_{r = 0}^{\min (k, \l)} r! \binom{k}{r} \binom{\l}{r} I_{k + \l - 2r} [f \otimes_r g].
\label{prod}
\end{align}

\section{Integer lattice counting estimates}
\label{SEC:count}
In this section, we prove some counting estimates
related to the fractional wave equations,
which are important in controlling the stochastic terms, both for our global well-posedness result and for our weak universality result. 
Most of the counting estimates below have prototypes in \cite{Bring2}. 


\subsection{Basic counting estimates}
We start with the following basic counting lemma for the fractional wave equation. In the following, we denote
\begin{align*}
\jbb{n} = \jbb{n}_\al \deff \sqrt{\jb{n}^{2\al}-\frac 14},
\end{align*}

\noi
where $\al > 1$.

We first present the following basic counting lemma. The lemma corresponds to \cite[Lemma~4.15]{Bring2}, but in our setting, we need different computations.

\begin{lemma}
\label{LEM:BC_fwave}
Let $1 < \al \leq 2$, $N, A \in 2^\N$ be dyadic numbers,
and $a \in \Z^3$ be such that $|a| \sim A$. Then,
we have 
\begin{align} 
\begin{split}
\sup_{\zeta \in  \R}  \# \big\{ n \in \Z^3\colon |n|\sim N, \big| \jbb{a + n} \pm \jbb{n} - \zeta \big| \les  1 \big\}   \les  
\min(A,N)^{-1} N^{3}. 
\end{split}
\label{BC_fwave}
\end{align}
\end{lemma}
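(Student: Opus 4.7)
The approach is a slice-by-slice reduction, aligning with the dominant coordinate of $a$. Since $|a|\sim A$, at least one of $a_1, a_2, a_3$ has magnitude $\sim A$; by permuting the coordinate axes of $\Z^3$ (which preserves the counting) we may assume $|a_1|\sim A$. Writing $n = (n_1, n')$ with $n' \in \Z^2$, the constraint $|n|\sim N$ forces $|n'|\lesssim N$, so there are $\lesssim N^2$ choices of $n'$, and it suffices to prove, for each such $n'$,
\[
\#\{n_1 \in \Z : |n|\sim N,\ |f(n_1) - \zeta| \lesssim 1\} \lesssim \min(A, N)^{-1}\cdot N,
\]
where $f(n_1) \overset{\textup{def}}{=} \jbb{a+n} \pm \jbb{n}$.

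Next, I would analyze $f$ through its derivatives. A direct computation gives
\[
f'(n_1) = \alpha\bigg(\frac{\jb{a+n}^{2\alpha-2}(a_1+n_1)}{\jbb{a+n}} \pm \frac{\jb{n}^{2\alpha-2}\,n_1}{\jbb{n}}\bigg),
\]
and since for $\alpha\geq 1$ each of $\jbb{a+n}$ and $\jbb{n}$ is strictly convex in $n_1$ (a routine computation of $\partial_{n_1}^2$ reduces to showing that $\partial_{n_1}^2(1+c+n_1^2)^\alpha>0$ for $c\geq 0$ and $\alpha\geq 1$), $f$ is strictly convex in the $+$ case and so $f'$ has at most one zero. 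In the $-$ case, clearing the radicals in $f'(n_1) = 0$ and squaring reduces to an algebraic equation of bounded degree in $n_1$, whose real solutions are $O(1)$ in number. Consequently $\{n_1 : |f - \zeta| \lesssim 1\}$ is contained in a union of $O(1)$ monotone intervals, on each of which the sublevel-set length is $\lesssim 1/\min |f'|$, or $\lesssim 1/\sqrt{\min |f''|}$ near a critical point by the van der Corput-type inequality.

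To finish, I would separate into the two regimes according to the size of $A$. When $A \geq N$, one has $|a_1 + n_1| \sim A$ for $|n_1| \lesssim N$, so the first term in $f'$ has magnitude $\sim A^{\alpha-1}$ and dominates the second (of size $\lesssim N^{\alpha-1} \leq A^{\alpha-1}$); thus $|f'|\gtrsim A^{\alpha-1}\gtrsim 1$, giving $O(1)$ integer solutions per slice and $\lesssim N^2 = \min(A,N)^{-1}N^3$ in total. When $A < N$, a Taylor expansion in $A/N$ shows $|f'|\gtrsim A\,N^{\alpha-2}$ away from critical points and $|f''|\gtrsim N^{\alpha-2}$ near them, yielding a per-slice count of $\lesssim 1 + N^{2-\alpha}/A \lesssim N/A$ for $\alpha>1$. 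Summing the per-slice estimate over the $\lesssim N^2$ choices of $n'$ produces the claimed bound $\lesssim \min(A,N)^{-1}N^3$.

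The main obstacle is the $-$ case when $A<N$: the two terms of $f'$ are of comparable size $\sim AN^{\alpha-2}$ and can cancel at a critical point, so the naive uniform lower bound on $|f'|$ fails. The essential step will be the second-derivative bound $|f''|\gtrsim N^{\alpha-2}$ near such critical points; this requires expanding $\partial_{n_1}^2\jbb{a+n} - \partial_{n_1}^2\jbb{n}$ carefully and showing that the leading $O(A\,N^{\alpha-3})$-cancellation is strictly subdominant to the convexity contribution of size $\sim N^{\alpha-2}$, which holds precisely because $\alpha > 1$.
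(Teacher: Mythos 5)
Your slicing strategy is genuinely different from the paper's proof (which converts the lattice count into a continuum volume bound at the cost of the Lipschitz factor $\max(A,N)^{\al-1}$, then decomposes into level sets $\jbb{a+\xi}=m_1+O(1)$, $\jbb{\xi}=m_2+O(1)$ and integrates in polar coordinates, extracting the crucial $A^{-1}$ from the angular measure of the intersection of the two shells), but as written it has a quantitative gap at its core. Fix $n'$ and take the $+$ sign: $f'$ genuinely vanishes (at $n_1\approx -a_1/2$, where the two terms of $f'$ have opposite signs), and near that point your curvature bound $|f''|\gtrsim N^{\al-2}$ only yields a sublevel-set length $\les |f''|^{-1/2}\sim N^{1-\al/2}$, not the $1+N^{2-\al}/A$ you state; optimizing the standard split $\{|f'|\ge \ld\}\cup\{|f'|<\ld\}$ with these derivative bounds gives per-slice $\les 1+N^{1-\al/2}+N^{2-\al}/A$, and the middle term is sharp when $\zeta$ is within $O(1)$ of the critical value, since $f$ really does vary by only $O(1)$ over an interval of length $\sim N^{1-\al/2}$ around its vertex. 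Because your scheme multiplies a uniform per-slice bound by the $\les N^2$ choices of $n'$, this produces $N^{3-\al/2}$, which exceeds the target $\min(A,N)^{-1}N^3$ whenever $N^{\al/2}\ll A\le N$. To rescue the slicing argument you must count separately the ``bad'' slices $n'$ for which the critical value of $f(\cdot\,;n')$ lies within $O(1)$ of $\zeta$ (a shell of $\les N^{2-\al}$ slices, since the critical value moves by $\sim N^{\al-1}$ per unit step in $|n'|$), and the proposal does not do this. A related problem sits at the boundary of your two regimes: when $A\sim N$ the assertions $|a_1+n_1|\sim A$ and ``the first term of $f'$ dominates the second'' both fail ($a_1+n_1$ can vanish, and two terms of comparable size $\sim N^{\al-1}$ can cancel for suitable sign configurations), so the uniform bound $|f'|\gtrsim A^{\al-1}$ is unjustified exactly where it is needed.

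Two further steps are not sound as stated. First, for the $-$ sign and general real $\al\in(1,2]$, $f'$ is built from $(1+|\cdot|^2)^{\al-1}$ and $\jbb{\cdot}$, which are not algebraic functions, so ``clearing radicals and squaring'' does not reduce $f'(n_1)=0$ to a bounded-degree polynomial equation; the $O(1)$ bound on critical points needs a different argument (for instance, when $a'=0$ the map $t\mapsto \partial_t\jbb{(t,n')}$ is strictly increasing, so $f'$ never vanishes in the $-$ case, but this does not cover $a'\neq 0$, where critical points can occur). Second, your announced ``essential step'' $|f''|\gtrsim N^{\al-2}$ near critical points is only clear for the $+$ sign, where $f''$ is a sum of two positive terms; for the $-$ sign $f''$ is a difference of two comparable quantities and does vanish on the slice (for $a'=0$ it vanishes at $n_1=-a_1/2$ by the odd symmetry of $f$ about that point), so near a critical point of the $-$ case you have no curvature lower bound at all, while for the $+$ sign, where the bound does hold, it is insufficient for $A>N^{\al/2}$ as explained above. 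These are exactly the tangency/degeneracy configurations that the paper's argument never has to face, because it does not restrict to lines at all.
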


\begin{proof}
If $a = 0$, we obtain the bound \eqref{BC_fwave} immediately. Thus, we can assume that $a \neq 0$, so that $|a| \geq 1$. Since $\xi \mapsto \jbb{ a + \xi } \pm \jbb{ \xi } $ is Lipschitz with constant $L \sim \max(A,N)^{\al-1}$ for $|\xi| \sim N$, we see that, for any fixed $\zeta \in \R$, the $1$-neighborhood of the set on the left-hand side of \eqref{BC_fwave} 
is contained in 
\begin{align*}
\big\{ \xi \in \R^3\colon |\xi|\sim N, \big| \jbb{ a + \xi } \pm \jbb{ \xi }   -\zeta' \big| \les  1 \text{ for some } \zeta' \in \R \text{ with } |\zeta' - \zeta| \les L \big\}.
\end{align*}

\noi
Then, we have 
\begin{align}
\begin{split}
 \sup_{\zeta \in  \R}  \# \big\{ & n \in \Z^3\colon |n|\sim N, \big|  \jbb{ a + n } \pm \jbb{ n } - \zeta \big|\les  1 \big\} \\
& \les  \max(A,N)^{\al-1} \sup_{\zeta' \in \R}
\big| \big\{ \xi \in \R^3\colon |\xi|\sim N, \big|  \jbb{ a + \xi } \pm \jbb{ \xi }  - \zeta' \big|\les  1 \big\} \big|,
\end{split}
\label{BC0}
\end{align}

\noi
where $|S|$ denotes the Lebesgue measure of a measurable set $S \subset \R^3$. Let us decompose 
\begin{align}
\begin{split}
& \big|\big\{ \xi \in \R^3\colon |\xi|\sim N, \big|  \jbb{ a + \xi } \pm \jbb{ \xi } - \zeta' \big|\les  1 \big\}\big|\\
&\les  \sum_{\substack{m_1,m_2\in \Z \\ | m_1 \pm m_2 - \zeta' |\les  1 }} 
\big|\big\{ \xi \in \R^3\colon |\xi| \sim N,   \jbb{a + \xi } = m_1 + O(1) , \jbb{\xi} = m_2 + O(1) \big\}\big| \\
&\les  N^\al \sup_{\substack{m_1,m_2 \in \Z \\ m_1, m_2 \geq 1 }} \big|\big\{ \xi \in \R^3\colon |\xi|\sim N,   \jbb{a + \xi } = m_1 + O(1) , \jbb{\xi} = m_2 + O(1) \big\}\big| ,
\end{split}
\label{BC1}
\end{align}

\noi
where in the last step,  
there are at most $\sim N^\al$ non-trivial choices of $m_2$, and then for a fixed $m_2 \in \Z$, the condition $ | m_1 \pm m_2 - \zeta' |\les  1$ implies that there are at most $\sim 1 $ choices for $m_1$. 
Since the Lebesgue measure is rotational invariant, we can assume that $a = |a| e_3$ with $e_3 = (0, 0, 1) \in \R^3$. By using polar coordinates, we note that 
\begin{align}
\begin{split}
& \big|\big\{ \xi \in \R^3\colon |\xi|\sim N,   \jbb{a + \xi } = m_1 + O(1) , \jbb{\xi} = m_2 + O(1) \big\}\big| \\
&\les   N^2 \int_0^\infty \int_0^\pi \ind_{\{ \jb{r}^\al=m_2 + O(1) \}} \ind_{ \{ (1+ |a|^2 + 2 r |a| \cos(\theta) + r^2)^{\frac \al2} = m_1 + O(1) \}} \sin(\theta) d\theta dr,
\end{split}
\label{BC1-1}
\end{align}

\noi
where $\theta$ is the angle between the vectors $\xi$ and $a$. Note that given $m_2 \geq 1$ and $1 < \al \leq 2$, we have
\begin{align}
\big\{r > 0: \jb{r}^\al = m_2 + O(1) \big \} 
\subset \Big \{r > 0: r = (m_2^{\frac{2}{\al}} - 1)^{\frac12} + O \big(m_2^{\frac{1}{\al} - 1} \big) \Big \}.
\label{BC1-2}
\end{align}

\noi
Thus, from \eqref{BC1-1} and \eqref{BC1-2}, we obtain
\begin{align}
\begin{split}
\big|\big\{ &\xi \in \R^3\colon |\xi|\sim N,   \jbb{a + \xi } = m_1 + O(1) , \jbb{\xi} = m_2 + O(1) \big\}\big| \\
  &\les N^2 \int_0^\infty \int_0^\pi 
\ind_{ \big\{ r =(m_2^{\frac2\al} - 1)^{\frac12}  + O ( m_2^{\frac 1\al-1} ) \big\}}  \\
&\quad \times \ind_{ \big\{ 1+ |a|^2 + 2 r |a| \cos(\theta) + r^2 = m_1^{\frac 2\al} + O (m_1^{\frac 2\al -1} ) \big\}} \sin(\theta) d\theta dr.
\end{split}
\label{BC2}
\end{align}

\noi
Since $m_1 \les  \max(A,N)^\al$,
we have 
\begin{align*}
\cos(\theta) = \frac{m_1^{\frac2{\al}}}{2|a|r}  - \frac{1+ |a|^2+r^2}{2|a|r}  + O \big( \max(A,N)^{ 2 -\al} A^{-1} N^{-1}\big).
\end{align*}

\noi
This shows that for a fixed $r$, $\cos(\theta)$ is contained in an interval of size $\sim \max(A,N)^{2 -\al} A^{-1} N^{-1}$.
Thus, we obtain
\begin{align}
\begin{split}
\eqref{BC2} &\les   \max(A,N)^{2 -\al} A^{-1} N^{-1} 
N^2  \int_0^\infty 
\ind_{ \big\{ r = (m_2^{\frac2\al} - 1)^{\frac12}  + O \big(m_2^{\frac 1\al-1} \big) \big\}}dr \\
&\les   
\max(A,N)^{2 -\al} A^{-1} N^{-1} 
N^2  N^{1 - \al}. 
\end{split}
\label{BC3}
\end{align}

\noi
By combining \eqref{BC0}, \eqref{BC1}, \eqref{BC2}, and \eqref{BC3}, 
we have
\begin{align*}
\sup_{\zeta \in \R} \#  \big\{ & n \in \Z^3  \colon |n|\sim N, \big|  \jbb{ n + a } \pm \jbb{ n } - \zeta \big| \les  1 \big\}\\
&\les  N^\al \max(A,N) A^{-1} N^{-1} 
N^2  N^{1 - \al}\\
& = \min(A,N)^{-1} 
N^{3},
\end{align*}

\noi
which finishes the proof.
\end{proof}

We also record a ``two-ball" counting, which is a direct consequence of Lemma~\ref{LEM:BC_fwave}. For a proof, see \cite[Lemma~4.17]{Bring2}.
\begin{lemma}
\label{LEM:two_balls}
Let $1 < \al \leq 2$, $N, A, B \geq 2^\N$ be dyadic numbers, and $a\in \Z^3$ be such that $|a|\sim A$. Then, we have
\begin{align*}
 \sup_{\zeta \in  \R} & \# \big\{  n \in \Z^3\colon |n|\sim N, |n+a|\sim B, \big| \jbb{ a + n } \pm \jbb{ n } - \zeta \big| \les  1 \big\}\\
&  \les   
\min(A,B,N)^{-1} \min(B,N)^{3}.
\end{align*}
\end{lemma}

\subsection{General counting estimates}

In this subsection, we prove some counting estimates that are in more general forms compared to those in \cite{Bring2}. These general counting estimates play a crucial role in the analysis of general Wick ordered stochastic terms for our weak universality result.

We start with the following estimate, which can be viewed as a generalized version of the cubic counting estimate as in \cite[Proposition~4.18~(i)]{Bring2}. This will be used as a lemma for later counting estimates.

\begin{lemma}
\label{LEM:cubic_counting}
Let $1 < \al \leq 2$ and $k \geq 3$ be an integer. Let $\eps_{\textup{sum}},\eps_1,\dots,\eps_k \in \{ \pm1\}$ and define the phase 
\begin{align}
\kappa (\bar n) = \kappa (n_1,\dots,n_k)\overset{\textup{def}}{=} 
 \eps_{\textup{sum}} \bigg\llbracket \sum_{j = 1}^k n_j \bigg\rrbracket + \sum_{j = 1}^k \eps_j \jbb{ n_{j} }.
\label{kappa}
\end{align}

\noi
Let $N_1,\dots,N_k \in 2^\N$ be dyadic numbers and let $\zeta \in \R$. Then, we have
\begin{align*}
\begin{split}
\sup_{\zeta \in \R} \# \big\{ & (n_1,\dots,n_k) \colon |n_1|\sim N_1,\dots, |n_k|\sim N_k, |\kappa (\bar n) - \zeta|\leq 1 \big\} \\
&\les  \big( \max_{(2)} ( N_1, \dots, N_k ) \big)^{-1} \prod_{j = 1}^k N_j^3,
\end{split}
\end{align*}

\noi
where $\max_{(2)}$ denotes the second largest element.
\end{lemma}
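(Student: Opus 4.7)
The strategy is to reduce to a pair-counting problem for the two largest variables and then invoke the basic counting estimate (Lemma~\ref{LEM:BC_fwave}). Relabelling the variables (and the signs $\eps_j$) if necessary, I may assume without loss of generality that $N_1\geq N_2\geq\cdots\geq N_k$, so that $\max_{(2)}\{N_1,\dots,N_k\}=N_2$. Fixing $(n_3,\dots,n_k)$ contributes at most $\prod_{j=3}^{k}N_j^3$ configurations; I set $d=n_3+\cdots+n_k$, which satisfies $|d|\les N_3\leq N_2\leq N_1$, and absorb the now-constant Hermite terms $\sum_{j\geq 3}\eps_j\jbb{n_j}$ into a shifted constant $\zeta'\in\R$. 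The lemma thus reduces to establishing
\begin{align}
\sup_{\zeta'\in\R}\#\big\{(n_1,n_2)\colon |n_j|\sim N_j,\ \big|\eps_{\textup{sum}}\jbb{n_1+n_2+d}+\eps_1\jbb{n_1}+\eps_2\jbb{n_2}-\zeta'\big|\leq 1\big\}\les N_1^3 N_2^2.
\label{count_goal}
\end{align}

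To prove \eqref{count_goal}, I will further fix $n_1$ and regard $n_1+n_2+d$ as $n_2+a$ with $a=n_1+d$. When $|a|\geq 1$, Lemma~\ref{LEM:BC_fwave} directly yields
\[
\#\{n_2\colon |n_2|\sim N_2,\ |\eps_{\textup{sum}}\jbb{n_2+a}+\eps_2\jbb{n_2}-\zeta''|\leq 1\}\les \min(|a|,N_2)^{-1}N_2^3,
\]
while the degenerate case $a=0$ forces the single choice $n_1=-d$ and contributes at most $\les N_2^3$ from the trivial spherical-shell count, which is absorbed by $N_1^3 N_2^2$ since $N_1\geq N_2\geq 1$.

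It then remains to sum $\min(|n_1+d|,N_2)^{-1}$ over $|n_1|\sim N_1$ with $n_1\neq -d$. Using the elementary inequality $\min(x,y)^{-1}\leq x^{-1}+y^{-1}$ together with the classical lattice bound $\sum_{0<|m|\les N_1}|m|^{-1}\les N_1^2$ (applied after the translation $m=n_1+d$; legitimate since $|d|\les N_1$), I expect
\[
\sum_{\substack{|n_1|\sim N_1\\ n_1\neq -d}}\min(|n_1+d|,N_2)^{-1}\les N_1^2+N_2^{-1}N_1^3\les N_2^{-1}N_1^3,
\]
where the last step exploits $N_1\geq N_2$; this is precisely where the desired factor $N_2^{-1}$ is produced. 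Multiplying by $N_2^3$ gives \eqref{count_goal}, and multiplying by the $\prod_{j=3}^{k}N_j^3$ configurations of $(n_3,\dots,n_k)$ completes the proof. The only mild obstacle is ensuring the $n_1$-sum is uniform in the shift $d$ and the constant $\zeta'$; the $x^{-1}+y^{-1}$ split cleanly reduces this to two standard estimates, and the ordering hypothesis $N_1\geq N_2$ then converts one of them into the $(\max_{(2)})^{-1}$ gain.
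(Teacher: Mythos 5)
Your argument is correct, but it is organized quite differently from the paper's proof. The paper (with the frequencies ordered increasingly) runs a case analysis on the relative sizes of $N_{k-2},N_{k-1},N_k$: in each case it identifies one variable for which the accompanying shift $\big\langle\sum_{j\neq j_0}n_j\big\rangle$ is \emph{pointwise} bounded below by the second-largest scale (in the harder case via the pigeonhole-type claim \eqref{nl_max_bdd}), and then applies Lemma \ref{LEM:BC_fwave} once in that variable to extract the full gain $\big(\max_{(2)}\{N_1,\dots,N_k\}\big)^{-1}$. You instead freeze $(n_3,\dots,n_k)$ and the largest variable $n_1$, apply Lemma \ref{LEM:BC_fwave} in the second-largest variable $n_2$ with the shift-dependent bound $\min(|n_1+d|,N_2)^{-1}N_2^3$, and recover the gain only \emph{on average} over $n_1$, via $\min(x,y)^{-1}\le x^{-1}+y^{-1}$ and the elementary lattice bound $\sum_{0<|m|\les N_1}|m|^{-1}\les N_1^2$ together with $N_2\le N_1$; the degenerate shift $n_1=-d$ is absorbed trivially. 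This averaging removes the need for any case distinction or for the claim \eqref{nl_max_bdd}, at the modest cost of a translation-invariance check (uniformity in $d$, with constants depending only on $k$) which you correctly address; conversely, the paper's pointwise localization of the gain is the version whose structure is reused in Remark \ref{RMK:CC2} and the subsequent tensor estimates, but your bound implies the same statement, so nothing downstream is affected. Two cosmetic points you may wish to make explicit: when $\eps_{\textup{sum}}=-1$ the constraint should be multiplied by $-1$ (and $\zeta''$ replaced by $-\zeta''$) to match the form $\jbb{a+n}\pm\jbb{n}$ in Lemma \ref{LEM:BC_fwave}, and the constant $\zeta''$ depends on $n_1$, which is harmless since that lemma is uniform in $\zeta$.
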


\begin{proof}
By symmetry, we can assume that $N_1 \leq N_2 \leq \cdots \leq N_k$. We first consider the case when $N_{k - 1} < \frac{1}{10 k} N_k$ or $N_{k - 2} < \frac{1}{10 k} N_k$. In this case, we have 
\begin{align*}
\bigg\langle \sum_{\substack{ j = 1 \\ j \neq k - 1}}^k n_j \bigg\rangle \geq |n_k| - \sum_{j = 1}^{k - 2} |n_j| > \frac 12 N_k - \frac{1}{10} N_k \ges N_k \geq N_{k - 1}.
\end{align*}

\noi
Thus, applying Lemma \ref{LEM:BC_fwave} to the sum in $n_{k-1} \in \Z^3$, we obtain
\begin{align*}
\# \big\{ & (n_1,\dots,n_k) \colon |n_1|\sim N_1,\dots, |n_k|\sim N_k, |\kappa (\bar n) - \zeta|\leq 1 \big\} \\
&\les \sum_{n_1, \dots, n_{k - 2}, n_k \in \Z^3} \bigg( \prod_{\substack{ j = 1 \\ j \neq k - 1}}^k \ind_{\{ |n_j|\sim N_j\}} \bigg) \min \bigg( \bigg\langle \sum_{\substack{ j = 1 \\ j \neq k - 1}}^k n_j \bigg\rangle, N_{k - 1} \bigg)^{-1} N_{k - 1}^3 \\
&\les N_{k-1}^{-1} \prod_{j = 1}^k N_j^3,
\end{align*}

\noi
which is acceptable.

We now consider the case when $N_\l < \frac{1}{10 k} N_k$ for some $1 \leq \l \leq k - 3$ and $N_j \geq \frac{1}{10 k} N_k$ for all $\l + 1 \leq j \leq k - 1$. For convenience, we denote $\overline{n}_\l = \sum_{j = 1}^\l n_j$, so that
\begin{align}
|\overline{n}_\l| \leq \sum_{j = 1}^\l |n_j| \leq \sum_{j = 1}^\l N_j < \frac{1}{10} N_k.
\label{nl_bdd}
\end{align}

\noi
We claim that
\begin{align}
\max_{\l + 1 \leq k' \leq k} \bigg\{ \bigg\langle \overline{n}_\l + \sum_{\substack{j = \l + 1 \\ j \neq k'}}^k n_j \bigg\rangle \bigg\} \geq \frac{1}{10 k} N_k.
\label{nl_max_bdd}
\end{align}

\noi
Indeed, assuming that for all $\l + 1 \leq k' \leq k - 1$ we have
\begin{align*}
\bigg\langle \overline{n}_\l + \sum_{\substack{j = \l + 1 \\ j \neq k'}}^k n_j \bigg\rangle < \frac{1}{10 k} N_k,
\end{align*}

\noi
we can deduce from \eqref{nl_bdd} that
\begin{align*}
\bigg\langle \overline{n}_\l + \sum_{j = \l + 1}^{k - 1} n_j \bigg\rangle &= \bigg\langle \overline{n}_l + \sum_{j = \l + 1}^{k - 1} n_j + \frac{k - \l - 1}{k - \l - 2} n_k - \frac{k - \l - 1}{k - \l - 2} n_k \bigg\rangle \\
&= \bigg\langle - \frac{k - \l - 1}{k - \l - 2} n_k + \overline{n}_\l + \frac{1}{k - \l - 2} \sum_{k' = \l + 1}^{k - 1} \sum_{\substack{j = \l + 1 \\ j \neq k'}}^k n_j \bigg\rangle \\
&\geq |n_k| - \frac{1}{k - \l - 2} |\overline{n}_\l| - \frac{1}{k - \l - 2} \sum_{k' = \l + 1}^{k - 1} \bigg\langle \overline{n}_\l +  \sum_{\substack{j = \l + 1 \\ j \neq k'}}^k n_j \bigg\rangle \\
&\geq \frac 12 N_k - \frac{1}{10} N_k - \frac{1}{10} N_k \geq \frac{1}{10k} N_k.
\end{align*}

\noi
This shows \eqref{nl_max_bdd}. We can assume without loss of generality that
\begin{align*}
\max_{\l + 1 \leq k' \leq k} \bigg\{ \bigg\langle \overline{n}_\l + \sum_{\substack{j = \l + 1 \\ j \neq k'}}^k n_j \bigg\rangle \bigg\} = \bigg\langle \overline{n}_\l + \sum_{j = \l + 1}^{k - 1} n_j \bigg\rangle = \bigg\langle \sum_{j = 1}^{k - 1} n_j \bigg\rangle.
\end{align*}

\noi
Applying Lemma \ref{LEM:BC_fwave} to the sum in $n_k \in \Z^3$ and using \eqref{nl_max_bdd}, we obtain
\begin{align*}
\# \big\{ & (n_1,\dots,n_k) \colon |n_1|\sim N_1,\dots, |n_k|\sim N_k, |\kappa (\bar n) - \zeta|\leq 1 \big\} \\
&\les \sum_{n_1, \dots, n_{k - 1} \in \Z^3} \bigg( \prod_{j = 1}^{k - 1} \ind_{\{ |n_j| \sim N_j \}} \bigg) \min \bigg( \bigg\langle \sum_{j = 1}^{k - 1} n_j \bigg\rangle, N_k \bigg)^{-1} N_k^3 \\
&\les N_k^{-1} \prod_{j = 1}^k N_j^3,
\end{align*}

\noi
which is sufficient for our purpose. The case when $N_j \geq \frac{1}{10k} N_k$ for all $1 \leq j \leq k - 1$ is similar by letting $\l = 0$ and $n_0 = 0$.
\end{proof}

\begin{remark}\rm
\label{RMK:CC2}
Let $n_{\textup{sum}} = \sum_{j = 1}^k n_j$. In the variables $(n_{\textup{sum}},n_1, \dots, n_{k - 1})$, the phase takes the form 
\begin{align*}
\kappa = \eps_{\textup{sum}} \jbb{ n_{\textup{sum}} } + \sum_{j = 1}^{k - 1} \eps_j \jbb{ n_{j} } + \eps_k \bigg\llbracket n_{\textup{sum}} - \sum_{j = 1}^{k - 1}n_j  \bigg\rrbracket.
\end{align*}

\noi
Here, we view $n_{\textup{sum}}$ as a free variable.
Let $N_{\textup{sum}}, N_1, \dots, N_{k-1}$ be dyadic numbers. After changing $(n_1, \dots, n_{k - 1}) \mapsto (-n_1, \dots, -n_{k - 1})$ and 
applying Lemma \ref{LEM:cubic_counting}, we obtain 
\begin{align}
\begin{split}
\sup_{\zeta \in \R} \# \big\{ & (n_{\textup{sum}},n_1, \dots,n_{k - 1}) \colon |n_{\textup{sum}}|\sim N_{\textup{sum}},|n_1|\sim N_1, \dots, |n_{k - 1}|\sim N_{k - 1},  \\
&|\kappa - \zeta|\leq 1 \big\} 
\les  \big( \max_{(2)} ( N_{\textup{sum}},N_1,\dots, N_{k - 1} ) \big)^{-1} N_{\textup{sum}}^3 \prod_{j = 1}^{k - 1} N_j^{3},
\end{split}
\label{CC2}
\end{align}

\noi
where $\max_{(2)}$ denotes the second largest element.
\end{remark}

We now show the following summation counting estimate, which can be viewed as a generalization of the cubic sum estimate in \cite[Proposition~4.20]{Bring2}.

\begin{lemma}
\label{LEM:CS}
Let $k \geq 3$ be an integer, $1 < \al \leq \frac 32$, $s \geq \al - 1$, and $N_1,\dots,N_k \in 2^\N$ be dyadic numbers. 
Let the phase function $\kappa$ be as in \eqref{kappa}.
Then, we have
\begin{align*}
\begin{split}
\sup_{\zeta \in \R}  &\sum_{n_1,\dots,n_k\in \Z^3} \bigg[ \bigg( \prod_{j=1}^{k} \ind_{\{|n_j| \sim N_j\}}  \jbb{ n_j }^{-2} \bigg) \bigg\langle \sum_{j = 1}^k n_j \bigg\rangle^{2(s-\al)}   \ind_{\{ |\kappa - \zeta|\leq 1\}}\bigg] \\
&\les  N_{\max}^{(k - 3) (3 - 2\al) + 2s - 6 \al + 6}, 
\end{split}
\end{align*}

\noi
where $N_{\max} = \max(N_1,\dots,N_k)$.
\end{lemma}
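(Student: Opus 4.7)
The plan is to reduce everything to a single application of Lemma \ref{LEM:cubic_counting} together with elementary dyadic bookkeeping. By symmetry I would relabel so that $N_1 \leq N_2 \leq \cdots \leq N_k = N_{\max}$, and on the support $|n_j| \sim N_j$ replace each $\jbb{n_j}^{-2}$ by $N_j^{-2\al}$ via $\jbb{n_j} \sim \jb{n_j}^\al \sim N_j^\al$. For the middle weight $\jb{\sum_j n_j}^{2(s-\al)}$, I would use the unified bound $\les N_{\max}^{2(s-\al)_+}$, coming from $|\sum_j n_j| \les k N_{\max}$ in the case $s > \al$ and from $\jb{\,\cdot\,}^{2(s-\al)} \leq 1$ in the case $s \leq \al$.

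Next, Lemma \ref{LEM:cubic_counting} bounds the number of $(n_1, \dots, n_k)$ with $|n_j| \sim N_j$ and $|\kappa - \zeta| \leq 1$ by $N_{k-1}^{-1} \prod_{j=1}^k N_j^3$, where $N_{k-1}$ is the second-largest scale. Combining factors yields
\begin{align*}
\text{Sum} \les N_{\max}^{2(s-\al)_+} \cdot N_k^{3-2\al} \cdot N_{k-1}^{2-2\al} \prod_{j=1}^{k-2} N_j^{3-2\al},
\end{align*}
the key observation being that the counting factor $N_{k-1}^{-1}$ combines with one of the $N_{k-1}^{3-2\al}$ factors to produce the exponent $2-2\al$.

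The final step is exponent bookkeeping. The hypothesis $\al \leq \tfrac32$ gives $3-2\al \geq 0$, so each remaining $N_j^{3-2\al}$ is bounded by $N_{\max}^{3-2\al}$; the hypothesis $\al > 1$ gives $2-2\al < 0$, so $N_{k-1}^{2-2\al} \leq 1$ and that factor is dropped. A short computation then shows the resulting exponent in $N_{\max}$ is at most the target $(k-3)(3-2\al) + 2s - 6\al + 6 = (3-2\al)k + 2s - 3$ provided $s \geq \al - \tfrac32$, a condition weaker than the stated hypothesis $s \geq \al - 1$.

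I expect the main obstacle to be the exponent bookkeeping at the end rather than any analytic difficulty: one must notice that the counting factor $N_{k-1}^{-1}$ is precisely what is needed to cancel a unit of the $N_{k-1}^{3-2\al}$ factor, and then verify that the hypotheses on $\al$ and $s$ are exactly enough to absorb all remaining scales into $N_{\max}$. If a finer bound were desired, one could alternatively dyadically decompose $|\sum_j n_j| \sim N_{\sum}$ and appeal to Remark \ref{RMK:CC2}, splitting the $N_{\sum}$-sum according to $N_{\sum} \lessgtr N_{k-1}$; the hypothesis $s \geq \al - 1$ is then used exactly to ensure positivity of $2(s-\al)+2$ in the resulting geometric sum.
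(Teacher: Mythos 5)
Your reduction to Lemma \ref{LEM:cubic_counting} is fine as far as it goes, but the crude treatment of the weight $\langle \sum_j n_j\rangle^{2(s-\al)}$ creates a genuine gap in exactly the regime the lemma is needed for. Your chain of inequalities yields the exponent $2(s-\al)_+ + (k-1)(3-2\al)$ for $N_{\max}$, while the target exponent is $(k-3)(3-2\al)+2s-6\al+6 = k(3-2\al)+2s-3$. Comparing the two, your bound matches the target only when $s\ge \al$ (where $2(s-\al)_+ = 2(s-\al)$ and you get equality); when $\al-1\le s<\al$ you bound the weight by $1$ and your exponent exceeds the target by $2(\al-s)>0$. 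The claimed sufficient condition $s\ge \al-\tfrac32$ is an arithmetic slip: the inequality $(k-1)(3-2\al)\le k(3-2\al)+2s-3$ is equivalent to $s\ge\al$, not $s\ge\al-\tfrac32$. This matters because every application in the paper sits in the range $s<\al$: e.g.\ $s=\al-1$, $k=3$ (quintic and septic counting) gives your bound $N_{\max}^{6-4\al}$ versus the target $N_{\max}^{4-4\al}$, a loss of $N_{\max}^{2}$; worse, $6-4\al>0$ for $\al<\tfrac32$, so your bound does not even decay, whereas the decay $N_{\max}^{-\theta}$ is precisely what Lemmas \ref{LEM:QC} and \ref{LEM:SepC} extract from this estimate. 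The loss is structural: when $s<\al$ the weight is largest where $n_{\textup{sum}}=\sum_j n_j$ is small, and the only compensation is that the counting there is smaller — information that Lemma \ref{LEM:cubic_counting} applied with the boxes $|n_j|\sim N_j$ alone cannot see.

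The repair is what you relegated to your final sentence, and it is essentially the paper's proof: dyadically localize $|n_{\textup{sum}}|\sim N_{\textup{sum}}\les N_k$ and apply the counting bound in the variables $(n_{\textup{sum}},n_1,\dots,n_{k-1})$ (Remark \ref{RMK:CC2}), so that the count carries $N_{\textup{sum}}^3\,(\max_{(2)}\{N_{\textup{sum}},N_1,\dots,N_{k-1}\})^{-1}\prod_{j=1}^{k-1}N_j^3$; the factor $N_{\textup{sum}}^{2s-2\al+3}$ then compensates the weight. One must also run a case analysis slightly finer than your proposed split $N_{\textup{sum}}\lessgtr N_{k-1}$: the paper distinguishes $\max_{(2)}\{N_{\textup{sum}},N_1,\dots,N_{k-1}\}\sim N_{k-1}$ from $\ll N_{k-1}$ (the latter forcing $N_{k-1}\sim N_k$), and in the second case further splits $N_{\textup{sum}}\les N_{k-2}$ versus $N_{k-2}\ll N_{\textup{sum}}\ll N_{k-1}$, since the identity of the second-largest scale changes the gain. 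The hypothesis $s\ge\al-1$ is indeed used where you guessed, to make $2s-2\al+2\ge 0$ so that the geometric sum over $N_{\textup{sum}}$ is dominated by its top term. So your "finer bound" remark points in the right direction, but it must be promoted from an aside to the actual argument; the crude main argument you propose does not prove the lemma for $\al-1\le s<\al$.
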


Before proving this lemma, let us mention that Lemma~\ref{LEM:CS} with $k = 3$ is important for establishing the regularity of the cubic stochastic object defined in \eqref{so4b} below. We observe that the upper bound in Lemma~\ref{LEM:CS} does not converge in the case $\al = 1$, and the assumption $\al > 1$ provides the essential smoothing that allows us to use much simpler solution ansatz compared to that in \cite{BDNY}.

\begin{proof}[Proof of Lemma~\ref{LEM:CS}]
By symmetry, we may assume that $N_1 \le \dots \le N_k$. Denote $n_{\textup{sum}} = \sum_{j = 1}^k n_j$.
We apply 
\eqref{CC2} in Remark \ref{RMK:CC2}
to get
\begin{align}
\begin{aligned}
 & \sum_{n_1,\dots,n_k\in \Z^3}  \bigg[ \bigg( \prod_{j=1}^{k} \ind_{\{|n_j| \sim N_j\}} \jbb{ n_j }^{-2}  \bigg) \bigg\langle \sum_{j = 1}^k n_j \bigg\rangle^{2(s-\al)}  \ind_{\{ |\kappa - \zeta|\leq 1\}}\bigg] \\
&\les  \sum_{N_{\textup{sum}} \les N_k} N_{\textup{sum}}^{2(s - \al)}  \bigg( \prod_{j=1}^{k} N_j^{-2\al} \bigg)\\ 
& \hphantom{XXX} \times \# \big\{ (n_1,\dots,n_k) \colon |n_{\textup{max}}| \sim N_{\textup{max}}, |n_1| \sim N_1, \dots, |n_k| \sim N_k, |\kappa - \zeta| \leq 1 \big\}\\
& \les  \prod_{j = 1}^k N_j^{-2 \al} \sum_{ N_{\textup{sum}} \les  N_k }  N_{\textup{sum}}^{2s-2 \al} 
  \big( \max_{(2)} ( N_{\textup{sum}},N_1,\dots,N_{k - 1} ) \big)^{-1} N_{\textup{sum}}^3 \prod_{j = 1}^{k - 1} N_j^3,
\end{aligned}
\label{CS1}
\end{align}

\noi
where $\max_{(2)}$ denotes the second largest element.
We note that $\max_{(2)} ( N_{\textup{sum}},N_1,\dots,N_{k-1} ) \les N_{k - 1}$.

We then distinguish two cases.
If $\max_{(2)} ( N_{\textup{sum}},N_1,\dots,N_{k-1} ) \sim N_{k - 1}$,
then \eqref{CS1} is bounded by
\begin{align}
\begin{split}
\prod_{j = 1}^{k - 2} N_j^{3 - 2\al} &\sum_{N_{\textup{sum}} \les N_k}  N_{\textup{sum}}^{2s - 2 \al + 3} N_{k - 1}^{2 - 2\al} 
N_k^{- 2\al} \\
&\les N_{k - 1}^{(k - 3)(3 - 2 \al) + 5 - 4 \al} N_k^{2s - 4 \al + 3} \\
&\les 
\begin{cases}
N_k^{(k - 3)(3 - 2 \al) + 2s - 8 \al + 8} & \text{if } \al \leq \frac{3k - 4}{2k - 2} \\
N_k^{(k - 3)(3 - 2 \al) + 2s - 4 \al + 3} & \text{if } \al > \frac{3k - 4}{2k - 2},
\end{cases}
\end{split}
\label{CS2}
\end{align}


If $\max_{(2)} ( N_{\textup{sum}},N_1,\dots,N_{k-1} ) \ll N_{k - 1}$,
then we have $\max ( N_{\textup{sum}}, N_1, \dots, N_{k - 2} ) \ll N_{k-1}$,
which implies that $N_{k - 1} \sim N_k$.
If furthermore $N_{\textup{sum}} \les N_{k - 2} \ll N_{k - 1}$, 
then \eqref{CS1} is bounded by
\begin{align}
\begin{split}
\prod_{j = 1}^{k - 3} N_j^{3 - 2 \al} &\sum_{N_{\textup{sum}} \les N_k}  N_{\textup{sum}}^{2s - 2 \al + 3} N_{k - 2}^{2 - 2 \al} N_{k - 1}^{3 - 2 \al} 
N_k^{-2 \al} \\
&\les N_{k - 2}^{(k - 3) (3 - 2 \al) + 2 - 2\al} N_k^{2s - 6 \al + 6} \\
&\les 
\begin{cases}
N_k^{(k - 3)(3 - 2 \al) + 2s - 8 \al + 8} & \text{if } \al \leq \frac{3k - 7}{2k - 4} \\
N_k^{(k - 3)(3 - 2 \al) + 2s - 6 \al + 6} & \text{if } \al > \frac{3k - 7}{2k - 4}.
\end{cases}
\end{split}
\label{CS3}
\end{align}

\noi
If $N_{k - 2} \ll  N_{\textup{sum}} \ll N_{k - 1}$,
then \eqref{CS1} is bounded by
\begin{align}
\begin{split}
\prod_{j = 1}^{k - 1} N_j^{3 - 2\al} \sum_{N_{\textup{sum}} \les N_k}  N_{\textup{sum}}^{2s - 2\al + 2} 
N_k^{-2 \al} \les
N_k^{(k - 3) (3 - 2 \al) + 2s - 8 \al + 8} 
\end{split}
\label{CS4}
\end{align}

\noi
given that $s \geq \al - 1$. Combining \eqref{CS2}, \eqref{CS3}, and \eqref{CS4}, we obtain the desired estimate.

\end{proof}

Next, we consider a general counting estimate involving a supremum, 
which will be used in the proof of the deterministic random tensor estimate. The estimate generalizes \cite[Lemma~4.22~(i)]{Bring2}.
\begin{lemma}
\label{tools:lem_sup}
Let $k \geq 3$ be an integer, $1 < \al \leq 2$, $N_1,\dots,N_k \in 2^\N$ be dyadic numbers, and $\zeta \in \R$.  
Let the phase function $\kappa$ be as in \eqref{kappa}.
Then, we have
\begin{align*}
\sup_{n \in \Z^3} & \# \bigg\{ (n_1,\dots,n_k)\colon |n_1|\sim N_1, \dots, |n_k|\sim N_k, n = \sum_{j = 1}^k n_j, |\kappa - \zeta|\leq 1 \bigg\}\\
&\les \big( \max ( N_1,\dots,N_k ) \big)^{-3} \big( \max_{(3)} ( N_1,\dots,N_k ) \big)^{-1} \prod_{j = 1}^{k } N_j^3 ,
\end{align*}

\noi
where $\max_{(3)}$ denotes the third largest element.
\end{lemma}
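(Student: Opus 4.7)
The plan is to adapt the proof of Lemma \ref{LEM:cubic_counting} to incorporate the constraint $\sum_{j=1}^k n_j = n$. Without loss of generality assume $N_1 \leq N_2 \leq \dots \leq N_k$, so that $\max\{N_j\} = N_k$ and $\max_{(3)}\{N_j\} = N_{k-2}$; the target bound becomes $\sup_n \#\{\dots\} \les N_{k-2}^{-1} \prod_{j=1}^{k-1} N_j^3$. Since $n_k = n - \sum_{j<k} n_j$ is determined by the other variables, I will count the tuples $(n_1,\dots,n_{k-1})$ satisfying $|n_j| \sim N_j$ for $j<k$, the additional ball constraint $|n - \sum_{j<k} n_j| \sim N_k$ (encoding $|n_k| \sim N_k$), and the phase constraint $|\wt\kappa - \zeta| \leq 1$, where $\wt\kappa$ denotes $\kappa$ evaluated at $n_k = n - \sum_{j<k} n_j$.

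First, I fix $(n_1,\dots,n_{k-2})$ and count admissible $n_{k-1}$. With $c' = n - \sum_{j=1}^{k-2} n_j$, the phase as a function of $n_{k-1}$ takes the form $\eps_{k-1}\jbb{n_{k-1}} + \eps_k \jbb{c' - n_{k-1}}$ plus a constant, subject to $|n_{k-1}| \sim N_{k-1}$ and $|c' - n_{k-1}| \sim N_k$. Applying Lemma \ref{LEM:two_balls} with $a = -c'$, $N = N_{k-1}$, $B = N_k$ yields the pointwise count $\#\{n_{k-1}\} \les \min(|c'|, N_{k-1})^{-1} N_{k-1}^3$ when $|c'| \geq 1$, and the trivial bound $\#\{n_{k-1}\} \les N_{k-1}^3$ in the degenerate case $c' = 0$, which corresponds to at most one choice of $n_{k-2}$ once the earlier variables are fixed.

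Next, I fix $(n_1,\dots,n_{k-3})$ and sum over $n_{k-2}$. Setting $d = n - \sum_{j \leq k-3} n_j$ and changing variables $m = d - n_{k-2}$ (so that $c' = m$ and $|n_{k-2}| \sim N_{k-2}$ translates to $|d-m| \sim N_{k-2}$), the task reduces to verifying
\begin{align*}
\sum_{\substack{m \in \Z^3 \setminus \{0\} \\ |d - m| \sim N_{k-2}}} \min(|m|, N_{k-1})^{-1} \les N_{k-2}^2.
\end{align*}
I will split into two cases: if $|d| \geq 2 N_{k-2}$ then on the shell $|m| \ges N_{k-2}$, so (using $N_{k-2} \leq N_{k-1}$) the sum is $\les N_{k-2}^{-1} \cdot N_{k-2}^3 = N_{k-2}^2$; if $|d| < 2 N_{k-2}$ then $|m| \leq 3 N_{k-2}$, and the integrability $\sum_{1 \leq |m| \leq R} |m|^{-1} \les R^2$ with $R = 3 N_{k-2}$ yields the same bound. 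Combining with the $N_{k-1}^3$ gain from the previous step produces $\les N_{k-2}^2 N_{k-1}^3$ for the sum over $(n_{k-2}, n_{k-1})$, and summing the remaining variables $(n_1,\dots,n_{k-3})$ freely by $\prod_{j=1}^{k-3} N_j^3$ completes the bound $N_{k-2}^{-1} \prod_{j<k} N_j^3 = N_k^{-3} N_{k-2}^{-1} \prod_{j=1}^k N_j^3$.

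The main obstacle is the regime where $|c'|$ is small (equivalently, where $n_{k-2}$ is close to $d$), since Lemma \ref{LEM:two_balls} degenerates there, giving only the weak pointwise bound $|c'|^{-1} N_{k-1}^3$. The crucial point is that in this regime $|c'| = |m|$ varies over a ball of radius $\sim N_{k-2}$, and integrating $|m|^{-1}$ against this ball produces a factor of $N_{k-2}^2$ rather than $N_{k-1}^2$; this is precisely what converts the naive $N_{k-1}^{-1}$ gain into the sharper $N_{k-2}^{-1} = (\max_{(3)})^{-1}$ gain that appears in the claimed estimate.
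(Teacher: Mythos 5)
Your proof is correct and follows essentially the same route as the paper: eliminate $n_k$ via the constraint $n=\sum_j n_j$, apply the counting lemma in $n_{k-1}$ to get the factor $\min(\jb{n-\sum_{j\le k-2}n_j},N_{k-1})^{-1}N_{k-1}^3$, then gain $N_{k-2}^2$ from the sum over $n_{k-2}$ and sum the remaining variables trivially. The only (immaterial) deviations are that you invoke Lemma \ref{LEM:two_balls} where the paper uses Lemma \ref{LEM:BC_fwave}, and that you write out explicitly the $n_{k-2}$-summation that the paper dispatches in one sentence.
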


\begin{proof}
By symmetry, we assume without loss of generality that $N_1 \leq \cdots \leq N_k$. 
Using Lemma~\ref{LEM:BC_fwave}
for the sum in $n_{k - 1} \in \Z^3$, we have that 
\begin{align*}
 \# \bigg\{ & (n_1,\dots,n_k)\colon |n_1|\sim N_1, \cdots, |n_k|\sim N_k, n= \sum_{j = 1}^k n_j, |\kappa - \zeta|\leq 1\bigg\}\\
& =  \# \bigg\{ (n_1,\dots,n_{k - 1})\colon  |n_1|\sim N_1, \cdots |n_{k - 1}|\sim N_{k - 1}, 
\\
& \hphantom{XXXXXX} \bigg| \eps_{\textup{sum}} \jbb{ n }  + \sum_{j = 1}^{k - 1} \eps_{j} \jbb{ n_j }  + \eps_{k} \bigg\llbracket n - \sum_{j = 1}^{k - 1} n_j \bigg\rrbracket - \zeta \bigg| \leq 1\bigg\} \\
&\les  \sum_{n_1, \dots, n_{k - 2} \in \Z^3} \min \bigg( \bigg\langle n - \sum_{j = 1}^{k - 2} n_j \bigg\rangle, N_{k - 1} \bigg)^{-1} N_{k - 1}^{3} \\
&\les  N_{k - 2}^2 N_{k - 1}^3 \prod_{j = 1}^{k - 3} N_j^3.
\end{align*}

\noi
In the last inequality above, if $\jb{n - \sum_{j = 1}^{k - 2} n_j}$ is the smaller one, then we can fix $n_1, \dots, n_{k - 3}$ and sum over $n_{k - 2}$ to obtain the desired bound. Thus, we finish the proof.
\end{proof}

\subsection{Quintic counting estimates}
For our global well-posedness result, we will also need to establish the regularity of a quintic stochastic term, which requires some more counting estimates. We first consider the following non-resonant quintic sum estimate similar to \cite[Lemma~4.27]{Bring2}. Below, we use notation $n_{12\cdots k} = n_1 + n_2 + \cdots + n_k$ from \cite{Bring2}.

\begin{lemma}
\label{LEM:QC}
Let $1 < \al \leq \frac 32$, $s\leq \al - \frac 12$, and $N_1,N_2,N_3,N_4,N_5 \in 2^\N$ be dyadic numbers. 
Furthermore, we define three phase functions by 
\begin{align}
\kappa_2 (n_1,n_2,n_3) &\overset{\textup{def}}{=} 
 \eps_{123} \jbb{ n_{123} } + \eps_1 \jbb{ n_{1} } + \eps_2 \jbb{ n_{2} } +  \eps_3 \jbb{ n_{3} }, \label{defk2} \\ 
\kappa_3(n_1,\hdots,n_5)
 &\overset{\textup{def}}{=} \eps_{12345} \jbb{ n_{12345} } + \eps_{123} \jbb{ n_{123} } + \eps_4 \jbb{ n_4 } + \eps_5 \jbb{ n_5 }, \notag \\
\kappa_4 (n_1,\hdots,n_5) &\overset{\textup{def}}{=}  \eps_{12345} \jbb{ n_{12345} } + \sum_{j=1}^5 \eps_j \jbb{ n_j } .  \notag
\end{align} 

\noi
Then, there exists $\theta >0$ such that 
\begin{align*}
\sup_{\zeta ,\zeta' \in \R } & \sum_{n_1,\hdots,n_5\in \Z ^3} \bigg[ \Big( \prod_{j=1}^{5} \ind_{\{|n_j|\sim N_j\}} \Big) \langle n_{12345} \rangle^{2(s-\al)} 
\jbb{ n_{123} }^{-2}  \Big( \prod_{j=1}^5 \jbb{ n_j }^{-2} \Big) \\
& \times \ind_{\{ |\kappa_2 - \zeta|\leq 1 \}} \cdot \Big( \ind_{\{ |\kappa_3 - \zeta'|\leq 1\}} +  \ind_{\{ |\kappa_4 - \zeta'|\leq 1\}} \Big) \bigg] \les  N_{\textup{max}}^{- \theta},
\end{align*}

\noi
where $N_{\textup{max}} = \max ( N_1, \dots, N_5 )$.
\end{lemma}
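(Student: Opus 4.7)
The plan is to exploit both phase constraints via successive applications of the cubic counting estimates from Section \ref{SEC:count}.

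First, I reduce the $\kappa_4$ case to the $\kappa_3$ case. Observe the identity
\begin{align*}
\kappa_4 - \kappa_2 = \eps_{12345}\jbb{n_{12345}} - \eps_{123}\jbb{n_{123}} + \eps_4\jbb{n_4} + \eps_5\jbb{n_5},
\end{align*}
which is of exactly the form of $\kappa_3$ (with one sign possibly flipped). Since the target bound must hold uniformly over all choices of signs, the triangle inequality combined with $|\kappa_2 - \zeta|\le 1$ converts $|\kappa_4 - \zeta'|\le 1$ into $|\widetilde\kappa_3 - (\zeta' - \zeta)|\le 2$, so it suffices to handle the $\kappa_3$ situation.

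Next, I introduce the auxiliary variable $m := n_{123}$ and dyadically decompose $|m|\sim M$. The key structural observation is that $\kappa_2$ depends only on $(n_1, n_2, n_3)$ (with $m$ a parameter) while $\kappa_3$ depends only on $(m, n_4, n_5)$, so the sum decouples as
\begin{align*}
S \les \sum_M \Big( \sup_{|m|\sim M} T(m) \Big) \cdot \mathrm{Out}(M),
\end{align*}
where $T(m)$ is the inner cubic sum over $(n_1, n_2, n_3)$ with $n_{123} = m$ under the $\kappa_2$ constraint, and $\mathrm{Out}(M)$ collects the outer cubic sum over $(m, n_4, n_5)$ with $|m|\sim M$ under the $\kappa_3$ constraint, together with the weights $\jbb{m}^{-2}\jbb{n_4}^{-2}\jbb{n_5}^{-2}\jb{m+n_4+n_5}^{2(s-\al)}$.

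For $\mathrm{Out}(M)$, note that $\kappa_3$ is exactly of the form \eqref{kappa} with $k = 3$ in the variables $(m, n_4, n_5)$. Lemma \ref{LEM:CS} then yields $\mathrm{Out}(M) \les \max(M, N_4, N_5)^{2s - 6\al + 6}$ whenever $s \geq \al - 1$; for $s < \al - 1$ I reduce to this borderline case via the pointwise inequality $\jb{\cdot}^{2(s-\al)} \leq \jb{\cdot}^{-2}$. For $T(m)$, I assume WLOG $N_1 \le N_2 \le N_3$, fix $n_1$, and use $n_3 = m - n_1 - n_2$ to recast the $\kappa_2$ constraint as a bi-frequency phase condition $|\eps_2\jbb{n_2} + \eps_3\jbb{(n_1-m)+n_2} - \tilde\zeta|\le 1$ with the ball restrictions $|n_2|\sim N_2$, $|(n_1-m)+n_2|\sim N_3$. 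Lemma \ref{LEM:two_balls} bounds the count of such $n_2$ by $\min(|n_1 - m|, N_2, N_3)^{-1}\min(N_2, N_3)^3$, and summing in $n_1$ gives $T(m)\les \prod_{j=1}^3 N_j^{-2\al}(N_2^5 + N_1^3 N_2^2)$, a strict improvement over the unconstrained count by at least a factor of $\max(N_1, N_2, N_3)^{-1}$.

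Multiplying the inner and outer bounds and summing over dyadic $M$ via $\sum_{|m|\sim M} 1 \les M^3$ leaves a sum over dyadic scales of explicit powers of $N_1, \ldots, N_5, M$. The main obstacle is the careful case analysis: one must verify that, regardless of which of $N_1, \ldots, N_5$ (or $M$) attains $N_{\max}$, the net exponent of $N_{\max}$ is strictly negative. The hypothesis $s \leq \al - \frac 12$ is precisely what ensures—after combining the negative contribution of $\jb{n_{12345}}^{2(s-\al)}$ with the counting gains from both the $\kappa_2$ and the $\kappa_3$ constraints, against the positive contributions $3 - 2\al$ arising from the unconstrained ball counts—that a strictly negative net exponent $-\theta < 0$ survives uniformly in every subcase, yielding the desired decay $N_{\max}^{-\theta}$.
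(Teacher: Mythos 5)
There is a genuine gap, and it sits exactly at the step you leave unverified. Your decomposition into an inner sum $T(m)$ over $(n_1,n_2,n_3)$ with $n_{123}=m$ and an outer sum $\mathrm{Out}(M)$ over $(m,n_4,n_5)$ is fine, and the reduction of the $\kappa_4$ case to a $\kappa_3$-type constraint via $\kappa_4-\kappa_2$ is valid (though unnecessary: both phases have the same structure in $n_4$, which is why the paper treats them identically). The problem is that you then estimate $\mathrm{Out}(M)$ by a black-box application of Lemma \ref{LEM:CS}, giving $\max(M,N_4,N_5)^{2s-6\al+6}$, and assert that the case analysis closes under $s\le \al-\tfrac12$. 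It does not: at $s=\al-\tfrac12$ the exponent is $5-4\al$, which is nonnegative for $1<\al\le \tfrac54$. Concretely, take $N_1\sim N_2\sim N_3\sim N_5\sim M\sim 1$ and $N_4=N_{\textup{max}}$ large; then your inner bound gives $\sup_{|m|\sim M}T(m)\les 1$ and your outer bound gives only $N_{\textup{max}}^{5-4\al}$, so the product is not $\les N_{\textup{max}}^{-\theta}$ on part of the claimed range $1<\al\le\tfrac32$. (The lemma itself is still true in this configuration—the weight $\jbb{n_4}^{-2}\jb{n_{12345}}^{2(s-\al)}\sim N_4^{2s-4\al}$ against a count $\les N_4^3$ gives $N_4^{2-2\al}$—but that information is destroyed once you pass to the worst-case exponent of Lemma \ref{LEM:CS} for the outer variables.) In addition, your parenthetical claim that the inner constrained count improves on the unconstrained one by a factor $\max(N_1,N_2,N_3)^{-1}$ is false: with $n_{123}=m$ fixed the gain from the phase is only of size $\min(N_1,N_2,N_3)^{-1}$ in the relevant sense (compare Lemma \ref{tools:lem_sup}), as your own computation $N_1^3N_2^2+N_2^3\min(N_1,N_2)^2$ already shows.

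The paper's proof avoids exactly this loss by never applying Lemma \ref{LEM:CS} to the outer variables. It sums in $n_4$ with the two-ball estimate (Lemma \ref{LEM:two_balls}), keeping the factor $\min(N_{12345},N_{1235},N_4)^{-1}\min(N_4,N_{12345})^3$, sums $n_5$ trivially, retains the weight $N_{12345}^{2(s-\al)}$ explicitly, and then uses the frequency-scale estimate Lemma \ref{LEM:FS} to control the ratio of min-factors by $N_4N_{12345}$; Lemma \ref{LEM:CS} is used only on the inner $(n_1,n_2,n_3)$ sum with $s=\al-1$, where its exponent $4-4\al$ is genuinely negative. This produces $N_{12345}^{2s-2\al+1}N_4^{2-2\al}N_5^{2-2\al}\max(N_1,N_2,N_3)^{-\theta}$, in which every factor is favorable precisely because $s\le\al-\tfrac12$ and $\al>1$. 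To repair your argument you would have to redo the outer sum at this level of precision (or prove a sharper, configuration-dependent version of Lemma \ref{LEM:CS}); as written, the asserted "net exponent strictly negative in every subcase" is not available from the bounds you have assembled.
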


Before proving Lemma \ref{LEM:QC},
let us recall the following frequency-scale estimate from \cite[Lemma A.1]{Bring2}.
\begin{lemma}
\label{LEM:FS}
Let $N_4,N_5,N_{1235},N_{12345} \ge 1$ and 
\begin{equation*}
\ind_{\{ |n_4|\sim N_4 \}} \cdot \ind_{\{ |n_5|\sim N_5 \}} \cdot \ind_{\{ |n_{1235}|\sim N_{1235}\}}\cdot \ind_{\{ |n_{12345}|\sim N_{12345} \}} \not \equiv 0. 
\end{equation*}
Then, it holds that
\begin{equation*}
 \frac{\min(N_4,N_{12345})^2  \min(N_5,N_{1235})}{\min(N_{12345},N_{1235},N_4) } \les  N_4 \cdot N_{12345}. 
\end{equation*}
\end{lemma}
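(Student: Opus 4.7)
The statement is an elementary inequality about the four dyadic scales $N_4, N_5, N_{1235}, N_{12345}$, and the only structural input is the algebraic identity $n_{12345} = n_{1235} + n_4$. My plan is to first extract, from this identity, a comparability relation among the three scales $N_4, N_{1235}, N_{12345}$, and then do a short case analysis. The role of $N_5$ is marginal: it enters only through the harmless bound $\min(N_5, N_{1235}) \le N_{1235}$.

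\textbf{Step 1: Triangle comparability of $(N_4, N_{1235}, N_{12345})$.} Since the joint indicator does not vanish identically, there exist $n_1,\ldots,n_5\in\Z^3$ realizing the four dyadic conditions. From $n_4 = n_{12345} - n_{1235}$ together with the two obvious rearrangements, the triangle inequality gives
\begin{align*}
N_{12345} \lesssim N_{1235} + N_4, \qquad N_{1235} \lesssim N_{12345} + N_4, \qquad N_4 \lesssim N_{12345} + N_{1235}.
\end{align*}
In particular, if we order $\{N_4, N_{1235}, N_{12345}\}$ as $M_1 \ge M_2 \ge M_3$, then $M_1 \lesssim M_2 + M_3 \lesssim M_2$, so $M_1 \sim M_2$; that is, the two largest among these three scales are comparable.

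\textbf{Step 2: Case analysis depending on where the minimum sits.} Set $x = N_4$, $y = N_{12345}$. Trivially $\min(N_5, N_{1235}) \le N_{1235}$, so it suffices to bound
\begin{align*}
\frac{\min(x,y)^2\, N_{1235}}{\min(x, y, N_{1235})} \lesssim xy.
\end{align*}
I split into two cases. If $N_{1235} \le \min(x,y)$, the fraction collapses to $\min(x,y)^2 \le xy$ and we are done. Otherwise $N_{1235} > \min(x,y)$, so $\min(x,y,N_{1235}) = \min(x,y)$ and the left-hand side equals $\min(x,y)\, N_{1235}$. In this case $N_{1235}$ is not the smallest of the three comparable scales, hence by Step 1 it is comparable to the largest of $\{x, y\}$, i.e.\ $N_{1235} \sim \max(x,y)$. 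Therefore $\min(x,y)\, N_{1235} \sim \min(x,y)\max(x,y) = xy$, which is exactly the desired bound.

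\textbf{Expected obstacle.} There is no real obstacle; the only place one has to be careful is checking Step 1 when $N_{1235}$ is the unique maximum (the bound $N_{1235} \lesssim N_{12345} + N_4$ then forces $\max(N_4, N_{12345}) \sim N_{1235}$, which is exactly what is used in the second case of Step 2). Once this comparability is in place, the rest is bookkeeping among three dyadic scales.
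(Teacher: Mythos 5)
Your proof is correct, and the case analysis is clean. One remark on context first: the paper itself does not prove Lemma~\ref{LEM:FS}; it is cited verbatim from \cite[Lemma A.1]{Bring2} with no argument reproduced, so there is no internal proof to compare against.

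On the substance: the two ingredients you use are exactly what the lemma needs. The identity $n_{12345}=n_{1235}+n_4$ gives, via the triangle inequality (together with the convention $N\geq 1$, which handles the degenerate $|n|=0$ case), the usual dyadic comparability of the two largest scales among $\{N_4,N_{1235},N_{12345}\}$. From there, the bound $\min(N_5,N_{1235})\leq N_{1235}$ reduces the claim to $\min(N_4,N_{12345})^2\,N_{1235}/\min(N_4,N_{12345},N_{1235})\lesssim N_4N_{12345}$, and your dichotomy on whether $N_{1235}$ is the smallest of the three settles it: if it is, the ratio is $\min(N_4,N_{12345})^2\leq N_4N_{12345}$; if not, the two-largest comparability forces $N_{1235}\sim\max(N_4,N_{12345})$, and the ratio is $\min(N_4,N_{12345})\,N_{1235}\sim N_4N_{12345}$. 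No gap.
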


Now we are ready to prove 
 Lemma \ref{LEM:QC}.

\begin{proof}[Proof of Lemma \ref{LEM:QC}:]
Let $\zeta, \zeta' \in \R$ be arbitrary. 
Let $N_{12345}$ and $N_{1235}$ 
be the dyadic size of $n_{12345}$ and $n_{1235}$ respectively. 
Using Lemma \ref{LEM:two_balls} for the sum in $n_4 \in \Z^3$ 
and then summing in $n_5 \in \Z^3$, 
we obtain that 
\begin{align*}
& \sum_{n_1,\hdots,n_5\in \Z^3} \bigg[ \Big( \prod_{j=1}^{5} 
\ind_{\{|n_j|\sim N_j\}} \Big) \ind_{\{ |n_{12345} | \sim N_{12345} \}} \ind_{\{ |n_{1235}| \sim N_{1235} \}} \langle n_{12345} \rangle^{2(s-\al)}  \\
&\hphantom{XX} \times 
\jbb{ n_{123} }^{-2} \Big( \prod_{j=1}^5 \jbb{ n_j }^{-2} \Big)  \ind_{\{ |\kappa_2 - \zeta|\leq 1\}} \cdot \Big( \ind_{\{ |\kappa_3 - \zeta'|\leq 1\}} +  \ind_{\{ |\kappa_4 - \zeta'|\leq 1\}} \Big)  \bigg] \allowdisplaybreaks[3] \\
&\les  N_{12345}^{2(s-\al)}  \min(N_{12345},N_{1235},N_4)^{-1} \min(N_4,N_{12345})^{3} N_4^{-2 \al} \\
&\hphantom{XX}\times \sum_{n_1,n_2,n_3,n_5\in \Z^3}  \Big( \prod_{j=1,2,3,5} \ind_{\{|n_j|\sim N_j\}} \jbb{n_j}^{-2} \Big) \ind_{\{ |n_{1235}| \sim N_{1235} \}} 
\jb{ n_{123} }^{-2}  \ind_{\{ |\kappa_2 - \zeta|\leq 1\}}\allowdisplaybreaks[3] \\
&\les   N_{12345}^{2(s-\al)}   \min(N_{12345},N_{1235},N_4)^{-1} \min(N_4,N_{12345})^{3}  \min(N_5,N_{1235})^3 \\
 &\hphantom{XX} \times  (N_4 N_5)^{-2 \al} \sum_{n_1,n_2,n_3 \in \Z^3}  \Big( \prod_{j=1}^3 \ind_{\{|n_j|\sim N_j\}} \jbb{n_j}^{-2} \Big) 
\jb{ n_{123} }^{-2} \ind_{\{ |\kappa_2-\zeta|\leq 1\}}. 
\end{align*}

\noi
Using Lemma \ref{LEM:CS} with $k = 3$ and $s = \al - 1$ to bound the remaining sum in $n_3$, $n_4$, and $n_5$ and applying Lemma \ref{LEM:FS}, we obtain a bound of the total contribution by
\begin{align*}
 &N_{12345}^{2(s-\al)}(N_4 N_5)^{-2 \al}  \frac{\min(N_4,N_{12345})^{3}  \min(N_5,N_{1235})^3}{\min(N_{12345},N_{1235},N_4) } 
 \max(N_1,N_2,N_3)^{-\theta} \\
&\les  N_{12345}^{2s - 2\al + 1} \min(N_4,N_{12345}) \min(N_5,N_{1235})^2 N_4^{1 - 2 \al} N_5^{-2\al} \max(N_1,N_2,N_3)^{-\theta}\\
&\les  N_{12345}^{2s - 2\al + 1} N_4^{2-2\al} N_5^{2 - 2\al} \max(N_1,N_2,N_3)^{-\theta}
\end{align*}
for some $\ta > 0$. Due to our condition on $s$, this is acceptable
by choosing $\theta_1 = \min \{2\al-2,  \theta\} - 2\eps$.
\end{proof}

We will also need the following basic resonance estimate similar to \cite[Lemma~4.25]{Bring2}. 

\begin{lemma}
\label{LEM:BR}
Let $1 < \al \leq 2$, $n_1,n_2 \in \Z ^3$ be arbitrary, $N_3 \in 2^\N$ be a dyadic number, and $\kappa = \kappa_2$ be as defined in \eqref{defk2}. Then, there exists $\theta >0$ such that 
 \begin{align*} 
  \sum_{n' \in \Z } \sum_{n_3 \in \Z ^3} \langle n' \rangle^{-1} \ind_{\{|n_{3}|\sim N_{3} \}} 
 \jbb{ n_{123} }^{-1} \jbb{ n_{3} }^{-2} \ind_{\{ |\kappa - n'| \leq 1 \}} \les  N_3^{-\theta} \langle n_{12} \rangle^{-\al}. 
 \end{align*}
\end{lemma}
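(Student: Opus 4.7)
The plan is to integrate out $n'$ first and then estimate the remaining sum over $n_3$ by dyadic decompositions in $|n_{123}|$ and the modulation $|\kappa|$, with Lemma~\ref{LEM:two_balls} providing the key counting input. For the $n'$ sum, the unit-interval constraint forces $n' \in \Z$ to lie in an $O(1)$-neighbourhood of $\kappa$, so $\jb{n'} \sim \jb{\kappa}$ and
\begin{equation*}
\sum_{n' \in \Z} \jb{n'}^{-1} \ind_{\{|\kappa - n'| \leq 1\}} \les \jb{\kappa}^{-1}.
\end{equation*}
Setting $A := \jb{n_{12}}$ and using $\jbb{m} \sim \jb{m}^{\al}$, it therefore remains to prove
\begin{equation*}
S := \sum_{n_3 \in \Z^3} \ind_{\{|n_3| \sim N_3\}}\, \jb{n_{123}}^{-\al}\, \jb{n_3}^{-2\al}\, \jb{\kappa}^{-1} \les N_3^{-\ta} A^{-\al}.
\end{equation*}

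I would then dyadically decompose $|n_{123}| \sim B$ and $|\kappa| \sim M$, noting that $B \les \max(A, N_3)$ and $1 \leq M \les \max(A, N_3)^{\al}$. Viewed as a function of $n_3$, the phase takes the form $\eps_{123} \jbb{n_{12} + n_3} + \eps_3 \jbb{n_3} + \textup{const}$, so Lemma~\ref{LEM:two_balls} with shift $a = n_{12}$, combined with the trivial bound coming from the two-annulus intersection, yields
\begin{equation*}
\#\big\{ n_3 : |n_3| \sim N_3,\; |n_{123}| \sim B,\; |\kappa| \sim M \big\} \les \min\big( M \min(A, B, N_3)^{-1},\, 1 \big)\, \min(B, N_3)^3.
\end{equation*}
Summing in $M$ costs only a $\log(\max(A, N_3))$ factor, so matters reduce to verifying
\begin{equation*}
\sum_{B \textup{ dyadic}} B^{-\al}\, N_3^{-2\al}\, \min(A, B, N_3)^{-1}\, \min(B, N_3)^3 \les A^{-\al} N_3^{2 - 2\al}.
\end{equation*}
I split into three regimes. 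If $A \ll N_3$, only $B \sim N_3$ contributes (since $|n_{123}| = |n_{12} + n_3| \sim N_3$), giving $A^{-1} N_3^{3 - 3\al} \les A^{-\al} N_3^{2 - 2\al}$ after using $A^{\al - 1} \leq N_3^{\al - 1}$. If $A \gg N_3$, only $B \sim A$ contributes and the bound $A^{-\al} N_3^{2 - 2\al}$ is immediate. If $A \sim N_3$, then $B$ ranges freely over $1 \leq B \les N_3$, the summand becomes $N_3^{-2\al} B^{2 - \al}$, and since $\al < 2$ this sums to $N_3^{2 - 3\al} \sim A^{-\al} N_3^{2 - 2\al}$. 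Finally, choosing any $0 < \ta < 2(\al - 1)$ absorbs the logarithm and yields the desired estimate.

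The main obstacle is the intermediate regime $A \sim N_3$, where Lemma~\ref{LEM:two_balls} offers no gain from either ball radius; the $B$-sum must then be handled by balancing the modulation factor $\min(A, B, N_3)^{-1} = B^{-1}$ against the frequency weight $B^{-\al}$ and the trivial count $B^3$, producing the summable tail $B^{2 - \al}$. The resulting positive gain is exactly $2(\al - 1)$, reflecting the essential role of the assumption $\al > 1$ in the lemma and pinning down the admissible range of $\ta$.
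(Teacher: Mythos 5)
Your strategy is essentially the paper's: once the $n'$-variable is dealt with, you arrive at exactly the dyadic-in-$|n_{123}|$ sum that the paper estimates, you invoke Lemma \ref{LEM:two_balls} with shift $a=n_{12}$ (legitimate, since the fixed constant $\eps_1\jbb{n_1}+\eps_2\jbb{n_2}$ is absorbed into the supremum over $\zeta$), and your three regimes $A\ll N_3$, $A\gg N_3$, $A\sim N_3$ are just a reparametrization of the paper's cases $N_{123}\ll N_3$, $N_{123}\gg N_3$, $N_{123}\sim N_3$, yielding the same gain $\ta<2(\al-1)$. The only real difference is the handling of the $n'$-weight: the paper counts the non-trivial values of $n'$ and takes a supremum, accepting a logarithmic loss up front, whereas you keep the pointwise weight $\jb{\kappa}^{-1}$ and decompose dyadically in the modulation $|\kappa|\sim M$; this is a perfectly good, slightly cleaner variant.

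Two points in your $M$-bookkeeping do need repair, though. First, the assertion $1\le M\les\max(A,N_3)^{\al}$ is false: $\kappa_2$ contains $\eps_1\jbb{n_1}+\eps_2\jbb{n_2}$, which depends on $n_1,n_2$ individually rather than on $n_{12}$, so $|\kappa|$ can be arbitrarily large even when $\jb{n_{12}}$ and $N_3$ are bounded (take $n_2=-n_1$ with $|n_1|$ huge and $\eps_1=\eps_2$). Second, and more seriously, a loss of $\log(\max(A,N_3))$ cannot be absorbed at the end: in your regime $A\gg N_3$ the bound you produce is exactly $A^{-\al}N_3^{2-2\al}$, with no slack in the power of $A$, so an extra $\log A$ is fatal when $A$ is super-exponentially large in $N_3$, and $N_3^{-\ta}$ cannot eat it. Fortunately both defects vanish if you simply sum your own displayed count over dyadic $M\ge 1$ with no a priori restriction on its range: $\sum_{M} M^{-1}\min\big(M\min(A,B,N_3)^{-1},1\big)\les \min(A,B,N_3)^{-1}\log\big(2+\min(A,B,N_3)\big)$, and since $\min(A,B,N_3)\le N_3$ the logarithmic loss is only $\log(2+N_3)$, which any $\ta<2(\al-1)$ absorbs. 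With that correction (and noting that at the endpoint $\al=2$, which the lemma allows, your intermediate-regime sum $\sum_{B\les N_3}B^{2-\al}$ gives an extra harmless $\log N_3$ rather than converging geometrically), the argument is complete and recovers the paper's conclusion.
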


\begin{proof}
Since $n_1,n_2 \in \Z^3$ are fixed, there are at most $\sim N_3^\al$ non-trivial choices of $n' \in \Z$. By losing a log factor, 
it suffices to prove 
\begin{equation*}
\sup_{n' \in \Z} \sum_{n_3 \in \Z^3} \ind_{\{|n_{3}|\sim N_{3} \}} 
 \jbb{ n_{123} }^{-1} \jbb{ n_{3} }^{-2} \ind_{\{ |\kappa - n'| \leq 1 \}} \les  N_3^{- \theta' }  \langle n_{12} \rangle^{-\al},
\end{equation*}

\noi
for some $\theta' > 0$.
By inserting an additional dyadic localization and applying 
Lemma \ref{LEM:two_balls}, we obtain that 
\begin{equation}
\begin{aligned}
\sum_{n_3 \in \Z^3} &\ind_{\{|n_{3}|\sim N_{3} \}} 
 \jbb{ n_{123} }^{- 1} \jbb{ n_{3} }^{-2} \ind_{\{ |\kappa - n'| \leq 1 \}} \\
 &\leq N_3^{-2\al} \sum_{N_{123}\geq 1}  N_{123}^{-\al}  \sum_{n_3 \in \Z^3}  \ind_{\{ |n_{123}|\sim N_{123} \}} \ind_{\{|n_{3}|\sim N_{3} \}} 
\ind_{\{ |\kappa - n'| \leq 1 \}}\\
 &\les \sum_{N_{123}\geq 1}   N_{123}^{-\al} N_3^{-2\al} \min(N_{123},\jb{n_{12}},N_3)^{-1} \min(N_{123},N_3)^{3}. 
\end{aligned}
 \label{BR1}
\end{equation}

\noi
We now distinguish three cases.
If $N_{123} \ll N_3$, then $\jb{n_{12}} \sim N_3$. Thus, we can bound the right-hand-side of \eqref{BR1} by
\begin{equation*}
\begin{split}
 \sum_{N_{123} \ll N_3}  N_{123}^{2 - \al} N_3^{-2\al} \les  \jb{n_{12}}^{-\al} N_{3}^{2 - 2 \al}. 
 \end{split}
\end{equation*}

\noi
If $N_{123} \sim N_3$, then $\jb{n_{12}} \les  N_{123} \sim N_3 $. Thus, we can bound the right-hand-side of \eqref{BR1} by
\begin{equation*}
N_3^{3 - 3 \al} \jb{n_{12}}^{-1} \les  \jb{n_{12}}^{-\al} N_{3}^{2 - 2 \al}. 
\end{equation*}

\noi
Finally, if $N_{123} \gg N_3$, then $N_{123}\sim \jb{n_{12}} \gg N_3$. Thus, we can bound the right-hand-side of \eqref{BR1} by
\begin{equation*}
\begin{split}
\sum_{N_{123} \sim \jb{n_{12}}}  N_{123}^{-\al} N_3^{-2\al} \min(N_{123},\jb{n_{12}},N_3)^{-1} \min(N_{123},N_3)^{3} \les  \jb{n_{12}}^{-\al} N_3^{2 - 2\al}. 
 \end{split}
\end{equation*}

\noi
Thus, we complete the proof by setting $\theta' = 2 \al - 2$. 
\end{proof}

We now consider the following double-resonance quintic sum estimate similar to \cite[Lemma~4.29]{Bring2}.

\begin{lemma}
\label{LEM:DRC}
Let $1 < \al \leq 2$, $N_1,N_2,N_3 \in 2^\N$ be dyadic numbers, and $\kappa = \kappa_2$ be as defined in \eqref{defk2}. Then, it holds that  
\begin{align*}
\begin{split}
\sup_{\zeta \in \R} &\sup_{|n_1|\sim N_1} \sum_{n_2,n_3\in \Z ^3} \bigg[ \Big( \prod_{j=2}^3 \ind_{\{ |n_j|\sim N_j \}} \Big) \jbb{ n_{123} }^{- 1} \jbb{ n_2 }^{-2} \jbb{ n_3 }^{-2}   \ind_{\{|\kappa - \zeta| \le 1 \}} \bigg] \\
&\les  N_2^{2 - 2\al + \eps} N_3^{2 - 2\al + \eps},  
\end{split}
\end{align*}
for arbitrarily small $\eps > 0$.
\end{lemma}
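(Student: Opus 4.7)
The plan is to perform the $n_3$-sum first (for fixed $n_1, n_2, \zeta$), obtaining an estimate of the form $N_3^{2-2\al}\jb{n_{12}}^{-\al}$, and then sum over $n_2$ via an elementary convolution estimate. The key observation is that the $n_3$-sum mirrors the one carried out in the proof of Lemma \ref{LEM:BR}, except that now the constraint $|\kappa - \zeta| \leq 1$ is for a fixed $\zeta \in \R$ instead of being averaged against the weight $\jb{n'}^{-1}$; fortunately, the proof of Lemma \ref{LEM:BR} actually produces the desired supremum-in-$\zeta$ bound, as it reduces to the sup-version before inserting the $n'$-sum.

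For the first step, I would insert the dyadic decomposition $\jbb{n_{123}}^{-1} \les \sum_{N_{123} \geq 1} N_{123}^{-\al} \ind_{\{|n_{123}| \sim N_{123}\}}$ and apply Lemma \ref{LEM:two_balls} with $a = n_{12}$ and balls at scales $N_3$ and $N_{123}$, yielding
\begin{align*}
\sum_{|n_3| \sim N_3} \jbb{n_{123}}^{-1} \jbb{n_3}^{-2} \ind_{\{|\kappa - \zeta| \leq 1\}} \les N_3^{-2\al} \sum_{N_{123} \geq 1} N_{123}^{-\al} \min(\jb{n_{12}}, N_{123}, N_3)^{-1} \min(N_3, N_{123})^3.
\end{align*}
Splitting into the three cases $N_{123} \ll N_3$ (so $\jb{n_{12}} \sim N_3$), $N_{123} \sim N_3$, and $N_{123} \gg N_3$ (so $\jb{n_{12}} \sim N_{123}$), the geometric sum in $N_{123}$ is in each case bounded by $N_3^{2-2\al} \jb{n_{12}}^{-\al}$, exactly as in the end of the proof of Lemma \ref{LEM:BR}.

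It then remains to evaluate
\begin{align*}
N_3^{2-2\al} \sum_{|n_2| \sim N_2} \jb{n_2}^{-2\al} \jb{n_1 + n_2}^{-\al}.
\end{align*}
Splitting into $N_2 \ll N_1$ (where $|n_1 + n_2| \sim N_1$ and one uses $N_1 \geq N_2$) and $N_2 \gtrsim N_1$ (where the convolution estimate in Lemma \ref{LEM:SUM}(i) applies), one obtains $\sum_{|n_2| \sim N_2} \jb{n_1 + n_2}^{-\al} \les N_2^{3-\al}$ uniformly in $|n_1| \sim N_1$, so the full sum is bounded by $N_2^{3-3\al} N_3^{2-2\al}$. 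Since $\al > 1$ implies $N_2^{3-3\al} \leq N_2^{2-2\al}$, this is stronger than the claimed $N_2^{2-2\al+\eps} N_3^{2-2\al+\eps}$. I do not anticipate any serious obstacle: the whole argument is a short elaboration of the proof of Lemma \ref{LEM:BR}, with the only mild new input being the elementary $n_2$-convolution estimate at the end.
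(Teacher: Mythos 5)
Your argument is correct. It rests on the same core input as the paper's proof — dyadically expanding $\jbb{n_{123}}^{-1}$ in $N_{123}$ and applying Lemma \ref{LEM:two_balls} to the $n_3$-sum with $a = n_{12}$ — but the endgame is organized differently. The paper additionally localizes $|n_{12}| \sim N_{12}$, sums over $n_2$ by pure counting (the factor $\min(N_2, N_{12})^3$), and then invokes a minor variant of Lemma \ref{LEM:FS} to collapse the product of minima into $N_2^{2-2\al} N_3^{2-2\al} N_{123}^{1-\al}$, with the dyadic sums in $N_{123}$ and $N_{12}$ absorbed using $\al > 1$ and the $\eps$-room. You instead keep $\jb{n_{12}}$ as an explicit weight, run the three-case analysis from the proof of Lemma \ref{LEM:BR} (which, as you correctly observe, is really a $\sup_{\zeta \in \R}$ bound before the $n'$-summation) to get $\jb{n_{12}}^{-\al} N_3^{2-2\al+\eps}$ for each fixed $n_2$, and then perform the weighted $n_2$-sum directly; this avoids both the $N_{12}$-localization and the FS-variant, and yields the slightly stronger factor $N_2^{3-3\al}$ in place of $N_2^{2-2\al}$. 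Two cosmetic remarks: at $\al = 2$ the regime $N_{123} \ll N_3$ produces a $\log N_3$ loss, which is harmless thanks to the $\eps$ in the statement; and the bound $\sum_{|n_2| \sim N_2} \jb{n_1 + n_2}^{-\al} \les N_2^{3-\al}$ is an elementary ball-sum rather than literally an instance of Lemma \ref{LEM:SUM}\,(i) (which is stated for full convolutions), though it is of course true and uniform in $n_1$.
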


\begin{proof}
By losing a log factor,
we may further dyadically localize $|n_{123}| \sim N_{123}$ and $|n_{12}|\sim N_{12}$. 
We first sum in $n_3\in \Z^3$ using Lemma \ref{LEM:two_balls} and then sum in $n_2 \in \Z^3$ using the dyadic constraint to obtain
\begin{align*}
N_2^{-2\al} & N_3^{-2\al}  N_{123}^{-\al} \sum_{n_2,n_3\in \Z^3} \bigg[ \Big( \prod_{j=1}^3 \ind_{\{ |n_j|\sim N_j \}}\Big) \ind_{\{ |n_{123}|\sim N_{123}\}}  \ind_{\{ |n_{12}|\sim N_{12}\}}  \ind_{\{|\kappa - \zeta| \le 1 \}} \bigg] \\
&\les  N_2^{-2\al} N_3^{-2\al}  \min(N_{123},N_{12},N_3)^{-1} \min(N_3,N_{123})^{3} N_{123}^{-\al}  \\
&\quad \times  \sum_{n_2 \in \Z^3} \ind_{\{ |n_2|\sim N_2 \}} \ind_{\{ |n_{12}|\sim N_{12}\}} \\
&\les   N_2^{-2\al} N_3^{-2\al}  \min(N_{123},N_{12},N_3)^{-1} \min(N_3,N_{123})^{3} \min(N_{2},N_{12})^3  N_{123}^{-\al}.
\end{align*}
Using a minor variant of Lemma \ref{LEM:FS}, the above contribution is bounded by
\begin{align*}
N_2^{-2\al} N_3^{1 - 2\al} \min(N_3,N_{123}) \min(N_2,N_{12})^2 N_{123}^{1 - \al} \les N_2^{2 - 2\al} N_3^{2 - 2\al} N_{123}^{1 - \al}. 
\end{align*}

\noi
Thus we finish the proof.
\end{proof}

\subsection{Septic counting estimates}
We now show a septic counting estimate similar to \cite[Lemma~4.31]{Bring2}, which will be helpful for us to analyze the regularity of a septic term. Let us recall the notion of a pairing at the end of Subsection~\ref{SUBSEC:mul}. If $(n_j, n_k)$ with $j \neq k$ is a pairing, we say that the tuple of indices $(j, k)$ is a {\it paired index}. If $\mathcal P$ is a set of disjoint paired indices, we write $j \in \mathcal P$ if $j$ belongs to some tuple of paired index in $\mathcal P$ and $j \notin \mathcal P$ otherwise.

\begin{lemma}
\label{LEM:SepC}
Let $1 < \al \leq \frac 32$ and $N_1, \dots, N_7 \in 2^\N$ be dyadic numbers. 
Let $\kappa = \kappa_2$ be as in \eqref{defk2}.
Furthermore, we define
\begin{equation*}
\Ld (n_1,n_2,n_3) \deff \sum_{\eps_1,\eps_2,\eps_3 \in \{ \pm 1 \}} \sum_{m\in \Z } \langle m \rangle^{-1} \jbb{n_{123}}^{-1} \Big( \prod_{j=1}^3 \jbb{ n_j }^{- 1} \Big) \ind_{\{ |\kappa - m|\leq 1 \}}. 
\end{equation*}

\noi
We also let $\mathcal P$ be a set of disjoint paired indices in $\{1,\hdots,7\}$ such that we cannot have $(j, k) \in \mathcal P$ if $j$ and $k$ both belong to either of the three sets: $\{1, 2, 3\}$, $\{4, 5, 6\}$, and $\{7\}$. Moreover, we define
\begin{align*}
n_{\textup{sum}} \deff \sum_{j \notin \mathcal P } n_j.
\end{align*}

\noi
Then, there exists $\ta > 0$ such that 
\begin{align*}
&\sum_{\{n_j\}_{j \notin \mathcal P }} \hspace{-1ex} \langle n_{\textup{sum}} \rangle^{-1}
\bigg( \sum_{\{n_j\}_{j \in \mathcal P}}    
 \Big( \prod_{j = 1}^7 \ind_{\{|n_j| \sim N_j\}} \Big) \Ld (n_1,n_2,n_3) \Ld (n_4,n_5,n_6)  \jbb{ n_7 }^{-1} \bigg)^2 \les N_{\textup{max}}^{-\ta},
\end{align*}

\noi
where $N_{\textup{max}} = \max (N_1, \dots, N_7)$.
\end{lemma}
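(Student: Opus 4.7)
The plan is to perform a case analysis over the structure of the pairing $\mathscr{P}$ and, in each case, reduce the septic sum to applications of the basic resonance estimate (Lemma \ref{LEM:BR}), the double-resonance estimate (Lemma \ref{LEM:DRC}), and the convolution lemma (Lemma \ref{LEM:SUM}), all combined with Cauchy--Schwarz in appropriate variables. Since $\mathscr{P}$ respects the partition $\{1,2,3\}, \{4,5,6\}, \{7\}$, every pair must connect distinct partition sets, and since $|\{1,\dots,7\}|$ is odd at least one element is always unpaired. This leaves only a handful of structural types of $\mathscr{P}$: the empty pairing; pairings with one or two pairs (with $7$ paired or unpaired); and the ``maximal'' pairing with three pairs, which forces $7$ unpaired and the entire set $\{1,2,3\}$ to be contracted to $\{4,5,6\}$.

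\textbf{Key reductions.} First I would use admissibility to collapse each pair $(i,j) \in \mathscr{P}$ via the substitution $n_j = -n_i$, noting that by the evenness of $\jbb{\cdot}$ and after the sum over the $\varepsilon$-signs, the factor $\prod_j \jbb{n_j}^{-1}$ in each $\Lambda$ is unaffected. Next, for bounding the square of the inner sum there are two natural options: for pairings of small size one may estimate the inner sum pointwise in $(n_j)_{j \notin \mathscr{P}}$ using Cauchy--Schwarz and then square; for larger pairings it is more efficient to expand the square into a double sum over two independent copies $(n_j)_{j \in \mathscr{P}}$ and $(n_j')_{j \in \mathscr{P}}$ of the paired variables and analyze the resulting bilinear form. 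In either approach, the weight $\jb{n_{\textup{nr}}}^{-1}$ on the outer sum absorbs one slowly decaying convolution through Lemma \ref{LEM:SUM}.

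\textbf{Extracting the resonance gain.} For each $\Lambda(n_a, n_b, n_c)$ factor, I would extract smallness from the resonance constraint by applying Lemma \ref{LEM:BR}. After performing the $m$-sum to the effect $\sum_m \jb{m}^{-1} \ind_{\{|\kappa-m|\le 1\}} \sim \jb{\kappa}^{-1}$, Lemma \ref{LEM:BR} delivers a bound of the form $\sum_{n_c} \Lambda \lesssim N_{\max(a,b,c)}^{-\theta}\, \jb{n_{a}+n_b}^{-\alpha}\, \jbb{n_a}^{-1}\jbb{n_b}^{-1}$, possibly after trading a harmless factor $\jbb{n_c}^{-1}$ for $\jbb{n_c}^{-2}$ at the cost of $N_c^{\alpha-1}$ absorbed into $N_{\max}^{-\theta}$ via Cauchy--Schwarz. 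In the cases where at most one pair connects $\{1,2,3\}$ and $\{4,5,6\}$, this gain in one $\Lambda$ combined with a Schur-type estimate on the other $\Lambda$ and the convolution sum Lemma \ref{LEM:SUM} on the remaining free variables yields the required $N_{\max}^{-\theta}$ decay. The most delicate situation is the maximal pairing $\mathscr{P} = \{(1,4),(2,5),(3,6)\}$: after the substitutions $n_{3+j} = -n_j$ and invoking $\Lambda(-n_1,-n_2,-n_3) = \Lambda(n_1,n_2,n_3)$, the expression collapses to
$\sum_{n_7} \jb{n_7}^{-1}\jbb{n_7}^{-2}\bigl(\sum_{n_1,n_2,n_3}\ind\cdot\Lambda(n_1,n_2,n_3)^2\bigr)^2$. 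The $n_7$-sum is summable thanks to $\alpha > 1$, while the inner sum is precisely of double-resonance type and is controlled by applying Lemma \ref{LEM:DRC} (after bounding $\jb{\kappa}^{-2}\leq \jb{\kappa}^{-1}$ and removing one $\jbb{n_{123}}^{-1}$ by the dyadic localization), which supplies factors of the form $N_2^{2-2\alpha+\varepsilon}N_3^{2-2\alpha+\varepsilon}$ delivering the $N_{\max}^{-\theta}$ gain after squaring.

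\textbf{Main obstacle.} The principal difficulty is the combinatorial bookkeeping: maintaining the smallness $N_{\max}^{-\theta}$ across all pairing types requires choosing, in each case, \emph{which} variable inside each $\Lambda$ to collapse via Lemma \ref{LEM:BR}, so that the resulting partial sum $\jb{n_{\textup{res}}}^{-\alpha}$ aligns either with $n_{\textup{nr}}$ or with another summable variable on which we have enough decay. In the mixed cases where $7$ is paired with some index in $\{1,2,3\}$ or $\{4,5,6\}$, one must carefully track which $\Lambda$ receives the resonance gain and which only contributes through the trivial Schur estimate, so as not to waste the positive power of the maximal frequency. The double-resonance case, where both $\Lambda$ factors become entangled under the full contraction between the two cubic blocks, is the technical heart of the argument and relies essentially on Lemma \ref{LEM:DRC}.
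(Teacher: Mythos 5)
There is a genuine gap, and it sits exactly at what you call the technical heart of the argument. Your toolbox omits the one estimate that actually produces decay in the largest frequency when that frequency lies \emph{inside} one of the cubic blocks, namely the three-variable counting bound of Lemma \ref{LEM:CS}. The paper's proof rests on two reductions: first, since $\al>1$ is an open condition, it peels off a factor $N_j^{-\ta}$ from each weight $\jbb{n_j}^{-1}$ at the outset, so that the remaining task is only to prove a bound $\les 1$ (uniformly in the dyadic parameters); second, it proves $\sum_{n_1,n_2,n_3}\Ld(n_1,n_2,n_3)^2 \les \max(N_1,N_2,N_3)^{-\theta'}$ via Lemma \ref{LEM:CS} with $k=3$, $s=\al-1$ (this is the paper's estimate \eqref{p1}), and $\sum_{n_3}\ind_{\{|n_3|\sim N_3\}}\jbb{n_3}^{-1}\Ld \les N_3^{-\theta'}\jb{n_{12}}^{-\al}\jb{n_1}^{-\al}\jb{n_2}^{-\al}$ via Lemma \ref{LEM:BR} (estimate \eqref{p2}); with these, only two cases are needed ($7$ paired or unpaired), and Cauchy--Schwarz plus the reconstitution of the full $\Ld^2$-sums finishes the proof. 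Lemma \ref{LEM:DRC} is not used at all.

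Your treatment of the maximal pairing $\{(1,4),(2,5),(3,6)\}$ does not close. After the collapse you correctly arrive at $\sum_{n_7}\jb{n_7}^{-1}\jbb{n_7}^{-2}\big(\sum_{n_1,n_2,n_3}\ind\cdot\Ld^2\big)^2$, but Lemma \ref{LEM:DRC} takes a \emph{supremum} over $n_1$ and gains only $N_2^{2-2\al+\eps}N_3^{2-2\al+\eps}$; the leftover sum of the $n_1$-weight, $\sum_{|n_1|\sim N_1}\jbb{n_1}^{-2}\sim N_1^{3-2\al}$, \emph{grows} for $\al<\tfrac32$. In the regime $N_1=N_{\max}\gg N_2,N_3,N_7$ your route therefore yields something like $N_1^{2(3-2\al)}$ after squaring, which is neither $\les N_{\max}^{-\theta}$ nor even bounded; the dyadic localization of $n_{123}$ cannot rescue this, since $|n_{123}|$ may be $O(1)$ even when $N_1$ is huge. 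The same issue resurfaces in your unpaired cases whenever the maximal frequency sits inside a cubic block: your stated form of Lemma \ref{LEM:BR} with gain $N_{\max(a,b,c)}^{-\theta}$ overstates that lemma, which only gains in the summed variable $N_c$ (and your proposed trade of $\jbb{n_c}^{-1}$ for $\jbb{n_c}^{-2}$ costs $N_c^{\al}$, which the $N_c^{-(2\al-2)}$ gain cannot absorb for $\al<2$; in the paper the missing power comes for free from $\jbb{n_7}^{-1}$ when $n_c=-n_7$). The repair is either to adopt the paper's opening reduction so that no decay is needed from these cases, or to replace your DRC/Schur steps by the genuine counting estimate $\sum_{n_1,n_2,n_3}\Ld^2\les\max(N_1,N_2,N_3)^{-\theta'}$ from Lemma \ref{LEM:CS}; with that in hand, your finer enumeration of pairings also becomes unnecessary, since only the dichotomy ``$7$ paired vs.\ unpaired'' matters.
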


\begin{proof}
Thanks to the open condition $\al > 1$, we can take out an $N_j^{-\ta}$ factor from each $\jbb{n_j}^{-1}$. This loss of $\ta$ regularity will not affect the convergence property, and so for simplicity, we still work with $\jbb{n_j}^{-1}$ below and we aim to show 
\begin{align*}
\sum_{\{n_j\}_{j \notin \mathcal P}}  \langle n_{\text{sum}} \rangle^{-1}
 \bigg(  \sum_{\{n_j\}_{j \in \mathcal P}} \Big( \prod_{j = 1}^7 \ind_{\{|n_j| \sim N_j\}} \Big) \ind_{\{|n_{7}|\sim N_{7} \}}  \Ld (n_1,n_2,n_3)  \Ld (n_4,n_5,n_6)  \jbb{ n_7 }^{-1} \bigg)^2  \les  1.
\end{align*}

\noi
Using Lemma~\ref{LEM:CS} with $k = 3$ and $s = \al - 1$, we have 
\begin{equation}\label{p1}
\sum_{n_1,n_2,n_3 \in \Z^3}  \Ld(n_1,n_2,n_3)^2 \les  \max (N_{1}, N_2, N_3)^{- \theta'} 
\end{equation}

\noi
for some $\theta' > 0$.
Using the basic resonance estimate (Lemma~\ref{LEM:BR}), we have for all $N_3 \geq 2^\N$ that 
\begin{equation}\label{p2}
\sum_{n_3\in \Z^3} \ind_{\{|n_{3}|\sim N_{3} \}} \jbb{ n_3 }^{-1} \Ld (n_1,n_2,n_3) \les  N_3^{-\theta'} 
\langle n_{12} \rangle^{-\al} \langle n_1\rangle^{-\al} \langle n_2 \rangle^{-\al}. 
\end{equation}

\noi
for some $\theta' > 0$.
Due to the symmetry of $\Ld$, 
we only need to consider the following two cases.

\smallskip \noi
\textbf{Case 1:} $7 \notin \mathcal P$.  

We further dyadically localize $|n_{123456}| \sim N_{123456}$. By using the Cauchy-Schwarz inequality, summing in $n_7$, and using \eqref{p1}, we obtain
\begin{align*}
\sum_{\{n_j\}_{j \notin \mathcal P}} &\langle n_{\text{sum}} \rangle^{-1}
 \bigg(  \sum_{\{n_j\}_{j \in \mathcal P}} \Big( \prod_{j = 1}^7 \ind_{\{|n_j| \sim N_j\}} \Big) \ind_{\{|n_{123456}|\sim N_{123456} \}} \\
 &\quad \times \Ld (n_1,n_2,n_3) \Ld (n_4,n_5,n_6)  \jbb{ n_7 }^{-1} \bigg)^2 \\
 &\les  \sum_{\{n_j\}_{j \notin \mathcal P}} \bigg[ \ind_{\{ |n_{\text{sum}} - n_7|\sim N_{123456} \}} \langle n_{\text{sum}} \rangle^{-1}  \jbb{ n_7 }^{-2}
 \Big( \sum_{\{n_j\}_{j \in \mathcal P \wedge j \in \{1, 2, 3\}}}   \Ld (n_1,n_2,n_3)^2 \Big) \\
 &\quad \times  \Big( \sum_{\{n_j\}_{j \in \mathcal P \wedge j \in \{4, 5, 6\}}} \Ld (n_4,n_5,n_6)^2 \Big) \bigg]  \\
 &\les  N_{123456}^{2 - 2\al + \eps}  \sum_{(n_j)_{ \substack{j \notin \mathcal P \wedge j \neq 7}}} 
  \Big( \sum_{\{n_j\}_{j \in \mathcal P \wedge j \in \{1, 2, 3\}}}   \Ld (n_1,n_2,n_3)^2 \Big)  \\ 
  &\quad \times \Big( \sum_{\{n_j\}_{j \in \mathcal P \wedge j \in \{4, 5, 6\}}} \Ld (n_4,n_5,n_6)^2 \Big)  \\
  &= N_{123456}^{2 - 2\al + \eps}  \Big( \sum_{n_1,n_2,n_3\in \Z^3}   \Ld (n_1,n_2,n_3)^2 \Big)     
  \Big( \sum_{n_4,n_5,n_6\in \Z^3} \Ld (n_4,n_5,n_6)^2 \Big)  \\
  &\les   N_{123456}^{2 - 2\al + \eps},
 \end{align*}
for $\eps > 0$ arbitrarily small. This is acceptable given $\al > 1$.
 
\smallskip \noi
\textbf{Case 2:} $7 \in \mathcal P$.

We further dyadically localize $|n_{1234567}| \sim N_{1234567}$. By symmetry, 
 we may assume that $(3,7)\in \mathcal P$.  
 We let $\mathcal{P}'$ be the pairing on $\{1,2,4,5,6\}$ obtained by removing the pair $(3,7)$ from $\mathcal P$. We also view the condition $j \notin \mathcal{P}'$ as a subset of $\{1,2,4,5,6\}$. By first using \eqref{p2} and then the Cauchy-Schwarz inequality, we have
 \begin{align*}
\sum_{\{n_j\}_{j \notin \mathcal P}} &\langle n_{\text{sum}} \rangle^{-1}
 \bigg(  \sum_{(n_j)_{j \in \mathcal P}}  \Big( \prod_{j = 1}^7 \ind_{\{|n_j| \sim N_j\}} \Big) \ind_{\{|n_{1234567}|\sim N_{1234567} \}} \\
 &\quad \times  \Ld (n_1,n_2,n_3) \Ld (n_4,n_5,n_6)  \jbb{ n_7 }^{-1} \bigg)^2 \\
 &\les  N_7^{- 2 \theta} N_{1234567}^{-1} \sum_{\{n_j\}_{j \notin \mathcal{P}'}}   \Big( \sum_{\{n_j\}_{j \in \mathcal{P}'}}  \langle n_{12} \rangle^{-\al} \langle n_1 \rangle^{-\al} \langle n_2 \rangle^{- \al} \Ld (n_4,n_5,n_6) \Big)^2 \\
 &\les  N_7^{- 2 \theta} 
 N_{1234567}^{-1}  \sum_{\{n_j\}_{j\notin \mathcal{P}'}}  
 \Big(  \sum_{\{n_j\}_{j \in \mathcal{P}' \wedge j \in \{1, 2\}}}  \langle n_{12} \rangle^{-2\al} \langle n_1 \rangle^{-2\al} \langle n_2 \rangle^{-2\al} \Big) \\
 &\quad \times \Big(  \sum_{\{n_j\}_{j \in \mathcal{P}' \wedge j \in \{4, 5, 6\}}} \Ld (n_4,n_5,n_6)^2 \Big) ,
 \end{align*}
 
 \noi
which is sufficient thanks to \eqref{p1}
and the fact that
\[
 \sum_{n_1,n_2 \in \Z^3}  \langle n_{12} \rangle^{-2\al} \langle n_1 \rangle^{-2\al} \langle n_2 \rangle^{-2\al} 
 < \infty
\]

\noi
using the discrete convolution inequality in Lemma~\ref{LEM:SUM}~(i) along with $\al > 1$. Thus, we finish the proof. 
\end{proof}

\subsection{Deterministic tensor estimate}
In this subsection, we show the following deterministic tensor estimate, which will be useful in dealing with random operators both for our global well-posedness result and for our weak universality result.

We first recall some notations from \cite{DNY3}. For a finite index set $A$, we denote $n_A$ as the tuple $(n_j)_{j \in A}$. We also denote $\| \cdot \|_{n_A \to n_B}$ as the operator norm $\| \cdot \|_{\l^2_{n_A} \to \l^2_{n_B}}$ for any finite index sets $A$ and $B$.

The following lemma generalizes \cite[Lemma~4.33]{Bring2}.

\begin{lemma}
\label{LEM:ten1}
Let $k \geq 3$ be an integer, $\eps' > 0$ be arbitrarily small, and $\zeta \in \R$. Let $1 < \al < \frac 32$ and $\eps > 0$ be arbitrarily small if $k > 3$; and let $1 < \al \leq \frac 32$ and $\eps = 0$ if $k = 3$. Let $N, N_1, \dots, N_k \geq 1$ be dyadic numbers such that $N_j \leq N$ for all $j$ and let $\kappa$ be as defined in \eqref{kappa}. We also let $s \in \R$ be such that $\max(- \al + \frac 32, \al - 1) < s < 2\al - \frac 32$.
Then, the tensor $\mathfrak{h}^{\zeta}$ defined by
\begin{align*}
\mathfrak{h}^{\zeta}_{n n_1 \cdots n_k} \overset{\textup{def}}{=} \ind_{\{n = \sum_{j = 1}^k n_j\}} \bigg( \prod_{j = 1}^k \ind_{\{|n_j| \sim N_j\}} \ind_{\{|n_j| \leq N\}} \bigg) \cdot \ind_{\{|\kappa - \zeta| \leq 1\}} \frac{\jb{n}^{s - \al + \eps'}}{\prod_{j = 1}^{k - 1} \jbb{n_j} \cdot \jb{n_k}^{s}}
\end{align*}

\noi
satisfies
\begin{align*}
\max_{(B_1, B_2)} \big\{ \| \mathfrak{h}^{\zeta} \|_{n_k n_{B_1} \to n n_{B_2}} \big\} 
\les N^{(k - 3) (\frac 32 - \al) - \eps} \big( \max ( N_1, \dots, N_k ) \big)^{- \ta}.
\end{align*}

\noi
for some $\ta > 0$, where $(B_1, B_2)$ runs through all partitions of $\{1, \dots, k - 1\}$.
\end{lemma}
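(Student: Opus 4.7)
The plan is to reduce the operator norm to the counting estimates of Section~\ref{SEC:count} via Schur's test. First I would apply Schur's test to $|\mathfrak{h}^\zeta|$ viewed as a kernel, giving
\[
\|\mathfrak{h}^\zeta\|_{n_k n_{B_1} \to n n_{B_2}}^2 \leq \Big( \sup_{n, n_{B_2}} \sum_{n_k, n_{B_1}} |\mathfrak{h}^\zeta| \Big) \cdot \Big( \sup_{n_k, n_{B_1}} \sum_{n, n_{B_2}} |\mathfrak{h}^\zeta| \Big).
\]
On the dyadic shells $|n_j| \sim N_j$, the amplitude of $\mathfrak{h}^\zeta$ factorizes as $\jb{n}^{s-\al+\eps'} \prod_{j<k} N_j^{-\al} \cdot N_k^{-s}$, and the first factor provides decay in $|n|$ since $s - \al + \eps' < 0$ (as $s < 2\al - \tfrac{3}{2} \leq \al$).

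For the column sum, since the count of $(n_k, n_{B_1})$ with fixed $(n, n_{B_2})$ is bounded by the corresponding count with $n_{B_2}$ free, I would invoke Lemma~\ref{tools:lem_sup} to get the bound $\lesssim (\max_j N_j)^{-3} (\max_{(3)} N_j)^{-1} \prod_j N_j^3$. For the row sum, the constraint $n = \sum_j n_j$ determines $n$ once $n_{B_2}$ is fixed, so the sum reduces to a count of $n_{B_2}$ satisfying one phase equation; I would bound this iteratively via Lemma~\ref{LEM:BC_fwave} applied to the largest component of $n_{B_2}$. Multiplying the two suprema and carrying out case analysis on the orderings of $(N_1, \ldots, N_k)$ and on $|B_1|$ versus $|B_2|$ yields the claimed operator-norm bound; the growth factor $N^{(k-3)(3/2-\al)}$ arises precisely from the $(\max_{(3)} N_j)^{-1} \prod_j N_j^3$ part of Lemma~\ref{tools:lem_sup}, which captures the higher-frequency contributions.

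The main obstacle is the balanced regime $N_1 \sim \cdots \sim N_k \sim N_{\max}$ combined with a non-trivial partition $(B_1, B_2)$, where the basic Schur bound becomes tight. To extract the required $N_{\max}^{-\delta}$ decay in this regime, I would dyadically decompose $|n| \sim M$ (for $M \leq N_{\max}$ dyadic) and exploit the amplitude decay $\jb{n}^{s-\al+\eps'} \sim M^{s-\al+\eps'}$ together with the refined two-ball counting of Lemma~\ref{LEM:two_balls} for each dyadic block. The hypotheses $\max(-\al + \tfrac{3}{2}, \al - 1) < s < 2\al - \tfrac{3}{2}$ are exactly what is needed to close the resulting geometric series in $M$: the upper bound ensures a genuine $\jb{n}$-decay, the lower bound $s > \al - 1$ supplies the integrability condition required by Lemma~\ref{LEM:CS}, and $s > -\al + \tfrac{3}{2}$ handles the low-$|n|$ regime where cancellations in $n = \sum n_j$ are possible.
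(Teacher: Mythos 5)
Your overall frame (Schur's test after pulling out the amplitude, combined with the counting estimates and a dyadic localization of $|n|$) is the same as the paper's, but the way you bound the Schur factors has a genuine gap. The problematic step is your column-sum bound: you estimate the number of $(n_k, n_{B_1})$ with $(n, n_{B_2})$ \emph{fixed} by the number with $n_{B_2}$ freed and then invoke Lemma \ref{tools:lem_sup}. This relaxation over-counts by up to $\prod_{j \in B_2} N_j^3$ and is in fact weaker than even the trivial box count of the restricted sum. Concretely, take $k = 3$, $B_1 = \{1\}$, $B_2 = \{2\}$, $N_1 \sim N_2 \sim N_3 \sim N_{\textup{sum}} \sim L$, and configurations with $|n_1 + n_3| \les 1$: your column factor is only bounded by $L^5$ (this is what Lemma \ref{tools:lem_sup} yields when all frequencies are comparable), your row factor is of size $L^3$ in this configuration (Lemmas \ref{LEM:BC_fwave} and \ref{LEM:two_balls} give no gain since $|n_1+n_3|\sim 1$), and the squared amplitude is $\sim L^{2s-2\al+2\eps'}L^{-2s}L^{-4\al} = L^{-6\al+2\eps'}$. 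Your estimates therefore multiply to $L^{8-6\al+2\eps'}$, which does not give the claimed $L^{-2\dl}$ once $1<\al<\frac 43$, although the lemma is asserted for all $1<\al\le\frac 32$ when $k=3$ (a similar failure occurs for $k>3$ near $\al=1$). Your proposed rescue in the balanced regime, namely dyadic decomposition in $|n|$ plus the decay of $\jb{n}^{s-\al+\eps'}$ and two-ball counting, does not repair this: in the configuration above $|n|\sim N_{\max}$, so the amplitude decay is already fully exploited; the loss comes from the freed count itself, not from the weight.

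The paper closes the estimate by never releasing the fixed variables: in each Schur factor the conditioning variables are held fixed. For $k=3$ and $B_1,B_2\neq\varnothing$ a purely trivial count $\min(N_{\textup{sum}},N_2)^3\min(N_1,N_3)^3$ already suffices, the hypotheses $-\al+\frac 32<s<2\al-\frac 32$ doing the rest; for $B_1=\varnothing$ or $B_2=\varnothing$ one of the two Schur factors is identically $\les 1$ (one variable is determined by the linear constraint) and Lemma \ref{tools:lem_sup} is applied to the other; and for $k>3$ with both $B_1,B_2\neq\varnothing$ the paper runs a case analysis on where $\max\{N_1,\dots,N_k\}$ (or $N_{\textup{sum}}$) sits, applying Lemma \ref{tools:lem_sup} to exactly that factor for which the gain $(\max)^{-3}N_{j'}^{-1}$ lands on the largest frequency, while the other factor is bounded by the restricted trivial count. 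Some bookkeeping of this type (or an equivalent sharp treatment of the restricted column count, e.g.\ applying Lemma \ref{LEM:BC_fwave} directly to the summed variables with the conditioning variables fixed) is needed in place of your freed-variable bound before the argument covers the whole stated range of $\al$ and $s$.
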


\begin{proof}
By losing a factor of $\log (1 + \max(N_1, \dots, N_k))$, we further dyadically localize $|n| \sim N_{\textup{sum}}$.

We first consider the case when $B_2 = \varnothing$. By Schur's test, we obtain
\begin{align*}
\begin{split}
\| \mathfrak{h}^{\zeta} \|_{n_1 \cdots n_k \to n}^2 &\leq N_{\textup{sum}}^{2s - 2 \al + 2 \eps'} N_k^{-2s} \prod_{j = 1}^{k - 1} N_j^{-2 \al}  \\
&\quad \hspace{-20pt} \times \sup_{n \in \Z^3} \sum_{n_1, \dots, n_k \in \Z^3} \bigg( \prod_{j = 1}^k \ind_{\{ |n_j| \sim N_j \}} \bigg) \ind_{\{n = \sum_{j = 1}^k n_j\}} \ind_{\{|n| \sim N_{\textup{sum}}\}} \ind_{\{|\kappa - \zeta| \leq 1\}} \\
&\quad \hspace{-20pt} \times \sup_{n_1, \dots, n_k \in \Z^3} \sum_{n \in \Z^3} \bigg( \prod_{j = 1}^k \ind_{\{ |n_j| \sim N_j \}} \bigg) \ind_{\{n = \sum_{j = 1}^k n_j\}} \ind_{\{|n| \sim N_{\textup{sum}}\}} \ind_{\{|\kappa - \zeta| \leq 1\}}
\end{split}
\end{align*}

\noi
Note that the second supremum is bounded by 1, since $n$ is uniquely determined by $n_1, \dots, n_k$. To deal with the first supremum, we apply Lemma \ref{tools:lem_sup} to obtain
\begin{align*}
\| \mathfrak{h}^{\zeta} \|_{n_1 \cdots n_k \to n}^2 \les N_{\textup{sum}}^{2s - 2\al + 2 \eps'} N_k^{3 - 2s} \big( \max ( N_1, \dots, N_k ) \big)^{-3} \big( \max_{(3)} ( N_1, \dots, N_k ) \big)^{-1} \prod_{j = 1}^{k - 1} N_j^{3 - 2 \al}.
\end{align*}

\noi
When $N_k = \max_{(3)} ( N_1, \dots, N_k )$, we have (recalling that $s < 2 \al - \frac 32 \leq \al$)
\begin{align*}
\| \mathfrak{h}^{\zeta} \|_{n_1 \cdots n_k \to n}^2 &\les N_k^{2 - 2s} \big( \max ( N_1, \dots, N_k ) \big)^{-3} \prod_{j = 1}^{k - 1} N_j^{3 - 2 \al} \\
&\les N^{(k - 3) (3 - 2 \al) - \eps} N_k^{2 - 2s} \big( \max ( N_1, \dots, N_k ) \big)^{3 - 4 \al + \eps}.
\end{align*}

\noi
If $s \geq 1$, then the above bound is acceptable given $\al > 1 > \frac 34$. If $s < 1$, then the above bound is also acceptable since $(2 - 2s) + (3 - 4 \al + \eps) < 0$ given $s > -\al + \frac 32 > -2 \al + \frac 52$. When $N_k \neq \max_{(3)} ( N_1, \dots, N_k )$, say without loss of generality that $N_{k - 1} = \max_{(3)} ( N_1, \dots, N_k )$, we have (recalling that $\al > 1$ and $s < 2 \al - \frac 32 \leq \al \leq \frac 32$)
\begin{align*}
\| \mathfrak{h}^{\zeta} \|_{n_1 \cdots n_k \to n}^2 &\les N_k^{3 - 2s} \big( \max ( N_1, \dots, N_k ) \big)^{-3} \prod_{j = 1}^{k - 2} N_j^{3 - 2 \al} \\
&\les N^{(k - 3) (3 - 2 \al) - \eps} \big( \max ( N_1, \dots, N_k ) \big)^{3 - 2 s - 2 \al + \eps},
\end{align*}

\noi
which is acceptable given $s > -\al + \frac 32$.

We now consider the case when $B_1 = \varnothing$. Using a similar method with Schur's test and Lemma \ref{tools:lem_sup}, we obtain
\begin{align*}
\| &\mathfrak{h}^{\zeta} \|_{n_k \to n n_1 \cdots n_{k - 1}}^2 \leq N_{\textup{sum}}^{2s - 2 \al + 2 \eps'} N_k^{-2s} \prod_{j = 1}^{k - 1} N_j^{- 2 \al} \\
& \quad \times \sup_{n_k \in \Z^3} \sum_{n, n_1, \dots, n_{k - 1} \in \Z^3} \bigg( \prod_{j = 1}^k \ind_{\{|n_j| \sim N_j\}} \bigg) \ind_{\{n = \sum_{j = 1}^k n_j\}} \ind_{\{|n| \sim N_{\textup{sum}}\}} \ind_{\{|\kappa - \zeta| \leq 1\}} \\
& \quad \times \sup_{n, n_1, \dots, n_{k - 1} \in \Z^3} \sum_{n_k \in \Z^3} \bigg( \prod_{j = 1}^k \ind_{\{|n_j| \sim N_j\}} \bigg) \ind_{\{n = \sum_{j = 1}^k n_j\}} \ind_{\{|n| \sim N_{\textup{sum}}\}} \ind_{\{|\kappa - \zeta| \leq 1\}} \\
&\les N_{\textup{sum}}^{3 + 2s - 2\al + 2 \eps'} N_k^{-2s} \big( \max ( N_{\textup{sum}}, N_1, \dots, N_{k - 1} ) \big)^{-3} \\
&\quad \times \big( \max_{(3)} ( N_{\textup{sum}}, N_1, \dots, N_{k - 1} ) \big)^{-1} \prod_{j = 1}^{k - 1} N_j^{3-2\al}
\end{align*}

\noi
When $N_{\text{sum}} = \max_{(3)} ( N_\text{sum}, N_1, \dots, N_{k-1} )$, we have (recalling that $s > \al - 1 > 0$)
\begin{align*}
\| \mathfrak{h}^{\zeta} \|_{n_k \to n n_1 \cdots n_{k - 1}}^2 &\les N_{\text{sum}}^{2 + 2s - 2 \al + 2 \eps'} \big( \max ( N_{\textup{sum}}, N_1, \dots, N_{k - 1} ) \big)^{-3} \prod_{j = 1}^{k - 1} N_j^{3-2\al} \\
&\les N^{(k - 3) (3 - 2 \al) - \eps} \big( \max ( N_{\textup{sum}}, N_1, \dots, N_{k - 1} ) \big)^{5 + 2s - 6\al + 2 \eps' + \eps},
\end{align*}
which is acceptable given $s < 2\al - \frac 32 < 3 \al - \frac 52$. When $N_{\text{sum}} \neq \max_{(3)} ( N_\text{sum}, N_1, \dots, N_k )$, say without loss of generality that $N_{k-1} = \max_{(3)} ( N_\text{sum}, N_1, \dots, N_{k-1} )$, we have (recalling that $\al > 1$)
\begin{align*}
\| \mathfrak{h}^{\zeta} \|_{n_k \to n n_1 \cdots n_{k - 1}}^2 &\les N_{\text{sum}}^{3 + 2s - 2 \al + 2 \eps'} \big( \max ( N_{\textup{sum}}, N_1, \dots, N_{k - 1} ) \big)^{-3} \prod_{j = 1}^{k - 2} N_j^{3-2\al} \\
&\les N^{(k - 3) (3 - 2 \al) - \eps} \big( \max ( N_{\textup{sum}}, N_1, \dots, N_{k - 1} ) \big)^{3 + 2s - 4\al + 2 \eps' + \eps},
\end{align*}

\noi
which is acceptable given $s < 2\al - \frac 32$.

For the case when $B_1 \neq \varnothing$ and $B_2 \neq \varnothing$, by Schur's test, we obtain
\begin{align*}
\| \mathfrak{h}^{\zeta} \|_{n_k n_{B_1} \to n n_{B_2}}^2 &\leq N_{\textup{sum}}^{2s - 2\al + 2 \eps'} N_k^{-2 s} \prod_{j = 1}^{k - 1} N_j^{- 2 \al} \\
&\quad \times \sup_{n_k, n_{B_1} } \sum_{n, n_{B_2}} \bigg( \prod_{j = 1}^k \ind_{\{|n_j| \sim N_j\}} \bigg) \ind_{\{n = \sum_{j = 1}^k n_j\}} \ind_{\{|n| \sim N_{\textup{sum}}\}} \ind_{\{|\kappa - \zeta| \leq 1\}} \\
&\quad \times \sup_{n, n_{B_2}} \sum_{n_k, n_{B_1}} \bigg( \prod_{j = 1}^k \ind_{\{|n_j| \sim N_j\}} \bigg) \ind_{\{n = \sum_{j = 1}^k n_j\}} \ind_{\{|n| \sim N_{\textup{sum}}\}} \ind_{\{|\kappa - \zeta| \leq 1\}}.
\end{align*}

\noi
If $k = 3$, by symmetry we can assume that $B_1 = \{1\}$ and $B_2 = \{2\}$, so that
\begin{align*}
\| \mathfrak{h}^{\zeta} \|_{n_k n_{B_1} \to n n_{B_2}}^2 &\les N_{\textup{sum}}^{2s - 2\al + 2 \eps'} N_1^{-2\al} N_2^{-2\al} N_3^{-2s} \min (N_{\textup{sum}}, N_2)^3 \min (N_1, N_3)^3 \\
&\les \big( \max ( N_1, N_2, N_3 ) \big)^{- \ta}
\end{align*}

\noi
for some $\ta > 0$ as long as $2s - 4\al + 3 < 0$ and $-2s - 2\al + 3 < 0$, which is equivalent to $-\al + \frac 32 < s < 2\al - \frac 32$. If $k > 3$, we consider the following subcases.

\smallskip \noi
\textbf{Subcase 1:} $|B_1| \geq 2$ and $\max ( N_1, \dots, N_k ) = N_{k'}$ for some $k' \in B_2$.

In this subcase, we apply Lemma \ref{tools:lem_sup} to the summation $\sum_{n_k, n_{B_1}}$ to obtain (recalling that $s > -\al + \frac 32 \geq 0$ and $\al > 1$)
\begin{align*}
\| \mathfrak{h}^{\zeta} \|_{n_k n_{B_1} \to n n_{B_2}}^2 &\les N_{\textup{sum}}^{3 + 2s - 2\al + 2 \eps'} N_k^{-2s} \big( \max ( N_1, \dots, N_k ) \big)^{-3} N_{j'}^{-1} \prod_{j = 1}^{k - 1} N_j^{3 - 2 \al} \\
&\les N^{(k - 3) (3 - 2 \al) - \eps} \big( \max ( N_1, \dots, N_k ) \big)^{2s - 4 \al + 3 + 2 \eps' + \eps}
\end{align*}

\noi
for some $1 \leq j' \leq k - 1$. This estimate is acceptable given $s < 2\al - \frac 32$.

\smallskip \noi
\textbf{Subcase 2:} $|B_1| \geq 2$ and $\max ( N_1, \dots, N_k ) = N_{k'}$ for some $k' \in B_1 \cup \{k\}$.

In this subcase, we again apply Lemma \ref{tools:lem_sup} to the summation $\sum_{n_k, n_{B_1}}$ to obtain
\begin{align*}
\| \mathfrak{h}^{\zeta} \|_{n_k n_{B_1} \to n n_{B_2}}^2 \les N_{\textup{sum}}^{2s - 2\al + 2 \eps'} N_k^{3 - 2s} \big( \max ( N_1, \dots, N_k ) \big)^{-3} N_{j'}^{-1} \prod_{j = 1}^{k - 1} N_j^{3 - 2\al}
\end{align*}

\noi
for some $1 \leq j' \leq k$. If $1 \leq j' \leq k-1$, we obtain (recalling that $s < 2\al - \frac 32 \leq \al \leq \frac 32$ and $\al > 1$)
\begin{align*}
\| \mathfrak{h}^{\zeta} \|_{n_k n_{B_1} \to n n_{B_2}}^2 \les N^{(k - 3) (3 - 2 \al) - \eps} \big( \max ( N_1, \dots, N_k ) \big)^{- 2s - 2\al + 3 + \eps},
\end{align*}
which is acceptable given $s > -\al + \frac 32$. If $j' = k$, we obtain 
\begin{align*}
\| \mathfrak{h}^{\zeta} \|_{n_k n_{B_1} \to n n_{B_2}}^2 \les N^{(k - 3) (3 - 2 \al) - \eps} N_k^{2 - 2s} \big( \max ( N_1, \dots, N_k ) \big)^{3 - 4\al + \eps}.
\end{align*}

\noi
If $s \geq 1$, the above bound is valid given $\al > 1 > \frac 34$. If $s < 1$, the above bound is also valid since $-2s - 4\al + 5 < 0$ given $s > -\al + \frac 32 > -2\al + \frac 52$.

\smallskip \noi
\textbf{Subcase 3:} $|B_2| \geq 2$ and $\max ( N_1, \dots, N_k ) = N_{k'}$ for some $k' \in B_1 \cup \{k\}$.

In this subcase, we apply Lemma \ref{tools:lem_sup} to the summation $\sum_{n, n_{B_2}}$ to obtain (recalling that $s < 2\al - \frac 32 < \al \leq \frac 32$ and $\al > 1$)
\begin{align*}
\| \mathfrak{h}^{\zeta} \|_{n_k n_{B_1} \to n n_{B_2}}^2 &\les N_{\textup{sum}}^{2s - 2\al + 2 \eps'} N_k^{3 - 2s} \big( \max ( N_1, \dots, N_k ) \big)^{-3} N_{j'}^{-1} \prod_{j = 1}^{k - 1} N_j^{3 - 2\al} \\
&\les N^{(k - 3) (3 - 2 \al) - \eps} \big( \max ( N_1, \dots, N_k ) \big)^{-2s - 2\al + 3 + \eps},
\end{align*}

\noi
where $1 \leq j' \leq k - 1$. This is acceptable given $s > -\al + \frac 32$.

\smallskip \noi
\textbf{Subcase 4:} $|B_2| \geq 2$ and $\max ( N_1, \dots, N_k ) = N_{k'}$ for some $k' \in B_2$.

In this subcase, we again apply Lemma \ref{tools:lem_sup} to the summation $\sum_{n, n_{B_2}}$ to obtain
\begin{align*}
\| \mathfrak{h}^{\zeta} \|_{n_k n_{B_1} \to n n_{B_2}}^2 &\les N_{\textup{sum}}^{3 + 2s - 2\al + 2 \eps'} N_k^{- 2s} \big( \max ( N_1, \dots, N_k ) \big)^{-3} N_{j'}^{-1} \prod_{j = 1}^{k - 1} N_j^{3 - 2\al}
\end{align*}

\noi
for some $0 \leq j' \leq k - 1$, where we denote $N_0 = N_{\textup{sum}}$ for convenience. If $1 \leq j' \leq k - 1$, we obtain (recalling that $s > -\al + \frac 32 \geq 0$ and $\al > 1$)
\begin{align*}
\| \mathfrak{h}^{\zeta} \|_{n_k n_{B_1} \to n n_{B_2}}^2 &\les N^{(k - 3) (3 - 2 \al) - \eps} \big( \max ( N_1, \dots, N_k ) \big)^{2s - 4\al + 3 + 2 \eps' + \eps},
\end{align*}

\noi
which is acceptable given $s < 2\al - \frac 32$. If $j' = 0$, we obtain (recalling that $s > \al - 1$)
\begin{align*}
\| \mathfrak{h}^{\zeta} \|_{n_k n_{B_1} \to n n_{B_2}}^2 &\les N^{(k - 3) (3 - 2 \al) - \eps} \big( \max ( N_1, \dots, N_k ) \big)^{2s - 6 \al + 5 + 2 \eps' + \eps},
\end{align*}

\noi
which is acceptable given $s < 2\al - \frac 32 < 3\al - \frac 52$.

\end{proof}

\section{On Gibbs measures}
\label{SEC:meas}
In this section, we discuss convergence of Gibbs measures for our weak universality result and also construct the Gibbs measure stated in Theorem~\ref{THM:Gibbs}.

The main tool that we are going to use in this section is the Bou\'e-Dupuis variational formula in \cite{BD}; see also \cite[Theorem~7]{Ust} and \cite[Theorem~2]{BG}. In fact, we will use a simplified version of the variational formula from \cite[Lemma~2.6]{FT}. For this purpose, we let $Y$ be a random variable distributed according to the base Gaussian measure $\mu_\al$ in \eqref{gauss2}. From \eqref{gauss3}, we can write
\begin{align}
Y = \frac{1}{(2 \pi)^{\frac 32}} \sum_{n \in \Z^3} \frac{g_n (\o)}{\jb{n}^\al} e^{i n \cdot x},
\label{defY}
\end{align}

\noi
where $\{g_n\}_{n \in \Z^3}$ is a sequence of standard i.i.d complex-valued Gaussian random variables conditioned such that $g_n = \cj{g_{-n}}$ for each $n \in \Z^3$ (with $g_0$ being real-valued).

\begin{lemma}
\label{LEM:BD}
Let $\al \geq 1$ and $N \in \N$. Let $Y$ be a Gaussian random variable defined in \eqref{defY}. Let $F : C^\infty (\T^3) \to \R$ be a measurable function such that $\E [ |F_- (\pi_N Y)|^p ] < \infty$ for some $p > 1$, where $F_-$ denotes the negative part of $F$. Then, we have
\begin{align*}
\log \E \big[ e^{F (\pi_N Y)} \big] \leq \E \bigg[ \sup_{\Dr \in H^\al (\T^3)} \Big\{ F (\pi_N Y + \pi_N \Dr) - \frac 12 \| \Dr \|_{H^\al}^2 \Big\} \bigg].
\end{align*}
\end{lemma}

Let us also record a lemma on regularities of Wick orderings for $Y$ defined in \eqref{defY}. Given $N \in \N$, we define $Y_N = \pi_N Y$. We define Wick ordering $:\! (Y_N)^k \!:$ as in \eqref{HkX}. 

\begin{lemma}
\label{LEM:YNk}
Let $\al \in \R$, $s > 0$, $\be \geq 0$, and $k \in \N$ satisfying
\begin{align*}
s \wedge \frac 32 > k \Big( \frac 32 - \al \Big) - \be > 0.
\end{align*}

\noi
Let $Y$ be defined in \eqref{defY}. Then, for any $1 \leq p \leq \infty$ and $1 \leq q < \infty$, we have
\begin{align*}
\sup_{N \geq 1} \Big\{ N^{-q \be} \E \big[ \| :\! Y_N^k \!: \|_{W^{-s, p}}^q \big] \Big\} < \infty.
\end{align*}
\end{lemma}
\begin{proof}
The proof of the bound follows directly from the Sobolev embedding (for $p = \infty$), the Gaussian hypercontractivity, the Ito isometry \eqref{ito}, and the discrete convolution inequality in Lemma~\ref{LEM:SUM}~(ii). See, for example, \cite[Lemma~B.3]{STzX}.
\end{proof}

\subsection{Convergence of Gibbs measures for the macroscopic model}
\label{SUBSEC:Gibbs1}

In Subsection~\ref{SUBSEC:wu}, we mentioned that our proof for the weak universality result relies crucially on the convergence of Gibbs measures for the macroscopic model. Let us now present the details.

Recalling the potential $V_N$ in \eqref{defVN}, we define the potential energy
\begin{align}
\mathcal{R}_N (u) \deff \int_{\T^3} V_N (\pi_N u) dx.
\label{defRN2}
\end{align}

\noi
We define the frequency truncated Gibbs measure $\nu_N$ by
\begin{align}
d \nu_N (u) \deff \mathcal{Z}_N^{-1} e^{- \mathcal{R}_N (u)} d \mu_\al (u),
\label{Gibbs_nuN}
\end{align}

\noi
where $\mu_\al$ is the base Gaussian measure as in \eqref{gauss2} and the normalizing factor $\mathcal{Z}_N$ is given by 
\begin{align}
\mathcal{Z}_N = \int e^{- \mathcal{R}_N (u)} d \mu_\al (u).
\label{ZN}
\end{align}

\noi
Our goal in this subsection is to prove the convergence of $\{\nu_N\}_{N \in \N}$ as $N \to \infty$.

\begin{proposition}[Convergence of Gibbs measures for the macroscopic model]
\label{PROP:Gibbs2}

Let $\frac 98 < \al < \frac 32$. Assume the criticality condition $\cj{a}_1 = 0$ and the positivity condition \eqref{positive}. Then, for any $1 \leq p < \infty$,  we have the uniform bound
\begin{align}
\sup_{N \in \N} \big\| e^{- \mathcal{R}_N (u)} \big\|_{L^p (\mu_\al)} \leq C_p < \infty.
\label{Lp_bdd2}
\end{align}

\noi
Moreover, there exists a limiting functional $\mathcal{R} (u)$ such that
\begin{align}
\lim_{N \to \infty} e^{- \mathcal{R}_N (u)} = e^{- \mathcal{R}(u)} \qquad \text{in } L^p (\mu),
\label{Lp_conv2}
\end{align}

\noi
where we denote
\begin{align}
\mathcal{R} (u) = \kappa \int_{\T^3}  :\! u^2 \!: dx +  \cj{a}_2 \int_{\T^3} :\! u^4 \!: dx.
\label{defR2}
\end{align}

\noi
Consequently, the sequence of truncated Gibbs measures $\{ \nu_N \}_{N \in \N}$ in \eqref{Gibbs_nuN} converges in total variation to a limiting Gibbs measure $\nu$ denoted by
\begin{align}
d \nu (u) = \mathcal{Z}^{-1} e^{-\mathcal{R}(u)} d\mu (u),
\label{Gibbs_nu}
\end{align}

\noi
The Gibbs measure $\nu$ is mutually absolutely continuous with respect to the base Gaussian measure $\mu_\al$.
\end{proposition}

We first prove the following lemma.

\begin{lemma}
\label{LEM:RNconv}
Let $\frac 98 < \al < \frac 32$. Let $\mathcal{R}_N$ be defined in \eqref{defRN2}. Then, for any $1 \geq p < \infty$, $\mathcal{R}_N$ converges in $L^p (\mu_\al)$ as $N \to \infty$ to a limit $\mathcal{R}$, which we denote by \eqref{defR2}.
\end{lemma}

\begin{proof}
We only consider the case when $p = 2$, since the convergence for general $p \geq 1$ then follows from the Gaussian hypercontractivity. 

We recall from \eqref{defRN2} and \eqref{defVN} that
\begin{align*}
\mathcal{R}_N (u) = \sum_{j = 1}^m \cj{a}_{j, N} N^{- (2 j - 4) (\frac 32 - \al)} \int_{\T^3} :\! (\pi_N u)^{2 j} \!: dx.
\end{align*}

\noi
For the $j = 1$ and the $j = 2$ terms, we can compute that for any $N_1 \geq N_2 \geq 1$, by the expression \eqref{gauss3}, the Ito isometry \eqref{itoX}, and the discrete convolution inequality in Lemma~\ref{LEM:SUM},
\begin{align}
\bigg\| \int_{\T^3} :\! (\pi_{N_1} u)^{2} \!:  dx - \int_{\T^3} :\! (\pi_{N_2} u)^{2} \!:  dx  \bigg\|_{L^2 (\mu_\al)}^2 \sim \sum_{\substack{n_1 + n_2 = 0 \\ N_2 < \max (n_1, n_2) \leq N_1}} \frac{1}{\jb{n_1}^{2 \al} \jb{n_2}^{2 \al}} \les N_2^{- \ta}
\label{RL0}
\end{align}

\noi
and
\begin{align}
\begin{split}
\bigg\| &\int_{\T^3} :\! (\pi_{N_1} u)^{4} \!:  dx - \int_{\T^3} :\! (\pi_{N_2} u)^{4} \!:  dx  \bigg\|_{L^2 (\mu_\al)}^2 \\
&\sim \sum_{\substack{n_1 + n_2 + n_3 + n_4 = 0 \\ N_2 < \max (n_1, n_2, n_3, n_4) \leq N_1}} \frac{1}{\jb{n_1}^{2 \al} \jb{n_2}^{2 \al} \jb{n_3}^{2 \al} \jb{n_4}^{2 \al}} \les N_2^{- \ta}
\end{split}
\label{RL1}
\end{align}

\noi
for some $\ta > 0$, where we used the condition $\al > \frac 98$. 
For the $j \geq 3$ terms, by the Ito isometry \eqref{itoX} and the discrete convolution inequality in Lemma~\ref{LEM:SUM}~(ii), we have
\begin{align}
\begin{split}
\bigg\| &\sum_{j = 3}^m \cj{a}_{j, N} N^{- (2j - 4) (\frac 32 - \al)} \int_{\T^3} :\! u_N^{2j} \!: dx \bigg\|_{L^2 (\mu_\al)}^2 \\
&= \sum_{j = 3}^m (2j)! |\cj{a}_{j, N}|^2 N^{- 4 (j - 2) (\frac 32 - \al)}  \sum_{\substack{n_1 + \cdots + n_{2j} = 0 \\ |n_\l| \leq N}} \frac{1}{\jb{n_1}^{2 \al} \cdots \jb{n_{2j}}^{2 \al}} \\
&\leq \sum_{j = 3}^m (2j)! |\cj{a}_{j, N}|^2 N^{-2j \eps} \sum_{\substack{n_1 + \cdots + n_{2j} = 0 \\ |n_\l| \leq N}} \frac{1}{\jb{n_1}^{2 \al + \frac{2j - 4}{j} (\frac 32 - \al) - \eps} \cdots \jb{n_{2j}}^{2 \al + \frac{2j - 4}{j} (\frac 32 - \al) - \eps}} \\
&\les \sum_{j = 3}^m (2j)! |\cj{a}_{j, N}|^2 N^{-2j \eps}.
\end{split}
\label{RL2}
\end{align}

\noi
for some $\eps > 0$. Here, we used the valid condition $\frac{3(2j - 1)}{2j} < 2 \al + \frac{2j - 4}{j} (\frac 32 - \al) - \eps < 3$ with $j \geq 3$ for applying Lemma~\ref{LEM:SUM}~(ii), which is satisfied given $\frac 98 < \al < \frac 32$ and $\eps > 0$ sufficiently small. 
Thus, combining \eqref{RL0}, \eqref{RL1}, the convergences in \eqref{ajconv} and \eqref{a1conv}, and \eqref{RL2}, we finish our proof.
\end{proof}

We now Proposition~\ref{PROP:Gibbs2}. Note that to obtain the convergence \eqref{Lp_conv2}, it suffices to prove the uniform bound \eqref{Lp_bdd2}. Indeed, by Lemma \ref{LEM:RNconv} and the continuity of the exponential function, we have
\begin{align*}
e^{- \mathcal{R}_N (u)} \too e^{- \mathcal{R} (u)}
\end{align*}

\noi
as $N \to \infty$ in probability. Then, by using the uniform bound \eqref{Lp_bdd2}, we can obtain the desired convergence of the densities \eqref{Lp_conv2} using a standard argument. See \cite[Remark~3.8]{Tz08} and also the proof of \cite[Proposition~1.2]{OTh}.

To prove the uniform bound \eqref{Lp_bdd2}, since the assumption \eqref{positive} is still valid if we replace $\cj{a}_j$ by $p \cj{a}_j$, we can assume without loss of generality that $p = 1$. By the Bou\'e-Dupuis variational formula (Lemma \ref{LEM:BD}), we have
\begin{align}
\log \big\| e^{- \mathcal{R}_N (u)} \big\|_{L^1 (\mu_\al)} \leq  \E \bigg[ \sup_{\Dr \in H^\al (\T^3)} \bigg\{ - \int_{\T^3} V_N (Y_N + \Dr_N ) dx - \frac 12 \| \Dr \|_{H^\al}^2 \bigg\} \bigg],
\label{BD1-1}
\end{align}

\noi
where $Y_N = \pi_N Y$ with $Y$ being defined in \eqref{defY} and $\Dr_N = \pi_N \Dr$. The main task is to obtain an upper bound for \eqref{BD1-1}.

The following two lemmas are 3-dimensional analogues of \cite[Section~3.3.4 and Section~3.3.5]{STzX}, and so we will be brief in the proofs.

\begin{lemma}
\label{LEM:l=123}
Let $\frac 98 < \al < \frac 32$ and $j$ be an integer satisfying $2 \leq j \leq m$. Let $\dl > 0$ be arbitrarily small and $0 < \be_1, \be_2 < \al$. Then, there exist $1 \leq p_1, p_2 < \infty$ and a constant $C(\dl) > 0$ such that
\begin{align}
&N^{- (2j - 4) (\frac 32 - \al)} \int_{\T^3} :\! Y_N^{2j - 1} \!: \Dr_N dx \leq  C(\dl)  N^{- (4j - 8) (\frac 32 - \al)} \| :\! Y_N^{2j - 1} \!: \|_{H^{- \al}}^2 + \dl \| \Dr_N \|_{H^\al}^2,  \label{l=1} \\
&N^{- (2j - 4) (\frac 32 - \al)} \int_{\T^3} :\! Y_N^{2j - 2} \!: \Dr_N^2 dx \notag \\
&\quad \leq C(\dl) N^{- 4 (2j - 4) (\frac 32 - \al)} \| :\! Y_N^{2j - 2} \!: \|_{W^{- \be_1, p_1}}^4 + \dl \big( \| \Dr_N \|_{H^\al}^2 + \| \Dr_N \|_{L^4}^4 \big),
\label{l=2} \\
&N^{- (2j - 4) (\frac 32 - \al)} \int_{\T^3} :\! Y_N^{2j - 3} \!: \Dr_N^3 dx \notag \\
&\quad \leq C(\delta) \big( N^{- (2j - 4) (\frac 32 - \al)} \| :\! Y_N^{2j - 3} \!: \|_{W^{- \be_2, p_2}} \big)^{\frac{4 \al}{\al - \be_2}} + \dl \big( \| \Dr_N \|_{H^\al}^2 + \| \Dr_N \|_{L^4}^4 \big), \label{l=3}
\end{align}
\end{lemma}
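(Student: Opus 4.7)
The plan is to prove each of the three estimates by the same two-step recipe: (i) dualize the integral against a suitable negative-order $W^{-\beta_\ell, p_\ell}$-norm of the Wick power, and (ii) control the resulting norm of the drift factor $\Dr_N^\ell$ by a mixture of $\|\Dr_N\|_{H^\al}$ and $\|\Dr_N\|_{L^4}$, then use weighted Young's inequality (with $\dl$ as the small parameter on the drift side) to split off the desired absorbable terms. The role of the exponents $\be_\ell, p_\ell$ is to make the Wick factor finitely integrable on the Wiener-chaos side (which is ensured by Lemma \ref{LEM:YNk} provided $\be_\ell > 0$) while keeping the positive-order Sobolev norm $\|\Dr_N^\ell\|_{W^{\be_\ell, p_\ell'}}$ controllable by the drift norms appearing on the right.

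For \eqref{l=1}, I would simply apply the duality pairing $\int fg\,dx \leq \|f\|_{H^{-\al}}\|g\|_{H^\al}$ with $f = \, :\!Y_N^{2j-1}\!:\,$ and $g = \Dr_N$, then use the elementary Young inequality $ab \leq \tfrac{a^2}{2\dl} + \tfrac{\dl b^2}{2}$; absorbing the prefactor $N^{-(2j-4)(\tfrac32 - \al)}$ into the squared term produces $N^{-(4j-8)(\tfrac 32-\al)}$ as claimed. For \eqref{l=2}, after dualizing against $W^{-\be_1, p_1}$, the task reduces to proving the product bound
\[
\|\Dr_N^2\|_{W^{\be_1, p_1'}} \les \|\Dr_N\|_{H^\al}\|\Dr_N\|_{L^4},
\]
which I would obtain by applying the fractional Leibniz rule of Lemma \ref{LEM:Bes}(v), then controlling the resulting $W^{\be_1, q}$-factor via the Sobolev embedding $H^\al \hookrightarrow W^{\be_1, q}$ (legal when $\al - \be_1 \geq 3(\tfrac12 - \tfrac1q)$) and the other factor by $\|\Dr_N\|_{L^4}$; the conjugate relation $\tfrac{1}{p_1'} = \tfrac{1}{q} + \tfrac14$ forces $p_1$ to be finite and sufficiently large. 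Finally, applying the three-factor Young inequality with conjugate triple $(4, 2, 4)$ (which respects $\tfrac14 + \tfrac12 + \tfrac14 = 1$) and $\dl$-weights placed on the $H^\al$ and $L^4$ slots produces the $\dl^{-3}$-power and the $\dl(\|\Dr_N\|_{H^\al}^2 + \|\Dr_N\|_{L^4}^4)$ absorption on the right-hand side.

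For \eqref{l=3} the same scheme applies, but the intermediate product bound is now the interpolative estimate
\[
\|\Dr_N^3\|_{W^{\be_2, p_2'}} \les \|\Dr_N\|_{H^\al}^{a}\|\Dr_N\|_{L^4}^{3-a}, \qquad a \overset{\text{def}}{=} \frac{\be_2}{\al},
\]
which I would derive by combining the fractional Leibniz rule applied to $\Dr_N^3$ with a Gagliardo--Nirenberg/real-interpolation step between $W^{\al, r}$ and $L^r$ (giving the exponent $a = \be_2/\al$), followed by Sobolev embedding into $H^\al$ and $L^4$. Once this factorization is in hand, the three-factor Young inequality with conjugate triple $(p^*, 2/a, 4/(3-a))$ satisfying $\tfrac{1}{p^*} + \tfrac{a}{2} + \tfrac{3-a}{4} = 1$ yields $p^* = \tfrac{4}{1-a} = \tfrac{4\al}{\al - \be_2}$, precisely matching the exponent in \eqref{l=3}; putting the $\dl$-weights on the drift terms and $C(\dl)$ on the Wick term completes the estimate.

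The main obstacle, as usual in this scheme, is the simultaneous juggling of the parameters $(\be_\ell, p_\ell)$: they must be chosen small enough in $\be_\ell$ and large enough in $p_\ell$ that Lemma \ref{LEM:YNk} delivers a uniform-in-$N$ bound on the Wick-power factor of the form $N^{(2j-2\ell-2)(\tfrac32 - \al) + \eps}$, yet also compatible with the Sobolev embedding condition $\al - \be_\ell \geq 3(\tfrac12 - \tfrac{1}{q})$ used in the product estimates. For $\frac{9}{8} < \al \leq \frac 32$ and $2 \leq j \leq m$, this window is open, which is exactly why the integer constraint $\al > \tfrac98$ of Theorem \ref{THM:Gibbs2} appears; carefully verifying that this window is nonempty for all relevant $j, \ell$ is the one nonroutine technical bookkeeping step, and the rest of the proof is a three-dimensional transcription of the two-dimensional argument in \cite{STzX}.
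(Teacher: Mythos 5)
Your proposal is correct and follows essentially the same route as the paper: duality paired with the product/fractional-Leibniz estimate (the paper invokes Lemma \ref{LEM:gko}, placing one drift factor in $W^{\be_1,q_1}$ with $q_1\le 2$ and the other in $L^{q_2}$ with $q_2\le 4$), interpolation between $W^{\al,p}$ and $L^4$ with exponent $\be_2/\al$ for the cubic term, and weighted Young's inequality with exponents $(4,2,4)$ and $\big(\tfrac{4\al}{\al-\be_2},\tfrac{2\al}{\be_2},\tfrac{4\al}{3\al-\be_2}\big)$, respectively. Your closing concern about tuning $(\be_\ell,p_\ell)$ to make Lemma \ref{LEM:YNk} applicable is not actually part of this lemma's proof (the stated inequalities are deterministic and only need $0<\be_1,\be_2<\al$); that bookkeeping belongs to the application in the proof of Theorem \ref{THM:Gibbs2}, but this does not affect the validity of your argument.
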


\begin{proof}
Note that \eqref{l=1} follows directly from duality and Cauchy's inequality.

For \eqref{l=2}, we let $1 < p_1 < \infty$, $1 < q_1 \leq 2$, and $1 < q_2 \leq 4$ be such that
$\frac{1}{p_1} + \frac{1}{q_1} + \frac{1}{q_2} = 1$.
Then, by duality, the product estimate in Lemma~\ref{LEM:gko}~(i), and Young's inequality, we have
\begin{align*}
&N^{- (2j - 4) (\frac 32 - \al)} \int_{\T^3} :\! Y_N^{2j - 2} \!: \Dr_N^2 dx \\
&\quad \leq C N^{- (2j - 4) (\frac 32 - \al)} \| :\! Y_N^{2j - 2} \!: \|_{W^{- \be_1, p_1}} \| \Dr_N \|_{W^{\be_1, q_1}} \| \Dr_N \|_{L^{q_2}} \\
&\quad \leq C(\dl) N^{- 4 (2j - 4) (\frac 32 - \al)} \| :\! Y_N^{2j - 2} \!: \|_{W^{- \be_1, p_1}}^4 + \dl \big( \| \Dr_N \|_{H^\al}^2 + \| \Dr_N \|_{L^4}^4 \big)
\end{align*}

\noi
for some constants $C, C(\dl) > 0$, as desired.

For \eqref{l=3}, we use duality and the product estimate in Lemma~\ref{LEM:gko}~(i) to obtain
\begin{align}
\int_{\T^3} :\! Y_N^{2j - 3} \!: \Dr_N^3 dx \les \| :\! Y_N^{2j - 3} \!: \|_{W^{- \be_2, \frac{1 + \eps}{\eps}}} \| \Dr_N \|_{W^{\be_2, \frac{2 (1 + \eps)}{1 - \eps}}} \| \Dr_N \|_{L^4}^2
\label{l3-1}
\end{align}

\noi
for some $\eps > 0$ arbitrarily small. By interpolation (Lemma \ref{LEM:interp}) with $\be_2 < \al$, we have
\begin{align}
\| \Dr_N \|_{W^{\be_2, \frac{2 (1 + \eps)}{1 - \eps}}} \les \| \Dr_N \|_{W^{\al, p}}^{\frac{\be_2}{\al}} \| \Dr_N \|_{L^4}^{1 - \frac{\be_2}{\al}} \quad \text{with } p = \frac{4 (1 + \eps) \be_2}{(1 - 3 \eps) \al + (1 + \eps) \be_2}.
\label{l3-2}
\end{align}

\noi
Since $\be_2 < \al$, by letting $\eps > 0$ be sufficiently small, we can have $p \leq 2$. Thus, combining \eqref{l3-1} and \eqref{l3-2} and using Young's inequality, we obtain
\begin{align*}
&N^{- (2j - 4) (\frac 32 - \al)} \int_{\T^3} :\! Y_N^{2j - 3} \!: \Dr_N^3 dx \notag \\
&\quad \leq C N^{- (2j - 4) (\frac 32 - \al)} \| :\! Y_N^{2j - 3} \!: \|_{W^{- \be_2, \frac{1 + \eps}{\eps}}} \| \Dr_N \|_{H^{\al}}^{\frac{\be_2}{\al}} \| \Dr_N \|_{L^4}^{3 - \frac{\be_2}{\al}} \\
&\quad \leq C(\delta) \Big( N^{- (2j - 4) (\frac 32 - \al)} \| :\! Y_N^{2j - 2} \!: \|_{W^{- \be_2, p_2}} \Big)^{\frac{4 \al}{\al - \be_2}} + \dl \big( \| \Dr_N \|_{H^\al}^2 + \| \Dr_N \|_{L^4}^4 \big)
\end{align*}

\noi
for some constants $C, C(\dl) > 0$, as desired.
\end{proof}

\begin{lemma}
\label{LEM:l>=4}
Let $\frac 98 < \al < \frac 32$, $4 \leq \l \leq 2m - 1$, $\frac{\l + 1}{2} \leq j \leq m$, and $\dl > 0$ be arbitrarily small. Let $\ell_0 = \lfloor \frac{\l}{2} \rfloor + 1 \leq m$ and $\be_3 > 0$ be such that
\begin{align}
\frac{2 \l_0 \be_3}{(2 \l_0 - \l) \al + \be_3} < 2.
\label{be_cond}
\end{align}

\noi
We also define $\gamma$ and $\eta$ to be
\begin{align}
\gamma = \frac{(\l_0 - 2) (\l \al - \be_3) (\frac 32 - \al)}{\l_0 \al} \quad \text{and} \quad \eta = \frac{2 \l_0 \al}{(2\l_0 - \l) \al - (\l_0 - 1) \be_3}. \label{eta}
\end{align}

\noi
Then, there exists a constant $C(\dl) > 0$ such that
\begin{align*}
\begin{split}
N^{- (2j - 4) (\frac 32 - \al)} &\int_{\T^3} :\! Y_N^{2j - \l} \!: \Dr_N^\l dx \leq C(\dl) \big( N^{- (2j - 4) (\frac 32 - \al) + \gamma} \| :\! Y_N^{2j - \l} \!: \|_{W^{- \be_3, \infty}} \big)^\eta \\
& + \dl \big( \| \Dr_N \|_{H^\al}^2 + N^{- (2 \l_0 - 4) (\frac 32 - \al)} \| \Dr_N \|_{L^{2 \l_0}}^{2 \l_0} \big).
\end{split}
\end{align*}
\end{lemma}

\begin{proof}
By duality and the product estimate in Lemma~\ref{LEM:gko}~(i), we have
\begin{align}
\int_{\T^3} :\! Y_N^{2j - \l} \!: \Dr_N^\l dx &\les \| :\! Y_N^{2j - \l} \!: \|_{W^{- \be_3, \frac{1 + \eps}{\eps}}} \| \Dr_N \|_{W^{\be_3, p_\eps}} \| \Dr_N \|_{L^{2 \l_0}}^{\l - 1},
\label{l4-1}
\end{align}

\noi
where
\begin{align*}
p_\eps = \frac{2 (1 + \eps) \l_0}{2 \l_0 - (1 + \eps) (\l - 1)} \searrow \frac{2 \l_0}{2 \l_0 - \l + 1}
\end{align*}

\noi
as $\eps \to 0$. Note that the condition \eqref{be_cond} guarantees that $\be_3 < \al$. By interpolation (Lemma~\ref{LEM:interp}) with $\be_3 < \al$, we have
\begin{align}
\| \Dr_N \|_{W^{\be_3, p_\eps}} \les \| \Dr_N \|_{W^{\al, q_\eps}}^{\frac{\be_3}{\al}} \| \Dr_N \|_{L^{2 \l_0}}^{1 - \frac{\be_3}{\al}},
\label{l4-2}
\end{align}

\noi
where
\begin{align*}
q_\eps = \frac{2 \l_0 \be_3}{ \frac{2 \l_0 \al}{p_\eps} - (\al - \be_3) } \searrow \frac{2 \l_0 \be_3}{(2 \l_0 - \l) \al + \be_3}
\end{align*}

\noi
as $\eps \to 0$. Since we have the condition \eqref{be_cond}, by taking $\eps > 0$ to be sufficiently small, we have $q_\eps \leq 2$. Thus, combining \eqref{l4-1} and \eqref{l4-2} and applying Young's inequaity, we obtain
\begin{align*}
&N^{- (2j - 4) (\frac 32 - \al)} \int_{\T^3} :\! Y_N^{2j - \l} \!: \Dr_N^\l dx \\
&\leq C N^{- (2j - 4) (\frac 32 - \al) + \gamma} \| :\! Y_N^{2j - \l} \!: \|_{W^{- \be_3, \infty}} \| \Dr_N \|_{H^\al}^{\frac{\be_3}{\al}} \cdot \big( N^{- \frac{\g \al}{\l \al - \be_3}} \| \Dr_N \|_{L^{2 \l_0}} \big)^{\l - \frac{\be_3}{\al}} \\
&\leq C(\dl) \big( N^{- (2j - 4) (\frac 32 - \al) + \gamma} \| :\! Y_N^{2j - \l} \!: \|_{W^{- \be_3, \infty}} \big)^\eta + \dl \big( \| \Dr_N \|_{H^\al}^2 + N^{- (2 \l_0 - 4) (\frac 32 - \al)} \| \Dr_N \|_{L^{2 \l_0}}^{2 \l_0} \big)
\end{align*}

\noi
for some constants $C, C(\dl) > 0$, with $\g$ and $\eta$ being defined in \eqref{eta}. The use of Young's inequality is valid given $\eta > 1$, which is true thanks to the condition \eqref{be_cond}. Thus, we have obtained the desired estimate.
\end{proof}

We are now ready to prove Proposition~\ref{PROP:Gibbs2}.
\begin{proof}[Proof of Proposition~\ref{PROP:Gibbs2}] 
As mentioned after the proof of Lemma~\ref{LEM:RNconv}, we only need to obtain an upper bound for \eqref{BD1-1}. 

By \eqref{defVN}, \eqref{HkX}, and the binomial expansion in \eqref{herm_decomp1}, we have
\begin{align*}
V_N (Y_N + \Dr_N) &= \cj{a}_{1, N} N^{2 (\frac 32 - \al)} \big( :\! Y_N^2 \!: + 2 Y_N \Dr_N + \Dr_N^2 \big) \\ 
&\quad + \sum_{\l = 0}^{2m - 1} \mathcal{Y}_{N, \l} \Dr_N^\l + \sum_{j = 2}^m \cj{a}_{j, N} N^{- (2j - 4) (\frac 32 - \al)} \Dr_N^{2j},
\end{align*}

\noi
where 
\begin{align}
\mathcal{Y}_{N, \l} = \sum_{j = 2 \vee \lceil \frac{\l + 1}{2} \rceil}^m \cj{a}_{j, N} \binom{2j}{\l} N^{- (2j - 4) (\frac 32 - \al)} :\! Y_N^{2j - \l} \!:. 
\label{defYc}
\end{align}

\noi
As in \cite[Proposition 3.5]{STzX}, by the positivity condition \eqref{positive} and the convergences in \eqref{ajconv} and \eqref{a1conv}, we obtain
\begin{align}
\sum_{j = 1}^m \cj{a}_{j, N} N^{- (2j - 4) (\frac 32 - \al)} \Dr_N^{2j} \geq c \sum_{j = 2}^m N^{- (2j - 4) (\frac 32 - \al)} \Dr_N^{2j} - C
\label{BD2}
\end{align}

\noi
for some constants $c, C > 0$ and sufficiently large $N \in \N$. Consequently, by \eqref{BD2} and the Cauchy-Schwarz inequality, we have
\begin{align}
\begin{split}
- \int_{\T^3} &V_N (Y_N + \Dr_N) dx - \frac 12 \| \Dr \|_{H^\al}^2  \\
&\leq - \int_{\T^3} \bigg( \cj{a}_{1, N} N^{2 (\frac 32 - \al)} \big( :\! Y_N^2 \!: + 2 Y_N \Dr_N \big) + \sum_{\l = 0}^{2m - 1} \mathcal{Y}_{N, \l} \Dr_N^\l \bigg) dx  \\
&\quad - c \sum_{j = 2}^m N^{- (2j - 4) (\frac 32 - \al)} \| \Dr_N \|_{L^{2j}}^{2j} - \frac 12 \| \Dr_N \|_{H^\al}^2 + C.
\end{split}
\label{BD3}
\end{align}

\noi
With \eqref{BD3} and \eqref{a1conv} in hand, in order to show an upper bound for \eqref{BD1-1}, we only need to show that for any small $\dl > 0$, there exists a random constant $K = K(\dl, N)$ whose expectation is uniformly bounded in $N$ such that
\begin{align}
\begin{split}
\bigg| &\int_{\T^3} :\! Y_N^2 \!: dx \bigg| + \bigg| \int_{\T^3} Y_N \Dr_N dx \bigg| + \sum_{\l = 0}^{2m - 1} \bigg| \int_{\T^3} \mathcal{Y}_{N, \l} \Dr_N^\l dx \bigg| \\
&\leq K + \dl \bigg( \sum_{j = 2}^m N^{- (2j - 4) (\frac 32 - \al)} \| \Dr_N \|_{L^{2j}}^{2j} + \| \Dr_N \|_{H^\al}^2 \bigg).
\end{split}
\label{BD_goal}
\end{align}

For the first term on the left-hand-side of \eqref{BD_goal}, we use Lemma~\ref{LEM:YNk} to obtain
\begin{align*}
\bigg| \int_{\T^3} :\! Y_N^2 \!: dx \bigg| \leq \| :\! Y_N^2 \!: \|_{H^{-2 (\frac 32 -  \al) - \eps}} \leq K.
\end{align*}

\noi
For the second term on the left-hand-side of \eqref{BD_goal}, we use duality, Young's inequality, and Lemma~\ref{LEM:YNk} to obtain
\begin{align*}
\bigg| \int_{\T^3} Y_N \Dr_N dx \bigg| \leq \| Y_N \|_{H^{- \al}} \| \Dr_N \|_{H^\al} \leq K + \dl \| \Dr_N \|_{H^\al}^2,
\end{align*}

\noi
where we need $- \al < \al - \frac 32$ which is valid given $\al > \frac 98 > \frac 34$.

We now consider the third term on the left-hand-side of \eqref{BD_goal}. By \eqref{defYc}, Lemma \ref{LEM:l=123}, and Lemma \ref{LEM:l>=4}, we have
\begin{align}
\begin{split}
\sum_{\l = 0}^{2m - 1} \bigg| \int_{\T^3} \mathcal{Y}_{N, \l} \Dr_N^\l dx \bigg| &\leq C(\dl) \sum_{j = 2}^m \big( N^{- (2j - 4) (\frac 32 - \al)} \| :\! Y_N^{2j - 1} \!: \|_{H^{- \al}} \big)^2 \\
&\quad + C(\dl) \sum_{j = 2}^m \big( N^{- (2j - 4) (\frac 32 - \al)} \| :\! Y_N^{2j - 2} \!: \|_{H^{- \be_1, p_1}} \big)^4 \\
&\quad + C(\dl) \sum_{j = 2}^m \big( N^{- (2j - 4) (\frac 32 - \al)} \| :\! Y_N^{2j - 3} \!: \|_{W^{- \be_2, p_2}} \big)^{\frac{4 \al}{\al - \be_2}} \\
&\quad + C(\dl) \sum_{\l = 4}^{2m - 1} \sum_{j = \lceil \frac{\l + 1}{2} \rceil}^m \big( N^{- (2j - 4) (\frac 32 - \al) + \gamma} \| :\! Y_N^{2j - \l} \!: \|_{W^{- \be_3, \infty}} \big)^\eta \\
&\quad + \dl \bigg( \sum_{j = 2}^m N^{- (2j - 4) (\frac 32 - \al)} \| \Dr_N \|_{L^{2j}}^{2j} + \| \Dr_N \|_{H^\al}^2 \bigg),
\end{split}
\label{BDl}
\end{align}

\noi
where $C(\dl) > 0$ is a constant, $0 < \be_1, \be_2 < \al$, $\be_3$ satisfies \eqref{be_cond}, $1 \leq p_1, p_2 < \infty$ are as given by Lemma \ref{LEM:l=123}, and $\g$ and $\eta$ are as defined in \eqref{eta}. It remains to show that the expectation of the first four terms on the right-hand-side of \eqref{BDl} is uniformly bounded in $N$. For the first term on the right-hand-side of \eqref{BDl}, we can apply Lemma \ref{LEM:YNk} given $\frac 98 < \al < \frac 32$. For the second term, since $\al > \frac 98$, we can pick $\be_1$ such that $3 - 2 \al < \be_1 < \al$, and so Lemma~\ref{LEM:YNk} can be applied. For the third term, since $\al > \frac 98$, we can pick $\be_2$ such that $\frac 32 - \al < \be_2 < \al$, and so Lemma \ref{LEM:YNk} can be applied. For the fourth term, since we already know from \eqref{be_cond} that $\be_3 < \al < \frac 32$, in order to apply Lemma \ref{LEM:YNk}, we require
\begin{align}
\be_3 > \g - (\l - 4) \Big( \frac 32 - \al \Big) > 0
\label{BD_cond1}
\end{align}

\noi
and
\begin{align}
(2j - 4) \Big( \frac 32 - \al \Big) - \g \geq 0.
\label{BD_cond2}
\end{align}

\noi
By plugging in the value of $\g$ as in \eqref{eta}, we see that the condition \eqref{BD_cond1} is equivalent to
\begin{align*}
\be_3 > \frac{2 \al (2 \l_0 - \l) (\frac 32 - \al)}{\frac 32 \l_0 - 2 (\frac 32 - \al)}  \quad \text{and} \quad \frac{\l_0 \be_3}{(2 \l_0 - \l) \al + \be_3} < 2,
\end{align*}

\noi
where $\l_0 = \lfloor \frac{\l}{2} \rfloor + 1$. Along with the condition \eqref{be_cond}, such $\be_3$ can be found as long as
\begin{align*}
\frac{2 \al (2 \l_0 - \l) (\frac 32 - \al)}{\frac 32 \l_0 - 2 (\frac 32 - \al)} < \frac{(2 \l_0 - \l) \al}{\l_0 - 1},
\end{align*}

\noi
which holds given $\al > \frac 34$. Then, by plugging in the value of $\g$ as in \eqref{eta}, the condition \eqref{BD_cond2} is equivalent to
\begin{align*}
\be_3 \geq \frac{(4 \l_0 - 2 \l - (2j - \l) \l_0) \al}{\l_0 - 2}.
\end{align*}

\noi
Along with \eqref{be_cond}, such $\be_3$ can be found as long as
\begin{align*}
\frac{(4 \l_0 - 2 \l - (2j - \l) \l_0) \al}{\l_0 - 2} < \frac{(2 \l_0 - \l) \al}{\l_0 - 1},
\end{align*}

\noi
which holds given $2j - \l \geq 1$, and $\l_0 = \lfloor \frac{\l}{2} \rfloor + 1 \leq \l - 1$ (note that the equalities cannot hold simultaneously). This finishes the proof of Proposition~\ref{PROP:Gibbs2}.
\end{proof}

\subsection{Construction of the fractional $\Phi^4_3$-measure}
\label{SUBSEC:Gibbs2}
In this subsection, we show the construction of the fractional $\Phi^4_3$-measure in Theorem~\ref{THM:Gibbs}.

In the case $\al > \frac 98$, we can view the truncated fractional $\Phi^4_3$-measure $\rho_N$ in \eqref{GibbsN1} as a special case of $\nu_N$ in \eqref{Gibbs_nuN} by setting $V_N (\pi_N u) = \frac 14 :\! (\pi_N u)^4 \!:$ and $\mathcal{R} (u) = \frac 14 \int_{\T^3} :\! u^4 \!: dx$. Thus, the proof of Theorem~\ref{THM:Gibbs}~(i) is covered by the proof of Proposition~\ref{PROP:Gibbs2} in Subsection~\ref{SUBSEC:Gibbs1} (the case $\al = \frac 32$ for Theorem~\ref{THM:Gibbs}~(i) also follows from the same steps).

We now focus on the case $1 < \al \leq \frac 98$. As mentioned in Subsection~\ref{SUBSEC:Gibbs}, we need to introduce a further renormalization for the potential energy $R_N$ as defined in \eqref{RNu} due to the following term
\begin{align*}
\int_{\T^3} \!:\!Y_N^3\!: \Dr_N dx,
\end{align*}

\noi
where $Y_N = \pi_N Y$ with $Y$ being defined in \eqref{defY} and $\Dr_N = \pi_N \Dr$ with $\Dr$ being a drift term belonging to $H^\al (\T^3)$. Here, we follow \cite[Section 6]{OOTol1} by introducing a change of variable
\begin{align}
\Ups^N = \Dr + \ZZ_N
\label{defUps}
\end{align}

\noi
with $\ZZ_N = \pi_N \ZZ^N$ being defined by
\begin{align}
\ZZ^N \deff \jb{\nb}^{-2 \al} :\! Y_N^3 \!: .
\label{defZZ}
\end{align}

\noi
Then, from \eqref{defZZ} and \eqref{defUps}, we have
\begin{align}
\int_{\T^3} :\!Y_N^3\!: \Dr_N dx + \frac 12 \| \Dr \|_{H^\al}^2 = \frac 12 \| \Ups^N \|_{H^\al}^2
 - \frac 12 \| \ZZ_N \|_{H^\al}^2,
\label{YZ2}
\end{align}

\noi
and we define the constant $\al_N$ is defined by
\begin{align}
\al_N \deff \frac 12  \E \big[ \| \ZZ_N \|_{H^\al}^2 \big].
\label{defalN}
\end{align}

\noi
From Lemma~\ref{LEM:YNk}, we see that $\ZZ_N$ has (uniform-in-$N$) regularity $5 \al - \frac 92 -$, so that $\al_N \to \infty$ as $N \to \infty$ since $5 \al - \frac 92 < \al$ given $1 < \al \leq \frac 98$. Thus, we replace the potential energy $R_N$ in \eqref{RNu} by $R_N^\dia \deff R_N + \al_N$, so that the Bou\'e-Dupuis variational formula (Lemma~\ref{LEM:BD}) gives
\begin{align}
\log \big\| e^{- R_N^\dia (u)} \big\|_{L^1 (\mu_\al)} \leq  \E \bigg[ \sup_{\Ups^N \in H^\al}  \Big\{ - \wt R_N^\dia (Y_N + \Ups^N - \ZZ_N) - \frac 12 \| \Ups^N \|_{H^\al}^2 \Big\} \bigg],
\label{BDRNd}
\end{align}

\noi
where
\begin{align}
\begin{split}
\wt R_N^\dia (Y + \Ups^N - \ZZ_N) &= \frac 14 \int_{\T^3} : \! Y_N^4 \! : dx + \frac 32 \int_{\T^3} : \! Y_N^2 \! : \Dr_N^2 dx + \int_{\T^3} Y_N \Dr_N^3 dx \\
&\quad + \frac 14 \int_{\T^3} \Dr_N^4 dx
\end{split}
\label{RNt}
\end{align}

\noi
with $\Dr_N = \Ups_N - \ZZ_N$ and $\Ups_N = \pi_N \Ups^N$.

\medskip
We now establish some estimates to deal with terms on the right-hand-side of \eqref{RNt}.

\begin{lemma}
\label{LEM:Dr1}
Let $1 < \al \leq \frac 98$ and $\eps > 0$. Then, there exists a constant $C > 0$ such that
\begin{align*}
&\int_{\T^3} \Dr_N^4 dx \geq \frac 12 \int_{\T^3} \Ups_N^4 dx - C \|\ZZ_N\|_{W^{5\alpha -\frac 92 - \eps, \infty}}^4,  \\
&\int_{\T^3} \Dr_N^4 dx \leq 2 \int_{\T^3} \Ups_N^4 dx + C \|\ZZ_N\|_{W^{5\alpha -\frac 92 - \eps, \infty}}^4. 
\end{align*}

\noi
for
$\Dr_N = \Ups_N - \ZZ_N$,
uniformly in $N \in \N$.
\end{lemma}

\begin{proof}
By H\"older's inequalities and Young's ineqaulities, we have
\begin{align*}
\| \Ups_N \ZZ_N^3 \|_{L^1} \leq \| \Ups_N \|_{L^4} \|\ZZ_N\|_{L^4}^3 \leq \dl \| \Ups_N \|_{L^4}^4 + C(\dl) \|\ZZ_N\|_{W^{5\alpha - \frac 92 - \eps, \infty}}^4,
\end{align*}
\begin{align*} 
\| \Ups_N^2 \ZZ_N^2 \|_{L^1} \leq \| \Ups_N \|_{L^4}^2 \|\ZZ_N\|_{L^4}^2 \leq \dl \| \Ups_N \|_{L^4}^4 + C(\dl) \|\ZZ_N\|_{W^{5\alpha - \frac 92 - \eps, \infty}}^4,
\end{align*}
\begin{align*} 
\| \Ups_N^3 \ZZ_N \|_{L^1} \leq \| \Ups_N \|_{L^4}^3\|\ZZ_N\|_{L^4} \leq \dl \| \Ups_N \|_{L^4}^4 + C(\dl) \|\ZZ_N\|_{W^{5\alpha - \frac 92 - \eps, \infty}}^4
\end{align*}

\noi
for some $\dl > 0$ arbitrarily small and a constant $C(\dl) > 0$
This proves the two estimates.
\end{proof}

\begin{lemma}
\label{LEM:Dr2}
Let $1 < \al \leq \frac 98$ and $\eps > 0$ be small.
Then, for any $\dl > 0$, there exist constants  $C(\dl, \eps), c > 0$ such that
\begin{align}
\begin{split}
\bigg| \int_{\T^3}  Y_N \Dr_N^3  dx \bigg| &\leq \dl \big( \| \Ups_N \|_{L^4}^4 +  \| \Ups_N \|_{H^\alpha}^2 \big) \\
&\quad + C(\dl) \Big(1 +  \| Y_N \|_{W^{\alpha -\frac 32 - \eps, \infty}}^{c} + \|\ZZ_N\|_{W^{5\alpha - \frac 92 - \eps, \infty}}^c \Big),
\end{split}
\label{YY1y}
\end{align}
\begin{align}
\begin{split}
&\bigg| \int_{\T^3} :\! Y_N^2 \!: \Dr_N^2 dx  - \int_{\T^3} :\! Y_N^2 \!: \ZZ_N^2 dx \bigg| \leq \dl \big( \|\Ups_N\|_{L^4}^4+\|\Ups_N\|_{H^\al}^2 \big) \\
&\quad + C(\dl, \eps) \Big( 1 + \| :\! Y_N^2 \!: \|_{W^{2 \al - 3 - \eps, \infty}}^{c}
+ \| \ZZ_N \|_{W^{5 \al - \frac 92 + \eps, \infty}}^c  +  \| :\! Y_N^2 \!: \ZZ_N \|_{W^{2 \al - 3 - \eps, \infty}} \Big)
\end{split}
\label{YY2y}
\end{align}

\noi
for $\Dr_N = \Ups_N - \ZZ_N$, uniformly in $N \in \N$.
\end{lemma}

\begin{proof}
We first prove \eqref{YY1y}. Note that
\begin{align*}
\bigg| \int_{\T^3}  Y_N \Dr_N^3  dx\bigg| = \bigg| \int_{\T^3}  Y_N \Ups_N^3  dx - 3 \int_{\T^3}  Y_N \Ups_N^2 \ZZ_N  dx + 3 \int_{\T^3}  Y_N \Ups_N \ZZ_N^2  dx - \int_{\T^3}  Y_N \ZZ_N^3  dx\bigg|.
\end{align*}

\noi
By duality, the product estimate  (Lemma~\ref{LEM:gko}~(i)), Young's inequality, interpolation (Lemma~\ref{LEM:interp}), and Young's inequality again, we have
\begin{align}
\begin{split}
\bigg| \int_{\T^3} Y_N \Ups_N^3  dx \bigg| &\leq \| Y_N \|_{W^{\al - \frac 32 - \eps, \infty}} \| \Ups_N^3 \|_{W^{- \al + \frac 32 + \eps, 1}} \\
&\leq C \| Y_N \|_{W^{\al - \frac 32 - \eps, \infty}} \| \Ups_N \|_{H^{- \al + \frac 32 + \eps}} \| \Ups_N \|_{L^4}^2 \\
&\leq C(\dl') \| Y_N \|_{W^{\al - \frac 32 - \eps, \infty}}^c + \dl' \| \Ups_N \|_{H^{\frac{\al}{2}}}^{\frac 83} + \dl' \| \Ups_N \|_{L^4}^4 \\
&\leq C(\dl') \| Y_N \|_{W^{\al - \frac 32 - \eps, \infty}}^c + C \dl' \| \Ups_N \|_{H^\al}^{\frac 43} \| \Ups_N \|_{L^2}^{\frac 43} +  \dl' \| \Ups_N \|_{L^4}^4 \\
&\leq C(\dl') \| Y_N \|_{W^{\al - \frac 32 - \eps, \infty}}^c + C \dl' \| \Ups_N \|_{H^\al}^2 + 2 \dl' \| \Ups_N \|_{L^4}^4
\end{split}
\label{YY1y-1}
\end{align}

\noi
for any $0 < \dl' \ll 1$, where we used $0 < - \al + \frac 32 + \eps \leq \frac{\al}{2}$ given $\al > 1$ and $\eps > 0$ sufficiently small. By duality, the product estimate (Lemma~\ref{LEM:gko}~(i)), interpolation (Lemma~\ref{LEM:interp}), and Young's inequalities, we have
\begin{align}
\begin{split}
\bigg| \int_{\T^3} &Y_N \Ups_N^2 \ZZ_N dx \bigg| \leq \| Y_N \|_{W^{\al - \frac 32 - \eps, \infty}} \| \Ups_N^2 \ZZ_N \|_{W^{-\al + \frac 32 + \eps, 1}} \\ 
&\leq C \| Y_N \|_{W^{\al - \frac 32 - \eps, \infty}} \| \Ups_N \|_{W^{\frac{\al}{2}, 2 + \eps}}^2 \| \ZZ_N \|_{W^{5 \al - \frac 92 - \eps, \infty}} \\
&\leq C(\dl') \Big( \| Y_N \|_{W^{\al - \frac 32 - \eps, \infty}}^{c} + \| \ZZ_N \|_{W^{5 \al - \frac 92 - \eps, \infty}}^c \Big) + \dl' \| \Ups_N \|_{W^{\frac{\al}{2}, 2 + \eps}}^{\frac 83} \\
&\leq C(\dl') \Big( \| Y_N \|_{W^{\al - \frac 32 - \eps, \infty}}^{c} + \| \ZZ_N \|_{W^{5 \al - \frac 92 - \eps, \infty}}^c \Big) + C \dl' \| \Ups_N \|_{H^\al}^2 + 2 \dl' \| \Ups_N \|_{L^4}^4
\end{split}
\label{YY1y-2}
\end{align}

\noi
for any $0 < \dl' \ll 1$, where we used $0 < - \al + \frac 32 + \eps < \frac{\al}{2}$ and $- \al + \frac 32 + \eps < 5 \al - \frac 92 - \eps$ given $\al > 1$ and $\eps > 0$ sufficiently small. By duality, the product estimate (Lemma~\ref{LEM:gko}~(i)), and Young's inequality, we have
\begin{align}
\begin{split}
\bigg| \int_{\T^3} Y_N \Ups_N \ZZ_N^2 dx \bigg| &\leq \| Y_N \|_{W^{\al - \frac 32 - \eps, \infty}} \| \Ups_N \ZZ_N^2 \|_{W^{-\al + \frac 32 + \eps, 1}} \\
&\leq C \| Y_N \|_{W^{\al - \frac 32 - \eps, \infty}} \| \Ups_N \|_{H^\al} \| \ZZ_N \|_{W^{5 \al - \frac 92 - \eps, \infty}}^2 \\
&\leq C(\dl') \Big( \| Y_N \|_{W^{\al - \frac 32 - \eps, \infty}}^{c} + \| \ZZ_N \|_{W^{5 \al - \frac 92 - \eps, \infty}}^c \Big) + \dl' \| \Ups_N \|_{H^{\al}}^2,
\end{split}
\label{YY1y-3}
\end{align}

\noi
for any $0 < \dl' \ll 1$, where we used $0 < - \al + \frac 32 + \eps < \al$ and $- \al + \frac 32 + \eps < 5 \al - \frac 92 - \eps$ given $\al > 1$ and $\eps > 0$ sufficiently small. By duality, the product estimate (Lemma~\ref{LEM:gko}~(i)), and Young's inequality, we have
\begin{align}
\begin{split}
\bigg| \int_{\T^3} Y_N \ZZ_N^3 dx \bigg| &\leq \| Y_N \|_{W^{\al - \frac 32 - \eps, \infty}} \| \ZZ_N^3 \|_{W^{-\al + \frac 32 + \eps, \infty}} \\
&\les \| Y_N \|_{W^{\al - \frac 32 - \eps, \infty}}^c + \| \ZZ_N \|_{W^{5 \al - \frac 92 - \eps, \infty}}^c,
\end{split}
\label{YY1y-4}
\end{align}

\noi
where we used $0 < -\al + \frac 32 + \eps < 5 \al - \frac 92 - \eps$ given $\al > 1$ and $\eps > 0$ sufficiently small. Combining \eqref{YY1y-1}, \eqref{YY1y-2}, \eqref{YY1y-3}, and \eqref{YY1y-4}, we obtain \eqref{YY1y}.

We now prove \eqref{YY2y}. Note that
\begin{align}
\bigg| \int_{\T^3} :\! Y_N^2 \!: \Dr_N^2 dx \bigg| = \bigg|  \int_{\T^3} :\! Y_N^2 \!: \Ups_N^2 dx - 2  \int_{\T^3} :\! Y_N^2 \!: \Ups_N \ZZ_N dx +  \int_{\T^3} :\! Y_N^2 \!: \ZZ_N^2 dx \bigg|.
\label{YY2y-0}
\end{align}

\noi
By duality, the product estimate (Lemma~\ref{LEM:gko}~(i)), interpolation (Lemma~\ref{LEM:interp}), and Young's inequality, we have
\begin{align}
\begin{split}
\bigg| \int_{\T^3} :\! Y_N^2 \!: \Ups_N^2 dx \bigg| &\leq C \| :\! Y_N^2 \!: \|_{W^{2\al - 3 - \eps, \infty}} \| \Ups_N \|_{H^{-2 \al + 3 + \eps}}^2 \\
&\leq C(\dl', \eps) \| :\! Y_N^2 \!: \|_{W^{2 \al - 3 - \eps, \infty}}^c + \dl' \| \Ups_N \|_{H^{\frac{2 - \eps}{2 + \eps} \al}}^{2 + \eps} \\
&\leq C(\dl', \eps) \| :\! Y_N^2 \!: \|_{W^{2 \al - 3 - \eps, \infty}}^c + C \dl' \| \Ups_N \|_{H^\al}^{2 - \eps} \| \Ups_N \|_{L^2}^{2 \eps} \\
&\leq C(\dl', \eps) \| :\! Y_N^2 \!: \|_{W^{2 \al - 3 - \eps, \infty}}^c + C\dl' \| \Ups_N \|_{H^\al}^2 + C\dl' \| \Ups_N \|_{L^4}^4
\end{split}
\label{YY2y-1}
\end{align}

\noi
for any $0 < \dl' \ll 1$, where we used $0 < -2\al + 3 + \eps < \frac{2 - \eps}{2 + \eps} \al$ given $\al > 1$ and $\eps > 0$ sufficiently small. By duality and Young's inequality, we have
\begin{align}
\begin{split}
\bigg| \int_{\T^3} :\! Y_N^2 \!: \Ups_N \ZZ_N dx \bigg| &\leq \| :\! Y_N^2 \!: \ZZ_N \|_{W^{- \al, \infty}}  \| \Ups_N \|_{W^{\al, 1}} \\
&\leq C \| :\! Y_N^2 \!: \ZZ_N \|_{W^{2 \al - 3 - \eps, \infty}} \| \Ups_N \|_{H^\al} \\
&\leq C(\dl') \| :\! Y_N^2 \!: \ZZ_N \|_{W^{2 \al - 3 - \eps, \infty}}^2 + \dl' \| \Ups_N \|_{H^\al}^2
\end{split}
\label{YY2y-2}
\end{align} 

\noi
for any $0 < \dl' \ll 1$, where we used $- \al + \eps < 2\al - 3 - \eps$ given $\al > 1$ and $\eps > 0$ sufficiently small. 
Combining \eqref{YY2y-0}, \eqref{YY2y-1}, and \eqref{YY2y-2}, we obtain \eqref{YY2y}.
\end{proof}

\begin{lemma}
\label{LEM:regYZ}
Let $1 < \al \leq \frac 98$ and $\eps > 0$. Then, 

\smallskip \noi
\textup{(i)} $:\! Y_N^2 \!: \ZZ_N$ is uniformly bounded \textup{(}in $N \in \N$\textup{)} in $L^p ( \O; W^{2 \al - 3 - \eps, \infty} (\T^3) )$ for any $p > 1$ and also in $W^{2 \al - 3 - \eps, \infty} (\T^3)$ almost surely.

\smallskip \noi
\textup{(ii)} $\E [ \int :\! Y_N^2 \!: \ZZ_N^2 dx ]$ is uniformly bounded in $N \in \N$.
\end{lemma}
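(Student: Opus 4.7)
I plan to prove both parts by direct Fourier-side computations, reducing each claim to summing over lattice points with weights coming from the Gaussian covariance structure, and then invoking Lemma \ref{LEM:reg} for part (i) and an elementary convergence check for part (ii). Using $\F(Y_N(t))(n) = \ind_{\{|n|\leq N\}} \jb{n}^{-\al} B_n(t)$, I will first write
\begin{align*}
\F(:\! Y_N^2(1) \!:)(n) &= \sum_{n_1+n_2 = n} \frac{\ind_{\{|n_1|,|n_2|\leq N\}}}{\jb{n_1}^\al \jb{n_2}^\al} :\! B_{n_1}(1) B_{n_2}(1) \!:, \\
\F(\ZZ_N(1))(n) &= \frac{\ind_{\{|n|\leq N\}}}{\jb{n}^{2\al}} \int_0^1 \sum_{n_3+n_4+n_5 = n} \frac{\ind_{\{|n_i|\leq N\}}}{\jb{n_3}^\al \jb{n_4}^\al \jb{n_5}^\al} :\! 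B_{n_3}(t) B_{n_4}(t) B_{n_5}(t) \!: dt.
\end{align*}
By Lemma \ref{LEM:Fub} together with Lemma \ref{LEM:nua}(i), each Fourier coefficient above is a multiple stochastic integral of order $2$ and $3$ respectively, so by the product formula (Lemma \ref{LEM:prod}) the object $:\!Y_N^2\!: \cdot \ZZ_N$ lives in the inhomogeneous Wiener chaos $\H_{\leq 5}$ and the scalar $\int_{\T^3} :\!Y_N^2\!: \ZZ_N^2\,dx$ is computable via Isserlis--Wick pairings on eight Brownian motions.

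For part (i), the key step is to establish the Fourier-side bound
\begin{align*}
\E \big[ |\F(:\!Y_N^2\!: \cdot \ZZ_N)(n)|^2 \big] \les \jb{n}^{3 - 4\al + \eps}
\end{align*}
uniformly in $N \geq 1$, which matches the target regularity $2\al - 3 - \eps$ through Lemma \ref{LEM:reg}. Writing the convolution $\sum_{m+m' = n} \F(:\!Y_N^2\!:)(m) \F(\ZZ_N)(m')$ and taking the $L^2(\O)$ norm produces a sum over admissible pairings of ten Brownian motions (two from each $:\!Y_N^2\!:$ factor and three from each $\ZZ_N$ factor in $|\cdot|^2$), where orthogonality of the homogeneous chaoses obtained via Lemma \ref{LEM:prod} eliminates cross-degree terms. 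After performing the time integrations, which contribute only bounded factors of the form $\int_0^1\!\int_0^1 \min(1,t_i)^{a_i}\min(t_i,t_j)^{b_{ij}}\, dt_i\, dt_j$, each surviving pairing reduces to an iterated convolution sum that I can control by repeated application of Lemma \ref{LEM:SUM}. The resulting pointwise-in-$n$ bound combined with Lemma \ref{LEM:reg} and the Wiener chaos estimate (Lemma \ref{LEM:hyp}) then yields $L^p(\O; \C^{2\al - 3 - \eps})$ and almost sure boundedness.

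For part (ii), I will directly expand
\begin{align*}
\E \bigg[ \int_{\T^3} :\!Y_N^2\!: \ZZ_N^2\, dx \bigg] \sim \sum_{n+n'+n''=0} \E \big[ \F(:\!Y_N^2\!:)(n)\, \F(\ZZ_N)(n')\, \F(\ZZ_N)(n'') \big],
\end{align*}
and evaluate the inner expectation using Isserlis's formula together with Lemma \ref{LEM:Wick2}. The Wick-ordering constraints forbid self-pairings inside any single factor, so every contributing pairing must cross between $:\!Y_N^2\!:$ and the two $\ZZ_N$ factors. Each admissible pairing produces a time integral bounded by a constant and a weighted lattice sum whose weights are powers of $\jb{\cdot}^{-2\al}$ at each internal Brownian frequency together with additional $\jb{n'}^{-2\al}\jb{n''}^{-2\al}$ factors from the $(1-\Delta)^{-\al}$ smoothing built into $\ZZ_N$. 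A finite case analysis on the pairings, combined with iterated use of Lemma \ref{LEM:SUM}, shows that every such lattice sum converges uniformly in $N$ provided $\al > 1$.

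The main obstacle, especially in part (i), is the low regularity regime $1 < \al \leq \tfrac{15}{14}$, where the deterministic resonant product of $:\!Y_N^2\!:$ and $\ZZ_N$ is not directly defined (their H\"older regularities sum to a negative number). In this range, the target bound survives only because of the built-in \emph{stochastic cancellation}: the chaos decomposition from Lemma \ref{LEM:prod} produces contractions whose $L^2$ norms are borderline convergent, and convergence is restored only after one carefully exploits the extra $\jb{\cdot}^{-2\al}$ decay coming from $(1-\Delta)^{-\al}$ in the definition of $\ZZ_N$. The technical heart of the argument is thus a careful case distinction in the lattice counting of the pairings, which will occupy most of the computation; the analogous Cauchy bounds between $N_1$ and $N_2$ yield almost sure convergence (not merely boundedness) via the second part of Lemma \ref{LEM:reg}.
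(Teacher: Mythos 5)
Your plan is correct and follows essentially the same route as the paper: for (i) you estimate $\E\big[|\F(:\!Y_N^2\!:\ZZ_N)(n)|^2\big]\les \jb{n}^{3-4\al+\eps}$ by decomposing the product into chaoses (pairings between the $:\!Y_N^2\!:$ and $\ZZ_N$ nodes), controlling the resulting convolution sums with Lemma \ref{LEM:SUM} and the extra $\jb{\cdot}^{-2\al}$ smoothing in $\ZZ_N$, and concluding via Lemma \ref{LEM:reg}; for (ii) you reduce to the (essentially unique) cross-factor Wick pairing and a lattice sum convergent for $\al>1$, exactly as in the paper. The only caveat is wording: in (ii) the pairings need not all touch $:\!Y_N^2\!:$ (two of them join the two $\ZZ_N$ factors to each other); what Wick ordering rules out is only self-pairing within a single factor, which your case analysis would have to respect.
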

\begin{proof}
(i) We recall from \eqref{defZZ} that $\ZZ^N = \jb{\nb}^{- 2\al} :\! Y_N^3 \!:$. In view of the notation \eqref{gn_mul} and \eqref{HkIk}, we can treat $:\! Y_N^2 \!:$ and $\ZZ_N$ as multiple stochastic integrals. For convenience, we denote $n_1, n_2$ be frequencies of the nodes of $:\! Y_N^2 \!:$ and denote $n_3, n_4, n_5$ be frequencies the nodes of $\ZZ_N$. By the product formula in \eqref{prod}, we need to discuss the pairings occurring between $:\! Y_N^2 \!:$ and $\ZZ_N$.

For the case when there is no pairings between $:\! Y_N^2 \!:$ and $\ZZ_N$, we use the Ito isometry \eqref{ito}, \eqref{jensen}, and the discrete convolution inequality in Lemma~\ref{LEM:SUM}~(i) repetitively to obtain that for any $n \in \Z^3$, the contribution for $\E [ | (:\! Y_N^2 \!: \ZZ_N)^\wedge (n) |^2 ]$ is given by
\begin{align}
\begin{split}
&\sum_{\substack{n_1, \dots, n_5 \in \Z^3 \\ n_1 + \cdots + n_5 = n}} \frac{1}{\jb{n_1}^{2 \al} \jb{n_2}^{2 \al} \jb{n_3 + n_4 + n_5}^{4 \al} \jb{n_3}^{2 \al} \jb{n_4}^{2 \al} \jb{n_5}^{2 \al}} \\
&\quad \les \sum_{\substack{n', n'' \in \Z^3 \\ n' + n'' = n}} \frac{1}{\jb{n'}^{4\al - 3 - \eps} \jb{n''}^{10 \al - 6 - \eps}} \les \jb{n}^{3 - 4 \al + \eps}.
\end{split}
\label{YYZ1}
\end{align}

\noi
For the case when there is one pairing between $:\! Y_N^2 \!:$ and $\ZZ_N$, say $(n_1, n_3)$ is a pairing without loss of generality, by the Ito isometry \eqref{ito} and the discrete convolution inequality in Lemma~\ref{LEM:conv}~(i) repetitively, we obtain the following contribution for $\E [ | (:\! Y_N^2 \!: \ZZ_N)^\wedge (n) |^2 ]$:
\begin{align}
\begin{split}
&\sum_{\substack{n_2, n_4, n_5 \in \Z^3 \\ n_2 + n_4 + n_5 = n}} \frac{1}{\jb{n_2}^{2 \al} \jb{n_4}^{2 \al} \jb{n_5}^{2 \al}} \Big( \sum_{n_1 \in \Z^3} \frac{1}{\jb{n_1}^{2 \al} \jb{n_1 + n_4 + n_5}^{2 \al}} \Big)^2 \\
&\quad \les \sum_{\substack{n_2, n_4, n_5 \in \Z^3 \\ n_2 + n_4 + n_5 = n}} \frac{1}{\jb{n_2}^{2 \al} \jb{n_4}^{2 \al} \jb{n_5}^{2 \al} \jb{n_4 + n_5}^{8 \al - 6}}  \les \jb{n}^{12 - 14 \al + \eps}.
\end{split}
\label{YYZ2}
\end{align}

\noi
For the case where there are two pairings between $:\! Y_N^2 \!:$ and $\ZZ_N$, say $(n_1, n_3)$ and $(n_2, n_4)$ are pairings without loss of generality, by the Ito isometry \eqref{ito} and the discrete convolution inequality in Lemma~\ref{LEM:conv}~(i) repetitively, we obtain the following contribution for $\E [ | (:\! Y_N^2 \!: \ZZ_N)^\wedge (n) |^2 ]$:
\begin{align}
\frac{1}{\jb{n}^{2 \al}} \Big( \sum_{n_1, n_2 \in \Z^3} \frac{1}{\jb{n_1}^{2 \al} \jb{n_2}^{2 \al} \jb{n_1 + n_2 + n}^{2 \al}} \Big)^2 \les \jb{n}^{12 - 14 \al}.
\label{YYZ3}
\end{align}

\noi
Combining \eqref{YYZ1}, \eqref{YYZ2}, and \eqref{YYZ3}, we obtain the desired regularity estimate in view of \cite[Lemma~2.6]{GKO2}.

\smallskip \noi
(ii) We denote by $n_1, n_2$ the frequencies of the nodes of $:\! Y_N^2 \!:$, $n_3, n_4, n_5$ the frequencies of the nodes of the first $\ZZ_N$, and $n_6, n_7, n_8$ the frequencies of the nodes of the second $\ZZ_N$. Due to the appearance of the expectation, in order to obtain a non-zero contribution, all frequencies must be paired. Note that the only possibility (up to permutation) of pairings is $\{(n_1, n_3), (n_2, n_6), (n_4, n_7), (n_5, n_8)\}$. Then, by the discrete convolution inequality in Lemma~\ref{LEM:conv}~(i) repetitively, we obtain
\begin{align*}
\bigg| &\E \bigg[ \int :\! Y_N^2 \!: \ZZ_N^2 dx \bigg] \bigg| \\
&\les \sum_{n_1, n_2, n_4, n_5 \in \Z^3} \frac{1}{\jb{n_1}^{2 \al} \jb{n_2}^{2 \al} \jb{n_4}^{2 \al} \jb{n_5}^{2 \al} \jb{n_1 + n_4 + n_5}^{2 \al} \jb{n_2 + n_4 + n_5}^{2 \al}} \\
&\les \sum_{n_4, n_5 \in \Z^3} \frac{1}{\jb{n_4}^{2 \al} \jb{n_5}^{2 \al} \jb{n_4 + n_5}^{8 \al - 6}} \les 1.
\end{align*}

\noi
Therefore, we have finished the proof.
\end{proof}

We now have all the necessary tools to prove Theorem~\ref{THM:Gibbs}~(ii).  
\begin{proof}[Proof of Theorem~\ref{THM:Gibbs}~(ii)]
Let us mainly show the following uniform $L^1$-bound:
\begin{align}
\sup_{N \in \N} \big\| e^{- R_N^\dia (u)} \big\|_{L^1 (\mu_\al)} < \infty.
\label{uni_bdd}
\end{align}

\noi
Recalling $\wt R^{\dia}_N$ in \eqref{RNt}, by using Lemma~\ref{LEM:Dr1}, Lemma~\ref{LEM:Dr2}, Lemma~\ref{LEM:regYZ}, and Lemma~\ref{LEM:YNk}, we obtain
\begin{align}
\begin{split}
\E \bigg[  &\sup_{\Ups^N \in H^\al} \Big\{ - \wt R^{\dia}_{N}(Y+ \Ups^{N} - \ZZ_{N}) - \frac12 \| \Ups^{N} \|_{H^\al}^2 \Big\} \bigg] \\
&\leq \E \bigg[ C(\dl, \eps) \Big( 1 + \| Y_N \|_{W^{\al - \frac 32 - \eps, \infty}}^c + \| :\! Y_N^2 \!: \|_{W^{2\alpha - 3 - \eps, \infty}}^{c}
+ \| \ZZ_N \|_{W^{5\al - \frac 92 + \eps, \infty}}^c  \\
&\qquad +  \| :\! Y_N^2 \!: \ZZ_N \|_{W^{2 \al - 3 - \eps, \infty}} \Big) - \frac 32 \int_{\T^3} :\! Y_N^2 \!: \ZZ_N^2 dx \bigg]  \\
&\quad + \E \bigg[ \sup_{\Ups^N \in H^\al} \Big\{ \dl \| \Ups_N \|_{L^4}^4 + \dl \| \Ups_N \|_{H^\al}^2 - \frac 12 \| \Ups_N \|_{L^4}^4 - \frac 12 \| \Ups_N \|_{H^\al}^2 \Big\} \bigg]  \\
&\leq C(\eps),
\end{split}
\label{logZN}
\end{align}

\noi
where $c, C(\dl, \eps), C(\eps) > 0$ are constants and we take $\dl > 0$ sufficiently small. The uniform bound \eqref{uni_bdd} then follows directly from \eqref{BDRNd} and \eqref{logZN}.

The rest of the argument have been performed several times in other literature. Briefly speaking, we first show tightness of the frequency truncated Gibbs measures $\{ \rho_N \}_{N \in \N}$ in \eqref{GibbsN2}. Then, from Prokhorov's theorem, there exists a weakly convergence sequence of $\{ \rho_N \}_{N \in \N}$. Finally, we show uniqueness of the limiting Gibbs measure, which implies the convergence of $\{ Z_N \}_{N \in \N}$ and also the weak convergence of the whole sequence $\{ \rho_N \}_{N \in \N}$ and so finishes the proof of Theorem~\ref{THM:Gibbs}~(ii). Since all the key estimates are already established in Lemma~\ref{LEM:Dr1}, Lemma~\ref{LEM:Dr2}, and Lemma~\ref{LEM:regYZ}, we omit details and refer the readers to \cite[Subsection~6.2 and Proposition~6.6]{OOTol1} and \cite[Subsection~3.2 and Proposition~3.8]{OOT2}.
\end{proof}

\subsection{Singularity of the fractional $\Phi^4_3$-measure for $1 < \alpha \leq \frac 98$}
\label{SUBSEC:sing}

In this subsection, we prove that when $1 < \al \leq \frac 98$, the fractional $\Phi^4_3$-measure $\rho$ constructed in Theorem~\ref{THM:Gibbs}~(ii) is singular with respect to the base Gaussian measure $\mu_\al$. We mainly follow the approach in \cite[Subsection~6.4]{OOTol1} and \cite[Subsection~3.4]{OOT2}.

Before proving this singularity result, we show the following lemma. Recall that $\ZZ^N$ is as defined in \eqref{defZZ} and $\ZZ_N = \pi_N \ZZ^N$.
\begin{lemma}
\label{LEM:EZZNM}
Let $\al > 1$. Then, we have
\begin{align*}
\E \bigg[ \int_{\T^3} \jb{\nb}^{2 \al} \ZZ_{N} \cdot (\ZZ_N - \ZZ_M) dx  \bigg] = 0
\end{align*}

\noi
for $M \geq N \geq 1$.
\end{lemma}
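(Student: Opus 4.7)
Plan: The plan is to show that the inner spatial integrand has zero expectation for each fixed $s\in[0,t]$; the claim then follows by Fubini since for fixed $N,M$ all the relevant quantities are smooth in $x$ and square-integrable in $s$. First, using \eqref{YZ12}, the definition $\dot\ZZ_N=\pi_N\dot\ZZ^N$, and the commutation of $\pi_N$ with $(1-\Dl)^\al$, I compute $(1-\Dl)^\al\dot\ZZ_N(s)=\pi_N\!:\! Y_N^3(s)\!:$. By self-adjointness of $\pi_N$ and of $(1-\Dl)^{-\al}$, together with the identity $\pi_N\pi_M=\pi_N$ (valid since $N\le M$), the spatial integral rewrites as
\begin{align*}
\int_{\T^3}(1-\Dl)^\al\dot\ZZ_N\bigl(\dot\ZZ_N-\dot\ZZ_M\bigr)dx \,=\, \int_{\T^3}\!:\!Y_N^3(s)\!:\cdot\,(1-\Dl)^{-\al}\pi_N\bigl(:\!Y_N^3(s)\!: -:\!Y_M^3(s)\!:\bigr)dx.
\end{align*}
It therefore suffices to prove that $\E\bigl[:\!Y_N^3(s,x)\!:\bigl(:\!Y_N^3(s,y)\!:-:\!Y_M^3(s,y)\!:\bigr)\bigr]=0$ for every $x,y\in\T^3$ and every $s\in[0,t]$.

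Next, I split $Y_M=Y_N+V$ with $V\overset{\textup{def}}{=}(\pi_M-\pi_N)Y$. Since the Brownian motions $\{B_n\}_{|n|\le N}$ and $\{B_n\}_{N<|n|\le M}$ are supported on disjoint $\pm$–equivalence classes and hence mutually independent (see \eqref{W1}), the processes $Y_N$ and $V$ are independent, with $\E[V(s,x)^2]=s(\s_M-\s_N)$. Applying the Hermite decomposition \eqref{herm_decomp} with $\s_1=s\s_N$ and $\s_2=s(\s_M-\s_N)$ (so that $\s_1+\s_2=s\s_M$ is the variance of $Y_M(s,x)$) yields
\begin{align*}
:\!Y_M^3(s)\!: \,=\, :\!Y_N^3(s)\!:\,+\,3:\!Y_N^2(s)\!:\,V(s)\,+\,3Y_N(s):\!V^2(s)\!:\,+\,:\!V^3(s)\!:,
\end{align*}
so that $:\!Y_N^3(s)\!:-:\!Y_M^3(s)\!:$ is a sum of three terms, each of which is the product of a (Wick) polynomial in $Y_N(s)$ with a Wick power of $V(s)$ of positive order.

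Finally, multiplying by $:\!Y_N^3(s,x)\!:$ and taking expectation, the independence of $Y_N$ and $V$ lets each of the three terms factor into an expectation involving only $Y_N$ times one of $\E[V(s,y)]$, $\E[:\!V^2(s,y)\!:]$, $\E[:\!V^3(s,y)\!:]$. All three of these factors vanish: the first because $V$ is centered Gaussian, and the latter two because Wick powers of positive degree have zero mean (this is Lemma~\ref{LEM:Wick2} taking one of the two indices equal to $0$, or equivalently the fact that an element of a nontrivial Wiener chaos is orthogonal to the constants). This establishes the required pointwise identity, and the lemma follows. The only subtle point is to adopt the standard convention $:\!Y_K^k(s)\!:\,=H_k(Y_K(s);s\s_K)$ for each $K\in\{N,M\}$, which is precisely what makes \eqref{herm_decomp} directly applicable; once this convention is in place the computation is essentially algebraic.
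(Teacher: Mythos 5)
Your proof is correct, but it takes a different route from the paper's. The paper argues entirely on the Fourier side: it writes $\ft{\dot \ZZ}_N(n,t)$ and $\ft{\dot\ZZ}_M(n,t)-\ft{\dot\ZZ}_N(n,t)$ (for $|n|\le N$) as triple stochastic integrals over the frequency sets $\{\max_j|n_j|\le N\}$ and $\{\max_j|n_j|>N\}$ respectively, observes that these index sets are disjoint, and invokes the It\^o isometry (Lemma~\ref{LEM:nua}\,(iii)) to conclude $\E\big[\ft{\dot\ZZ}_N(n,t)\big(\ft{\dot\ZZ}_N(n,t)-\ft{\dot\ZZ}_M(n,t)\big)\big]=0$ coefficientwise, after which Plancherel finishes the proof with no separate kernel/Fubini reduction. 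You instead work in physical space: after the (correct) reduction via self-adjointness of $\pi_N$ and \eqref{YZ12}, you prove the stronger pointwise covariance identity $\E\big[:\!Y_N^3(s,x)\!:\big(:\!Y_N^3(s,y)\!:-:\!Y_M^3(s,y)\!:\big)\big]=0$ by splitting $Y_M=Y_N+V$ with $V=(\pi_M-\pi_N)Y$ independent of $Y_N$, expanding via the Hermite addition formula \eqref{herm_decomp}, and using that positive-order Wick powers of $V$ are centered (Lemma~\ref{LEM:Wick2} with one index zero). Both arguments hinge on the same underlying fact — the randomness at frequencies $N<|n|\le M$ is orthogonal to everything built from frequencies $|n|\le N$ — but your version avoids the multiple-stochastic-integral machinery of Subsection~2.6 altogether, relying only on Gaussian independence and Hermite calculus, and it yields the cleaner statement that the two-point functions of $:\!Y_N^3\!:$ against $:\!Y_N^3\!:$ and against $:\!Y_M^3\!:$ coincide (equivalently, by Lemma~\ref{LEM:Wick2}, because $\E[Y_N(s,x)Y_M(s,y)]=\E[Y_N(s,x)Y_N(s,y)]$ for $N\le M$); the paper's version buys directness, staying within the chaos-representation formalism it uses throughout Section~5. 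Your closing remark about the convention $:\!Y_K^k(s)\!:=H_k(Y_K(s);s\s_K)$ is indeed the one implicitly used in the paper (compare the $t^3$ factor in \eqref{eGk4}), so your application of \eqref{herm_decomp} with variances $s\s_N$ and $s(\s_M-\s_N)$ is consistent; the degenerate cases $s=0$ or $N=M$ are trivial and cause no harm.
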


\begin{proof}
From \eqref{gn_mul} and \eqref{HkIk}, we can write
\begin{align*}
\ft{\ZZ}_N (n) = \sum_{\substack{n_1 + n_2 + n_3 = n \\ |n_1|, |n_2|, |n_3| \leq N}} \frac{\ind_{\{ |n| \leq N \}}}{\jb{n}^{2 \al} \jb{n_1}^\al \jb{n_2}^\al \jb{n_3}^\al} \int_0^1 \int_0^1 \int_0^1 1 \, d B_{n_1} (t) d B_{n_2} (t) d B_{n_3} (t).
\end{align*}

\noi
Note that for $|n| \leq N$, we have
\begin{align*}
\ft \ZZ_M &(n) - \ft{\ZZ}_N (n) \\
&= \sum_{\substack{n_1 + n_2 + n_3 = n \\ N < \max ( |n_1|, |n_2|, |n_3| ) \leq M}} \frac{1}{\jb{n}^{2 \al} \jb{n_1}^\al \jb{n_2}^\al \jb{n_3}^\al} \int_0^1 \int_0^1 \int_0^1 1 \, d B_{n_1} (t) d B_{n_2} (t) d B_{n_3} (t).
\end{align*}

\noi
Note that the sets
\begin{align*}
&\big\{ (n_1, n_2, n_3) \in \Z^3 : n_1 + n_2 + n_3 = n, \, |n_j| \leq N \text{ for } j = 1, 2, 3 \big\}, \\
&\big\{ (n_1, n_2, n_3) \in \Z^3 : n_1 + n_2 + n_3 = n, \, N < \max (|n_1|, |n_2|, |n_3|) \leq M \big\}
\end{align*}

\noi
are disjoint. Thus, by the Ito isometry \eqref{ito}, for $|n| \leq N$ we have
\begin{align*}
\E \Big[ \ft{\ZZ}_N (n) \big( \ft{\ZZ}_N (n) - \ft{\ZZ}_M (n) \big) \Big] = 0,
\end{align*}

\noi
so that we have
\begin{align*}
\E \bigg[ \int_{\T^3} \jb{\nb}^{2 \al} \ZZ_N \cdot (\ZZ_N - \ZZ_M) dx  \bigg] = \sum_{|n| \leq N} \jb{n}^{2 \al} \E \Big[ \ft{\ZZ}_N (n) \big( \ft{\ZZ}_N (n) - \ft{\ZZ}_M (n) \big) \Big] = 0.
\end{align*}
\end{proof}

We are now ready to prove the singularity of the fractional $\Phi^4_3$-measure $\rho$ with respect to the base Gaussian measure $\mu_\al$. Given $N \in \N$, we define the constants $a_N$ and $b_N$ by
\begin{align}
a_N \deff \sum_{|n| \leq N} \jb{n}^{-8 \al + 6} \sim
\begin{cases}
\log N, &\text{if } \al = \frac 98, \\
N^{- 8 \al + 9}, &\text{if } 1 < \al < \frac 98,
\end{cases}
\label{defAN}
\end{align}
and
\begin{align}
b_N \deff (\log N)^{-\frac 18} a_N^{- \frac 12} \sim
\begin{cases}
(\log N)^{- \frac 58}, &\text{if } \al = \frac 98, \\
(\log N)^{- \frac 18} N^{4 \al - \frac 92}, &\text{if } 1 < \al < \frac 98.
\end{cases} 
\label{defBN}
\end{align}

\begin{proposition}[Singularity of the fractional $\Phi^4_3$-measure]
\label{PROP:sing}
Let $1 < \al \leq \frac 98$ and $\eps > 0$. Let $R_N$ be as in \eqref{RNu}. Then, there exists an increasing sequence $\{ N_k \}_{k \in \N} \subset \N$ such that the set
\begin{align*}
S \deff \big\{ u \in H^{\al - \frac 32 - \eps} (\T^3): \lim_{k \to \infty} b_{N_k} R_{N_k} (u) = 0 \big\}
\end{align*} 

\noi
satisfies $\mu_\al (S) = 1$ and $\rho(S) = 0$.
Consequently, fractional $\Phi^4_3$-measure $\rho$ and the base Gaussian measure $\mu_\al$ are mutually singular in the case $1 < \al \leq \frac 98$.
\end{proposition}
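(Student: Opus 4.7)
The plan is to detect the singularity through the normalized observable $B_N R_N(u)$, whose behavior differs sharply between $\mu$ and $\rho$. The normalization $B_N \sim (\log N)^{-1/4} A_N^{-1/2}$ is chosen so that the $\mu$-side fluctuations (of order $B_N A_N^{1/2} \sim (\log N)^{-1/4}$) vanish, while the $\rho$-side drift (of order $B_N \alpha_N \sim (\log N)^{-1/4} A_N^{1/2}$) blows up; here $\alpha_N \sim A_N$ in view of \eqref{YZ14}.

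On the $\mu$-side, since $\int :\!Y_N^4\!: dx$ is a pure 4th Wiener chaos, the It\^o isometry (Lemma \ref{LEM:nua}(iii)) together with Lemma \ref{LEM:SUM} gives
\begin{align*}
\|R_N\|_{L^2(\mu)}^2 \;\sim\; \sum_{\substack{n_1+\cdots+n_4=0\\ |n_j|\leq N}}\prod_{j=1}^4 \jb{n_j}^{-2\alpha} \;\sim\; A_N,
\end{align*}
so $\|B_N R_N\|_{L^2(\mu)}^2 \sim (\log N)^{-1/2}$. Extracting a subsequence $\{N_k\}$ along which this $L^2$-norm is summable (for instance $N_k \geq \exp(4^k)$), Chebyshev's inequality and the Borel--Cantelli lemma produce $B_{N_k} R_{N_k} \to 0$ $\mu$-a.s., hence $\mu(S) = 1$.

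On the $\rho$-side, I control exponential moments. For $\beta \in (0,1)$, the Bou\'e--Dupuis formula (Lemma \ref{LEM:var3}) combined with the change of variables \eqref{YZ13} (now with $\dot{\mathbf\Upsilon} = \dot\theta + \beta \dot{\mathfrak Z}_N$) yields
\begin{align*}
-\log\E_\mu[e^{-\beta R_N^\dia}] \;=\; \beta(1-\beta)\alpha_N + O(1),
\end{align*}
the upper bound via the trial $\dot{\mathbf\Upsilon} = 0$ and the lower bound from the coercivity argument of the proof of \eqref{uni_bdd} (quartic $\frac14\int \Theta_N^4$ dominating, remainder bounded by Lemmas \ref{LEM:Dr7}, \ref{LEM:Dr8}, \ref{LEM:regYZ}). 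Rearranging gives $\E_\mu[e^{-(1-t)R_N}] \leq e^{(1-t)^2\alpha_N + C}$, and dividing by $Z_N$ yields $\log\E_{\rho_N}[e^{tR_N}] \leq (t^2-2t)\alpha_N + C$ for $t \in (0,1)$. Running the same analysis on $e^{-(R_L^\dia - tR_N)}$ for $L \geq N$, with the drift split as $\theta = \pi_N\theta + (1-\pi_N)\theta$ so that $R_N$ sees only the low-frequency piece while the $R_L^\dia$ shift $\mathfrak Z_L$ acts naturally on the high-frequency piece, produces the uniform-in-$L$ bound $\log\E_{\rho_L}[e^{tR_N}] \leq (t^2-2t)\alpha_N + C$. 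Since $e^{tR_N(u)}$ is non-negative and continuous (depending only on $\pi_N u$), the weak convergence $\rho_L \rightharpoonup \rho$ together with Fatou's lemma transfers this bound to $\E_\rho$. Markov's inequality with $c+t<2$ then gives $\rho(R_N \geq -c\alpha_N) \leq e^{(c+t-2)t\alpha_N + C} \to 0$, and because $1/B_N = (\log N)^{1/4}A_N^{1/2} \ll \alpha_N$ for large $N$, we deduce $\rho(|B_N R_N|<1) \to 0$. Finally, since $S \subset \liminf_k \{|B_{N_k}R_{N_k}|<1\}$, Fatou's lemma for sets gives $\rho(S) \leq \liminf_k \rho(|B_{N_k}R_{N_k}|<1) = 0$.

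The hard step will be the uniform-in-$L$ variational estimate $\log\E_{\rho_L}[e^{tR_N}] \leq (t^2-2t)\alpha_N + C$: the renormalization of $R_L^\dia$ lives at scale $L$ while the observable $R_N$ lives at scale $N$, and a priori the two couple through the optimizer. The decoupling relies on the observation that the discrepancy $\pi_N \mathfrak Z_L - \mathfrak Z_N$ involves only terms containing at least one factor of $Y_L - Y_N$, which, being Fourier-supported in $\{N<|n|\leq L\}$, is Gaussian-independent of $Y_N$ and has vanishing expectation of any Wick power; thus all cross-expectations with $:\!Y_N^3\!:$ in the Bou\'e--Dupuis functional collapse, and the $-2\alpha_N$ contribution from the cubic renormalization emerges cleanly, independently of $L$.
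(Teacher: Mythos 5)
Your $\mu$-side argument is the same as the paper's ($\| R_N \|_{L^2(\mu)}^2 \sim A_N$, then a sparse subsequence and Borel--Cantelli), but your $\rho$-side takes a genuinely different, though closely related, route, and it is sound in outline. The paper shows directly that $\E_\rho\big[e^{B_{N_k}R_{N_k}}\big]\to 0$: after Fatou and weak convergence it bounds $\int e^{B_{N_k}R_{N_k}-R_M^\dia}\,d\mu$ from below via Bou\'e--Dupuis with the shift \eqref{YZ13}, and the only dangerous term, $-B_{N_k}\int :\!Y_{N_k}^3\!:\,\pi_{N_k}\Dr\,dx$, is handled by the cross-scale orthogonality of Lemma \ref{LEM:EZZNM} plus Cauchy--Schwarz, yielding the lower bound $cB_{N_k}A_{N_k}-C\to\infty$; the remaining terms of $R_{N_k}$ are absorbed using the vanishing prefactor $B_{N_k}$ (see \eqref{RNk5}--\eqref{eGk3}). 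You instead derive a quantitative Laplace bound $\log\E_{\rho_L}[e^{tR_N}]\le (t^2-2t)\al_N+C$ uniformly in $L\ge N$ (using $Z_L\gtrsim 1$ from \eqref{logZN2}), transfer it to $\rho$ by weak convergence and truncation, and conclude by Markov together with $1/B_N\ll\al_N\sim A_N$; this buys a sharper, Gaussian-type tail estimate for $R_N$ under $\rho$, while the paper's packaging needs only the soft divergence $B_{N_k}A_{N_k}\to\infty$. Two remarks on your ``hard step''. First, the decoupling mechanism you invoke in the last paragraph (vanishing of cross-expectations between $:\!Y_N^3\!:$ and $\pi_N\ZZ_L-\ZZ_N$, because the discrepancy carries at least one factor supported on $\{N<|n|\le L\}$) is exactly the content of the paper's Lemma \ref{LEM:EZZNM}; the clean implementation is not a frequency splitting of the drift $\dr$ (note $\ZZ_L$ does have low-frequency content, built from triples of high modes) but the combined shift $\dot\Ups=\dot\Dr+\dot\ZZ_L-t\dot\ZZ_N$, after which the orthogonality gives $\E\int_0^1\langle\dot\ZZ_L,\dot\ZZ_N\rangle_{H^\al}\,dt=2\al_N$ and the exponent $(t^2-2t)\al_N$ drops out of the completed square. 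Second, in your route the remaining terms of $tR_N$ (in particular $-\tfrac t4\int\Dr_N^4$, $-\tfrac{3t}{2}\int:\!Y_N^2\!:\Dr_N^2$, $-t\int Y_N\Dr_N^3$) enter with an $O(1)$ coefficient rather than the paper's $B_{N_k}$, so they must be absorbed into the scale-$L$ coercivity via Lemmas \ref{LEM:Dr7}, \ref{LEM:Dr8} and \ref{LEM:regYZ}; this goes through (and becomes as easy as in the paper if you simply fix $t$ small, which suffices for your endgame since any $t\in(0,1)$ with $c+t<2$ works), but it should be stated, and the claimed two-sided asymptotics $-\log\E_\mu[e^{-\beta R_N^\dia}]=\beta(1-\beta)\al_N+O(1)$ is more than you need -- only the lower bound is used.
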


\begin{proof}
Using \eqref{RNu}, the Ito isometry \eqref{itoX}, the discrete convolution inequality in Lemma~\ref{LEM:SUM}~(i) repetitively, and \eqref{defAN}, we have
\begin{align}
\| R_N (u) \|_{L^2 (\mu_\al)}^2 \sim \sum_{\substack{n_1 + \cdots + n_4 = 0 \\ |n_1|, \dots, |n_4| \leq N}} \frac{1}{\jb{n_1}^{2 \al} \cdots \jb{n_4}^{2 \al}} \les \sum_{|n_1| \leq N} \frac{1}{\jb{n_1}^{- 6 + 8 \al}} = a_N.
\label{RNL2}
\end{align}

\noi
By \eqref{defBN} and \eqref{RNL2}, we have
\begin{align*}
\lim_{N \to \infty} b_N \| R_N (u) \|_{L^2 (\mu)} \les \lim_{N \to \infty} (\log N)^{- \frac 18} = 0.
\end{align*}

\noi
Thus, there exists a subsequence $\{N_k\}_{k \in \N}$ such that
\begin{align*}
\lim_{k \to \infty} b_{N_k} R_{N_k} (u) = 0
\end{align*}
$\mu_\al$-almost surely. This shows that $\mu_\al (S) = 1$.

We now show that $e^{b_{N_k} R_{N_k} (u)}$ tends to 0 in $L^1 (\rho)$, which will imply that up to a further subsequence,
\begin{align*}
\lim_{k \to \infty} b_{N_k} R_{N_k} (u) = - \infty
\end{align*}

\noi
$\rho$-almost surely, so that $\rho (S) = 0$ which finishes our proof. Let $\phi$ be a smooth bump function on $\R$ such that $\phi \equiv 1$ on $[-1, 1]$ and $\phi \equiv 0$ outside of $[-2, 2]$. By Fatou's lemma and the weak convergence of the truncated fractional $\Phi^4_3$-measure $\rho_M$ to $\rho$ as stated in Theorem~\ref{THM:Gibbs}~(ii), we have
\begin{align}
\begin{split}
\int e^{b_{N_k} R_{N_k} (u)} d \rho (u) &\leq \liminf_{K \to \infty} \int \phi \Big( \frac{b_{N_k} R_{N_k} (u)}{K} \Big) e^{b_{N_k} R_{N_k} (u)} d \rho (u) \\
&= \liminf_{K \to \infty} \lim_{M \to \infty} \int \phi \Big( \frac{b_{N_k} R_{N_k} (u)}{K} \Big) e^{b_{N_k} R_{N_k} (u)} d \rho_M (u) \\
&\leq \lim_{M \to \infty} \int e^{b_{N_k} R_{N_k} (u)} d \rho_M (u) \\
&= Z^{-1} \lim_{M \to \infty} \int e^{b_{N_k} R_{N_k} (u) -R_M^\dia(u)} d \mu_\al (u),
\end{split}
\label{eGk1}
\end{align}

\noi
where $Z = \lim_{M \to \infty} Z_M$ makes sense in view of \eqref{uni_bdd} and we recall that $R_M^\dia = R_M + \al_M$ with $\al_M$ being defined in \eqref{defalN}.
By the Bou\'e-Dupuis variational formula (Lemma~\ref{LEM:BD}) and \eqref{BDRNd}, we have
\begin{align*}
\log &\int e^{b_{N_k} R_{N_k} (u) -R_M^\dia(u)} d \mu_\al (u) \\
&\leq 
\E \bigg[ \sup_{\Ups^M \in H^\al} \Big\{ b_{N_k} R_{N_k} (Y + \Ups^M - \ZZ_M) - \wt{R}_M^\dia (Y + \Ups^M - \ZZ_M) - \frac 12  \| \Ups^M \|_{H^\al}^2 \Big\} \bigg]
\end{align*}

\noi
where $\wt{R}_M^\dia$ is as defined in \eqref{RNt}. By Lemma~\ref{LEM:Dr1}, Lemma~\ref{LEM:Dr2}, Lemma~\ref{LEM:regYZ}, and Lemma~\ref{LEM:YNk}, we have
\begin{align}
\begin{split}
\log &\int e^{b_{N_k} R_{N_k} (u) -R_M^\dia(u)} d \mu_\al (u) \\
&\leq 
\E \bigg[ \sup_{\Ups^M \in H^\al} \Big\{ b_{N_k} R_{N_k} (Y + \Ups^M - \ZZ_M) + \dl \big( \| \Ups_M \|_{L^4}^4 + \| \Ups_M \|_{H^\al}^2 \big) \\
&\qquad - \frac 12 \| \Ups^M \|_{H^\al}^2 \Big\} \bigg] + C (\dl)
\end{split}
\label{eGk2}
\end{align}

\noi
for any $\dl > 0$ and some constant $C(\dl) > 0$, where $\Ups_M = \pi_M \Ups^M$.

We recall from \eqref{RNu} that for $1 \leq N_k \leq M$, we have
\begin{align}
\begin{split}
R_{N_k} &(Y + \Ups^M - \ZZ_M) = \frac 14 \int_{\T^3} :\!Y_{N_k}^4\!: dx + \int_{\T^3} :\!Y_{N_k}^3\!: \pi_{N_k} \Dr dx \\
&+ \frac 32 \int_{\T^3} :\!Y_{N_k}^2\!: (\pi_{N_k} \Dr)^2 dx + \int_{\T^3} Y_{N_k} (\pi_{N_k} \Dr)^3 dx + \frac 14 \int_{\T^3} (\pi_{N_k} \Dr)^4 dx,
\end{split}
\label{RNk}
\end{align}

\noi
where $Y_{N_k} = \pi_{N_k} Y$ and $\pi_{N_k} \Dr = \pi_{N_k} \Ups^{M} - \pi_{N_k} \ZZ_{M}$. Using Lemma~\ref{LEM:Dr1} and Lemma~\ref{LEM:YNk}, we have the bound
\begin{align}
\frac 14 \int_{\T^3} (\pi_{N_k} \Dr)^4 dx \leq \frac 12 \| \pi_{N_k} \Ups^M \|_{L^4}^4 + C \| \pi_{N_k} \ZZ_M \|_{W^{5\al - \frac 92 - \eps, \infty}}^4 \les \| \Ups_M \|_{L^4}^4 + C
\label{RNk5}
\end{align}

\noi
for some $C  = C(\o) > 0$ with finite $p$th moments for all $1 \leq p < \infty$. Similarly, using Lemma~\ref{LEM:Dr2}, Lemma~\ref{LEM:regYZ}, and Lemma~\ref{LEM:YNk}, we bound the third and the fourth term on the right-hand-side of \eqref{RNk} by
\begin{align}
\frac 32 \int_{\T^3} :\!Y_{N_k}^2\!: (\pi_{N_k} \Dr)^2 dx \les \| \Ups_M \|_{L^4}^4 + \| \Ups_M \|_{H^\al}^2 + C,
\label{RNk3}
\end{align}
\begin{align}
\int_{\T^3} Y_{N_k} (\pi_{N_k} \Dr)^3 dx \les \| \Ups_M \|_{L^4}^4 + \| \Ups_M \|_{H^\al}^2 + C.
\label{RNk4}
\end{align}

\noi
Thus, by \eqref{eGk2}, \eqref{RNk}, \eqref{RNk5}, \eqref{RNk3}, \eqref{RNk4}, and \eqref{defBN}, we can take $M$ and $k$ large enough so that
\begin{align}
\begin{split}
\log &\int e^{b_{N_k} R_{N_k} (u) - R_M^\dia(u)} d \mu_\al (u) \\
&\leq 
\E \bigg[ \sup_{\Ups^M \in H^\al} \Big\{ b_{N_k} \int_{\T^3} :\! Y_{N_k}^3 \!: \pi_{N_k} \Dr dx + \dl_1 \big( \| \Ups_M \|_{L^4}^4 + \| \Ups_M \|_{H^\al}^2 \big) \\
&\qquad - \frac 12 \| \Ups^M \|_{H^\al}^2 \Big\} \bigg] + C (\dl_1).
\end{split}
\label{eGk3}
\end{align}

\noi
for any $0 < \dl_1 < 1$ and some constant $C (\dl_1) > 0$.

We now show that the contribution from the first term on the right-hand-side of \eqref{eGk3} tends to $- \infty$ as $M \to \infty$ and $k \to \infty$. Note that by \eqref{defZZ}, the Ito isometry \eqref{itoX}, \eqref{convsum1-1} in Lemma~\ref{LEM:SUM}~(i) twice, and \eqref{defAN}, we have 
\begin{align}
\begin{split}
\E \big[ \| \ZZ_{N_k} \|_{H^\al}^2 \big] &\sim \sum_{|n| \leq N_k} \jb{n}^{-2 \al} \sum_{\substack{n_1 + n_2 + n_3 = n \\ |n_1|, |n_2|, |n_3| \leq N_k}} \frac{1}{\jb{n_1}^{2 \al} \jb{n_2}^{2 \al} \jb{n_3}^{2 \al}} \\
&\sim \sum_{|n| \leq N_k} \jb{n}^{-8 \al + 6} = a_{N_k}
\end{split}
\label{eGk4}
\end{align}

\noi
Also, by \eqref{defZZ}, \eqref{defUps}, Lemma~\ref{LEM:EZZNM}, and the Cauchy-Schwarz inequality, we can compute that for $M \geq N_k \geq 1$,
\begin{align}
\begin{split}
\int_{\T^3} :\! Y_{N_k}^3 \!: \pi_{N_k} \Dr dx 
&= \int_{\T^3} \jb{\nb}^{2 \al} \ZZ_{N_k} \cdot ( \ZZ_{N_k} -\ZZ_M ) dx - \| \ZZ_{N_k} \|_{H^\al}^2 + \big\langle \ZZ_{N_k}, \pi_{N_k} \Ups^M \big\rangle_{H^\al} \\
&\leq - \frac 12 \| \ZZ_{N_k} \|_{H^\al}^2 + \frac 12  \| \Ups^M \|_{H^\al}^2.
\end{split}
\label{EY3}
\end{align}

\noi
Combining \eqref{eGk3}, \eqref{eGk4}, and \eqref{EY3}, we can take $M$ and $k$ large enough to obtain
\begin{align*}
\log \int e^{b_{N_k} R_{N_k} (u) - R_M^\dia(u)} d \mu_\al (u) \leq - C b_{N_k} a_{N_k} + C_1 \to \infty
\end{align*} 
as $M \to \infty$ and $k \to \infty$ for $1 < \al \leq \frac 98$ due to \eqref{defAN} and \eqref{defBN}, where $C_1 > 0$ is some constant. Thus, by \eqref{eGk1}, we obtain
\begin{align*}
\lim_{k \to \infty} \int e^{b_{N_k} R_{N_k} (u)} d \rho (u) = 0,
\end{align*}
which completes the proof of the proposition.

\end{proof}

\section{Local well-posedness with Gaussian initial data}
\label{SEC:LWP}

We now study the dynamical problem for the renormalized fractional hyperbolic $\Phi^4_3$-model. Let us recall the frequency truncated fractional hyperbolic $\Phi^4_3$-model \eqref{SNLW2} for readers' convenience:
\begin{align}
\dt^2 u_N + \dt u_N + (1 - \Dl)^\al u_N + \pi_N :\! (\pi_N u_N)^3 \!: \, = \sqrt{2} \xi
\label{SNLW4}
\end{align}

\noi
with
\begin{align*}
:\! (\pi_N u_N)^3 \!: \, = (\pi_N u_N)^3 - 3 \s_N \pi_N u_N,
\end{align*}

\noi
where $\s_N$ is as defined in \eqref{sigmaN}.
In this section, we consider local-in-time convergence of the solution $u_N$ for the equation \eqref{SNLW4} with Gaussian initial data distributed according to the base Gaussian measure $\mu_\al \otimes \mu_0$ in \eqref{gauss2}. Our goal is to prove the following statement. Below, we denote $\H^s (\T^3) = H^s (\T^3) \times H^{s - \al} (\T^3)$ for any $s \in \R$.

\begin{proposition}[Local well-posedness for the fractional hyperbolic $\Phi^4_3$-model]
\label{PROP:LWP}
Let $1 < \al \leq \frac 32$. Then, for any $N \in \N$, there exists an almost surely positive stopping time $T = T(\o)$ independent of $N$ such that the solution $(u_N, \dt u_N)$ to \eqref{SNLW4} with Gaussian initial data distributed by $\mu_\al \otimes \mu_0$ in \eqref{gauss2} exists in the class $C([0, T]; \H^{\al - \frac 32 - \eps} (\T^3))$. Moreover, the sequence of solutions $\{ (u_N, \dt u_N) \}_{N \in \N}$ converges in $C([0, T]; \H^{\al - \frac 32 - \eps} (\T^3))$ as $N \to \infty$.
\end{proposition}

Let us now present out solution ansatz for \eqref{SNLW4}. We denote $\<1>$ as the stochastic convolution
\begin{align}
\<1> (t) = S(t) (\phi_0, \phi_1) + \sqrt{2} \int_0^t \D (t - t') d W(t'),
\label{loli}
\end{align}

\noi
where $S(t)$ is defined in \eqref{St0}, $\D (t)$ is defined in \eqref{W3}, $(\phi_0, \phi_1) = (\phi_0^\o, \phi_1^\o)$ are Gaussian random distributions with $\Law (\phi_0^\o, \phi_1^\o) =  \mu_\al \otimes \mu_0$ in \eqref{gauss2}, and $W$ denotes a cylindrical Wiener process on $L^2 (\T^3)$ defined by
\begin{align}
W (t) = \frac{1}{(2 \pi)^{\frac 32}} \sum_{n \in \Z^3} B_n (t) e^{in \cdot x}
\label{defW}
\end{align}

\noi
with $B_n (t) = (2 \pi)^{- 3 / 2} \langle \xi, \ind_{[0, t]} \cdot e^{in \cdot x} \rangle_{x, t}$ satisfying the three properties listed in Subsection~\ref{SUBSEC:mul}. Note that $\<1>$ is the solution to the following linear stochastic damped wave equation:
\begin{align*}
\begin{cases}
\dt^2 \<1> + \dt \<1> + (1 - \Dl)^\al \<1> = \sqrt{2} \xi \\
(\<1>, \dt \<1>) |_{t = 0} = (\phi_0, \phi_1).
\end{cases}
\end{align*}

\noi
Given $N \in \N$,
we define the truncated stochastic terms  $\<1>_N$, $\<2>_N$ and $\<3>_N$ by
\begin{align}
\<1>_N \deff \pi_N \<1>, \quad
\<2>_N \deff \, :\! (\<1>_N)^2 \!: \, = \<1>_N^2-\s_N, \quad
\<3>_N \deff \, :\! (\<1>_N)^3 \!: \, \<1>_N^3 - 3\s_N \<1>_N,
\label{so4a}
\end{align}

\noi
and the second order process
$\<30>_N$ by
\begin{align}
\<30>_N \deff \pi_N \I(\<3>_N)=\int_0^t \pi_N \D(t-t')\<3>_N(t')dt',
\label{so4b}
\end{align}

\noi
where a direct computation yields that
\begin{align*}
\s_N = \E \big[ \<1>_N^2(x, t)\big] = \sum_{\substack{n \in \Z^3 \\ |n| \leq N }} \frac{1}{ \jb{n}^{2\al}}.
\end{align*}

\noi
Note that this $\s_N$ agrees with the definition in \eqref{sigmaN}.

Inspired by \cite{OPTz, Bring2, OWZ}, we proceed with the second order expansion
\begin{align}
u_N = \<1> - \<30>_N + v_N,
\label{exp3}
\end{align}

\noi
and rewrite \eqref{SNLW4} as
\begin{align}
\begin{split}
(\dt^2 + \dt + (1-\Dl)^\al) v_N
&=-\pi_N (\pi_N v_N)^3+3\pi_N \big( (\<30>_N-\<1>_N)(\pi_N v_N)^2 \big) \\
&\quad - 3\pi_N \big( (\<30>_N^2 - 2 \<30>_N \<1>_N)\pi_N v_N \big) - 3\pi_N (\<2>_N \pi_N v_N)\\
&\quad + \pi_N (\<30>_N^3)-3\pi_N (\<30>_N^2 \<1>_N) + 3\pi_N (\<30>_N\<2>_N).
\end{split}
\label{SfNLW9}
\end{align}

\noi
Taking $N\to\infty$, we formally obtain the following limiting equation:
\begin{align*}
\begin{split}
(\dt^2+\dt+(1-\Dl)^\al)v 
&= - v^3 + 3\big( (\<30>-\<1>)v^2 \big) - 3\big( (\<30>^2 - 2 \<30>\<1>) v \big) - 3\<2> \, v \\
&\quad + \<30>^3 - 3 \<30>^2 \<1> + 3\<30> \, \<2>,
\end{split}
\end{align*}

\noi
where the tree-structured stochastic objects appearing above are corresponding limits of the stochastic objects in \eqref{so4a} and \eqref{so4b} as $N \to \infty$ (if the limits exist).

Let us introduce more tree-structured stochastic objects for later uses:
\begin{align}
\<320>_N &\overset{\text{def}}{=} \pi_N \I(\<30>_N \<2>_N), \label{sto6} \\
\Sep &\overset{\text{def}}{=} \pi_N  \I(\<30>_N^2 \<1>_N), \label{sto6a} \\
\If^{\<2>_N}(v) &\overset{\text{def}}{=} \pi_N  \I(\<2>_N \cdot v), \label{ran1} \\
\If^{\<31>_N} (v) &\overset{\text{def}}{=} \pi_N \I \big( ( \<30>_N \<1>_N ) \cdot  v \big),   \label{sto7a}
\end{align}

\noi 
where $\I$ is the Duhamel operator defined in \eqref{lin1}. Putting everything together, by writing \eqref{SfNLW9} with zero initial data in the Duhamel formulation, we have
\begin{align}
\begin{split}
v_N
&=\pi_N \I\big(-(\pi_N v_N)^3+3(\<30>_N-\<1>_N)(\pi_N v_N)^2 -3\<30>_N^2 (\pi_N v_N)\big) \\
&\quad + 6 \If^{\<31>_N} ( \pi_N v_N ) - 3 \If^{\<2>_N} (\pi_N v_N) + \pi_N \I\big( \<30>_N^3 \big) - 3 \<70>_N + 3 \<320>_N.
\end{split}
\label{SfNLW10}
\end{align}

\noi
We can then take $N\to\infty$ in \eqref{SfNLW10} and obtain the following (formal) limiting equation for $v=u-\<1>+\<30>$:
\begin{align}
\begin{split}
v &= \I\big(-v^3 + 3(\<30>-\<1>)v^2-3\<30>^2v\big)
+ 6 \If^{\<31>} (v) \\
&\quad - 3 \If^{\<2>}(v) +\I\big(\<30>^3\big) - 3\<70> +3\<320>,
\end{split}
\label{SfNLW11}
\end{align}

\noi
where the tree-structured stochastic objects appearing in \eqref{SfNLW11} are corresponding limits of the stochastic objects in \eqref{so4a}, \eqref{so4b}, \eqref{sto6}, \eqref{sto6a}, \eqref{ran1}, and \eqref{sto7a} as $N \to \infty$ (if the limits exist).

\subsection{Stochastic terms and random operators}
\label{SUBSEC:para}

In this subsection, we establish regularity properties of stochastic terms and
the random operators.
We main follow the presentation in \cite{OWZ}.

Before stating the main results, we need to define the notation of an operator norm. Given Banach spaces $B_1$ and $B_2$,
we use $\L(B_1; B_2)$ to denote the space
of bounded linear operators from $B_1$ to $B_2$.
For $s_1, s_2, b \in \R$ and $I \subset \R$ a closed interval, we set
\begin{align*}
 \L^{s_1, b_1, s_2, b_2}_{I}
= \bigcap_{\substack{\varnothing \neq I ' \subseteq I \\ \text{closed interval}}}\L\big(X^{s_1, b_1}(I'); X^{s_2, b_2}(I')\big)
\end{align*}

\noi
endowed
with the norm given by
\begin{align}
\| S\|_{\L^{s_1, b_1, s_2, b_2}_I}
= \sup_{\substack{\varnothing \neq I ' \subseteq I \\ \text{closed interval}}} |I'|^{- \ta} \|S\|_{\L(X^{s_1, b_1}_{I'}; X^{s_2, b_2}_{I'})}
\label{Op2}
\end{align}

\noi
for some small $\ta = \ta (b_1, b_2) > 0$. If $I = [0, T]$, we write $\L_T^{s_1, b_1, s_2, b_2} = \L_{[0, T]}^{s_1, b_1, s_2, b_2}$.

Let us first show the following lemma regarding the regularity and convergence of the stochastic objects defined at the beginning of Section \ref{SEC:LWP}.
\begin{lemma}
\label{LEM:sto_reg}
Let $1 < \al \leq \frac 32$ and $T > 0$. Let $\eps = \eps(\al) > 0$ and $\dl = \dl (\al) > 0$ be sufficiently small. We also let $s \in \R$ be such that
$\max (- \al + \frac 32, \al - 1) < s < 2 \al - \frac 32$. We define general stochastic objects $\Xi_N$ and $\Xi$, the function space $X$, an integer $k$ to be any one of the following cases:
\begin{itemize}
\item[(i)] $\Xi_N = \<1>_N = \pi_N \<1>$ as in \eqref{loli}, $\Xi = \<1>$, $X = C ([0, T]; W^{\al - \frac 32 - \eps, \infty} (\T^3))$, $k = 1$;

\smallskip 
\item[(ii)] $\Xi_N = \<2>_N$ as in \eqref{so4a}, $\Xi = \<2>$, $X = C ([0, T]; W^{2\al - 3 - \eps, \infty} (\T^3))$, $k = 2$;

\smallskip 
\item[(iii)] $\Xi_N = \<30>_N$ as in \eqref{so4b}, $\Xi = \<30>$, $X = C ([0, T]; W^{ 3 \al - 3 - \eps, \infty} (\T^3))$ or $X^{3 \al - 3 - \eps, \frac 12 + \dl} ([0, T])$, $k = 3$;

\smallskip 
\item[(iv)] $\Xi_N = \<320>_N$ as in \eqref{sto6}, $\Xi = \<320>$, $X = X^{\al - \frac 12, \frac 12 + \dl} ([0, T])$, $k = 5$;

\smallskip
\item[(v)] $\Xi_N = \<70>_N$ as in \eqref{sto6a}, $\Xi = \<70>$, $X = X^{\al - \frac 12, \frac 12 + \dl} ([0, T])$, $k = 7$;

\smallskip
\item[(vi)] $\Xi_N = \If^{\<31>_N}$ as in \eqref{sto7a}, $\Xi = \If^{\<31>}$, $X = \L_T^{\al - \frac 12 - \eps, \frac 12 + \dl, \al - \frac 12 + \eps, \frac 12 + \dl}$, $k = 4$;

\smallskip
\item[(vii)] $\Xi_N = \If^{\<2>_N}$ as in \eqref{ran1}, $\Xi = \If^{\<2>}$, $X = \L_T^{s, \frac 12 + \dl, s + \eps, \frac 12 + \dl}$, $k = 2$.
\end{itemize}
Then, $\{ \Xi_N \}_{N \in \N}$ is almost surely a Cauchy sequence in the space $X$, and we denote the limit by $\Xi$. Also, we have the tail estimate
\begin{align}
\PP \big( \| \Xi_N \|_X > \ld \big) \leq C(T) \exp \big( - c (T) \ld^{\frac 2k} \big)
\label{tail}
\end{align}

\noi
for any $\ld > 0$ and some constants $C(T), c(T) > 0$ depending on $T$, uniformly in $N \in \N \cup \{\infty\}$ with the understanding that $\Xi_\infty = \Xi$. Moreover, there exists small $\g > 0$ such that
\begin{align}
\PP \big( N_2^\g \| \Xi_{N_1} - \Xi_{N_2} \|_X > \ld \big) \leq C(T) \exp \big( - c (T) \ld^{\frac 2k} \big)
\label{tail_diff}
\end{align}

\noi
for any $\ld > 0$ and some constants $C(T), c(T) > 0$ depending on $T$, uniformly in $N_1 \geq N_2 \geq 1$.
\end{lemma}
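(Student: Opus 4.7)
The overall strategy is to reduce each of (i)--(vii) to Fourier-side second-moment estimates within a fixed Wiener chaos, bound those moments using the counting lemmas of Section~\ref{SEC:count}, upgrade to $L^p(\O)$ bounds via the hypercontractivity estimate (Lemma~\ref{LEM:hyp}), and then apply Lemma~\ref{LEM:reg} (in space) or the $X^{s,b}$-variant of the same Kolmogorov-type argument (in space-time) to obtain the claimed pathwise regularity in the target space $X$. The tail estimates \eqref{tail} and \eqref{tail_diff} then follow from the explicit $p^{k/2}$ dependence of the Wiener chaos bound together with Chebyshev's inequality, exactly as in the derivation of \eqref{P4exp}.

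Cases (i) and (ii) are essentially Lemma~\ref{LEM:Dr}(i), after noting that the random linear solution $S(t)(\phi_0,\phi_1)$ contributes a smooth-in-time Gaussian of the same law as $Y(1)$ and does not alter the second-moment computations. For (iii), the $L^\infty_t W^{3\al-3-\eps,\infty}$ bound is a time-localized version of the same Wick-power estimate applied after bounding $\I$ by its kernel. For the $X^{3\al-3-\eps,\tfrac12+\dl}$ bound I would take the space-time Fourier transform and use Lemma~\ref{LEM:nua}(iii) to reduce
\begin{align*}
\E\big[|\ft{\<30>_N}(n,\tau)|^2\big]
\end{align*}
to a weighted sum over $n=n_1+n_2+n_3$ with weight $\prod_j \jbb{n_j}^{-2}$ and a resolvent factor $\jb{|\tau|-\jbb{n}}^{-2}$; multiplying by $\jb{|\tau|-\jbb{n}}^{1+2\dl}$ and summing in $\tau$ reduces matters precisely to Lemma~\ref{LEM:CS} with $k=3$ and $s=3\al-3-\eps$.

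For (iv) and (v), I would expand $\<30>_N\<2>_N$ and $\<30>_N^2\<1>_N$ via the product formula (Lemma~\ref{LEM:prod}) into pieces lying in $\H_{\leq 5}$ and $\H_{\leq 7}$, respectively, and then apply Lemma~\ref{LEM:Fub} to pull the Duhamel integral inside the multiple stochastic integral. The highest chaos components are handled by the non-resonant quintic estimate (Lemma~\ref{LEM:QC}) and the septic estimate (Lemma~\ref{LEM:SepC}); the lower chaos components (arising from internal pairings) are resonance-type sums controlled by Lemma~\ref{LEM:BR} combined with Lemma~\ref{LEM:DRC}. The target regularity $s=\al-\tfrac12$ in the $X^{s,\tfrac12+\dl}$-norm leaves enough margin for the $\eps$-losses coming from the dyadic summation in these estimates, provided $\al>1$.

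For the random operators (vi) and (vii), the key tool is Lemma~\ref{LEM:rand_ten}. After a dyadic decomposition of the input and output frequencies and a space-time Fourier transform, the action of $\If^{\<2>_N}$ or $\If^{\<31>_N}$ is represented by a random tensor whose kernel is the multiple stochastic integral of the deterministic tensor $\mathfrak{h}^{\zeta}$ appearing in Lemma~\ref{LEM:ten1} (with $k=2$ or $k=3$), modulated by the resolvent factor arising from $\I$. Lemma~\ref{LEM:rand_ten} then reduces the operator norm to $\max_{(B_1,B_2)}\|\mathfrak{h}^{\zeta}\|_{n_kn_{B_1}\to nn_{B_2}}$, which is bounded by Lemma~\ref{LEM:ten1}; the hypothesis $\max(-\al+\tfrac32,\al-1)<s<2\al-\tfrac32$ in (vii) is exactly the range where that deterministic tensor bound is summable, and likewise the $s=\al-\tfrac12-\eps$ choice in (vi) is admissible given $\al>1$. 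The difference estimate \eqref{tail_diff} is produced by inserting $\pi_{N_1}-\pi_{N_2}$ at one internal node and trading a small amount of regularity for the factor $N_2^{-\g}$.

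The main obstacle is case (vi) (and secondarily (v)): packaging the random operator so that Lemma~\ref{LEM:rand_ten} applies cleanly requires a careful bookkeeping of the stochastic integral representation of $\<30>_N\<1>_N$ (which lives in $\H_{\leq 4}$) and of the resolvent factor, and the verification of the deterministic tensor norm through Lemma~\ref{LEM:ten1} must track the endpoint cases in $(B_1,B_2)$ partitions where the third-largest frequency is the output. All other steps are routine once the correct counting estimate has been identified.
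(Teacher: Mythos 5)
Your treatment of (i)--(v) and (vii) follows essentially the same route as the paper: reduction to Fourier-side second moments via Lemma~\ref{LEM:Fub} and the It\^o isometry, the counting estimates of Section~\ref{SEC:count} (Lemma~\ref{LEM:CS} with $k=3$, $s=3\al-3-\eps$ for the cubic object; Lemmas~\ref{LEM:QC}, \ref{LEM:BR}, \ref{LEM:DRC}, \ref{LEM:SepC} for the quintic and septic objects), hypercontractivity, and Chebyshev for the tails; and for (vii) the paper indeed combines the random tensor estimate (Lemma~\ref{LEM:rand_ten}) with the deterministic tensor estimate (Lemma~\ref{LEM:ten1}, with $k=3$, not $k=2$).

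For (vi), however, your plan has a genuine gap. You propose to run the same random-tensor argument for $\If^{\<31>_N}$, citing Lemma~\ref{LEM:ten1}, but the tensor in that lemma is built from a \emph{single} phase constraint $|\kappa-\zeta|\le 1$ and the weight $\jb{n}^{s-\al+\eps'}/(\prod_{j\le k-1}\jbb{n_j}\,\jb{n_k}^s)$; the kernel of $\If^{\<31>_N}$ instead carries an \emph{inner} Duhamel integration inside $\<30>_N$, i.e.\ an additional factor $\jbb{n_{123}}^{-1}$ and a second modulation variable, which Lemma~\ref{LEM:ten1} does not accommodate. Moreover, the stochastic part of $\<30>_N\<1>_N$ lives in chaos order four, so even formally one would need Lemma~\ref{LEM:ten1} with $k\ge 4$, where the stated bound contains the growing factor $N^{(k-3)(\frac32-\al)}$ and requires $\al<\frac32$ strictly, while (vi) is claimed for all $1<\al\le\frac32$; and the resonant pairings between $\<30>_N$ and $\<1>_N$ (the lower-order chaos components) are precisely where the difficulty sits and are not reached by the tensor bound you cite. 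The paper avoids all of this by a different mechanism: it first estimates the product $\<30>_N\<1>_N$ itself in $L_T^{q}W_x^{\al-\frac32-\eps,6}$ via the paraproduct decomposition \eqref{para1}, handling $\<30>_N\pl\<1>_N$ and $\<30>_N\pg\<1>_N$ deterministically with Lemma~\ref{LEM:para} and the resonant product $\<30>_N\pe\<1>_N$ by a second-moment estimate based on Lemma~\ref{LEM:BR}; the operator bound in $\L_T^{\al-\frac12-\eps,\frac12+\dl,\al-\frac12+\eps,\frac12+\dl}$ is then a purely deterministic consequence of Lemma~\ref{LEM:nhomo}, Lemma~\ref{LEM:gko}\,(ii), Sobolev embedding, and the modulation inequality $\jb{\tau}^{a}\les\jb{|\tau|-\jbb{n}}^{a}\jbb{n}^{a}$. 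To close your argument for (vi) you would either have to prove a new deterministic tensor estimate adapted to the double-Duhamel, quartic structure (with the $\jbb{n_{123}}^{-1}$ weight and two phase constraints), or switch to the paper's product-estimate strategy.
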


\begin{remark} \rm
\label{RMK:gbn}
In view of \eqref{gn_mul}, for convenience of discussion of stochastic objects below, we write
\begin{align*}
g_n = \int_0^1 1 \, d B_n^0 (t) \quad \text{and} \quad h_n = \int_0^1 1 \, d B_n^1 (t),
\end{align*}

\noi
where $\{ B_n^0 \}_{n \in \Z^3}$ and $\{ B_n^1 \}_{n \in \Z^3}$ are two families of Gaussian processes satisfying the three properties listed in Subsection \ref{SUBSEC:mul}. Also, recalling that $W$ defined in \eqref{defW} is a cylindrical Wiener process on $L^2 (\T^3)$ independent with $\{g_n\}_{n \in \Z^3}$ and $\{h_n\}_{n \in \Z^3}$, we can assume that the three families $\{ B_n \}_{n \in \Z^3}$, $\{ B_n^0 \}_{n \in \Z^3}$, and $\{ B_n^1 \}_{n \in \Z^3}$ are independent. In this way, we can treat the random data term $S(t) (\phi_0, \phi_1)$ and the stochastic convolution term $\int_0^t \D (t - t') d W(t')$ in \eqref{loli} in a uniform manner.
\end{remark}

\begin{proof}[Proof of Lemma \ref{LEM:sto_reg}]
We only present some details for (iii), (vi), and (vii). For the other stochastic objects, the proof follows from a similar manner from some existing literature. For these objects, we only point out the main difference of the proof in our case.

\smallskip \noi
(i) The regularity and convergence of $\<1>_N$ follow from the same way as in \cite[Lemma 3.1 (i)]{GKO2}. The tail bounds \eqref{tail} and \eqref{tail_diff} follow from the same way as in \cite[Lemma 2.3]{GKOT}.

\smallskip \noi
(ii) The regularity and convergence of $\<2>_N$ follow from the same way as in \cite[Lemma 3.1 (ii)]{GKO2}. The tail bounds \eqref{tail} and \eqref{tail_diff} follow from the same way as in \cite[Lemma 2.3]{GKOT}.

\smallskip \noi
(iii) For $\<30>_N$, we follow the approach in \cite{Bring2} (see also \cite[Lemma~3.1~(ii)]{OWZ}) and first show 
\begin{align*}
\Big\| \big\| \<30>_N \big\|_{X_T^{3\al - 3 - \eps, \frac 12 + \dl}} \Big\|_{L^p (\O)} \les p^{\frac 32} e^{\frac 32 T}
\end{align*}

\noi
for any $p \geq 1$ and sufficiently small $\dl > 0$, uniform in $N \in \N$. By Lemma \ref{LEM:nhomo}, we only need to show
\begin{align}
\big\| \| \<3>_N \|_{X_T^{2\al - 3 - \eps, -\frac 12 + \dl}} \big\|_{L^p (\O)} \les p^{\frac 32} e^{\frac 32 T}.
\label{cub_goal}
\end{align}

Note that $\<1>_N$ (recall \eqref{loli} and Remark \ref{RMK:gbn}) is of the form $I_1^0 [g] + I_1^1 [h] + I_1 [f]$ for some $g, h, f \in L^2 (\Z^3 \times \R_+)$, where $I_k^0$ and $I_k^1$ denote the multiple stochastic integrals with respect to $\{ B_n^0 \}_{n \in \Z^3}$ and $\{ B_n^1 \}_{n \in \Z^3}$, respectively, for any $k \in \N$. We denote $\s_g = \| g \|_{L^2 (\R_+ \times \Z^3)}^2$, $\s_h = \| h \|_{L^2 (\R_+ \times \Z^3)}^2$, and $\s_f = \| f \|_{L^2 (\R_+ \times \Z^3)}^2$. By \eqref{so4a}, \eqref{HkIk}, and \eqref{herm_decomp}, we have

\begin{align}
\begin{split}
\<3>_N &= H_3 (I_1^0 [g] + I_1^1 [h] + I_1 [f]; \s_g + \s_h + \s_f) \\
&=  I_3^0 [g \otimes g \otimes g] + I_3^1 [h \otimes h \otimes h] + I_3 [f \otimes f \otimes f] \\
&\quad + 3 I_2^0 [g \otimes g] \cdot I_1^1 [h] + 3 I_2^0 [g \otimes g] \cdot I_1 [f] + 3 I_1^0 [g] \cdot I_2^1 [h \otimes h] \\
&\quad + 3 I_1^0 [g] \cdot I_2 [f \otimes f] + I_1^0 [g] \cdot I_1^1 [h] \cdot I_1 [f].
\end{split}
\label{sto3_expand}
\end{align}

\noi
By independence, we can abuse notations by denoting
\begin{align}
\begin{split}
I_2^0 [g \otimes g] \cdot I_1^1 [h] &= I_3 [g \otimes g \otimes h], \\
I_2^0 [g \otimes g] \cdot I_1 [f] &= I_3 [g \otimes g \otimes f], \\
I_1^0 [g] \cdot I_2^1 [h \otimes h] &= I_3 [g \otimes h \otimes h], \\
I_1^0 [g] \cdot I_2 [f \otimes f] &= I_3 [g \otimes f \otimes f], \\
I_1^0 [g] \cdot I_1^1 [h] \cdot I_1 [f] &= I_3 [g \otimes h \otimes f],
\end{split}
\label{sto3_simp}
\end{align}

\noi
where the $I_3$'s on the right-hand-side satisfies the same properties for multiple stochastic integrals as shown in Subsection \ref{SUBSEC:mul}.

In the following, we only consider the $I_3 [f \otimes f \otimes f]$ term, and the estimates for other terms follow from a similar (and easier) treatment. Note that the estimate for the $I_3 [f \otimes f \otimes f]$ term follows essentially from the proof of \cite[Lemma~3.1~(ii)]{OWZ}. For readers' convenience and for later purpose for the proof of our weak universality result, we describe some details below.

By taking the spatial Fourier transform, we have
\begin{align*}
\ft{\<3>}_N (n, t) = I_3 [f_{n, t}].
\end{align*}

\noi
where $f_{n, t}$ is defined by
\begin{align*}
f_{n, t} (n_1, t_1, n_2, t_2, n_3, t_3) = \ind_{\{n = n_{123}\}} \cdot \bigg( \prod_{j = 1}^3 e^{- \frac{t - t_j}{2}} \frac{\sin ((t - t_j) \jbb{n_j})}{\jbb{n_j}} \cdot \ind_{\{|n_j| \leq N\}} \cdot \ind_{[0, t]} (t_j) \bigg)
\end{align*}
By inserting a sharp time cut-off $\ind_{[0, T]}$,  we compute the space-time Fourier transform and use a stochastic Fubini theorem (see \cite[Lemma~B.2]{OWZ} or \cite[Theorem~4.33]{DZ}) to obtain
\begin{align}
\ft{\ind_{[0, T]} \<3>}_N (n, \tau) = I_3 \big[ \F_t (\ind_{[0, T]} f_{n, \cdot}) (\tau) \big],
\label{cub_main}
\end{align}

\noi
where $\F_t$ denotes the Fourier transform in time.

Given dyadic $N_j \in 2^\N$ for $j = 1, 2, 3$, we denote by $\<3>_N^{N_1, N_2, N_3}$ the contribution to $\<3>_N$ from $|n_j| \sim N_j$ for $j = 1, 2, 3$, and we denote
\begin{align*}
f_{n, t}^{\bar{N}} = f_{n, t} \cdot \prod_{j = 1}^3 \ind_{|n_j| \sim N_j}. 
\end{align*}

\noi
We also recall the definition of $\kappa (\bar n) = \kappa_2 (n_1, n_2, n_3)$ in \eqref{defk2}. Thus, by \eqref{equi_norm}, the Ito isometry \eqref{ito}, and expanding the sine functions into complex exponentials, we compute that
\begin{align}
\begin{split}
&\Big\| \big\| \<3>_N^{N_1, N_2, N_3} \big\|_{X_T^{2\al - 3 - \eps, -\frac 12 - \dl}} \Big\|_{L^2 (\O)}^2 \\
&\les \sum_{\eps_{123} \in {\{\pm 1\}}} \sum_{n \in \Z^3} \int_\R \jb{n}^{4\al - 6 - 2\eps} \jb{\tau}^{-1 - 2 \dl} \big\| \F_t \big( \ind_{[0, T]} f_{n, \cdot}^{\bar{N}} \big) (\tau - \eps_{123} \jbb{n}) \big\|_{\l_{n_1, n_2, n_3}^2 L_{t_1, t_2, t_3}^2}^2 d \tau \\
&\les e^{3T} \sum_{\eps_{123}, \eps_1, \eps_2, \eps_3 \in \{\pm 1\}} \sum_{n \in \Z^3} \int_\R \jb{n}^{4\al - 6 - 2\eps} \jb{\tau}^{-1 - 2 \dl} \bigg( \sum_{\substack{n = n_1 + n_2 + n_3 \\ |n_j| \sim N_j}} \prod_{j = 1}^3 \jbb{n_j}^{-2} \\
&\quad \times \int_{[0, T]^3} \bigg| \int_{\max (t_1, t_2, t_3)}^T e^{- i t (\tau - \kappa (\bar n)) - \frac 32 t} dt \bigg|^2 d t_3 d t_2 d t_1 \bigg) d\tau \\
&\les e^{3T} \sum_{\eps_{123}, \eps_1, \eps_2, \eps_3 \in \{\pm 1\}} \sum_{n \in \Z^3} \sum_{\substack{n = n_1 + n_2 + n_3 \\ |n_j| \sim N_j}} \frac{\jb{n}^{4\al - 6 - 2\eps}}{\prod_{j = 1}^3 \jbb{n_j}^2}   \int_{\R} \frac{1}{\jb{\tau}^{1 + 2 \dl} \jb{\tau - \kappa (\bar n)}^2} d\tau.
\end{split}
\label{cub_step1}
\end{align}

\noi
By \eqref{cub_step1}, the convolution inequality in Lemma \ref{LEM:conv}, and a level set decomposition, we obtain
\begin{align}
\begin{split}
&\Big\| \big\| \<3>_N^{N_1, N_2, N_3} \big\|_{X_T^{2\al - 3 - \eps, -\frac 12 - \dl}} \Big\|_{L^2 (\O)}^2 \\
&\quad \les e^{3T} \sum_{\eps_{123}, \eps_1, \eps_2, \eps_3 \in \{\pm 1\}} \sum_{n \in \Z^3} \sum_{\substack{n = n_1 + n_2 + n_3 \\ |n_j| \sim N_j}} \frac{\jb{n}^{4\al - 6 - 2\eps}}{\prod_{j = 1}^3 \jbb{n_j}^2}  \jb{\kappa (\bar n)}^{-1 - 2 \dl} \\
&\quad \les e^{3T} \sum_{\eps_{123}, \eps_1, \eps_2, \eps_3 \in \{\pm 1\}} \sup_{m \in \Z} \sum_{n \in \Z^3} \sum_{\substack{n = n_1 + n_2 + n_3 \\ |n_j| \sim N_j}} \frac{\jb{n}^{4\al - 6 - 2\eps}}{\prod_{j = 1}^3 \jbb{n_j}^2} \cdot \ind_{\{ |\kappa (\bar n) - m| \leq 1 \}}.
\end{split}
\label{cub_step2}
\end{align}

\noi
By the summation counting estimate in Lemma~\ref{LEM:CS} with $s = 3\al - 3 - \eps$ and $k = 3$, we obtain
\begin{align}
\Big\| \big\| \<3>_N^{N_1, N_2, N_3} \big\|_{X_T^{2\al - 3 - \eps, -\frac 12 - \dl}} \Big\|_{L^2 (\O)}^2 \les e^{3T} N_{\text{max}}^{-\ta}
\label{cub1}
\end{align}

\noi
for some $\ta > 0$, where $N_{\text{max}} = \max (N_1, N_2, N_3)$. 

On the other hand, using \eqref{cub_main}, we have
\begin{align}
\begin{split}
\Big\| \big\| \<3>_N^{N_1, N_2, N_3} \big\|_{X_T^{0, 0}} \Big\|_{L^2 (\O)}^2 &= \Big\| \big\| \<3>_N^{N_1, N_2, N_3} \big\|_{L_T^2 L_x^2} \Big\|_{L^2 (\O)}^2 \\
&\les T^\eta \sum_{\substack{n_1, n_2, n_3 \in \Z^3 \\ |n_j| \sim N_j}} \prod_{j = 1}^3 \jbb{n_j}^{-2} \\
&\les T^\eta N_{\text{max}}^{9 - 6 \al}
\end{split}
\label{cub2}
\end{align}

\noi
for some $\eta > 0$. Thus, by interpolating \eqref{cub1} and \eqref{cub2} and using the Gaussian hypercontractivity, we have
\begin{align*}
\Big\| \big\| \<3>_N^{N_1, N_2, N_3} \big\|_{X_T^{2 \al - 3 - \eps, -\frac 12 + \dl}} \Big\|_{L^p (\O)}^2 \les p^{3} e^{3T} N_{\text{max}}^{-\ta'}
\end{align*}

\noi
for some $\ta' > 0$. By summing up dyadic $N_1, N_2, N_3 \geq 1$, we obtain the bound \eqref{cub_goal}. Then, using Chebyshev's inequality, we can obtain the following tail estimate:
\begin{align}
\PP\Big( \|\<30>_N\|_{X_T^{3\al - 3 - \eps, \frac 12 + \dl} }> \ld\Big)
\leq C(T)\exp\big(-c (T) \ld^{\frac 23} \big).
\label{cub_tail}
\end{align}

For the convergence of $\<3>_N$ to $\<3>$ in $X_T^{2 \al - 3 - \eps, -\frac 12 + \dl}$, we use similar steps as above to estimate the difference $\ind_{[0,T]}\<3>_M - \ind_{[0,T]} \<3>_N$ for $M \geq N \geq 1$. The essential modification is that, in \eqref{cub_step1} and \eqref{cub_step2}, we need to insert the condition $N \leq \max (|n_1|, |n_2|, |n_3|) \leq M$, which allows us to gain a small negative power of $N$. Thus, we are able to obtain
\begin{align*}
\Big\| \big\| \<3>_M^{N_1, N_2, N_3} - \<3>_N^{N_1, N_2, N_3} \big\|_{X_T^{2 \al - 3 - \eps, -\frac 12 + \dl}} \Big\|_{L^p (\O)}^2 \les p^{3} e^{3T} N^{-\ta_1} N_{\text{max}}^{-\ta_2}
\end{align*}

\noi
for any $p \geq 1$ and some small $\ta_1, \ta_2 > 0$, and so we can sum up dyadic $N_1, N_2, N_3 \geq 1$ to obtain
\begin{align*}
\Big\| \big\| \<3>_M - \<3>_N \big\|_{X_T^{2 \al - 3 - \eps, -\frac 12 + \dl}} \Big\|_{L^p (\O)}^2 \les p^{3} e^{3 T} N^{-\ta_1}.
\end{align*}

\noi
By Chebyshev's inequality and the Borel-Cantelli lemma, we can obtain the almost sure convergence of $\<3>_N$ in $X_T^{2 \al - 3 - \eps, - \frac 12 + \dl}$, which in turn implies the almost sure convergence of $\<30>_N$ in $X_T^{3\al - 3 - \eps, - \frac 12 + \dl}$.

The bounds and convergence of $\<30>_N$ in the $C_T W_x^{s, \infty}$-norm follows from exactly the same way as in the proof of \cite[Lemma~3.1~(ii)]{OWZ}, and so we omit details. Also, the tail bounds \eqref{tail} and \eqref{tail_diff} follow from a similar treatment along with \eqref{cub_tail} (and also a corresponding difference tail estimate).

\smallskip \noi
(iv) In view of the expansion in \eqref{sto3_expand} and \eqref{sto3_simp} and also noting that the smoothing property for our equation is slightly stronger than that of \cite{OWZ}, we can use similar steps as in \cite[Lemma 3.4 (i)]{OWZ} to obtain
\begin{align*}
\Big\| \big\| \<30>_N \<2>_N \big\|_{X_T^{-\frac 12, - \frac 12 + \dl}} \Big\|_{L^p (\O)} \les p^{\frac 52} e^{\frac 52 T},
\end{align*}

\noi
so that the regularity and convergence of $\<320>_N$ hold.
The essential difference of the proof is the use of counting estimates. In view of the product formula \eqref{prod}, the product $\<30>_N \<2>_N$ is decomposed into three parts: a non-resonance term (no pairing), a single-resonance term (one pairing), and a double-resonance term (two pairings). For the non-resonance term, the use of Lemma~A.4 in \cite{OWZ} needs to be replaced by our Lemma~\ref{LEM:QC}. For the single-resonance term, the use of Lemma~A.3 in \cite{OWZ} needs to be replaced by our Lemma~\ref{LEM:BR}. For the double-resonance term, the use of Lemma~A.5 in \cite{OWZ} needs to be replaced by our Lemma~\ref{LEM:DRC}. The tail estimates \eqref{tail} and \eqref{tail_diff} follow from the same way as \eqref{cub_tail}.

\smallskip \noi
(v) As in (iv), we can use similar steps as in \cite[Lemma 3.4 (ii)]{OWZ} to obtain
\begin{align*}
\Big\| \big\| \<30>_N^2 \<1>_N \big\|_{X_T^{-\frac 12, - \frac 12 + \dl}} \Big\|_{L^p (\O)} \les p^{\frac 72} e^{\frac 72 T},
\end{align*}

\noi
so that the regularity and convergence of $\<70>_N$ hold. The essential difference of the proof is again the use of counting estimates. In view of the product formula \eqref{prod}, the product $\<30>_N^2 \<1>_N$ is decomposed into two parts: a non-resonance septic term (no pairing) and resonance septic terms (at least one pairing).
For the non-resonance septic term, the use of Lemma~A.1 in \cite{OWZ} needs to be replaced by our Lemma~\ref{LEM:CS} with $s = \al - 1$ and $k = 3$. For the resonance septic terms, the use of Lemma~A.6 in \cite{OWZ} needs to be replaced by our Lemma~\ref{LEM:SepC}. The tail estimates \eqref{tail} and \eqref{tail_diff} follow from the same way as \eqref{cub_tail}.

\smallskip \noi
(vi) We first use similar steps as in the proof of \cite[Lemma~3.3]{OWZ} (with the use of Lemma~A.3 in \cite{OWZ} replaced by our Lemma~\ref{LEM:BR}) to obtain
\begin{align*}
\Big\| \big\| \<30>_N \<1>_N \big\|_{L_T^q W_x^{\al - \frac 32 - \eps, \infty}} \Big\|_{L^p (\O)} \les_T p^2
\end{align*}

\noi
and
\begin{align}
\PP \Big( \big\| \<30>_N \<1>_N \big\|_{L_T^q W_x^{\al - \frac 32 - \eps, \infty}} > \ld \Big) \leq C(T) \exp \big( -c(T) \ld^{\frac 12} \big).
\label{cublin_tail}
\end{align}

\noi
for any $1 < p, q < \infty$. For the operator $\If^{\<31>_N}$, by \eqref{Op2}, the inhomogeneous linear estimate in Lemma~\ref{LEM:nhomo}, the time localization estimate \eqref{time2} in Lemma~\ref{LEM:time}, the product estimate in Lemma~\ref{LEM:gko}~(ii), Sobolev's embeddings, and the fact that $\jb{\tau}^{a} \les \jb{|\tau| - \jbb{n}}^{a} \jbb{n}^{a}$ for any $a > 0$, we have
\begin{align}
\begin{split}
\big\| &\If^{\<31>_N} \big\|_{\L_T^{\al - \frac 12 - \eps, \frac 12 + \dl, \al - \frac 12 + \eps, \frac 12 + \dl}} \\
&\leq \sup_{\substack{\varnothing \neq I \subseteq [0, T] \\ \text{closed interval}}}  |I|^{- \ta}  \sup_{\| v \|_{X_I^{\al - \frac 12 - \eps, \frac 12 + \dl} } \leq 1} \big\| \pi_N \I \big( (\<30>_N \<1>_N) v \big) \big\|_{X_I^{\al - \frac 12 + \eps, \frac 12 + \dl}} \\
&\les (1 + T) \sup_{\substack{\varnothing \neq I \subseteq [0, T] \\ \text{closed interval}}} \sup_{\| v \|_{X_I^{\al - \frac 12 - \eps, \frac 12 + \dl} } \leq 1} \big\| (\<30>_N \<1>_N) v \big\|_{L_I^2 H_x^{- \frac 12 + \eps}} \\
&\les (1 + T) \sup_{\substack{\varnothing \neq I \subseteq [0, T] \\ \text{closed interval}}} \sup_{\| v \|_{X_I^{\al - \frac 12 - \eps, \frac 12 + \dl} } \leq 1} \big\| \<30>_N \<1>_N \big\|_{L_I^{q} W_x^{-\frac 12 + \eps, 6}} \| v \|_{L_I^{2 + \dl_1} W_x^{\frac 12 - \eps, \frac{6}{3 - 2 \eps}}} \\
&\les(1 + T) \sup_{\substack{\varnothing \neq I \subseteq [0, T] \\ \text{closed interval}}} \sup_{\| v \|_{X_I^{\al - \frac 12 - \eps, \frac 12 + \dl} } \leq 1} \big\| \<30>_N \<1>_N \big\|_{L_I^{q} W_x^{\al - \frac 32 - \eps, 6}} \big\| \jb{\nb}_t^{\dl_2} v \big\|_{L_I^2 H_x^{\frac 12}} \\
&\les (1 + T) \big\| \<30>_N \<1>_N \big\|_{L_T^{q} W_x^{\al - \frac 32 - \eps, 6}},
\end{split}
\label{cublin_op}
\end{align}

\noi
where $\dl_1, \dl_2, \eps > 0$ are sufficiently small, $q \gg 1$ satisfies $\frac 12 = \frac{1}{q} + \frac{1}{2+\dl_1}$, and we also used $\al > 1$. This in turn implies the regularity and convergence of $\If^{\<31>_N}$. The tail estimates \eqref{tail} and \eqref{tail_diff} then follow easily from \eqref{cublin_tail} (and also a corresponding difference tail estimate).

\smallskip \noi
(vii) This part is similar to  \cite[Lemma~3.5]{OWZ}, and so we will be brief.
As in (iii), we can assume without loss of generality that
\begin{align*}
\ft{\<2>}_N (n, t) = I_2 [g_{n, t}],
\end{align*}

\noi
where $g_{n, t}$ is defined by
\begin{align*}
g_{n, t} (n_1, t_1, n_2, t_2) = \ind_{\{ n = n_{12} \}} \cdot \bigg( \prod_{j = 1}^2 e^{- \frac{t - t_j}{2}} \frac{\sin ((t - t_j) \jbb{n_j})}{\jbb{n_j}} \cdot \ind_{\{|n_j| \leq N\}} \cdot \ind_{[0, t]} (t_j) \bigg).
\end{align*}

\noi
and so we can write
\begin{align*}
\F_x (\<2>_N v) (n, t) = \sum_{n_3 \in \Z^3} \ft v (n_3, t) I_2 [ g_{n - n_3, t} ].
\end{align*}

\noi
Given dyadic $N_j \geq 1$ for $j = 1, 2, 3$, we define $\<2>_N^{N_1, N_2}$ be $\<2>_N$ with frequency localization $|n_1| \sim N_1$ and $|n_2| \sim N_2$, and we also define $v_{N_3}$ be $v$ with frequency localization $|n_3| \sim N_3$. Thus, by the inhomogeneous linear estimate in Lemma~\ref{LEM:nhomo}, it suffices to show that 
\begin{align}
\Big\|  \sup_{\| v \|_{X^{s, \frac 12 + \dl} } \leq 1} \big\| \ind_{[0, T]} (t) \cdot \<2>_N^{N_1, N_2}  v_{N_3} \big\|_{X^{s - \al + \eps, - \frac 12 - \dl}} \Big\|_{L^p (\O)} \les_T p N_{\text{max}}^{- \ta}
\label{quad_goal}
\end{align}

\noi
for some $\ta > 0$, any $p \geq 1$, and $\max(-\al + \frac 32, \al - 1) < s < 2\al - \frac 32$, where $N_{\text{max}} = \max (N_1, N_2, N_3)$. Once the bound \eqref{quad_goal} is established, the remaining argument (i.e.~interpolation with an $X_I^{0, 0}$-norm, the tail bounds \eqref{tail} and \eqref{tail_diff}, and the convergence) is similar to that in (iii) along with an additional application of the time localization estimate \eqref{time2} in Lemma~\ref{LEM:time} so that a factor $|I|^\ta$ can be created.

By writing 
\begin{align*}
\ft {v}_{N_3, 1} (n, \tau) = \ind_{[0, \infty)} (\tau) \ft{v}_{N_3} (n, \tau) \quad \text{and} \quad \ft {v}_{N_3, -1} (n, \tau) = \ind_{(- \infty, 0)} \ft{v}_{N_3} (n, \tau),
\end{align*}

\noi
we have
\begin{align}
\| v_{N_3} \|_{X^{s, b}}^2 = \sum_{\eps_3 \in \{\pm 1\}} \big\| \jb{n}^s \jb{\tau}^b \ft{v}_{N_3, \eps_3} (n, \tau + \eps_3 \jbb{n}) \big\|_{\l_n^2 L_\tau^2}^2.
\label{vN3}
\end{align}

\noi
We also denote
\begin{align*}
H_{N_1, N_2, N_3} (n, n_3, \tau, \tau_3) = \jb{n}^{s - \al + \eps} \jb{n_3}^{-s} I_2 [h_{n, n_3, \tau, \tau_3}^{N_1, N_2, N_3, I}],
\end{align*}

\noi
where
\begin{align*}
h_{n, n_3, \tau, \tau_3}^{N_1, N_2, N_3} & (n_1, t_1, n_2, t_2) = \ind_{\{n = n_{123}\}} \int_0^T e^{-it (\tau - \tau_3 - \eps_0 \jbb{n} - \eps_3 \jbb{n_3})} \\
&\quad \times \bigg( \prod_{j = 1}^2 e^{- \frac{t - t_j}{2}} \frac{\sin ((t - t_j) \jbb{n_j})}{\jbb{n_j}} \cdot \ind_{\{|n_j| \leq N\}} \cdot \ind_{\{ |n_j| \sim N_j \}} \cdot \ind_{[0, t]} (t_j) \bigg)
\end{align*}

\noi
with $\eps_0 \in \{\pm 1\}$. Thus, by \eqref{equi_norm}, a stochastic Fubini theorem (see \cite[Lemma~B.2]{OWZ} or \cite[Theorem~4.33]{DZ}), the Cauchy-Schwarz inequality in $\tau_3$, \eqref{vN3}, and Minkowski's integral inequality (with $p \geq 2$), we have
\begin{align}
\begin{split}
\bigg\| &\sup_{\| v \|_{X^{s, \frac 12 + \dl} } \leq 1} \| \ind_{[0, T]} (t) \cdot \<2>_N^{N_1, N_2}  v_{N_3} \|_{X^{s - \al + \eps, - \frac 12 - \dl}} \bigg\|_{L^p (\O)} \\
&\les \sum_{\eps_0, \eps_3 \in \{\pm 1\}} \bigg\| \sup_{\| v \|_{X^{s, \frac 12 + \dl} } \leq 1} \Big\| \jb{\tau}^{- \frac 12 - \dl} \sum_{|n_3| \leq N} \jb{n_3}^{s} \\
&\quad \times \Big| \int_\R \ft{v}_{N_3, \eps_3} (n_3, \tau_3 + \eps_3 \jbb{n_3}) H_{N_1, N_2, N_3} (n, n_3, \tau, \tau_3) d \tau_3 \Big| \Big\|_{\l_n^2} \bigg\|_{L^p (\O; L_\tau^2)}  \\
&\les \sup_{\eps_0, \eps_3 \in \{\pm 1\}} \Big\|  \jb{\tau}^{- \frac 12 - \dl} \jb{\tau_3}^{- \frac 12 - \dl}  \| H_{N_1, N_2, N_3} (n, n_3, \tau, \tau_3) \|_{\l_{n_3}^2 \to \l_n^2} \Big\|_{L^p (\O ; L_{\tau, \tau_3}^2)} \\
&\les \sup_{\eps_0, \eps_3 \in \{\pm 1\}} \sup_{\tau, \tau_3 \in \R} \Big\| \| H_{N_1, N_2, N_3} (n, n_3, \tau, \tau_3) \|_{\l_{n_3}^2 \to \l_n^2} \Big\|_{L^p (\O)}.
\end{split}
\label{quad1}
\end{align}

We now define
\begin{align*}
\kappa (\cj{n}) = \eps_0 \jbb{n} + \eps_1 \jbb{n_1} + \eps_2 \jbb{n_2} + \eps_3 \jbb{n_3}
\end{align*}
for $\eps_0, \eps_1, \eps_2, \eps_3 \in \{\pm 1\}$. For $m \in \Z$, we define the tensor
\begin{align*}
\mathfrak{h}^m = \mathfrak{h}^{m}_{n n_1 n_2 n_3} = \ind_{\{n = n_{123} \}} \bigg( \prod_{j = 1}^3 \ind_{\{|n_j| \sim N_j\}} \ind_{\{|n_j| \leq N\}} \bigg) \cdot \ind_{\{|\kappa (\cj{n}) - m| \leq 1\}} \frac{\jb{n}^{s - \al + \eps}}{\jbb{n_1} \jbb{n_2} \jb{n_3}^{s}}.
\end{align*}

\noi
Thus, we can write
\begin{align*}
H_{N_1, N_2, N_3} (n, n_3, \tau, \tau_3) = \sum_{\eps_1, \eps_2 \in \{\pm 1\}} \sum_{|m| \les N_{\text{max}}} I_2 [ \mathfrak{h}^{m}_{n n_1 n_2 n_3} \mathfrak{H}_{n_3, \tau, \tau_3}^{m} ],
\end{align*}

\noi
where
\begin{align*}
\mathfrak{H}_{n_3, \tau, \tau_3}^{m} (n_1, t_1, n_2, t_2) = \int_0^T \ind_{\{ |\kappa (\cj{n}) - m| \leq 1 \}} e^{- it (\tau - \tau_3 - \kappa (\cj{n})) - t} \bigg( \prod_{j = 1}^2 e^{\frac{t_j}{2} - i t_j \eps_j \jb{n_j}} \cdot \ind_{[0, t]} (t_j) \bigg) dt.
\end{align*}

\noi
By the random tensor estimate in \cite[Lemma~C.3]{OWZ}, the deterministic tensor estimate (Lemma~\ref{LEM:ten1} with $k = 3$), and performing the integration in $t$ in $\mathfrak{H}^m$, we obtain
\begin{align}
\begin{split}
\Big\| &\| H_{N_1, N_2, N_3} (n, n_3, \tau, \tau_3) \|_{\l_{n_3}^2 \to \l_n^2} \Big\|_{L^p (\O)} \\
&\les p e^T N_{\text{max}}^{\frac{\dl}{2}} \sum_{|m| \les N_{\text{max}}} \max \Big( \| \mathfrak{h}^m \|_{n_1 n_2 n_3 \to n}, \| \mathfrak{h}^m \|_{n_3 \to n_1 n_2 n}, \| \mathfrak{h}^m \|_{n_1 n_3 \to n n_2}, \\
&\quad \| \mathfrak{h}^m \|_{n_2 n_3 \to n n_1} \Big) \| \mathfrak{H}_{n_3, \tau, \tau_3}^{m} (n_1, t_1, n_2, t_2) \|_{\l_{n_1, n_2}^\infty L_{t_1, t_2}^2 ([0, T]^2)} \\
&\les p e^T N_{\text{max}}^{- \frac{\dl}{2}} \sum_{|m| \les N_{\text{max}}} \jb{\tau - \tau_3 - m}^{-1} \\
&\les p e^T N_{\text{max}}^{- \frac{\dl}{4}}
\end{split}
\label{quad2}
\end{align}

\noi
for some $\dl > 0$. Combining \eqref{quad1} and \eqref{quad2}, we obtain the desired estimate \eqref{quad_goal}.
\end{proof}

\subsection{Proof of local well-posedness}
\label{SUBSEC:lwp}
In this subsection, we prove local well-posedness statement in Proposition~\ref{PROP:LWP}. In fact, since the same argument will be used later again in Section~\ref{SEC:GWP}, we show a more general local well-posedness result. 

We recall that if $X_1$ and $X_2$ are two Banach spaces, then the space $X_1 + X_2$ induced by the norm
\begin{align*}
\| u \|_{X_1 + X_2} \overset{\text{def}}{=} \inf \{ \| u_1 \|_{X_1} + \| u_2 \|_{X_2}: u = u_1 + u_2 \},
\end{align*}

\noi
is also a Banach space.

\begin{proposition}[General local well-posedness for the fractional hyperbolic $\Phi^4_3$-model]
\label{THM:1}
Let $1 < \al \leq \frac 32$, $s \geq \al - \frac 12$, and $T > 0$.
Then,
there exist $\eps=\eps(\al,s)$ and $\dl=\dl(\al,s)>0$ small enough
such that
if
\begin{itemize}
\item   $\Xi_1$ is a distribution-valued function belonging to
 $C_T W^{\al-\frac 32 - \eps, \infty}$,


\smallskip
\item
$\Xi_3$ is a distribution-valued function belonging to
$ C_T W^{3\al - 3 - \eps, \infty} + X_T^{\al - \frac 12, \frac 12 + \dl}$,

\smallskip
\item
$\Xi_5$ is a distribution-valued function belonging to $X_T^{\al-\frac 12,\frac 12+\dl}$,

\smallskip
\item
$\Xi_7$ is a distribution-valued function belonging to $X_T^{\al-\frac 12,\frac 12+\dl}$,

\smallskip
\item
the operator $\If_{31}$ belongs to the class $\L^{\al - \frac 12, \frac 12 + \dl, \al - \frac 12, \frac 12 + \dl}_{T}$,

\smallskip
\item
the operator $\If_2$ belongs to the class $ \L^{\al - \frac 12, \frac 12 + \dl, \al - \frac 12,  \frac 12 + \dl}_{T}$.


%
%

\smallskip

\end{itemize}

\noi
then the following equation 
\begin{align*}
v&=\I\big(-v^3 + 3(\Xi_3 - \Xi_1) v^2 - 3\Xi_3^2 v\big)
		   + 6 \If_{31} (v) \\
		 &\quad - 3 \If_2 (v) +\I\big(\Xi_3^3\big) - 3\Xi_7 +3\Xi_5.
\end{align*}
is locally well-posed in
$\H^{s}(\T^3)$.
More precisely,
given any $(v_0, v_1) \in \H^{s}(\T^3)$,
there exist $0 < T_0 \leq T$
and  a unique solution $v$ to the
above equation on $[0, T_0]$ satisfying $v|_{t = 0} = (v_0, v_1)$
in the class\textup{:}
\begin{align*}
 X^{\al - \frac 12,\frac 12 + \dl}([0,T_0])
 \subset C([0,T_0];H^{\al - \frac 12}(\T^3)).
\end{align*}

\noi
Furthermore, the solution $v$
depends continuously
on the enhanced data set\textup{:}
\begin{align}
\pmb{\Xi} (v_0, v_1) =
\big(v_0, v_1,
\Xi_1, \,  \Xi_3,  \, \Xi_5, \, \Xi_7, \, \If_{31}, \, \If_2 \big)
\label{data1}
\end{align}

\noi
in the class\textup{:}
\begin{align*}
\mathcal{X}^{s, \al, \eps, \dl}_{T_0}
& = \H^{s}(\T^3)
\times
C([0,T_0]; W^{\al-\frac 32 - \eps, \infty}(\T^3)) \\
& \hphantom{X}
\times
\big( C([0,T_0]; W^{3\al - 3 - \eps, \infty}(\T^3)) + X^{\al - \frac 12,\frac 12+\dl}([0,T_0]) \big) \\
& \hphantom{X}
\times
X^{\al-\frac 12,\frac 12+\dl}([0,T_0])
\times
X^{\al-\frac 12,\frac 12+\dl}([0,T_0])\\
& \hphantom{X}
\times
\L^{\al - \frac 12, \frac 12 + \dl, \al - \frac 12, \frac 12 + \dl}_{T_0}
\times  \L^{\al - \frac 12, \frac 12 + \dl, \al - \frac 12, \frac 12 + \dl}_{T_0}.
\end{align*}

\end{proposition}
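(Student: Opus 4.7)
\smallskip
\noindent\textbf{Proof plan.} The plan is a standard contraction mapping argument in the Bourgain space $X^{\al - \frac 12, \frac 12 + \dl}([0, T_0])$, but the novelty here is that $\Xi_3$ has mixed regularity (sum of a smooth function plus a space-time distribution), so the various multilinear pieces must be estimated by splitting $\Xi_3 = A + B$ with $A \in C_{T}W^{3\al-3-\eps,\infty}$ and $B \in X_T^{\al-\frac12, \frac12+\dl}$. First I would define the solution map
\[
\Gamma(v) = S(t)(v_0,v_1) + \I\big(-v^3 + 3(\Xi_3 - \Xi_1)v^2 - 3\Xi_3^2 v + \Xi_3^3\big) + 6\If_{31}(v) - 3\If_2(v) - 3\Xi_7 + 3\Xi_5
\]
and work in a closed ball $B_R \subset X^{\al-\frac12, \frac12+\dl}([0,T_0])$. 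The linear evolution is controlled by Lemma~\ref{LEM:homo} and the source terms $\Xi_5,\Xi_7$ are given, while the random operator contributions $\If_{31}(v),\If_2(v)$ are bounded by their operator norms together with the small-time factor $T_0^{\ta}$ built into the class $\L^{\cdot}_{T_0}$ via \eqref{Op2}.

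The core of the argument is estimating $\I$ applied to each trilinear term. After Lemma~\ref{LEM:nhomo} reduces matters to an $X^{s-\al, -\frac12+\dl}$-bound and Lemma~\ref{LEM:time} gives a $T_0^{3\dl}$ gain by passing from $-\frac12+\dl$ to $-\frac12 + 4\dl$, the following estimates close the contraction with $s = \al - \frac 12$:
\begin{itemize}
\item For $v^3$: apply Lemma~\ref{LEM:str_var} with the ``$v$''-slot set equal to the constant $1 \in L^\infty \hookrightarrow W^{-\eps,\infty}$, giving the pure trilinear bound.
\item For $\Xi_1 v^2$ and $A v^2$: apply \eqref{tri2} of Lemma~\ref{LEM:str4}, using that $\Xi_1, A \in L^\infty W^{\al-\frac32-\eps,\infty}$ (the latter since $3\al-3-\eps > \al-\frac32-\eps$ for $\al>1$).
\item For $B v^2$: apply Lemma~\ref{LEM:str_var} with the low-regularity slot set to $1$, noting $B$ and $v$ live in the same $X^{s,\frac12+\dl}$ space.
\item For $A^2 v$, $AB v$, $B^2 v$: use \eqref{tri2}, \eqref{tri3} of Lemma~\ref{LEM:str4}, or the pure trilinear estimate respectively, after observing that $A^2 \in L^\infty W^{\al-\frac32-\eps,\infty}$ by algebra properties of $W^{3\al-3-\eps,\infty}$ for $\al > 1$.
\item For $\I(\Xi_3^3)$: expand as $A^3 + 3A^2 B + 3AB^2 + B^3$ and treat $A^3$ by Sobolev embedding into $L^\infty H^{3\al-3-\eps} \subset X^{s-\al,0}$ (valid for $s = \al-\frac12$), while the others are treated as above with one copy of the constant function.
\end{itemize}
The constraints $\max(-\al+\frac32,\al-1) < s < 2\al-\frac32$ in Lemmas~\ref{LEM:str_var} and \ref{LEM:str4} hold precisely because $\al > 1$, which is where the hypothesis on $\al$ enters.

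Taking $R$ proportional to the norm of the enhanced data set $\pmb{\Xi}(v_0,v_1)$ and choosing $T_0$ small enough that each $T_0^{\ta}$-gain absorbs the polynomial factors in $R$, one concludes $\Gamma : B_R \to B_R$ and that $\Gamma$ is a contraction; the standard Banach fixed point theorem then gives the unique solution $v \in X^{\al-\frac12, \frac12+\dl}([0,T_0])$, with continuity $v \in C([0,T_0]; H^{\al-\frac12})$ following from Lemma~\ref{LEM:HsXsb}. Continuous dependence on the enhanced data set $\pmb{\Xi}(v_0,v_1)$ in the topology $\mathcal{X}_{T_0}^{s,\al,\eps,\dl}$ follows from essentially the same multilinear estimates applied to differences of solutions and data.

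The main obstacle I anticipate is organizing the bookkeeping of the six-fold case split for the $\Xi_3^2 v$ and $\Xi_3^3$ terms while still harvesting a small power of $T_0$ from every instance. The danger is that the low-regularity part $B \in X^{\al-\frac12, \frac12+\dl}$ of $\Xi_3$ does not embed into $L^\infty W^{\al-\frac32-\eps,\infty}$, so one cannot blindly apply \eqref{tri2}; it must always be paired with a pure Strichartz-type trilinear estimate (the $v = 1$ specialization of Lemma~\ref{LEM:str_var}). Ensuring that this is compatible with the extraction of the small-time factor via Lemma~\ref{LEM:time} — both for the deterministic nonlinearity and, crucially, for the $\I(\Xi_3^3)$ source term that carries no $v$ but must still be placed in $X^{s,\frac12+\dl}_{T_0}$ with a factor that does not blow up — is the most delicate point of the argument.
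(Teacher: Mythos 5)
Your proposal is correct and follows essentially the same route as the paper's proof: a contraction in $X^{\al-\frac12,\frac12+\dl}([0,T_0])$ after splitting $\Xi_3$ into its $C_TW^{3\al-3-\eps,\infty}$ and $X_T^{\al-\frac12,\frac12+\dl}$ components, with the small-time gain harvested via Lemma~\ref{LEM:nhomo} together with \eqref{time2}, the $T_0^{\ta}$ built into \eqref{Op2} for $\If_{31},\If_2$, the trilinear bounds from Lemma~\ref{LEM:str_var} (with the extra slot set to $1$) and Lemma~\ref{LEM:str4}, and Lemmas~\ref{LEM:homo}, \ref{LEM:HsXsb} for the linear evolution and the embedding into $C([0,T_0];H^{\al-\frac12})$. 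The only (immaterial) deviation is that for the terms dominated by the smooth component of $\Xi_3$ and for $\Xi_1 v^2$ the paper uses direct H\"older/Strichartz bounds and \eqref{tri3}, where you invoke \eqref{tri2} via the embedding $W^{3\al-3-\eps,\infty}\subset W^{\al-\frac32-\eps,\infty}$; both close the estimates for $s=\al-\frac12$ since $1<\al\le\frac32$.
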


Note that Proposition~\ref{THM:1}, combined with the regularities of the stochastic objects in Lemma~\ref{LEM:sto_reg}, implies Proposition~\ref{PROP:LWP}.

\begin{remark} \rm
In the statement of Proposition \ref{THM:1}, $\Xi_3$ corresponds to the cubic stochastic object $\<30>$. We choose to put $\Xi_3$ in the larger space $C_T W_x^{3\al - 3 - \eps, \infty} + X_T^{\al - \frac 12, \frac 12 + \dl}$ instead of the simpler $C_T W_x^{3\al - 3 - \eps, \infty}$-space, because the use of this larger space is a crucial component in Proposition~\ref{PROP:exptail2} below, where the stochastic objects are constructed from initial data distributed by the Gibbs measure. 
Another difference from the local well-posedness argument in \cite{OWZ} is that, instead of considering the stochastic object $\<30> \<1>$ directly, we choose to consider the operator $\If_{31} = \If^{\<31>}$, which is also important for our globalization argument in the next section.
\end{remark}

\begin{proof}
Given the enhanced data set \eqref{data1}, we set
\begin{align*}
\pmb{\Xi} = \big(
\Xi_1, \,  \Xi_3,  \, \Xi_5, \, \Xi_7, \, \If_{31}, \, \If_2\big).
\end{align*}
For any $\al > 1$ and $\eps, \dl > 0$, we define the norm
\begin{align}
\begin{split}
\| &\pmb{\Xi} \|_{\mathcal{X}_{T}^{\al, \eps, \dl}} = \| \Xi_1 \|_{C_{T} W_x^{\al - \frac 32 - \eps, \infty}} + \| \Xi_3 \|_{C_{T} W_x^{3\al - 3 - \eps, \infty} + X_T^{\al - \frac 12, \frac 12 + \dl}} + \| \Xi_5 \|_{X_{T}^{\al - \frac 12, \frac 12 + \dl}} \\
&\quad + \| \Xi_7 \|_{X_{T}^{\al - \frac 12, \frac 12 + \dl}} + \| \If_{31} \|_{\L_{T}^{\al - \frac 12, \frac 12 + \dl, \al - \frac 12, \frac 12 + \dl}} + \| \If_2 \|_{\L^{\al - \frac 12, \frac 12 + \dl, \al - \frac 12,  \frac 12 + \dl}_{T}},
\end{split}
\label{Xae1}
\end{align}

\noi
where the $\L$-norms are as defined in \eqref{Op2}. Note that we can write $\Xi_3 = \Xi_{3}^{(1)} + \Xi_3^{(2)}$ with $\Xi_3^{(1)} \in C_T W_x^{3\al - 3 - \eps, \infty}$ and $\Xi_3^{(2)} \in X_T^{\al - \frac 12, \frac 12 + \dl}$ satisfying
\begin{align*}
\big\| \Xi_3^{(1)} \big\|_{C_T W_x^{3\al - 3 - \eps, \infty}} + \big\| \Xi_3^{(2)} \big\|_{X_T^{\al - \frac 12, \frac 12 + \dl}} \leq \| \Xi_3 \|_{C_{T} W_x^{\al - 1 - \eps, \infty} + X_T^{\al - \frac 12, \frac 12 + \dl}} + 1.
\end{align*}

\noi
Due to the assumptions of the proposition, we can assume that
\begin{align}
\| (v_0, v_1) \|_{\H^s} + \| \pmb{\Xi} \|_{\mathcal{X}_{T}^{\al, \eps, \dl}} + \big\| \Xi_3^{(1)} \big\|_{C_T W_x^{3\al - 3 - \eps, \infty}} + \big\| \Xi_3^{(2)} \big\|_{X_T^{\al - \frac 12, \frac 12 + \dl}} \leq K
\label{data_norm}
\end{align}

\noi
for some $K \geq 1$.

We first show the following four general estimates which are able to cover most of the required estimates below. Let $u_1$, $u_2$, $u_3$ be any functions and $0 < T_0 \leq T$. By the inhomogeneous linear estimate in Lemma~\ref{LEM:nhomo}, the time localization estimate \eqref{time2} in Lemma~\ref{LEM:time}, and Lemma~\ref{LEM:str_var} with $v \equiv 1$, we obtain
\begin{align}
\begin{split}
\| \I (u_1 u_2 u_3) \|_{X_{T_0}^{\al - \frac 12, \frac 12 + \dl}} &\les T_0^{3 \dl} \| u_1 u_2 u_3 \|_{X_{T_0}^{- \frac 12, - \frac 12 + 4 \dl}} \\
&\les T_0^{3 \dl} \| u_1 \|_{X_{T_0}^{\al - \frac 12, \frac 12 + \dl}} \| u_2 \|_{X_{T_0}^{\al - \frac 12, \frac 12 + \dl}} \| u_3 \|_{X_{T_0}^{\al - \frac 12, \frac 12 + \dl}}.
\end{split}
\label{lwp_est1}
\end{align}

\noi
By Lemma~\ref{LEM:nhomo}, \eqref{time2} in Lemma~\ref{LEM:time}, H\"older's inequality, and the Strichartz estimate \eqref{Lp_str} with $p = 4$, we obtain
\begin{align}
\begin{split}
\| \I (u_1 u_2 u_3) \|_{X_{T_0}^{\al - \frac 12, \frac 12 + \dl}} 
&\les T_0^{\ta} \| u_1 u_2 u_3 \|_{L_{T_0}^2 L_x^2} \\
&\les T_0^{\ta} \| u_1 \|_{L_{T_0}^\infty L_x^\infty} \| u_2 \|_{L_{T_0}^4 L_x^4} \| u_3 \|_{L_{T_0}^4 L_x^4} \\
&\les T_0^{\ta} \| u_1 \|_{L_{T_0}^\infty W_x^{3\al - 3 - \eps, \infty}} \| u_2 \|_{X_{T_0}^{\al - \frac 12, \frac 12 + \dl}} \| u_3 \|_{X_{T_0}^{\al - \frac 12, \frac 12 + \dl}}
\end{split}
\label{lwp_est2}
\end{align}

\noi
for some $\ta > 0$, where we used $3\al - 3 - \eps > 0$ for $\eps > 0$ sufficiently small. By Lemma~\ref{LEM:nhomo}, \eqref{time2} in Lemma~\ref{LEM:time}, and H\"older's inequality, we obtain
\begin{align}
\begin{split}
\| \I (u_1 u_2 u_3) \|_{X_{T_0}^{\al - \frac 12, \frac 12 + \dl}} 
&\les T_0^{\ta} \| u_1 u_2 u_3 \|_{L_{T_0}^2 L_x^2} \\
&\les T_0^{\ta} \| u_1 \|_{L_{T_0}^\infty L_x^\infty} \| u_2 \|_{L_{T_0}^\infty L_x^\infty} \| u_3 \|_{L_{T_0}^2 L_x^2} \\
&\les T_0^{\ta} \| u_1 \|_{L_{T_0}^\infty W_x^{3\al - 3 - \eps, \infty}} \| u_2 \|_{L_{T_0}^\infty W_x^{3\al - 3 - \eps, \infty}} \| u_3 \|_{X_{T_0}^{\al - \frac 12, \frac 12 + \dl}}
\end{split}
\label{lwp_est3}
\end{align}

\noi
for some $\ta > 0$. Lastly, by Lemma~\ref{LEM:nhomo}, \eqref{time2} in Lemma~\ref{LEM:time}, and H\"older's inequality, we obtain
\begin{align}
\begin{split}
\| \I (u_1 u_2 u_3) \|_{X_{T_0}^{\al - \frac 12, \frac 12 + \dl}} 
&\les T_0^{\ta} \| u_1 u_2 u_3 \|_{L_{T_0}^2 L_x^2} \\
&\les T_0^{\ta} \| u_1 \|_{L_{T_0}^\infty L_x^\infty} \| u_2 \|_{L_{T_0}^\infty L_x^\infty} \| u_3 \|_{L_{T_0}^\infty L_x^\infty} \\
&\les T_0^{\ta} \| u_1 \|_{L_{T_0}^\infty W_x^{3\al - 3 - \eps, \infty}} \| u_2 \|_{L_{T_0}^\infty W_x^{3\al - 3 - \eps, \infty}} \| u_3 \|_{L_{T_0}^\infty W_x^{3\al - 3 - \eps, \infty}}
\end{split}
\label{lwp_est4}
\end{align}

\noi
for some $\ta > 0$.

We now define the map $\G_{\pmb{\Xi}}$ by
\begin{align*}
\G_{\pmb{\Xi}} (v) &= S(t) (v_0, v_1) + \I \big( - v^3 + 3 (\Xi_3 - \Xi_1) v^2 - 3 \Xi_3^2 v \big) + 6 \If_{31} (v) \\
&\quad - 3 \If_2 (v) + \I \big( \Xi_3^3 \big) - 3 \Xi_7 + 3 \Xi_5,
\end{align*}

\noi
where the operators $S(t)$ and $\I$ are as defined in \eqref{St0} and \eqref{lin1}, respectively. We let $0 < T_0 \leq T$. By \eqref{lwp_est1}, \eqref{lwp_est2}, \eqref{lwp_est3}, \eqref{lwp_est4}, and \eqref{data_norm}, we obtain
\begin{align}
\begin{split}
\| \I (v^3) &\|_{X_{T_0}^{\al - \frac 12, \frac 12 + \dl}} + \| \I (\Xi_3 v^2) \|_{X_{T_0}^{\al - \frac 12, \frac 12 + \dl}} + \| \I (\Xi_3^2 v) \|_{X_{T_0}^{\al - \frac 12, \frac 12 + \dl}} + \| \I (\Xi_3^3) \|_{X_{T_0}^{\al - \frac 12, \frac 12 + \dl}} \\
&\les T_0^\ta \Big( \| v \|_{X_{T_0}^{\al - \frac 12, \frac 12 + \dl}}^3 + K^3 \Big)
\end{split}
\label{lwp5-0}
\end{align}

\noi
for some $\ta > 0$. By \eqref{tri3} in Lemma~\ref{LEM:str4}, the time localization estimate \eqref{time2} in Lemma~\ref{LEM:time},  and \eqref{data_norm}, we have
\begin{align}
\begin{split}
\| \I ( \Xi_1 v^2 ) \|_{X_{T_0}^{\al - \frac 12, \frac 12 + \dl}} &\les T_0^{3 \dl} \| \Xi_1 v^2 \|_{X_{T_0}^{- \frac 12, - \frac 12 + 4 \dl}} \\
&\les T_0^{3 \dl} \| \Xi_1 \|_{L_{T_0}^{\al - \frac 32 - \eps, \infty}} \| v \|_{X_{T_0}^{\al - \frac 12, \frac 12 + \dl}}^2 \\
&\leq T_0^{3 \dl} K \| v \|_{X_{T_0}^{\al - \frac 12, \frac 12 + \dl}}^2.
\end{split}
\label{lwp5-1}
\end{align}

\noi
By \eqref{data_norm}, and \eqref{Op2}, we have
\begin{align}
\begin{split}
\| \If_{31} (v) \|_{X_{T_0}^{\al - \frac 12, \frac 12 + \dl}} 
&\les T_0^{\ta} \| \If_{31} \|_{\L_{T_0}^{\al - \frac 12, \frac 12 + \dl, \al - \frac 12, \frac 12 + \dl}} \| v \|_{X_{T_0}^{\al - \frac 12, \frac 12 + \dl}} \\
&\leq T_0^{\ta} K \| v \|_{X_{T_0}^{\al - \frac 12, \frac 12 + \dl}}
\end{split}
\label{lwp5-2}
\end{align}

\noi
for some $\ta > 0$. Similarly,
\begin{align}
\| \If_2 (v) \|_{X_{T_0}^{\al - \frac 12, \frac 12 + \dl}} \les T_0^{\ta} K \| v \|_{X_{T_0}^{\al - \frac 12, \frac 12 + \dl}}.
\label{lwp5-3}
\end{align}

Thus, by the homogeneous linear estimate Lemma~\ref{LEM:homo} along with $s \geq \al - \frac 12$, \eqref{lwp5-0}, \eqref{lwp5-1}, \eqref{lwp5-2}, \eqref{lwp5-3}, and \eqref{data_norm}, we obtain
\begin{align*}
\| \G_{\pmb{\Xi}} (v) \|_{X_{T_0}^{\al - \frac 12, \frac 12 + \dl}} &\leq C \| (v_0, v_1) \|_{\H^s} + C T_0^\ta \Big( \| v \|_{X_{T_0}^{\al - \frac 12, \frac 12 + \dl}}^3 + K^3 \Big) + C K \\
&\leq C T_0^\ta \Big( \| v \|_{X_{T_0}^{\al - \frac 12, \frac 12 + \dl}}^3 + K^3 \Big) + C K
\end{align*}

\noi
for some constant $C \geq 1$. Using similar estimates, we can also obtain the following difference estimate:
\begin{align*}
\| \G_{\pmb{\Xi}} (v_1) - \G_{\pmb{\Xi}} (v_2) \|_{X_{T_0}^{\al - \frac 12, \frac 12 + \dl}} \leq C T_0^\ta \Big( \| v \|_{X_{T_0}^{\al - \frac 12, \frac 12 + \dl}}^2 + K^2 \Big) \| v_1 - v_2 \|_{X_{T_0}^{\al - \frac 12, \frac 12 + \dl}}.
\end{align*}
Therefore, by choosing $T_0 > 0$ small enough such that
\begin{align*}
(8 C^3 + 1) K^3 C T_0^\ta \leq \frac 12 \quad \Longleftrightarrow \quad T_0^\ta \leq \frac{1}{2 C (8 C^3 + 1) K^3},
\end{align*}

\noi
we obtain that $\G_{\pmb{\Xi}}$ is a contraction on a ball in the space $X_{T_0}^{\al - \frac 12, \frac 12 + \dl}$ of radius $2CK$. This implies the desired local well-posedness result.
\end{proof}

\begin{remark} \rm
\label{RMK:LWP}
Let us assume the same conditions as in Proposition~\ref{THM:1} but with zero initial data. Then, from the proof of Proposition~\ref{THM:1}, we see that if we have the bound for the enhanced data set
\begin{align*}
\| \pmb{\Xi} \|_{\mathcal{X}_{T}^{\al, \eps, \dl}} \leq K,
\end{align*}

\noi
then by choosing $T_0 > 0$ satisfying
\begin{align*}
T_0^\ta \leq \frac{1}{2 C (8 C^3 + 1) K^3}
\end{align*}

\noi
for some absolute constant $C \geq 1$, the local solution $v$ on $[0, T_0]$ constructed in the proof satisfies
\begin{align*}
\| v \|_{X_{T_0}^{\al - \frac 12, \frac 12 + \dl}} \leq 2 CK.
\end{align*}

\noi
This a-priori bound for $v$ will be used later in the globalization argument (in Proposition~\ref{PROP:ns}).
\end{remark}

\section{Global well-posedness and invariance of Gibbs measure}
\label{SEC:GWP}

In this section, we present the proof of Theorem \ref{THM:GWP}, almost sure global well-posedness of the fractional hyperbolic $\Phi_3^4$-model and the invariance of the fractional $\Phi^4_3$-measure $\rho$.

Before proceeding, let us introduce some notations. Given $N \in \N$, we let $\vec u_0 = (u_0, u_1)$ be the initial data independent with the space-time white noise $\xi$ such that $\Law (\vec u_0) = \vec \rho_N = \rho_N \otimes \mu_0$, where $\rho_N$ is the truncated fractional $\Phi^4_3$-measure defined in \eqref{GibbsN2} and $\mu_0$ is the white noise measure defined in \eqref{gauss2}. Let us then write the probability space $\O$ as $\O = \O_1 \times \O_2$
and the probability measure $\PP$ as $\PP = \PP_1 \otimes \PP_2$,
so that $\vec u_0$ depends only on $\o_1 \in \O_1$, $\xi$ depends only on $\o_2 \in \O_2$, and $\PP_j$ is the marginal probability measure on $\O_j$ for $j = 1, 2$.

Similar to Section~\ref{SEC:LWP}, let us now define the stochastic convolution
\begin{align}
\<1> (t; \vec u_0, \o_2) = S(t) \vec u_0 + \sqrt 2 \int_0^t \D(t-t') dW(t', \o_2),
\label{enh0a}
\end{align}

\noi
where $S (t)$ is defined in \eqref{St0}, $\D (t)$ is defined in \eqref{W3}, and $W$ is a cylindrical Wiener process given by \eqref{defW}. Recall that $\I$ is the Duhamel operator defined in \eqref{lin1}. Given $N \in \N$, we set 
\begin{align}
\begin{split}
\<1>_N (\vec u_0, \o_2) &\deff \pi_N \<1> (\vec u_0, \o_2), \\
\<2>_N (\vec u_0, \o_2) &\deff \<1>_N^2 (\vec u_0, \o_2) - \s_N, \\
\<3>_N (\vec u_0, \o_2)
  &\deff \<1>_N^3 (\vec u_0, \o_2) - 3\s_N\<1>_N (\vec u_0, \o_2), \\
\<30>_N (\vec u_0, \o_2) &\deff \pi_N \I (\<3>_N (\vec u_0,\o_2)), \\
\<320>_N (\vec u_0, \o_2) &\deff \pi_N \I(\<30>_N (\vec u_0, \o_2) \<2>_N (\vec u_0, \o_2)),  \\
\<70>_N (\vec u_0, \o_2) &\deff \pi_N \I(\<30>_N^2 (\vec u_0, \o_2) \<1>_N (\vec u_0, \o_2)), \\
 \If^{\<31>_N} (\vec u_0, \o_2) (v) &\deff \pi_N \I \big( \<30>_N (\vec u_0, \o_2) \<1>_N (\vec u_0, \o_2) \cdot v \big), \\
 \If^{\<2>_N} (\vec u_0, \o_2) (v) &\deff \pi_N \I (\<2>_N (\vec u_0, \o_2) \cdot v).
\end{split}
\label{enh0}
\end{align}

\noi
where $\sigma_N$ is as defined in \eqref{sigmaN}. We then gather all stochastic terms and define the truncated enhanced data set $\pmb{\Xi}_N (\vec u_0, \o_2)$ as
\begin{align}
\pmb{\Xi}_N (\vec u_0, \o_2)
  = \big(   \<1>_N , \,  \<30>_N,  \, \<320>_N, \, \Sep, \, \If^{\<31>_N}, \, \If^{\<2>_N}\big).
\label{data3x}
\end{align}

\subsection{On the truncated dynamics}
\label{SUBSEC:GWP1}

In this subsection, we go over global well-posedness of the frequency truncated fractional $\Phi^4_3$-model:
\begin{align}
\begin{split}
\dt^2 & u_N + \dt u_N  + (1 -  \Dl)^\al  u_N
+  \pi_N \big(   :\! (\pi_N u_N)^3 \!: \big)
= \sqrt{2} \xi
\end{split}
\label{SNLW3a}
\end{align}

\noi
and its invariance with respect to the truncated fractional $\Phi^4_3$-measure $\vec \rho_N = \rho_N \otimes \mu_0$. We will be brief since the procedure has already been performed several times in other literature.

We first show that the equation \eqref{SNLW3a} is globally well-posed by using a classical Gronwall's argument as in \cite[Proposition~5.1]{ORTz}. As for the invariance of $\vec \rho_N$ under the flow of \eqref{SNLW3a}, we write the truncated dynamics \eqref{SNLW3a} as a superposition of the deterministic fractional NLW:
\begin{align}
\dt^2 u_N + (1 - \Dl)^\al u_N + \pi_N \big( :\! (\pi_N u_N)^3 \!: \big) = 0
\label{fNLW}
\end{align}

\noi
and the Ornstein-Uhlenbeck process for $\dt u_N$:
\begin{align}
\dt (\dt u_N) = - \dt u_N + \sqrt{2} \xi.
\label{OUd}
\end{align}

\noi
Since $\vec \rho_N$ is invariant under the fractional NLW dynamics \eqref{fNLW} and the white noise measure $\mu_0$ on $\dt u_N$ is invariant under the Ornstein-Uhlenbeck dynamics \eqref{OUd}, we deduce the invariance of $\vec \rho_N$ under the dynamics of \eqref{SNLW3a}. To summarize, we have the following result.

\begin{lemma}
\label{LEM:GWP4}
Let $\al > 1$, $\eps > 0$, and $N \in \N$. 
Then, the frequency truncated fractional hyperbolic $\Phi^4_3$-model \eqref{SNLW3a} is almost surely globally well-posed with random initial data distributed by the truncated fractional $\Phi^4_3$-measure $\vec \rho_N = \rho_N \otimes \mu_0$, and $\vec \rho_N$ is invariant under the resulting dynamics.
More precisely, there exists a $\vec \rho_N \otimes \PP_2$-full measure set $\Si_N \subset \H^{\al - \frac 32 - \eps} \times \O_2$ such that for any $(\vec u_0, \o_2) \in \Si_N$, the solution $u_N = u_N (\vec u_0, \o_2) $ to \eqref{SNLW3a} exists globally in time and $\Law (u_N(t), \dt u_N(t)) = \rhoo_N$ for any $t \in \R_+$.
\end{lemma}

We refer the readers to \cite[Proposition~5.1]{ORTz} and  \cite[Lemma~9.3]{OOTol1} for a more detailed proof of Lemma~\ref{LEM:GWP4}.

\subsection{Uniform exponential integrability of the truncated enhanced data set}
\label{SUBSEC:exp}

In this subsection, we show uniform exponential integrability of the truncated enhanced data set $\pmb{\Xi}_N (\vec u_0, \o_2)$ as defined in \eqref{data3x} with respect to the truncated fractional $\Phi^4_3$-measure $\vec{\rho}_N \otimes \PP_2$. Also, we establish uniform integrability for the difference of the truncated enhanced data sets. For this purpose, for any $T > 0$, $1 < \al \leq \frac 32$, and $\eps, \dl > 0$ sufficiently small, we recall the following norm from \eqref{Xae1}:
\begin{align}
\begin{split}
\| \pmb{\Xi}_N (\vec u_0, \o_2) \|_{\mathcal{X}_{T}^{\al, \eps, \dl}} &= \| \<1>_N \|_{C_{T} W_x^{\al - \frac 32 - \eps, \infty}} + \| \<30>_N \|_{C_{T} W_x^{3\al - 3 - \eps, \infty} + X_T^{\al - \frac 12, \frac 12 + \dl}} \\ 
&\quad + \big\| \<320>_N \big\|_{X_{T}^{\al - \frac 12, \frac 12 + \dl}}   + \big\| \<70>_N \big\|_{X_{T}^{\al - \frac 12, \frac 12 + \dl}} \\
&\quad + \big\| \If^{\<31>_N} \big\|_{\L^{\al - \frac 12 - \eps, \frac 12 + \dl, \al - \frac 12 + \eps,  \frac 12 + \dl}_{T}}  + \big\| \If^{\<2>_N} \big\|_{\L^{\al - \frac 12 - \eps, \frac 12 + \dl, \al - \frac 12 + \eps,  \frac 12 + \dl}_{T}}.
\end{split}
\label{Xae}
\end{align}

\noi
As in \cite[Proposition~6.5]{OOT2}, our proof of the regularity estimates for the enhanced data set $\pmb{\Xi}$ is based on the Bou\'e-Dupuis variational formula. However, as mentioned in Subsection~\ref{SUBSEC:1.3}, we have more involved stochastic objects compared to those in \cite{OOT2}. 

Our main novelty here is the use of the sum space $C_{T} W_x^{3\al - 3 - \eps, \infty} + X_T^{\al - \frac 12, \frac 12 + \dl}$ for the cubic stochastic object $\<30>_N$. On the one hand, the drift terms in $\<30>_N$ contain the Duhamel operator $\I$ in \eqref{lin1} which does not have good boundedness properties in $L^p$ for $2 < p \leq \infty$, so that the Sobolev space $W^{3\al - 3 - \eps, \infty}$ alone is not suitable for $\<30>_N$. On the other hand, the smoothing on time regularity for $\I$ as in Lemma~\ref{LEM:nhomo} inspires us to use the $X_T^{\al - \frac 12, \frac 12 + \dl}$ to bound the drift terms.

Let us now state and prove the following proposition.

\begin{proposition}[Uniform exponential integrability for the enhanced data set] \label{PROP:exptail2}
Let $T>0$ and $1 < \al \leq \frac 32$. Let $\eps, \dl > 0$ be sufficiently small.
Then, for $0 < \be < \frac{2}{21}$,
we have
\begin{align}
\int \E_{\PP_2} \Big[ \exp \Big( \| \pmb{\Xi}_N (\vec u_0, \o_2) \|_{\mathcal{X}^{\al, \eps, \dl}_T}^\be \Big)
\Big]d \rhoo_N (\vec u_0)
\le C(T,  \be) < \infty
\label{exp01}
\end{align}
uniformly in $N \in \N$,
where the $\mathcal{X}^{\al, \eps, \dl}_T$-norm and
the truncated enhanced data set $\pmb{\Xi}_N (\vec u_0, \o_2)$ are as in \eqref{Xae} and \eqref{data3x}, respectively.
Here,
$\E_{\PP_2}$ denotes an expectation
with respect to
the  probability measure $\PP_2$ on $\o_2 \in \O_2$.

Moreover, for $0< \be < \frac{2}{21}$, there exists small $\gamma > 0$ such that
\begin{align}
\int \E_{\PP_2} \Big[ \exp \Big( N_2^\gamma \| \pmb{\Xi}_{N_1} (\vec u_0, \o_2)
- \pmb{\Xi}_{N_2} (\vec u_0, \o_2)
 \|_{\mathcal{X}^{\al, \eps, \dl}_T}^\be \Big) \Big]d \rhoo_N (\vec u_0) \le C(T, \be) < \infty
\label{exp02}
\end{align}
uniformly in $N, N_1, N_2 \in \N$ with $N \ge N_1 \ge N_2$.

\end{proposition}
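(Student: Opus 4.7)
The plan is to pass from the Gibbs measure $\rhoo_N = Z_N^{-1} e^{-R_N^\dia(u_0)} d\muu(\vec u_0)$ to the Gaussian reference $\muu$ and then apply the Bou\'e--Dupuis variational formula, as in the proof of Theorem \ref{THM:Gibbs}\,(ii) in Subsection \ref{SUBSEC:Gibbs2}. Since the uniform lower bound $Z_N \geq Z_\ast > 0$ already follows from the partition-function estimate given by \eqref{logZN2}, proving \eqref{exp01} reduces to bounding
\[
I_N \overset{\textup{def}}{=} \E_{\muu \otimes \PP_2}\Big[ \exp\big( \|\pmb{\Xi}_N (\vec u_0, \o_2)\|^\beta_{\mathcal{X}_T^{\al,\eps,\dl}} - R_N^\dia(u_0) \big) \Big]
\]
uniformly in $N$. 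Applying Bou\'e--Dupuis jointly to the spatial Gaussian $u_0$ and to the cylindrical Wiener process $W$ appearing in \eqref{enh0a}, with a drift $\theta \in \Ha$, expresses $\log I_N$ as the supremum over $\theta$ of
\[
\E\bigg[ \|\pmb{\Xi}_N^{\theta}\|^\beta_{\mathcal{X}_T^{\al,\eps,\dl}} - R_N^\dia(u_0 + \pi_N I(\theta)(1)) - \tfrac12 \int_0^1 \|\theta(t)\|_{L^2_x}^2 dt \bigg],
\]
where $\pmb{\Xi}_N^\theta$ is $\pmb{\Xi}_N$ evaluated at the drifted stochastic convolution; it suffices to show this supremum is bounded by a constant depending only on $T$ and $\beta$.

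The technical heart of the proof is the drift analysis. Writing $\Theta_N = \pi_N I(\theta)(1)$, the shifted convolution $\<1>_N + \Theta_N$ replaces $\<1>_N$ throughout $\pmb{\Xi}_N$. Using the Hermite-product identities \eqref{herm_decomp1}--\eqref{herm_decomp}, each of $\<2>_N, \<3>_N, \<30>_N, \<320>_N, \<70>_N$ and each of the random operators $\If^{\<31>_N}, \If^{\<2>_N}$ splits into a purely Gaussian piece plus finitely many cross terms carrying between one and seven factors of $\Theta_N$. I will distribute the exponent $\beta < 1$ via $(a+b)^\beta \le a^\beta + b^\beta$ and estimate each cross term in the appropriate component of $\mathcal{X}_T^{\al,\eps,\dl}$: the $C_T W_x^{\cdot,\infty}$-pieces through the paraproduct and Strichartz framework of Subsections \ref{SUBSEC:21} and \ref{SUBSEC:str}, and the $X^{s,b}$-pieces through the multilinear estimates of Subsection \ref{SUBSEC:str} together with the random-tensor estimate (Lemma \ref{LEM:rand_ten}). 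As emphasized in Remark \ref{RMK:BDNY}, working with the $X^{s,b}$-spaces (rather than the pure Sobolev spaces of Section \ref{SUBSEC:Gibbs2}) is indispensable here, since the drifted cubic object $\<30>_N^\theta$ is not in general in any $H^s$-space at the threshold regularity.

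Each cross-term estimate will be reduced, by Young's inequality, to a bound of the form $C(\dl)(1 + \mathcal{G}_N(\o_2)) + \dl\,\big( \|\Theta_N\|_{L^4}^4 + \|\Theta_N\|_{H^\al}^2 \big)$, where $\mathcal{G}_N$ is a polynomial in the norms of the Gaussian stochastic objects of Lemma \ref{LEM:sto_reg}, whose expectations are finite uniformly in $N$. These two $\dl$-weighted drift quantities are precisely the ones dominated, after the change of variables \eqref{YZ13}, by the lower bound for $-R_N^\dia(u_0+\Theta_N) + \tfrac12 \int_0^1 \|\theta\|_{L^2}^2 dt$ derived in Lemmas \ref{LEM:Dr7}--\ref{LEM:Dr8}. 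Grouping Gaussian drift contributions of comparable regularity, per the strategy flagged in Remark \ref{RMK:BDNY}, drastically cuts the number of distinct estimates required. Summing yields a uniform upper bound on $\log I_N$ and hence \eqref{exp01}. The difference statement \eqref{exp02} is obtained by repeating the entire argument with $\pmb{\Xi}_N$ replaced by $\pmb{\Xi}_{N_1} - \pmb{\Xi}_{N_2}$; the required gain $N_2^\g$ is supplied by the $N_2^{-\g}$ factors in the difference tail bounds \eqref{tail_diff} and propagates unchanged through the drift expansion.

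The exponent $\beta < \frac{2}{21} = \frac{2}{3\cdot 7}$ has a transparent interpretation: the septic Wiener-chaos order of $\<70>_N$ dictates a tail of order $\exp(-c\lambda^{2/7})$, while the cubic structure of $R_N^\dia$ forces an additional factor of $3$ to be forfeited when one disentangles $\|\pmb{\Xi}_N\|^\beta$ from the $R_N^\dia$ contribution during the variational analysis. I expect the main technical obstacle to be the control of the drifted septic object $\<70>_N^\theta$: its cross terms may contain up to five factors of $\Theta_N$, and bounding these in $X_T^{\al-\frac12,\,\frac12+\dl}$ requires both the septic counting estimate (Lemma \ref{LEM:SepC}) and the deterministic tensor estimate (Lemma \ref{LEM:ten1}) to extract enough regularity to absorb the drift into $\|\Theta_N\|_{L^4}^4 + \|\Theta_N\|_{H^\al}^2$ without any loss of derivatives; the drifted operator $\If^{\<31>_N,\theta}$ will be handled analogously, exploiting the $\eps$-smoothing gain already built into its codomain in $\mathcal{X}_T^{\al,\eps,\dl}$.
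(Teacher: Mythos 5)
Your overall scheme matches the paper's proof in its essentials: reduce to the Gaussian reference measure via the uniform positivity of $Z_N$, apply the Bou\'e--Dupuis formula together with the change of variables \eqref{YZ13}, expand each shifted object in powers of the drift, absorb the drift contributions by Young's inequality into the coercive quantities $\|\Ups_N\|_{H^\al}^2+\|\Ups_N\|_{L^4}^4$ furnished by \eqref{logZN} (i.e.\ by Lemmas \ref{LEM:Dr7}--\ref{LEM:Dr8}), and obtain \eqref{exp02} from the difference tail bounds \eqref{tail_diff}. Where you genuinely deviate is the variational step itself: you propose to apply Bou\'e--Dupuis \emph{jointly} in $u_0$ and in the cylindrical Wiener process $W$ of \eqref{enh0a}, whereas the paper freezes $(u_1,\o_2)$, applies Lemma \ref{LEM:var3} in the $u_0$-variable only, and integrates the resulting conditional bound at the very end using the exponential moment \eqref{expK} of the Gaussian constant $K(Y,u_1,\o_2)$. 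Your joint formulation is unnecessary (the weight $e^{-R_N^\dia}$ does not involve $\o_2$) and, as written, inconsistent: your cost term is only $\frac12\int_0^1\|\theta(t)\|_{L^2_x}^2\,dt$, so no price is paid for drifting $W$; making the joint route rigorous would require a variational formula on the enlarged Wiener space with adapted space-time drifts, plus an argument absorbing the Duhamel contributions of the noise drift by the quadratic cost alone, none of which you supply. The conditional route used in the paper removes all of these complications at no cost.

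A second point you gloss over is where the threshold $\be<\frac{2}{21}$ actually comes from. The shift does not enter the enhanced data as a static $\Theta_N$ but through its propagated version $\pi_N S(t)\Dr$ with $\Dr=\Ups^N-\ZZ_N$, and the quantitative engine of the proof is the bound \eqref{helper}, namely $\|\pi_N S(t)\Dr\|_{X_T^{\al-\frac12,\frac12+\dl}}\les\|\Ups_N\|_{H^\al}+K(Y)^{3/2}$, where the exponent $3/2$ reflects that $\ZZ_N$ lies in the third Wiener chaos. It is this $K^{3/2}$ per factor that produces the worst term $K(Y)^{21/2}$ in the expansion of the septic object, and the restriction on $\be$ is precisely the condition $\frac{21\be}{2}\le 1$ needed so that $\exp\big(K^{21\be/2}\big)$ is integrable via \eqref{expK}; the drift powers themselves (up to $\|\Ups_N\|_{H^\al}^{7}$) are absorbed since $7\be<2$. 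So your heuristic ``chaos order $7$ times a factor $3$'' lands on the right number, but not for the stated reason (it is not a loss from disentangling the norm from $R_N^\dia$); in a complete write-up you would need \eqref{helper}, or an analogue for your drifted convolution, stated and proved explicitly, since without it neither the absorption step nor the exponent $\frac{2}{21}$ can be justified.
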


\begin{proof}
We consider the six stochastic objects separately.

\smallskip \noi
\textbf{Case 1:} The stochastic convolution $\<1>$.

We first consider \eqref{exp01}.
For fixed $u_1$ and $\o_2$, by the Bou\'e-Dupuis variational formula (Lemma~\ref{LEM:BD}), the change of variables \eqref{defUps}, and \eqref{YZ2}, we have
\begin{align*}
\log &\int \exp \Big( \big\| \<1>_N (u_0, u_1, \o_2) \big \|_{ C_T W_x^{\al - \frac 32 - \eps, \infty} }^\be \Big) d \rho_N(u_0) \\
&\leq 
\E \bigg[ \sup_{\Ups^N \in H^\al}  \Big\{ \big\| \<1>_N (Y + \Dr, u_1, \o_2) \big \|_{ C_T W_x^{\al - \frac 32 - \eps, \infty} }^\be
- \wt R_N^\dia (Y+\Ups^N - \ZZ_N) \\
&\quad - \frac 12 \| \Ups^N \|_{H^\al}^2 \Big\} \bigg]
+ \log Z_N,
\end{align*}

\noi
where $Y$ is defined in \eqref{defY}, $\ZZ_N = \pi_N \ZZ^N$ is defined in \eqref{defZZ}, $\Dr = \Ups^N - \ZZ_N$, and $\wt R_N^\dia$ is defined as in \eqref{RNt}.
By \eqref{logZN} and \eqref{uni_bdd}, there exists $\eps_0, C_0 > 0$ such that
\begin{align}
\begin{split}
\log &\int \exp \Big( \big\| \<1>_N (u_0, u_1, \o_2) \big \|_{ C_T W_x^{\al - \frac 32 - \eps, \infty} }^\be \Big) d \rho_N(u_0) \\
& \leq 
\E \bigg[ \sup_{\Ups^N \in H^\al} \Big\{ \big\| \<1>_N (Y + \Dr, u_1, \o_2) \big \|_{ C_T W_x^{\al - \frac 32 - \eps, \infty} }^\be
 - \eps_0 \| \Ups_N \|_{H^\al}^2  \Big\} \bigg] + C_0,
\end{split}
\label{et2a}
\end{align}

\noi
uniformly in $u_1$ and $\o_2$.

In view of \eqref{enh0a}, we can write
\begin{align}
\<1>_N (Y + \Dr, u_1, \o_2) = \<1>_N (Y, u_1, \o_2) + \pi_N S(t) (\Dr, 0).
\label{elin1}
\end{align}
We recall from \eqref{defY} that $\Law (Y) = \mu_\al$ with $\mu_\al$ being defined in \eqref{gauss2} and so $\Law (Y, u_1) = \vec \mu_\al = \mu_\al \otimes \mu_0$. Thus, from the tail estimate in Lemma~\ref{LEM:sto_reg} for $\<1>_N$ and Lemma~\ref{LEM:YNk} along with \eqref{St0}, we have
\begin{align}
\big\| \<1>_N &(Y, u_1, \o_2) \big\|_{C_T W_x^{\al - \frac 32 - \eps, \infty}}^2 + \big\| \pi_N S(t) (\ZZ_N, 0) \big\|_{C_T W_x^{5 \al - \frac 92 - \eps, \infty}}^{\frac 23} \leq K(Y, u_1, \o_2),
\label{elin2}
\end{align}
where $K(Y, u_1, \o_2) \geq 1$ satisfies
\begin{align}
\E_{\vec \mu_\al \otimes \PP_2} \big[ \exp (\dl K (Y, u_1, \o_2)) \big] < \infty
\label{expK}
\end{align}
for $\dl > 0$ sufficiently small. Also, by Sobolev's embedding and \eqref{St0}, we have
\begin{align}
\big\| \pi_N S(t) (\Ups^N, 0) \big\|_{C_T W_x^{\al - \frac 32 - \eps, \infty}} \les \big\|  S(t) (\Ups_N, 0) \big\|_{C_T H_x^{\al}} \les \| \Ups_N \|_{H^{\al}}
\label{elin3}
\end{align}

\noi
Thus, by \eqref{elin1}, the fact that $\Dr = \Ups^N - \ZZ_N$, \eqref{elin2}, and \eqref{elin3}, we obtain
\begin{align}
\begin{split}
\big\| &\<1>_N (Y + \Dr, u_1, \o_2) \big\|_{ C_T W_x^{\al - \frac 32 - \eps, \infty} } \\ 
&\les \big\| \<1>_N (Y, u_1, \o_2) \big\|_{ C_T W_x^{\al - \frac 32 - \eps, \infty} } +  \big\| \pi_N S(t) (\Dr, 0) \big\|_{C_T W_x^{\al - \frac 32 - \eps, \infty}} \\
&\les K(Y, u_1, \o_2)^{\frac 32} + \| \Ups_N \|_{H^\al}.
\end{split}
\label{elin4}
\end{align}

By \eqref{elin4} and Young's inequality (with $0 < \be < 1$), we have
\begin{align}
\begin{split}
\E \bigg[ &\sup_{\Ups^N \in H^\al} \Big\{ \big\| \<1>_N (Y + \Dr, u_1, \o_2) \big \|_{ C_T W_x^{\al - \frac 32 - \eps, \infty} }^\be
- \eps_0 \| \Ups_N \|_{H^\al}^2  \Big\} \bigg] \\
&\leq C \E \Big[ K(Y, u_1, \o_2)^{\frac 32 \be} \Big] + \E \bigg[ \sup_{\Ups^N \in H^\al} \Big\{ C \| \Ups_N \|_{H^\al}^{\beta} - \eps_0 \| \Ups_N \|_{H^\al}^2 \Big\} \bigg] \\
&\les \E \Big[ K(Y, u_1, \o_2)^{\frac 32 \be} \Big] + 1
\end{split}
\label{elin5}
\end{align}

\noi
for some constant $C > 0$. Therefore, by \eqref{et2a}, \eqref{elin5}, Young's inequality, and Jensen's inequality, we have
\begin{align}
\begin{split}
\int \exp \Big( \big\| \<1>_N (u_0, u_1, \o_2) \big \|_{ C_T W_x^{\al - \frac 32 - \eps, \infty} }^\be \Big) d \rho_N(u_0) &\les \exp \Big( C \E \big[ K(Y, u_1, \o_2)^{\frac 32 \be} \big] \Big) \\
&\les \exp \Big( \dl \E \big[ K(Y, u_1, \o_2) \big]  \Big) \\
&\leq \int \exp \big( \dl K(Y, u_1, \o_2) \big) d \mu_\al (Y)
\end{split}
\label{elin6}
\end{align}
for $0 < \be < \frac 23$. We obtain the desired bound \eqref{exp01} by integrating in $(u_1, \o_2)$ with respect to $\mu_0 \otimes \PP_2$ and using \eqref{expK}.

We now briefly discuss how to obtain the difference estimate \eqref{exp02} for $\<1>_N (\vec u_0, \o_2)$ with $0 < \be < \frac 23$. By using the Bou\'e-Dupuis variational formula as above, we obtain the following bound which is similar to \eqref{et2a}:
\begin{align*}
\log &\int \exp \Big( N_2^\g \big\| \<1>_{N_1} (u_0, u_1, \o_2) - \<1>_{N_2} (u_0, u_1, \o_2) \big\|_{C_T W_x^{\al - \frac 32 - \eps, \infty}}^\be \Big) d \rho_N (u_0) \\
& \leq 
\E \bigg[ \sup_{\Ups^N \in H^\al}  \Big\{ N_2^\g \big\| \<1>_{N_1} (Y + \Dr, u_1, \o_2) - \<1>_{N_2} (Y + \Dr, u_1, \o_2) \big \|_{ C_T W_x^{\al - \frac 32 - \eps, \infty} }^\be \\
&\quad - \eps_0 \| \Ups_N \|_{H^\al}^2 \Big\} \bigg] + C_0
\end{align*}

\noi
for some constant $C_0 > 0$. We also use the decomposition \eqref{elin1}. By the difference tail estimates in Lemma \ref{LEM:sto_reg} for $\<1>_N$, we have
\begin{align*}
N_2^{\g'} &\big\| \<1>_{N_1} (Y, u_1, \o_2) - \<1>_{N_2} (Y, u_1, \o_2) \big\|_{C_T W_x^{\al - \frac 32 - \eps, \infty}}^2 \\
&+ N_2^{\g'} \big\| \pi_{N_1} S(t) (\ZZ_N, 0) - \pi_{N_2} S(t) (\ZZ_N, 0) \big\|_{C_T W_x^{5 \al - \frac 92 - \eps, \infty}}^{\frac 23} \leq K(Y, u_1, \o_2),
\end{align*}

\noi
where $\g' > 0$ is sufficiently small and $K (Y, u_1, \o_2) \geq 1$ satisfies \eqref{expK}. Then, by using similar steps as in \eqref{elin3}, \eqref{elin4}, \eqref{elin5}, and \eqref{elin6}, we obtain
\begin{align*}
\int \exp &\Big( N_2^{\g} \big\| \<1>_{N_1} (Y, u_1, \o_2) - \<1>_{N_2} (Y, u_1, \o_2) \big\|_{C_T W_x^{\al - \frac 32 - \eps, \infty}}^\be \Big) d \rho_N (u_0) \\
&\leq \int \exp \big( \dl K (Y, u_1, \o_2) \big) d \mu_\al (Y),
\end{align*}

\noi
given $\g > 0$ sufficiently small. Thus, the desired difference estimate follows by integrating in $(u_1, \o_2)$ with respect to $\mu_0 \otimes \PP_2$. From then on, we only focus on proving the uniform exponential integrability \eqref{exp01}.

\smallskip \noi
\textbf{Case 2:} The cubic stochastic object $\<30>$.

For simplicity, from now on we drop the dependence on $u_1$ and $\o_2$ also drop the ``0'' in the second argument of $S(t)$. Also, for inequalities, we suppress the dependence on $T$.

We proceed as in Case 1 using the Bou\'e-Dupuis variation formula to obtain a similar bound as \eqref{et2a}:
\begin{align*}
\begin{split}
\log &\int \exp \Big( \big\| \<30>_N (u_0) \big \|_{ C_T W_x^{3\al - 3 - \eps, \infty} + X_T^{\al - \frac 12, \frac 12 + \dl} }^\be \Big) d \rho_N(u_0) \\
& \leq 
\E \bigg[ \sup_{\Ups^N \in H^\al} \Big\{ \big\| \<30>_N (Y + \Dr) \big \|_{ C_T W_x^{3\al - 3 - \eps, \infty} + X_T^{\al - \frac 12, \frac 12 + \dl} }^\be
- \eps_0 \| \Ups_N \|_{H^\al}^2  \Big\} \bigg] + C_0
\end{split}
\end{align*}

\noi
for some constant $C_0 > 0$, where $\Law (Y) = \mu_\al$. With the change of variable $\Dr = \Ups^N - \ZZ_N$, the following bound will be useful and it follows from Lemma~\ref{LEM:homo} and Lemma~\ref{LEM:YNk}:
\begin{align}
\begin{split}
\| \pi_N S(t) \Dr \|_{X_T^{\al - \frac 12, \frac 12 + \dl}} &\les \| \pi_N \Dr \|_{H^{\al - \frac 12}} \\
&\les \| \Ups_N \|_{H^\al} + \| \ZZ_N \|_{W^{5\al - \frac 92 - \eps, \infty}} \\
&\les \| \Ups_N \|_{H^\al} + K(Y)^{\frac 32},
\end{split}
\label{helper}
\end{align}

\noi
where $K(Y) = K(Y, u_1, \o_2) \geq 1$ satisfies \eqref{expK}.

By \eqref{enh0} and \eqref{elin1}, we have
\begin{align}
\begin{split}
\<30>_N (Y + \Dr) &= \<30>_N (Y) + 3 \pi_N \I \big( \<2>_N (Y) \pi_N S(t) \Dr \big)  \\
&\quad + 3 \pi_N \I \big( \<1>_N (Y) (\pi_N S(t) \Dr)^2 \big) + \pi_N \I \big( (\pi_N S(t) \Dr)^3 \big).
\end{split}
\label{ecub}
\end{align}

\noi
By Lemma \ref{LEM:sto_reg} for $\<30>_N$, we have
\begin{align}
\big\| \<30>_N (Y) \big\|_{L_T^\infty W_x^{3\al - 3 - \eps, \infty}}^{\frac 23} \leq K (Y).
\label{ecub1}
\end{align}

\noi
By Lemma \ref{LEM:sto_reg} for $\If^{\<2>_N}$ and \eqref{helper}, we have 
\begin{align}
\begin{split}
\big\| \pi_N \I \big( \<2>_N (Y) \pi_N S(t) \Dr \big) \big\|_{X_T^{\al - \frac 12, \frac 12 + \dl}} &= \big\| \If^{\<2>_N} (Y) \big( \pi_N S(t) \Dr \big) \big\|_{X_T^{\al - \frac 12, \frac 12 + \dl}} \\
&\les K(Y) \| \pi_N S(t) \Dr \|_{X_T^{\al - \frac 12, \frac 12 + \dl}} \\
&\les K(Y) \big( \| \Ups_N \|_{H^\al} + K(Y)^{\frac 32} \big).
\end{split}
\label{ecub2}
\end{align}

\noi
By the inhomogeneous linear estimate in Lemma \ref{LEM:nhomo}, \eqref{tri3} in Lemma \ref{LEM:str4}, \eqref{elin2}, and \eqref{helper}, we have
\begin{align}
\begin{split}
\big\| \pi_N \I \big( \<1>_N (Y) (\pi_N S(t) \Dr)^2 \big) \big\|_{X_T^{\al - \frac 12, \frac 12 + \dl}} &\les K(Y)^{\frac 12} \| \pi_N S(t) \Dr \|_{X_T^{\al - \frac 12, \frac 12 + \dl}}^2 \\
&\les K(Y)^{\frac 12} \| \Ups_N \|_{H^\al} + K(Y)^{\frac 72}.
\end{split}
\label{ecub3}
\end{align}

\noi
By Lemma \ref{LEM:nhomo}, H\"older's inequality, Sobolev's embedding, and Lemma~\ref{LEM:YNk}, we have
\begin{align}
\begin{split}
\big\| \pi_N \I \big( (\pi_N S(t) \Dr)^3 \big) \big\|_{X_T^{\al - \frac 12, \frac 12 + \dl}} &\les \| \pi_N S(t) \Dr \|_{L_T^\infty L_x^6}^3 \\
&\les \| \Ups_N \|_{H^\al}^3 + \| \ZZ_N \|_{W^{5 \al - \frac 92 - \eps, \infty}}^3 \\
&\les \| \Ups_N \|_{H^\al}^3 + K(Y)^{\frac 92}.
\end{split}
\label{ecub4}
\end{align}

Combining \eqref{ecub}, \eqref{ecub1}, \eqref{ecub2}, \eqref{ecub3}, and \eqref{ecub4} and proceeding as in Case 1, we can show that \eqref{exp01} and \eqref{exp02} hold for $\<30>_N (\vec u_0, \o_2)$ with $0 < \be < \frac 29$.

\smallskip \noi
\textbf{Case 3:} The quintic stochastic object $\<320>$.

We proceed as in Case 1 using the Bou\'e-Dupuis variation formula to obtain a similar bound as \eqref{et2a}:
\begin{align*}
\begin{split}
\log &\int \exp \Big( \big\| \<320>_N (u_0) \big \|_{ X_T^{\al - \frac 12, \frac 12 + \dl} }^\be \Big) d \rho_N(u_0) \\
& \leq 
\E \bigg[ \sup_{\Ups^N \in H^\al} \Big\{ \big\| \<320>_N (Y + \Dr) \big \|_{ X_T^{\al - \frac 12, \frac 12 + \dl} }^\be - \eps_0 \| \Ups_N \|_{H^\al}^2 \Big\} \bigg] + C_0
\end{split}
\end{align*}

\noi
for some constant $C_0 > 0$, where $\Law (Y) = \mu_\al$. For convenience, we define
\begin{align}
\pmb{\Ld} \overset{\textup{def}}{=} \big\{ 3 \pi_N \I \big( \<2>_N (Y) \pi_N S(t) \Dr \big), 3 \pi_N \I \big( \<1>_N (Y) (\pi_N S(t) \Dr)^2 \big), \pi_N \I \big( (\pi_N S(t) \Dr)^3 \big) \big\},
\label{defLd}
\end{align}

\noi
so that by \eqref{ecub2}, \eqref{ecub3}, and \eqref{ecub4}, we have
\begin{align}
\sum_{\Ld \in \pmb{\Ld}} \| \Ld \|_{X_T^{\al - \frac 12, \frac 12 + \dl}} \les \| \Ups_N \|_{H^\al}^3 + K(Y)^{\frac 92},
\label{helper2-4}
\end{align}

\noi
where $K(Y) = K(Y, u_1, \o_2) \geq 1$ satisfies \eqref{expK}.

By \eqref{enh0} and \eqref{elin1}, we have
\begin{align}
\begin{split}
\<320>_N &(Y + \Dr) = \<320>_N (Y) + \pi_N \I \big( \<30>_N (Y) \<1>_N (Y) \pi_N S(t) \Dr \big)  + 2 \pi_N \I \big( \<30>_N (Y) (\pi_N S(t) \Dr)^2 \big) \\
&\quad + \sum_{\Ld \in \pmb{\Ld}} \Big( \If^{\<2>_N} (Y) (\Ld) + 2 \pi_N \I \big( \Ld \cdot \<1>_N (Y) \pi_N S(t) \Dr \big) + \pi_N \I \big( \Ld \cdot (\pi_N S(t) \Dr)^2 \big) \Big).
\end{split}
\label{equin}
\end{align}

\noi
By Lemma \ref{LEM:sto_reg} for $\<320>_N$, we have
\begin{align}
\big\| \<320>_N (Y) \big\|_{X_T^{\al - \frac 12, \frac 12 + \dl}} \leq K(Y)^{\frac 52}.
\label{equin1}
\end{align}

\noi
By Lemma \ref{LEM:sto_reg} for $\If^{\<31>_N}$ and \eqref{helper}, we have
\begin{align}
\begin{split}
\big\| \pi_N \I &\big( \<30>_N (Y) \<1>_N (Y) \pi_N S(t) \Dr \big) \big\|_{X_T^{\al - \frac 12, \frac 12 + \dl}} \\
&\les \big\| \If^{\<31>_N} (Y) \big\|_{\L_T^{\al - \frac 12, \frac 12 + \dl, \al - \frac 12, \frac 12 + \dl}} \| \pi_N S(t) \Dr \|_{X_T^{\al - \frac 12, \frac 12 + \dl}} \\
&\les K(Y)^2 \| \Ups_N \|_{H^\al} + K(T)^{\frac 72},
\end{split}
\label{equin2}
\end{align}

\noi
where $K(Y) = K(Y, u_1, \o_2) \geq 1$ satisfies \eqref{expK}. By the inhomogeneous linear estimate in Lemma \ref{LEM:nhomo}, H\"older's inequality, \eqref{ecub1}, the Strichartz estimate \eqref{Lp_str} with $p = 4$, and \eqref{helper}, we have
\begin{align}
\begin{split}
\big\| \pi_N \I \big( \<30>_N (Y) (\pi_N S(t) \Dr)^2 \big) \big\|_{X_T^{\al - \frac 12, \frac 12 + \dl}} &\les \big\| \<30>_N (Y) (\pi_N S(t) \Dr)^2 \big\|_{L_T^2 L_x^2} \\
&\les \big\| \<30>_N (Y) \big\|_{L_T^\infty L_x^\infty} \| \pi_N S(t) \Dr \|_{L_T^4 L_x^4} \\
&\les K(Y)^{\frac 32} \| \pi_N S(t) \Dr \|_{X_T^{\al - \frac 12, \frac 12 + \dl}} \\
&\les K(Y)^{\frac 32} \big( \| \Ups_N \|_{H^\al}^2 + K(Y)^3 \big).
\end{split}
\label{equin3}
\end{align}

\noi
By Lemma \ref{LEM:sto_reg} for $\If^{\<2>_N}$ and \eqref{helper2-4}, we have
\begin{align}
\begin{split}
\bigg\| \sum_{\Ld \in \pmb{\Ld}} \If^{\<2>_N} (Y) (\Ld) \bigg\|_{X_T^{\al - \frac 12, \frac 12 + \dl}} &\les \sum_{\Ld \in \pmb{\Ld}} K(Y) \| \Ld \|_{X_T^{\al - \frac 12, \frac 12 + \dl}} \\
&\les K(Y) \big( \| \Ups_N \|_{H^\al}^3 + K(Y)^{\frac 92} \big).
\end{split}
\label{equin4}
\end{align}

\noi
By Lemma \ref{LEM:nhomo}, \eqref{tri3} in Lemma \ref{LEM:str4}, \eqref{elin2}, \eqref{helper}, and \eqref{helper2-4}, we have
\begin{align}
\begin{split}
\bigg\| \sum_{\Ld \in \pmb{\Ld}} \pi_N &\I \big( \Ld \cdot \<1>_N (Y) \pi_N S(t) \Dr \big) \bigg\|_{X_T^{\al - \frac 12, \frac 12 + \dl}} \\
&\les \sum_{\Ld \in \pmb{\Ld}} \| \<1>_N (Y) \|_{L_T^\infty W_x^{\al - \frac 32 - \eps, \infty}} \| \Ld \|_{X_T^{\al - \frac 12, \frac 12 + \dl}} \| \pi_N S(t) \Dr \|_{X_T^{\al - \frac 12, \frac 12 + \dl}} \\
&\les K(Y)^{\frac 12} \big( \| \Ups_N \|_{H^\al}^4 + K(Y)^{6} \big).
\end{split}
\label{equin5}
\end{align}

\noi
By Lemma \ref{LEM:nhomo}, Lemma \ref{LEM:str_var} with $v \equiv 1$, \eqref{helper}, and \eqref{helper2-4}, we have
\begin{align}
\begin{split}
\bigg\| \sum_{\Ld \in \pmb{\Ld}} \pi_N &\I \big( \Ld \cdot (\pi_N S(t) \Dr)^2 \big) \bigg\|_{X_T^{\al - \frac 12, \frac 12 + \dl}} \\
&\les \sum_{\Ld \in \pmb{\Ld}} \| \Ld \|_{X_T^{\al - \frac 12, \frac 12 + \dl}} \| \pi_N S(t) \Dr \|_{X_T^{\al - \frac 12, \frac 12 + \dl}}^2 \\
&\les \| \Ups_N \|_{H^\al}^5 + K(Y)^{\frac{15}{2}}.
\end{split}
\label{equin6}
\end{align}

Combining \eqref{equin}, \eqref{equin1}, \eqref{equin2}, \eqref{equin3}, \eqref{equin4}, \eqref{equin5}, and \eqref{equin6} and proceeding as in Case 1, we can show that \eqref{exp01} and \eqref{exp02} hold for $\<320>_N (\vec u_0, \o_2)$ with $0 < \be < \frac{2}{15}$.

\smallskip \noi
\textbf{Case 4:} The septic stochastic object $\<70>$.

We proceed as in Case 1 using the Bou\'e-Dupuis variation formula to obtain a similar bound as \eqref{et2a}:
\begin{align*}
\begin{split}
\log &\int \exp \Big( \big\| \<70>_N (u_0) \big \|_{ X_T^{\al - \frac 12, \frac 12 + \dl} }^\be \Big) d \rho_N(u_0) \\
& \leq 
\E \bigg[ \sup_{\Ups^N \in H^\al} \Big\{ \big\| \<70>_N (Y + \Dr) \big \|_{ X_T^{\al - \frac 12, \frac 12 + \dl} }^\be - \eps_0 \| \Ups_N \|_{H^\al}^2 \Big\} \bigg] + C_0
\end{split}
\end{align*}

\noi
for some constant $C_0 > 0$, where $\Law (Y) = \mu_\al$. By \eqref{enh0}, \eqref{elin1}, and \eqref{defLd}, we have
\begin{align}
\begin{split}
\<70>_N (Y + \Dr) &= \<70>_N (Y) + 2 \sum_{\Ld \in \pmb{\Ld}} \pi_N \I \big( \Ld \cdot \<30>_N (Y) \<1>_N (Y) \big) \\
&\quad + 2 \sum_{\Ld \in \pmb{\Ld}} \pi_N \I \big( \Ld \cdot \<30>_N (Y) \pi_N S(t) \Dr \big) + \sum_{\Ld_1, \Ld_2 \in \pmb{\Ld}} \pi_N \I \big( \Ld_1 \cdot \Ld_2 \cdot \<1>_N (Y)  \big) \\
&\quad + \sum_{\Ld_1, \Ld_2 \in \pmb{\Ld}} \pi_N \I \big( \Ld_1 \cdot \Ld_2 \cdot \pi_N S(t) \Dr  \big).
\end{split}
\label{esep}
\end{align}

By Lemma \ref{LEM:sto_reg} for $\<70>_N$, we have
\begin{align}
\big\| \<70>_N (Y) \big\|_{X_T^{\al - \frac 12, \frac 12 + \dl}} \les K(Y)^{\frac 72},
\label{esep1}
\end{align}

\noi
where $K(Y) = K(Y, u_1, \o_2) \geq 1$ satisfies \eqref{expK}. By Lemma \ref{LEM:sto_reg} for $\If^{\<31>_N}$ and \eqref{helper2-4}, we have
\begin{align}
\begin{split}
\bigg\| \sum_{\Ld \in \pmb{\Ld}} &\pi_N \I \big( \Ld \cdot \<30>_N (Y) \<1>_N (Y) \big) \bigg\|_{X_T^{\al - \frac 12, \frac 12 + \dl}} \\
&\les \big\| \If^{\<31>_N} (Y) \big\|_{\L_T^{\al - \frac 12, \frac 12 + \dl, \al - \frac 12, \frac 12 + \dl}} \sum_{\Ld \in \pmb{\Ld}} \| \Ld \|_{X_T^{\al - \frac 12, \frac 12 + \dl}} \\
&\les K(Y)^2 \| \Ups_N \|_{H^\al}^3 + K(Y)^{\frac{13}{2}}.
\end{split}
\label{esep2}
\end{align}

\noi
By the inhomogeneous linear estimate in Lemma \ref{LEM:nhomo}, H\"older's inequality, the Strichartz estimate \eqref{Lp_str} with $p = 4$, \eqref{ecub1}, \eqref{helper2-4}, and \eqref{helper}, we have
\begin{align}
\begin{split}
\bigg\| \sum_{\Ld \in \pmb{\Ld}} &\pi_N \I \big( \Ld \cdot \<30>_N (Y) \pi_N S(t) \Dr \big) \bigg\|_{X_T^{\al - \frac 12, \frac 12 + \dl}} \\
&\les \sum_{\Ld \in \pmb{\Ld}} \big\| \Ld \cdot \<30>_N (Y) \pi_N S(t) \Dr \big) \big\|_{L_T^2 L_x^2} \\
&\les \sum_{\Ld \in \pmb{\Ld}} \| \Ld \|_{L_T^4 L_x^4} \big\| \<30>_N (Y) \big\|_{L_T^\infty L_x^\infty} \| \pi_N S(t) \Dr \|_{L_T^4 L_x^4} \\
&\les K(Y)^{\frac 32} \sum_{\Ld \in \pmb{\Ld}} \| \Ld \|_{X_T^{\al - \frac 12, \frac 12 + \dl}} \| \pi_N S(t) \Dr \|_{X_T^{\al - \frac 12, \frac 12 + \dl}} \\
&\les K(Y)^{\frac 32} \| \Ups_N \|_{H^\al}^4 + K(Y)^{\frac{15}{2}}.
\end{split}
\label{esep3}
\end{align}

\noi
By Lemma \ref{LEM:nhomo}, \eqref{tri3} in Lemma \ref{LEM:str4}, \eqref{elin2}, and \eqref{helper2-4}, we have
\begin{align}
\begin{split}
\bigg\| \sum_{\Ld_1, \Ld_2 \in \pmb{\Ld}} &\pi_N \I \big( \Ld_1 \cdot \Ld_2 \cdot \<1>_N (Y)  \big) \bigg\|_{X_T^{\al - \frac 12, \frac 12 + \dl}} \\
&\les \sum_{\Ld_1, \Ld_2 \in \pmb{\Ld}} \| \<1>_N \|_{L_T^\infty W_x^{\al - \frac 32 - \eps, \infty}} \| \Ld_1 \|_{X_T^{\al - \frac 12, \frac 12 + \dl}} \| \Ld_2 \|_{X_T^{\al - \frac 12, \frac 12 + \dl}} \\
&\les K(Y)^{\frac 12} \| \Ups_N \|_{H^\al}^6 + K(Y)^{\frac{19}{2}}.
\end{split}
\label{esep4}
\end{align}

\noi
By Lemma \ref{LEM:nhomo}, Lemma \ref{LEM:str_var} with $v \equiv 1$, \eqref{helper2-4}, and \eqref{helper}, we have
\begin{align}
\begin{split}
\bigg\| \sum_{\Ld_1, \Ld_2 \in \pmb{\Ld}} &\pi_N \I \big( \Ld_1 \cdot \Ld_2 \cdot \pi_N S(t) \Dr  \big) \bigg\|_{X_T^{\al - \frac 12, \frac 12 + \dl}} \\
&\les \sum_{\Ld_1, \Ld_2 \in \pmb{\Ld}} \| \Ld_1 \|_{X_T^{\al - \frac 12, \frac 12 + \dl}} \| \Ld_2 \|_{X_T^{\al - \frac 12, \frac 12 + \dl}} \| \pi_N S(t) \Dr \|_{X_T^{\al - \frac 12, \frac 12 + \dl}} \\
&\les \| \Ups_N \|_{H^\al}^7 + K(Y)^{\frac{21}{2}}.
\end{split}
\label{esep5}
\end{align}

Combining \eqref{esep}, \eqref{esep1}, \eqref{esep2}, \eqref{esep3}, \eqref{esep4}, and \eqref{esep5} and proceeding as in Case 1, we can show that \eqref{exp01} and \eqref{exp02} hold for $\<70>_N (\vec u_0, \o_2)$ with $0 < \be < \frac{2}{21}$.

\smallskip \noi
\textbf{Case 5:} The random operator with the cubic-linear stochastic object $\If^{\<31>}$.

We proceed as in Case 1 using the Bou\'e-Dupuis variation formula to obtain a similar bound as \eqref{et2a}:
\begin{align*}
\begin{split}
\log &\int \exp \Big( \big\| \If^{\<31>_N} (u_0) \big \|_{ \L_T^{\al - \frac 12 - \eps, \frac 12 + \dl, \al - \frac 12 + \eps, \frac 12 + \dl } }^\be \Big) d \rho_N(u_0) \\
& \leq 
\E \bigg[ \sup_{\Ups^N \in H^\al} \bigg\{  \sup_{\substack{\varnothing \neq I \subseteq [0, T] \\ \text{closed interval}}} |I|^{-\ta} \sup_{\| v \|_{X_I^{\al - \frac 12 - \eps, \frac 12 + \dl} } \leq 1 } \big\| \pi_N \I \big( \<30>_N \<1>_N (Y + \Dr) \cdot v \big) \big\|_{ X_I^{\al - \frac 12 + \eps, \frac 12 + \dl} }^\be \\
&\quad - \eps_0 \| \Ups_N \|_{H^\al}^2 \bigg\} \bigg] + C_0
\end{split}
\end{align*}

\noi
for some constant $C_0 > 0$, where $\Law (Y) = \mu_\al$ and $\ta = \ta (\dl) > 0$ sufficiently small. By \eqref{enh0}, \eqref{elin1}, and \eqref{defLd}, we have
\begin{align}
\begin{split}
\<30>_N \<1>_N (Y + \Dr) &= \<30>_N (Y) \<1>_N (Y) + \<30>_N (Y) \pi_N S(t) \Dr \\
&\quad + \sum_{\Ld \in \pmb{\Ld}} \Ld \cdot \<1>_N (Y) + \sum_{\Ld \in \pmb{\Ld}} \Ld \cdot \pi_N S(t) \Dr.
\end{split}
\label{ecl}
\end{align}

By the proof of Lemma \ref{LEM:sto_reg} for $\If^{\<31>_N}$, we have
\begin{align}
\big\| \<30>_N (Y) \<1>_N (Y) \big\|_{L_I^q W_x^{\al - \frac 32 - \eps, 6}}^{\frac 12} \leq K (Y),
\label{ecl1}
\end{align}

\noi
where $q > 1$ and $K(Y) = K(Y, u_1, \o_2) \geq 1$ satisfies \eqref{expK}. By H\"older's inequality, the change of variable $\Dr = \Ups^N - \ZZ_N$, Sobolev's embedding, and \eqref{ecub1}, we have 
\begin{align}
\begin{split}
\big\| \<30>_N (Y) \pi_N S(t) \Dr \big\|_{L_I^\infty W_x^{\al - \frac 32 - \eps, 6}} &\les \big\| \<30>_N (Y) \big\|_{L_I^\infty L_x^\infty} \| \pi_N S(t) \Dr \|_{L_I^\infty L_x^6} \\
&\les K(Y)^{\frac 32} \big( \| \Ups_N \|_{H^\al} + \| \ZZ_N \|_{W^{5 \al - \frac 92 - \eps, \infty}} \big) \\
&\les K(Y)^{\frac 32} \| \Ups_N \|_{H^\al} + K(Y)^3
\end{split}
\label{ecl2}
\end{align}

\noi
Thus, using similar steps as in \eqref{cublin_op} with $q \gg 1$ along with \eqref{ecl1} and \eqref{ecl2}, we obtain
\begin{align}
\begin{split}
\big\| &\pi_N \I \big( \<30>_N (Y) \<1>_N (Y) \cdot v \big) \big\|_{ X_I^{\al - \frac 12 + \eps, \frac 12 +  \dl} } \\
&\les |I|^{\ta} \big\| \<30>_N (Y) \<1>_N (Y) \big\|_{L_I^\infty W_x^{\al - \frac 32 - \eps, 6}} \| v \|_{X_I^{\al - \frac 12 - \eps, \frac 12 + \dl}} \\
&\les |I|^\ta K(Y)^2 \| v \|_{X_I^{\al - \frac 12 - \eps, \frac 12 + \dl}}.
\end{split}
\label{ecl3}
\end{align}

\noi
and
\begin{align}
\begin{split}
\big\| &\pi_N \I \big( \<30>_N (Y) \pi_N S(t) \Dr \cdot v \big) \big\|_{ X_I^{\al - \frac 12 + \eps, \frac 12 + \dl} } \\
&\les |I|^\ta \big\| \<30>_N (Y) \pi_N S(t) \Dr \big\|_{L_I^q W_x^{\al - \frac 32 + \eps, 6}} \| v \|_{X_I^{\al - \frac 12 - \eps, \frac 12 + \dl}} \\
&\les |I|^\ta \big( K(Y)^{\frac 32} \| \Ups_N \|_{H^\al} + K(Y)^3 \big) \| v \|_{X_I^{\al - \frac 12 - \eps, \frac 12 + \dl}}.
\end{split}
\label{ecl4}
\end{align}

\noi
By the inhomogeneous linear estimate in Lemma \ref{LEM:nhomo}, the time localization estimate \eqref{time2} in Lemma \ref{LEM:time}, \eqref{tri3} in Lemma \ref{LEM:str4}, \eqref{elin2}, and \eqref{helper2-4}, we have
\begin{align}
\begin{split}
\bigg\| \sum_{\Ld \in \pmb{\Ld}} \pi_N &\I \big( \Ld \cdot \<1>_N (Y) \cdot v \big) \bigg\|_{X_I^{\al - \frac 12 + \eps, \frac 12 + \dl}} \\
&\les |I|^\ta \sum_{\Ld \in \pmb{\Ld}} \| \<1>_N (Y) \|_{L_I^\infty W_x^{\al - \frac 32 - \eps, \infty}} \| \Ld \|_{X_I^{\al - \frac 12, \frac 12 + \dl}} \| v \|_{X_I^{\al - \frac 12 - \eps, \frac 12 + \dl}} \\
&\les |I|^\ta \big( K(Y)^{\frac 12} \| \Ups_N \|_{H^\al}^3 + K(Y)^5 \big) \| v \|_{X_I^{\al - \frac 12 - \eps, \frac 12 + \dl}}.
\end{split}
\label{ecl5}
\end{align}

\noi
By Lemma \ref{LEM:nhomo}, \eqref{time2} in Lemma \ref{LEM:time}, Lemma \ref{LEM:str_var} with $v \equiv 1$, \eqref{helper2-4}, and \eqref{helper}, we have
\begin{align}
\begin{split}
\bigg\| \sum_{\Ld \in \pmb{\Ld}} \pi_N &\I \big( \Ld \cdot \pi_N S(t) \Dr \cdot v \big) \bigg\|_{X_I^{\al - \frac 12 + \eps, \frac 12 + \dl}} \\
&\les |I|^\ta \sum_{\Ld \in \pmb{\Ld}}  \| \Ld \|_{X_I^{\al - \frac 12, \frac 12 + \dl}} \| \pi_N S(t) \Dr \|_{X_I^{\al - \frac 12, \frac 12 + \dl}} \| v \|_{X_I^{\al - \frac 12, \frac 12 + \dl}} \\
&\les |I|^\ta \big( \| \Ups_N \|_{H^\al}^4 + K(Y)^6 \big) \| v \|_{X_I^{\al - \frac 12 - \eps, \frac 12 + \dl}}. 
\end{split}
\label{ecl6}
\end{align}

Combining \eqref{ecl}, \eqref{ecl3}, \eqref{ecl4},  \eqref{ecl5}, and \eqref{ecl6} and proceeding as in Case 1, we can show that \eqref{exp01} and \eqref{exp02} hold for $\If^{\<31>_N} (\vec u_0, \o_2)$ with $0 < \be < \frac{1}{6}$.

\smallskip \noi
\textbf{Case 6:} The random operator with the quadratic stochastic object $\If^{\<2>}$.

We proceed as in Case 1 using the Bou\'e-Dupuis variation formula to obtain a similar bound as \eqref{et2a}:
\begin{align*}
\begin{split}
\log &\int \exp \Big( \big\| \If^{\<2>_N} (u_0) \big \|_{ \L_T^{\al - \frac 12 - \eps, \frac 12 + \dl, \al - \frac 12 + \eps, \frac 12 + \dl } }^\be \Big) d \rho_N(u_0) \\
& \leq 
\E \bigg[ \sup_{\Ups^N \in H^\al} \bigg\{  \sup_{\substack{\varnothing \neq I \subseteq [0, T] \\ \text{closed interval}}} |I|^{- \ta} \sup_{\| v \|_{X_I^{\al - \frac 12 - \eps, \frac 12 + \dl} } \leq 1 } \big\| \pi_N \I \big( \<2>_N (Y + \Dr) \cdot v \big) \big\|_{ X_I^{\al - \frac 12 + \eps, \frac 12 + \dl} }^\be \\
&\quad - \eps_0 \| \Ups_N \|_{H^\al}^2 \bigg\} \bigg] + C_0
\end{split}
\end{align*}

\noi
for some constant $C_0 > 0$, where $\Law (Y) = \mu_\al$. By \eqref{enh0} and \eqref{elin1}, we have
\begin{align}
\<2>_N (Y + \Dr) = \<2>_N (Y) + 2 \<1>_N (Y) \pi_N S(t) \Dr + (\pi_N S(t) \Dr)^2.
\label{equad}
\end{align}

By Lemma \ref{LEM:sto_reg} for $\If^{\<2>_N}$, we have
\begin{align}
\big\| \pi_N \I (\<2>_N (Y) \cdot v) \big\|_{X_I^{\al - \frac 12 + \eps, \frac 12 + \dl}} \les |I|^\ta K(Y) \| v \|_{X_I^{\al - \frac 12 - \eps, \frac 12 + \dl}}.
\label{equad1}
\end{align}

\noi
By Lemma \ref{LEM:nhomo}, \eqref{time2} in Lemma \ref{LEM:time}, \eqref{tri3} in Lemma \ref{LEM:str4}, \eqref{elin2}, and \eqref{helper}, we have
\begin{align}
\begin{split}
\big\| &\pi_N \I (\<1>_N (Y) \pi_N S(t) \Dr \cdot v) \big\|_{X_I^{\al - \frac 12 + \eps, \frac 12 + \dl}} \\
&\les |I|^\ta \| \<1>_N (Y) \|_{L_I^\infty W_x^{\al - \frac 32 - \eps, \infty}} \| \pi_N S(t) \Dr \|_{X_I^{\al - \frac 12, \frac 12 + \dl}} \| v \|_{X_I^{\al - \frac 12 - \eps, \frac 12 + \dl}} \\
&\les |I|^\ta \big( K(Y)^{\frac 12} \| \Ups_N \|_{H^\al} + K(Y)^2 \big) \| v \|_{X_I^{\al - \frac 12 - \eps, \frac 12 + \dl}}.
\end{split}
\label{equad2}
\end{align}

\noi
By Lemma \ref{LEM:nhomo}, \eqref{time2} in Lemma \ref{LEM:time}, Lemma \ref{LEM:str_var} with $v \equiv 1$, and \eqref{helper}, we have 
\begin{align}
\begin{split}
\big\| &\pi_N \I \big( ( \pi_N S(t) \Dr )^2 \cdot v \big) \big\|_{X_I^{\al - \frac 12 + \eps, \frac 12 + \dl}} \\
&\les |I|^\ta \| \pi_N S(t) \Dr \|_{X_I^{\al - \frac 12, \frac 12 + \dl}}^2 \| v \|_{X_I^{\al - \frac 12 - \eps, \frac 12 + \dl}} \\
&\les |I|^\ta \big( \| \Ups_N \|_{H^\al}^2 + K(Y)^3 \big) \| v \|_{X_I^{\al - \frac 12 - \eps, \frac 12 + \dl}}.
\end{split}
\label{equad3}
\end{align}

Combining \eqref{equad}, \eqref{equad1}, \eqref{equad2}, and \eqref{equad3} and proceeding as in Case 1, we can show that \eqref{exp01} and \eqref{exp02} hold for $\If^{\<2>_N} (\vec u_0, \o_2)$ with $0 < \be < \frac{1}{3}$.
\end{proof}

\subsection{Stability estimate}
\label{SUBSEC:stb}

In this subsection, we establish a stability result. Inspired by \cite[Proposition~6.8]{OOT2}, we would like to introduce an exponential decaying time weight and modify the local well-posedness argument in the proof of Proposition~\ref{THM:1}. However, our local well-posedness is based on the Fourier restriction norm method, which uses Fourier transform in time and so the time weight is also involved in the Fourier transform. Though the situation becomes more complicated (than that in \cite{OOT2}), we observe that the desired decay effect from the time weight can be obtained from slightly losing some time regularity. This is our new ingredient for this part.

Let us define our space with the time weight. Given an interval $I \subset \R$, $s, b \in \R$, and $\ld \geq 1$, we define the space $X_{I}^{s, b, \ld}$ by the norm
\begin{align}
\| u \|_{X_I^{s, b, \ld}} = \| e^{-\ld |t|} u \|_{X_I^{s, b}}.
\label{exp_norm}
\end{align}

\noi
If $I = [0, T]$ for some $T > 0$, we write $X_T^{s, b, \ld} = X_{[0, T]}^{s, b}$. Note that $(e^{-|t|})^\wedge (\tau) \sim \frac{1}{1 + \tau^2} \les \jb{\tau}^{-2}$ and its derivative in $\tau$ satisfies the same bound. Thus, by \eqref{time1} in Lemma \ref{LEM:time}, for any $s \in \R$ and $\frac 12 < b < 1$, if $u(x, 0) = 0$ for all $x \in \T^3$, we have
\begin{align}
\| u \|_{X_I^{s, b, \ld}} \les  \| u \|_{X_I^{s, b}}.
\label{exp_key}
\end{align}

\noi
Also, for any $s, b \in \R$, $T > 0$, and $\ld \geq 1$, by the triangle inequality and Young's convolution inequality, we have
\begin{align}
\begin{split}
\| u \|_{X^{s, b}_T} &\leq \| \eta_{\ld, T} (t) e^{- \ld |t|} u \|_{X^{s, b}} \\
&= \bigg\| \jb{n}^2 \jb{|\tau| - \jbb{n}}^b \int_\R \ft{\eta_{\ld, T}} (\tau_1)  \ft{e^{-\ld |\cdot|} u} (n, \tau - \tau_1) d\tau_1 \bigg\|_{\l_n^2 L_\tau^2} \\
&\les \Big\| \jb{\tau}^{|b|} \ft{ \eta_{\ld, T} } (\tau) \Big\|_{L_\tau^1} \| u \|_{X_T^{s, b, \ld}},
\end{split}
\label{exp_crude}
\end{align}

\noi
where $\eta_{\ld, T}$ is a Schwartz function that satisfies $\eta_{\ld, T} = e^{\ld t}$ when $t \in [0, T]$.

We now exploit the decay effect of the exponential decaying time weight.
\begin{lemma}
\label{LEM:nhomo2}
Let $\al \in \R$, $T > 0$, $\ld > 0$, and $s \in \R$. Let $\dl > 0$ be sufficiently small. Then, we have
\begin{align*}
\| \I (F) \|_{X_T^{s, \frac 12 + \dl, \ld}} \les (1 + T) \ld^{- \dl} \| F \|_{X_T^{s - \al, -\frac 12 + 4 \dl, \ld}}.
\end{align*}
\end{lemma}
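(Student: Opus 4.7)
The plan is to reduce this to the unweighted nonhomogeneous estimate for a suitably modified Duhamel operator, and then extract the $\ld^{-\dl}$ gain from the extra room between the input and output modulation indices. The starting point is the identity, valid for $t\geq 0$,
\begin{align*}
e^{-\ld t}\I(F)(t)
= \int_0^t e^{-(\ld+1/2)(t-t')}\,\frac{\sin\big((t-t')\jbb{\nabla}\big)}{\jbb{\nabla}}\,\big(e^{-\ld t'}F(t')\big)\,dt'
=: \I^\ld\big(e^{-\ld\cdot}F\big)(t),
\end{align*}
which exhibits $\I^\ld$ as the Duhamel operator for the same equation but with damping rate $\tfrac12+\ld$ in place of $\tfrac12$. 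Setting $G = e^{-\ld|\cdot|}F$ and extending $\I^\ld$ to all of $\R$ via the analogue of the substitute $\I'$ constructed in the proof of Lemma~\ref{LEM:nhomo}, the desired bound is equivalent to
\begin{align*}
\| \I^\ld(G)\|_{X_T^{s,\frac12+\dl}} \les (1+T)\,\ld^{-\dl}\,\| G\|_{X_T^{s-\al,-\frac12+4\dl}}.
\end{align*}

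For $0<\ld\leq 1$ there is nothing to prove, since $\ld^{-\dl}\geq 1$ and the claimed estimate then follows directly from Lemma~\ref{LEM:nhomo} applied to $\I^\ld$ (the proof there goes through verbatim with $\tfrac12$ replaced by $\tfrac12+\ld\sim 1$). I therefore focus on the regime $\ld\geq 1$. Applying Lemma~\ref{LEM:glue} to localize to a unit-length time interval, I reduce to a full-line estimate for $\eta(t)\I^\ld(G)$ and repeat the kernel computation of Lemma~\ref{LEM:nhomo}. The resulting kernel $\mathcal{K}^\ld(n,\tau,\mu)$ differs from $\mathcal{K}$ only by the substitution of the resolvent $(\tfrac12+i\mu\mp i\jbb{n})^{-1}$ with $(\tfrac12+\ld+i\mu\mp i\jbb{n})^{-1}$ and of $(e^{-|t|/2})^{\wedge}(\tau)\sim\jb{\tau}^{-2}$ with $(e^{-(\ld+1/2)|t|})^{\wedge}(\tau)\sim (\ld+\jb{\tau})^{-2}$. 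The pointwise bounds on $\mathcal{K}$ in the $|\tau-\mu|\leq 1$ and $|\tau-\mu|>1$ regions then yield the analogous bounds on $\mathcal{K}^\ld$ with $\jb{\mu\mp\jbb{n}}$ replaced throughout by $\big(\ld+\jb{\mu\mp\jbb{n}}\big)$, since $|\tfrac12+\ld+i(\mu\mp\jbb{n})|\sim \ld+\jb{\mu\mp\jbb{n}}$ for $\ld\geq 1$.

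To extract the gain of $\ld^{-\dl}$, observe that compared with the unweighted estimate the input exponent is $-\tfrac12+4\dl$ instead of $-\tfrac12+\dl$, giving $3\dl$ of room. In the diagonal piece $|\tau-\mu|\leq 1$ the kernel $\mathcal{K}^\ld$ is dominated by $\big(\ld+\jb{\mu\mp\jbb{n}}\big)^{-3\dl}$, which via $\ld+x\ges\max(\ld,x)$ and interpolation obeys $\big(\ld+\jb{\mu\mp\jbb{n}}\big)^{-3\dl}\les \ld^{-\dl}\jb{\mu\mp\jbb{n}}^{-2\dl}$; the resulting $\jb{\mu\mp\jbb{n}}^{-2\dl}$ factor is harmless for Schur's test. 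In the off-diagonal piece, integration by parts produces the two model kernels $\mathcal{K}_1$ and $\mathcal{K}_2$ of Lemma~\ref{LEM:nhomo}; after the same substitution, each factor of $\jb{\tau\mp\jbb{n}}^{-1}$ or $\jb{\mu\mp\jbb{n}}^{-1}$ becomes $\big(\ld+\jb{\cdot}\big)^{-1}$, and the same interpolation siphons off $\ld^{-\dl}$ while leaving an $L^2_{\tau,\mu}$-integrable remainder. Combining these pieces with Schur's test as in Lemma~\ref{LEM:nhomo}, and paying a $(1+T)$-factor for the gluing, delivers the claim.

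The main technical obstacle is the uniformity of the $\ld^{-\dl}$ gain across the various sub-regions of the kernel decomposition: one must verify, in each of the sub-cases $|\tau\mp\jbb{n}|\gtreqless|\mu\mp\jbb{n}|$ and in each of the boundary terms produced by integration by parts, that the interpolation $(\ld+x)^{-3\dl}\les\ld^{-\dl}x^{-2\dl}$ can be performed without disturbing the Schur-test estimates that make the $\ld=0$ kernel square-integrable. This is a careful bookkeeping exercise but introduces no new ingredient beyond what is already present in the proof of Lemma~\ref{LEM:nhomo}.
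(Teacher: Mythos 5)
Your proposal is correct and follows essentially the same route as the paper's proof: conjugate by the exponential weight to reduce to the Duhamel operator with damping rate $\tfrac 12+\ld$, rerun the kernel analysis of Lemma \ref{LEM:nhomo} with the $\ld$-enhanced resolvents, and extract the factor $\ld^{-\dl}$ by interpolation using the $3\dl$ of extra modulation room. The only (inessential) inaccuracy is in your heuristic description of the boundary term: $(e^{-(\frac12+\ld)|t|})^{\wedge}(\tau)\sim (1+\ld)(\ld+|\tau|)^{-2}$ rather than $(\ld+\jb{\tau})^{-2}$, but the same interpolation absorbs the extra numerator, exactly as in the paper's treatment of the factor $\ld\big(\ld^2+(\tau\mp\jbb{n})^2\big)^{-1}$.
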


\begin{proof}
By the definition \eqref{exp_norm} and a similar reduction as in Lemma \ref{LEM:nhomo}, it suffices to show
\begin{align*}
\big\| \eta (t) \I_\pm^\ld (F) \big\|_{X^{0, \frac 12 + \dl}} \les \ld^{-\dl} \| F \|_{X^{ - \al, - \frac 12 + 4 \dl}}
\end{align*}

\noi
for some $\dl > 0$, where $\eta: \R \to [0,1]$ is a smooth cut-off function on an interval with unit length and
\begin{align*}
\I_\pm^\ld (F) (t) = \sum_{n \in \Z^3} \frac{e^{in \cdot x}}{\jbb{n}} \int_\R \frac{e^{it \mu} - e^{- (\frac 12 + \ld) |t| \pm i t \jbb{n}}}{\frac 12 + \ld + i \mu \mp i \jbb{n}} \ft F (n, \mu) d\mu.
\end{align*}

By using similar steps that lead to \eqref{goal}, it suffices to show
\begin{align}
\bigg\| \int_\R \mathcal{K}^\ld (n, \tau, \mu) G(\mu) d\mu \bigg\|_{L_\tau^2} \les \| G (\mu) \|_{L_\mu^2}
\label{goal2}
\end{align}

\noi
uniformly in $n \in \Z^3$, where
\begin{align*}
\mathcal{K}^\ld (n, \tau, \mu) = \jb{|\tau| - \jbb{n}}^{\frac 12 + \dl} \jb{|\mu| - \jbb{n}}^{\frac 12 - 4 \dl} \int_\R \eta(t) \frac{e^{it (\mu - \tau)} - e^{- (\frac 12 + \ld) |t| - i \tau t \pm i t \jbb{n}}}{\frac 12 + \ld + i \mu \mp i \jbb{n}} dt.
\end{align*}

When $|\tau - \mu| \leq 1$, we have
\begin{align*}
|\mathcal{K}^\ld (n, \tau, \mu)| \ind_{\{|\tau - \mu| \leq 1\}} \les \frac{\jb{|\tau| - \jbb{n}}^{\frac 12 + \dl} \jb{|\mu| - \jbb{n}}^{\frac 12 - 4 \dl}}{\ld^\dl \jb{\mu + \jbb{n}}^{1 - \dl}} \ind_{\{|\tau - \mu| \leq 1\}} \les \ld^{- \dl}.
\end{align*}

\noi
Thus, by Schur's test, we have
\begin{align*}
\big\| \mathcal{K}^\ld (n, \tau, \mu) \ind_{\{|\tau - \mu| \leq 1\}} \big\|_{L_\mu^2 \to L_\tau^2}^2 &\leq \sup_{\tau \in \R} \int_{|\tau - \mu| \leq 1} |\mathcal{K}^\ld (n, \tau, \mu)| d\mu \cdot \sup_{\mu \in \R} \int_{|\tau - \mu| \leq 1} |\mathcal{K} (n, \tau, \mu)| d\tau \\
&\les \ld^{-2\dl},
\end{align*}

\noi
so that \eqref{goal2} holds when $|\tau - \mu| \leq 1$.

When $|\tau - \mu| > 1$, by using integration by parts, we obtain
\begin{align*}
\mathcal{K}^\ld (n, \tau, \mu) \ind_{\{|\tau - \mu| > 1\}} &= \ind_{\{|\tau - \mu| > 1\}} \jb{|\tau| - \jbb{n}}^{\frac 12 + \dl} \jb{|\mu| - \jbb{n}}^{\frac 12 - 4 \dl} \\
&\quad \times \bigg( \int_\R \eta'' (t) \frac{e^{i t (\mu - \tau)}}{-(\mu - \tau)^2 (\frac 12 + \ld  + i \mu \mp i \jbb{n})} dt \\
&\qquad -\frac{\eta(0) (1 + 2\ld)}{\big( (\frac 12 + \ld)^2 + (\tau \mp \jbb{n})^2 \big) (\frac 12 + \ld + i \mu \mp i \jbb{n})}  \\
&\qquad - \int_0^\infty \eta' (t) \frac{e^{-( \frac{1}{2} + \ld ) t - i\tau t \pm i t \jbb{n}}}{(\frac 12 + \ld + i \tau \mp i \jbb{n}) (\frac 12 + \ld + i \mu \mp i \jbb{n})} dt \\
&\qquad + \int_{-\infty}^0 \eta' (t) \frac{e^{(\frac{1}{2} + \ld) t - i \tau t \pm i t \jbb{n}}}{(\frac 12 + \ld - i \tau \pm i \jbb{n}) (\frac 12 + \ld + i \mu \mp i \jbb{n})} dt \bigg)
\end{align*}

\noi
Using integration by parts again and triangle inequalities, we can deduce that
\begin{align*}
|\mathcal{K}^\ld (&n, \tau, \mu)| \ind_{\{|\tau - \mu| > 1\}}  \\
&\les \frac{\jb{|\tau| - \jbb{n}}^{\frac 12 + \dl} \jb{|\mu| - \jbb{n}}^{\frac 12 - 4 \dl}}{\jb{\mu - \tau}^2 \big( \ld + |\mu \mp \jbb{n}| \big)} + \frac{\jb{|\tau| - \jbb{n}}^{\frac 12 + \dl} \jb{|\mu| - \jbb{n}}^{\frac 12 - 4 \dl} \ld}{\big( \ld^2 + (\tau \mp \jbb{n})^2 \big) \big( \ld + |\mu \mp \jbb{n}| \big)} \\
&\les \ld^{-\dl} \frac{\jb{|\tau| - \jbb{n}}^{\frac 12 + \dl}}{\jb{\mu - \tau}^2 \jb{\mu \mp \jbb{n}}^{\frac 12 + 3\dl}} + \ld^{-\dl} \frac{1}{\jb{\tau \mp \jbb{n}}^{\frac 12 + \dl} \jb{\mu \mp \jbb{n}}^{\frac 12 + \dl}}.
\end{align*}

\noi
The rest of the steps follows from almost the same way as in Lemma \ref{LEM:nhomo}, and so we omit details.
\end{proof}

We are now ready to state and prove our stability estimate. Let us consider the truncated fractional $\Phi^4_3$-model \eqref{SNLW3a} for $u_N$. We use a second order expansion as in \eqref{exp3} to write
\begin{align}
u_N (t) = \<1> (t; \vec u_0, \o_2) - \<30>_N (t; \vec u_0, \o_2) + v_N (t),
\label{second}
\end{align}

\noi
where the remainder term $v_N$ satisfies
\begin{align}
\begin{split}
v_N  = \G_N [v_N]   &\overset{\textup{def}}{=} -\pi_N \I \big( (\pi_N v_N)^3 \big) + 3\pi_N \I \big( (\<30>_N-\<1>_N)(\pi_N v_N)^2 \big) \\
		&\quad - 3 \pi_N \I \big( \<30>_N^2 \pi_N v_N \big) + 6 \If^{\<31>_N} (\pi_N v_N) -3\If^{\<2>_N} (\pi_N v_N) \\
		&\quad 
		  + \pi_N \I \big( \<30>_N^3 \big)-3 \<70>_N + 3 \<320>_N 
\end{split}
\label{SdNLW_vN}
\end{align}

\noi
Here, all the stochastic objects are defined as in \eqref{enh0} and are included in the enhanced data set $\pmb{\Xi}_N = \pmb{\Xi}_N (\vec u_0, \o_2)$ as in \eqref{data3x}.

\begin{proposition}[Stability estimate]
\label{PROP:stb}
Let $T > 0$, $1 < \al \leq \frac 32$, $K \gg 1$, and $C_0 \gg 1$. Let $\eps, \dl > 0$ be sufficiently small. Then, there exist $N_0 = N_0 (T, K, C_0) \in \N$ and small $\kappa_0 = \kappa_0 (T, K, C_0) > 0$ such that the following statement holds. Let $N_1 \geq N_2 \geq N_0$. Suppose that
\begin{align}
\| \pmb{\Xi}_{N_1} \|_{\mathcal{X}_T^{\al, \eps, \dl}} \leq K
\label{stb1}
\end{align}

\noi
and
\begin{align}
\| v_{N_1} \|_{X_T^{\al - \frac 12, \frac 12 + \dl}} \leq C_0,
\label{stb2}
\end{align}

\noi
where $v_{N_1}$ is the solution to the truncated equation \eqref{SdNLW_vN} with $N = N_1$ on $[0,T]$ with the truncated enhanced data set $\pmb{\Xi}_{N_1}$ and the $\mathcal{X}_T^{\al, \eps, \dl}$-norm is as defined in \eqref{Xae}. Furthermore, suppose that
\begin{align}
\| \pmb{\Xi}_{N_1} - \pmb{\Xi}_{N_2} \|_{\mathcal{X}_T^{\al, \eps, \dl}} \leq \kappa
\label{stb_diff}
\end{align}

\noi
for some $0 < \kappa \leq \kappa_0$. Then, there exists a solution $v_{N_2}$ to the truncated equation \eqref{SdNLW_vN} with $N = N_2$ on $[0, T]$ with the enhanced data set $\pmb{\Xi}_{N_2}$ satisfying
\begin{align}
\| v_{N_1} - v_{N_2} \|_{X_T^{\al - \frac 12, \frac 12 + \dl}} \leq C(T, K, C_0) (\kappa + N_2^{- \ta})
\label{stb_vdiff}
\end{align}

\noi
for some constant $C(T, K, C_0) > 0$ and some $\ta > 0$.
\end{proposition}

\begin{proof}
We can assume without loss of generality that $\kappa_0 \leq 1$, so that $\kappa \leq \kappa_0 \leq 1$. By \eqref{stb1} and \eqref{stb_diff}, we have
\begin{align}
\| \pmb{\Xi}_{N_2} \|_{\mathcal{X}_T^{\al, \eps, \dl}} \leq K + 1.
\label{stbp1}
\end{align}

By setting 
\begin{align*}
\dl v_{N_1, N_2} = v_{N_2} - v_{N_1},
\end{align*}

\noi
we have
\begin{align*}
v_{N_2} = \dl v_{N_1, N_2} + v_{N_1}.
\end{align*}

\noi
Thus, in view of \eqref{SdNLW_vN}, $\dl v_{N_1, N_2}$ is the unknown for the equation
\begin{align}
\dl v_{N_1, N_2} = \G' [\dl v_{N_1, N_2}] \overset{\textup{def}}{=} \G_{N_2} [\dl v_{N_1, N_2} + v_{N_1}] - \G_{N_1} [v_{N_1}].
\label{stbp2}
\end{align}

\noi
Our goal is to show that $\G'$ is a contraction on a small ball in $X_T^{\al - \frac 12, \frac 12 + \dl, \ld}$ for some $\ld \geq 1$ to be chosen at a later point. To achieve this, we first establish a bound on $\G'$ for $\dl v_{N_1, N_2} \in B_1$, where $B_1 \subset X_T^{\al - \frac 12, \frac 12 + \dl}$ is the closed ball of radius 1 with respect to the $X_T^{\al - \frac 12, \frac 12 + \dl}$-norm centered at the origin.

In view of \eqref{stbp2}, we can write
\begin{align}
e^{-\ld t} \G' [\dl v_{N_1, N_2}] (t) = e^{-\ld t} \textup{I}_1 (t) + e^{-\ld t} \textup{I}_2 (t) + e^{-\ld t} \textup{I}_3 (t),
\label{stb_I}
\end{align}

\noi
where each term in $\textup{I}_1$ contains the difference of one of the elements in the enhanced data sets $\pmb{\Xi}_{N_1}$ and $\pmb{\Xi}_{N_2}$, each term in $\textup{I}_2$ contains the difference $\pi_{N_1} - \pi_{N_2}$ of frequency projections, and each term in $\textup{I}_3$ contains $\dl v_{N_1, N_2}$.

We now modify the local well-posedness argument (Proposition \ref{THM:1}) to deal with $\textup{I}_1$, $\textup{I}_2$, and $\textup{I}_3$. For $\textup{I}_1$, in view of \eqref{exp_key} and \eqref{stb_diff}, by \eqref{stb1}, \eqref{stbp1}, \eqref{stb2}, and the fact that $\dl v_{N_1, N_2} \in B_1$, we obtain
\begin{align}
\| e^{- \ld t} \textup{I}_1 \|_{X_T^{\al - \frac 12, \frac 12 + \dl}} \leq C(T) (C_0^3 + K^3) \kappa.
\label{stb_I1}
\end{align}

\noi
For $\textup{I}_2$, we note that in the proof of local well-posedness (Proposition \ref{THM:1}), for each term there is room for losing some small amount of spatial regularity at the beginning or the end of each estimate (see also Lemma \ref{LEM:str2} and Lemma \ref{LEM:str4}). Thus, the contribution from $\pi_{N_1} - \pi_{N_2}$ along with this slight loss of regularity allows as to gain a negative power of $N_2$ for each term. As a result, by \eqref{exp_key}, \eqref{stb1}, \eqref{stbp1}, \eqref{stb2}, and the fact that $\dl v_{N_1, N_2} \in B_1$, we obtain
\begin{align}
\| e^{- \ld t} \textup{I}_2 \|_{X_T^{\al - \frac 12, \frac 12 + \dl}} \leq C(T) (C_0^3 + K^3) N_2^{-\ta}
\label{stb_I2}
\end{align}

\noi
for some $\ta > 0$. For $\textup{I}_3$, we replace the use of Lemma \ref{LEM:time} and Lemma \ref{LEM:nhomo} in the local well-posedness argument by Lemma \ref{LEM:nhomo2}, and we put the weight $e^{-\ld t}$ on the $\dl v_{N_1, N_2}$ term after the use of Lemma \ref{LEM:nhomo2}. Thus, by \eqref{stb1}, \eqref{stbp1}, \eqref{stb2}, and the fact that $\dl v_{N_1, N_2} \in B_1$, we obtain
\begin{align}
\| e^{- \ld t} \textup{I}_3 \|_{X_T^{\al - \frac 12, \frac 12 + \dl}} \leq C(T) (C_0^2 + K^2) \ld^{- \dl} \| \dl v_{N_1, N_2} \|_{X_T^{\al - \frac 12, \frac 12 + \dl, \ld}}.
\label{stb_I3}
\end{align}

Combining \eqref{stb_I}, \eqref{stb_I1}, \eqref{stb_I2}, and \eqref{stb_I3}, we obtain
\begin{align}
\begin{split}
\big\| &\G' [\dl v_{N_1, N_2}] \big\|_{X_T^{\al - \frac 12, \frac 12 + \dl, \ld}} \\
&\leq C(T, K, C_0) (\kappa + N_2^{-\ta}) + C(T, K, C_0) \ld^{-\dl} \| \dl v_{N_1, N_2} \|_{X_T^{\al - \frac 12, \frac 12 + \dl, \ld}}
\end{split}
\label{stb_contr1}
\end{align}

\noi
for any $\dl v_{N_1, N_2} \in B_1$. Using a similar computation, we also obtain the difference estimate
\begin{align}
\begin{split}
\big\| &\G' [ (\dl v_{N_1, N_2})^{(1)}] - \G' [ (\dl v_{N_1, N_2})^{(2)}] \big\|_{X_T^{\al - \frac 12, \frac 12 + \dl, \ld}} \\
&\leq C(T, K, C_0) \ld^{-\dl} \big\| (\dl v_{N_1, N_2})^{(1)} - (\dl v_{N_1, N_2})^{(2)} \big\|_{X_T^{\al - \frac 12, \frac 12 + \dl, \ld}}
\end{split}
\label{stb_contr2}
\end{align}

\noi
for any $(\dl v_{N_1, N_2})^{(1)}, (\dl v_{N_1, N_2})^{(2)} \in B_1$. Let us choose $\ld = \ld(T, K, C_0) \gg 1$ to be large enough such that 
\begin{align*}
C(T, K, C_0) \ld^{-\dl} \leq \frac 12.
\end{align*}

\noi
We also note that by \eqref{exp_crude}, we can choose $r = r(T, \ld) > 0$ be sufficiently small such that
\begin{align*}
\| \dl v_{N_1, N_2} \|_{X^{\al - \frac 12, \frac 12 + \dl}_T} \leq C(\ld, T) \| \dl v_{N_1, N_2} \|_{X_T^{\al - \frac 12, \frac 12 + \dl, \ld}} \leq C(\ld, T) r \leq 1
\end{align*}

\noi
for any $\dl v_{N_1, N_2} \in B_r^\ld$, where $B_r^\ld \subset X_T^{\al - \frac 12, \frac 12 + \dl, \ld}$ is the closed ball of radius $r$ with respect to the $X_T^{\al - \frac 12, \frac 12 + \dl, \ld}$-norm centered at the origin. This shows that \eqref{stb_contr1} and \eqref{stb_contr2} hold on $B_r^\ld$. Thus, by choosing $\kappa = \kappa(T, K, C_0)$ sufficiently small and $N_0 = N_0(T, K, C_0)$ sufficiently large in such a way that
\begin{align*}
C (T, K, C_0) ( \kappa + N_2^{- \ta} ) \leq \frac{r}{2},
\end{align*}

\noi
we obtain that $\G'$ is a contraction on $B_r^\ld$ for any $N_1 \geq N_2 \geq N_0$, so that there exists a unique $\dl v_{N_1, N_2} \in B_r^\ld$ that satisfies \eqref{stbp2}. Therefore, by setting $v_{N_2} = \dl v_{N_1, N_2} + v_{N_1}$ and noting that
\begin{align*}
v_{N_2} &= \dl v_{N_1, N_2} + v_{N_1} \\
&= \G_{N_2} [\dl v_{N_1, N_2} + v_{N_1}] - \G_{N_1} [v_{N_1}] + v_{N_1} \\
&= \G_{N_2} [\dl v_{N_1, N_2} + v_{N_1}] \\
&= \G_{N_2} [v_{N_2}],
\end{align*}

\noi
we conclude that $v_{N_2}$ satisfies the equation \eqref{SdNLW_vN} with $N = N_2$ on $[0, T]$. 
Furthermore, by \eqref{stb_contr1} with $\ld = \ld(T, K, C_0) \gg 1$ large enough and \eqref{exp_crude}, we have
\begin{align*}
\| v_{N_1} - v_{N_2} \|_{X_T^{\al - \frac 12, \frac 12 + \dl}} &= \| \dl v_{N_1, N_2} \|_{X_T^{\al - \frac 12, \frac 12 + \dl}} \\
&\leq C(\ld, T) \| \dl v_{N_1, N_2} \|_{X_T^{\al - \frac 12, \frac 12 + \dl, \ld}} \\
&\leq C(T, K, C_0) (\kappa + N_2^{-\ta}),
\end{align*}

\noi
which verifies \eqref{stb_vdiff}.
\end{proof}

\subsection{Nonlinear smoothing and uniform bound with large probability}
\label{SUBSEC:ns}
In this subsection, we prove that the solution $v_N$ to the truncated equation \eqref{SdNLW_vN} has a uniform bound with a large probability. To achieve this, we need the following nonlinear smoothing estimate, which is similar to  \cite[Proposition~3.3 and Lemma~4.3]{BDNY}.
\begin{proposition}[Nonlinear smoothing]
\label{PROP:ns}
Let $T \geq 1$, $1 < \al \leq \frac 32$, and $\eps, \dl > 0$ be sufficiently small. Let $u_N$ be the solution to the truncated equation \eqref{SNLW3a}. Then, given any $\eta > 0$, there exists $C_0 = C_0 (T, \eta) \gg 1$ such that
\begin{align}
\vec \rho_N \otimes \PP_2 \Big( \big\| \pi_N \I \big( :\! (\pi_N u_N)^3 \! : \big) \big\|_{L_T^\infty W_x^{3\al - 3 - \eps, \infty} + X_T^{\al - \frac 12, \frac 12 + \dl}} > C_0 \Big) < \eta,
\label{ns_main}
\end{align}

\noi
uniformly in $N \in \N$.
\end{proposition}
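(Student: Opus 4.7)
First, I would rewrite the Duhamel formula for \eqref{SNLWode2} using \eqref{enh0a} and the second order expansion \eqref{second} to obtain the key algebraic identity
\begin{align*}
\pi_N \I\bigl(:\! (\pi_N u_N)^3 \!:\bigr) = \<30>_N(\cdot;\vec u_0,\o_2) - v_N.
\end{align*}
Together with the continuous embedding $X_T^{\al-\frac 12,\,\frac 12+\dl} \hookrightarrow L_T^\infty W_x^{3\al-3-\eps,\infty} + X_T^{\al-\frac 12,\,\frac 12+\dl}$, this reduces \eqref{ns_main} to two uniform-in-$N$ probability estimates, each with probability $\ge 1-\eta/2$: one on $\<30>_N$ in the sum space, and one on $v_N$ in $X_T^{\al-\frac 12,\,\frac 12+\dl}$.

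The bound on $\<30>_N$ is immediate from Proposition \ref{PROP:exptail2}, which provides a uniform Gaussian-type exponential tail on the full enhanced data set $\pmb{\Xi}_N(\vec u_0,\o_2)$ in $\mathcal X_T^{\al,\eps,\dl}$ under $\Law(\vec u_0) = \vec\rho_N$; I pick $K_1 = K_1(T,\eta)$ so the corresponding norm exceeds $K_1$ with probability less than $\eta/2$.

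For $v_N$ I would run Bourgain's invariant measure argument on a partition $t_j = j\tau$ of $[0,T]$ into $J = \lceil T/\tau\rceil$ slightly overlapping subintervals $I_j$, with $\tau = \tau(K_2)$ the local-existence time furnished by Remark \ref{RMK:LWP} for enhanced-data size $K_2$. Invariance of $\vec\rho_N$ under the truncated flow (Lemma \ref{LEM:GWP4}) and stationarity of the cylindrical Wiener process $W$ ensure that the shifted enhanced data set
\begin{align*}
\pmb{\Xi}_N^{(j)} := \pmb{\Xi}_N\bigl(\cdot;(u_N(t_j),\dt u_N(t_j)),\tau_{t_j}\o_2\bigr)
\end{align*}
is equal in law to $\pmb{\Xi}_N(\vec u_0,\o_2)$ for every $j$, so Proposition \ref{PROP:exptail2} applied on the window $[0,\tau]$ together with a union bound over $j$ yields $\max_{j} \|\pmb{\Xi}_N^{(j)}\|_{\mathcal X_\tau^{\al,\eps,\dl}} \leq K_2$ with probability $\ge 1 - J\,C(\tau)\,e^{-cK_2^\be}$; a suitable choice of $K_2 = K_2(T,\eta)$ makes this $\geq 1-\eta/2$. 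On this event, the restarted second order expansion $u_N(t_j+s) = \<1>^{(j)}(s) - \<30>_N^{(j)}(s) + \tilde v_N^{(j)}(s)$, with $\<1>^{(j)} := \<1>(\cdot;(u_N(t_j),\dt u_N(t_j)),\tau_{t_j}\o_2)$, has its remainder $\tilde v_N^{(j)}$ satisfying \eqref{SdNLW_vN} with enhanced data $\pmb{\Xi}_N^{(j)}$ \emph{and zero initial data}, so Remark \ref{RMK:LWP} gives the uniform a priori bound $\|\tilde v_N^{(j)}\|_{X_\tau^{\al-\frac 12,\,\frac 12+\dl}} \leq 2CK_2$. Comparing the global and restarted expansions,
\begin{align*}
v_N(t_j+s) - \tilde v_N^{(j)}(s) = \bigl[\<1>^{(j)}(s) - \<1>(t_j+s;\vec u_0,\o_2)\bigr] - \bigl[\<30>_N^{(j)}(s) - \<30>_N(t_j+s;\vec u_0,\o_2)\bigr],
\end{align*}
and both bracketed stochastic-object differences are controlled in $X_{I_j}^{\al-\frac 12,\,\frac 12+\dl}$ by the bounds already in hand on $\pmb{\Xi}_N$ and $\pmb{\Xi}_N^{(j)}$ combined with Lemma \ref{LEM:homo} applied to the relevant linear wave evolutions. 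Patching the $J$ local bounds via Lemma \ref{LEM:glue} on the overlapping partition produces $\|v_N\|_{X_T^{\al-\frac 12,\,\frac 12+\dl}} \leq C_0(T,\eta)$.

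The hardest part is arranging the chaining in the third step so that the bound is uniform in $N$ and does not degrade as $J\sim T/\tau$ grows. A direct iteration, restarting the $v_N$-equation at each $t_j$ with the actual initial data $(v_N(t_j),\dt v_N(t_j))$, would blow up exponentially in $J$, because the LWP time in Remark \ref{RMK:LWP} shrinks with the initial-data norm, forcing $\tau\to 0$ and $J\to\infty$. The resolution, in the spirit of \cite{BDNY}, is to always re-expand $u_N$ around the \emph{fresh} randomness at $t_j$: then $\tilde v_N^{(j)}$ always starts from zero initial data and its local bound depends only on the uniform constant $K_2$ coming from invariance plus Proposition \ref{PROP:exptail2}. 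The randomness of the state $(u_N(t_j),\dt u_N(t_j))$ is absorbed entirely into the restarted enhanced data $\pmb{\Xi}_N^{(j)}$, whose tails are controlled once and for all. The remaining delicacy is the bookkeeping of the stochastic-object and linear-evolution differences between the original and restarted expansions, but this draws only on bounds already established in the previous steps.
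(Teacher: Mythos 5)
Your opening reduction is fine: from the Duhamel formulation, $\pi_N \I\big(:\!(\pi_N u_N)^3\!:\big) = \pm\big(\<30>_N - v_N\big)$, the $\<30>_N$ piece is handled by Proposition \ref{PROP:exptail2}, and your use of invariance (Lemma \ref{LEM:GWP4}) plus the Markov property to make the restarted enhanced data $\pmb{\Xi}_N^{(j)}$ equal in law to $\pmb{\Xi}_N(\vec u_0,\o_2)$, followed by a union bound over the $J\sim T/\tau$ windows and the zero-data a priori bound of Remark \ref{RMK:LWP}, is exactly the mechanism the paper exploits. But your route then diverges in a way that leaves a genuine gap: you reduce the proposition to a global bound $\|v_N\|_{X_T^{\al-\frac12,\frac12+\dl}}\le C_0$, and you claim the comparison terms between the original and restarted expansions are "controlled by bounds already in hand combined with Lemma \ref{LEM:homo}". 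They are not. The linear difference is $\<1>^{(j)}(s)-\<1>(t_j+s)=S(s)\vec w_N(t_j)$ with $\vec w_N(t_j)=(-\<30>_N+v_N,\,\dt(-\<30>_N+v_N))(t_j)$, and the cubic difference unravels into $S(s)\big(\<30>_N(t_j),\dt\<30>_N(t_j)\big)$ plus genuinely new tri-/bilinear corrections in $\pi_N S(\cdot)\vec w_N(t_j)$. Controlling these in $X^{\al-\frac12,\frac12+\dl}$ requires the position--velocity pair $(\<30>_N(t_j),\dt\<30>_N(t_j))$ in $\H^{\al-\frac12}$, which the $\mathcal{X}_T^{\al,\eps,\dl}$-norm does not supply: the $\<30>_N$ component is only measured in the sum space $C_TW_x^{3\al-3-\eps,\infty}+X_T^{\al-\frac12,\frac12+\dl}$, whose first component carries no information on $\dt\<30>_N$, and for $1<\al<\frac54$ one has $3\al-3<\al-\frac12$, so the cubic object is simply too rough to be placed in $X^{\al-\frac12,\frac12+\dl}$ at all. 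In addition, you never set up the induction in $j$ that your scheme tacitly needs (the data $\vec v_N(t_j)$, $\vec w_N(t_j)$ come from the previous window and feed back cubically through terms like $\I\big((\pi_N S(\cdot)\vec w_N(t_j))^3\big)$), so the assertion that the bookkeeping "draws only on bounds already established" does not hold without substantial extra multilinear estimates and a modified enhanced data set.

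There is also a structural point worth noting: a uniform-in-$N$ high-probability bound on $\|v_N\|_{X_T^{\al-\frac12,\frac12+\dl}}$ is precisely Proposition \ref{PROP:bdd}, which in the paper is deduced \emph{from} Proposition \ref{PROP:ns}; the paper's proof of \ref{PROP:ns} is designed so as not to need it. Instead of chaining expansions, the paper splits the nonlinearity inside the Duhamel operator, $\I\big(:\!(\pi_N u_N)^3\!:\big)=\sum_{j}\I\big(\ind_{[jT_0,(j+1)T_0]}:\!(\pi_N u_N)^3\!:\big)$, proves a tail bound for the single slice $j=0$ using only the local theory (second-order expansion, zero-data bound $\|v_N\|_{X_{T_0}^{\al-\frac12,\frac12+\dl}}\le 2CC_1'$ on the event that the enhanced data is bounded), and then uses invariance and a change of variables in time to transfer this bound to every slice, concluding with a union bound and $C_0=JC_1$. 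This avoids any comparison of restarted versus original stochastic objects, any control of $\vec w_N(t_j)$ as initial data, and any global estimate on $v_N$. To repair your argument you would either have to enlarge the enhanced data (e.g. record $(\<30>_N,\dt\<30>_N)$ in $C_T\H^{3\al-3-\eps}$ and adjust the target space, which still does not yield the claimed $X_T^{\al-\frac12,\frac12+\dl}$ bound for small $\al$), or switch to the paper's time-sliced formulation of the quantity being estimated.
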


\begin{proof}
We first show that for any $\eta' > 0$, there exists $C_1 = C_1 (\eta') \gg 1$ and $0 < T_0 = T_0 (\eta') \ll 1$ such that
\begin{align}
\vec \rho_N \otimes \PP_2 \Big( \big\| \pi_N \I \big( \ind_{[0, T_0]} (t') :\! (\pi_N u_N (t') )^3 \! : \big) \big\|_{L_T^\infty W_x^{3\al - 3 - \eps, \infty} + X_T^{\al - \frac 12, \frac 12 + \dl}} > C_1 \Big) < \eta'.
\label{ns_loc}
\end{align}

\noi
We recall from \eqref{second} the following second order expansion:
\begin{align*}
u_N (t) = \<1> (t; \vec {u}_0, \o_2) - \<30>_N (t; \vec u_0, \o_2) + v_N (t),
\end{align*}

\noi
where $v_N$ is the solution to \eqref{SdNLW_vN}. From Proposition~\ref{PROP:exptail2}, there exists $C_1' = C_1' (T, \eta') \geq 1$ such that
\begin{align}
\vec \rho_N &\otimes \PP_2 \Big( \| \pmb{\Xi}_N (\vec u_0, \o_2) \|_{\mathcal{X}_T^{\al, \eps, \dl}} > C_1' \Big) < e^{- c(T) C_1'} < \eta'.
\label{ns_data}
\end{align}

\noi
for some constant $c(T) > 0$. In the following, we work on the event
\begin{align}
\| \pmb{\Xi}_N (\vec u_0, \o_2) \|_{\mathcal{X}_T^{\al, \eps, \dl}} \leq C_1'.
\label{ns_data2}
\end{align}

By the uniqueness statement of local well-posedness in Proposition \ref{THM:1} and Remark \ref{RMK:LWP}, we know that if we choose $T_0 = T_0 (\eta') > 0$ small enough such that
\begin{align}
T_0^\ta \leq \frac{1}{2C (8C^3 + 1) (C_1')^3} < \frac{c(T)^3}{2C (8 C^3 + 1) \log (\frac{1}{\eta'})^3}
\label{T0_bdd}
\end{align}

\noi
for some absolute constant $C \geq 1$, we have the a-priori bound
\begin{align}
\| v_N \|_{X_{T_0}^{\al - \frac 12, \frac 12 + \dl}} \leq 2 C C_1'.
\label{ns_vN}
\end{align}

We now observe that
\begin{align}
\begin{split}
\pi_N \I \big( \ind_{[0, T_0]} :\! (\pi_N u_N )^3 \! : \big) &= \pi_N \I \big( \ind_{[0, T_0]} \<3>_N \big) + \pi_N \I \big( \ind_{[0, T_0]} (\pi_N v_N)^3  \big) \\
&\quad - 3 \pi_N \I \big( \ind_{[0, T_0]} \<30>_N  (\pi_N v_N)^2 \big) + 3 \pi_N \I \big( \ind_{[0, T_0]} \<1>_N  (\pi_N v_N)^2 \big) \\
&\quad + 3 \pi_N \I \big( \ind_{[0, T_0]} \<30>_N^2 \pi_N v_N \big) - 6 \pi_N \I \big( \ind_{[0, T_0]} \<30>_N \<1>_N \pi_N v_N \big) \\
&\quad + 3 \pi_N \I \big( \ind_{[0, T_0]} \<2>_N \pi_N v_N \big) - \pi_N \I \big(  \ind_{[0, T_0]} \<30>_N^3 \big) \\
&\quad + 3 \pi_N \I \big( \ind_{[0, T_0]} \<30>_N^2 \<1>_N \big) - 3 \pi_N \I \big( \ind_{[0, T_0]} \<30>_N \<2>_N \big).
\end{split}
\label{u_cub}
\end{align}

\noi
For all terms on the right-hand-side of \eqref{u_cub} involving $v_N$, we can put $\ind_{[0, T_0]}$ in front of $v_N$ and repeat the estimates in the local well-posedness argument in Proposition \ref{THM:1}. Also, by using \eqref{lwp_est1}, \eqref{lwp_est2}, \eqref{lwp_est3}, \eqref{lwp_est4}, and \eqref{ns_data2}, we have
\begin{align*}
\big\| \pi_N \I \big(  \ind_{[0, T_0]} \<30>_N^3 \big) \big\|_{X_T^{\al - \frac 12, \frac 12 + \dl}} &\les \big\|  \<30>_N^3 \big\|_{X_T^{- \frac 12, \frac 12 + \dl}} \\
&\les \| \<30>_N \|_{L_T^\infty W_x^{3\al - 3 - \eps, \infty} + X_T^{\al - \frac 12, \frac 12 + \dl}}^3 \\
&\leq (C_1')^3.
\end{align*}

\noi
For all other terms on the right-hand-side of \eqref{u_cub}, the appearance of $\ind_{[0, T_0]}$ does not affect the regularity properties of these stochastic objects (recall that in Lemma \ref{LEM:sto_reg} and Proposition~\ref{PROP:exptail2}, we always use Lemma \ref{LEM:nhomo} to get rid of the operator $\I$ when estimating the stochastic objects, so that the cutoff $\ind_{[0, T_0]}$ can be dealt with easily). Thus, the local well-posedness argument in Proposition \ref{THM:1} along with \eqref{ns_data2} and \eqref{ns_vN} yield
\begin{align*}
\big\| \pi_N &\I \big( \ind_{[0, T_0]} :\! (\pi_N u_N )^3 \! : \big) \big\|_{L_T^\infty W_x^{3\al - 3 - \eps, \infty} + X_T^{\al - \frac 12, \frac 12 + \dl}} \\
&\leq C' \Big( \| v_N \|_{X_{T_0}^{\al - \frac 12, \frac 12 + \dl}}^3 + (C_1')^3 \Big) + C' C_1' \\
&\leq C' (C_1')^3
\end{align*}

\noi
for some constant $C' > 0$. Therefore, \eqref{ns_loc} then follows from \eqref{ns_data} by taking $C_1 = C' (C_1')^3$.

We now show \eqref{ns_main} by using the invariance of the truncated Gibbs measure. We let $J \in \N$ to be determined later and let $T_0 = \frac{T}{J}$. We also let $\eta' = \frac{\eta}{J}$ and let $C_1' \geq 1$ be such that $\eqref{ns_data}$ holds. Thus, in order for \eqref{ns_loc} to hold with $C_1 = C' (C_1')^3$, we require \eqref{T0_bdd}, namely,
\begin{align*}
T_0 < \frac{c(T)^3}{2C (8 C^3 + 1) \log (\frac{1}{\eta'})^3} = \frac{c(T)^3}{2C (8 C^3 + 1) \log (\frac{T}{\eta T_0})^3}.
\end{align*}

\noi
This is possible by choosing $T_0 = T_0 (T, \eta) > 0$ sufficiently small.
We now choose $J = \frac{T}{T_0}$ (we can further shrink $T_0$ so that $\frac{T}{T_0}$ is an integer). Since $J$ depends only on $T$ and $\eta$, the above choices of $C_1$ and $C_1'$ depend only on $T$ and $\eta$. By using a change of variable in time, the invariance statement in Lemma \ref{LEM:GWP4}, and \eqref{ns_loc}, we have
\begin{align*}
&\vec \rho_N \otimes \PP_2 \Big( \big\| \pi_N \I \big( :\! (\pi_N u_N)^3 \! : \big) \big\|_{L_T^\infty W_x^{3\al - 3 - \eps, \infty} + X_T^{\al - \frac 12, \frac 12 + \dl}} > J  C_1 \Big) \\
&\leq \vec \rho_N \otimes \PP_2 \Big( \sum_{j = 0}^{J - 1} \big\| \pi_N \I \big( \ind_{[j T_0, (j + 1) T_0]} (t') :\! (\pi_N u_N (t'))^3 \! : \big) \big\|_{L_T^\infty W_x^{3\al - 3 - \eps, \infty} + X_T^{\al - \frac 12, \frac 12 + \dl}} > J C_1 \Big) \\
&\leq \sum_{j = 0}^{J - 1} \vec \rho_N \otimes \PP_2 \Big( \big\| \pi_N \I \big( \ind_{[0, T_0]} (t') :\! (\pi_N u_N (t' + j T_0))^3 \! : \big) \big\|_{L_T^\infty W_x^{3\al - 3 - \eps, \infty} + X_T^{\al - \frac 12, \frac 12 + \dl}} > C_1 \Big) \\
&= J \cdot \rho_N \otimes \PP_2 \Big( \big\| \pi_N \I \big( \ind_{[0, T_0]} (t') :\! (\pi_N u_N (t'))^3 \! : \big) \big\|_{L_T^\infty W_x^{3\al - 3 - \eps, \infty} + X_T^{\al - \frac 12, \frac 12 + \dl}} > C_1 \Big) \\
&< J \cdot \eta' = \eta.
\end{align*}

\noi
We can conclude by choosing $C_0 = J C_1$, which only depends on $T$ and $\eta$.
\end{proof}

We now show a uniform bound for the solution $v_N$ to \eqref{SdNLW_vN} in an $X^{s,b}$ space (solution space) with large probability. To achieve this, we need to ensure that our estimates for nonlinear terms on the right-hand-side of \eqref{SdNLW_vN} are linear in $\| v_N \|_{X^{s, b}}$. Most of these terms are easily tractable by modifying the local well-posedness argument in Proposition~\ref{THM:1}, except for the term $\I (\<1>_N (\pi_N v_N)^2)$ due to the roughness of $\<1>_N$. However, we observe that one can distribute the derivative loss to only one of the two $\pi_N v_N$'s thanks to symmetry. This is our new ingredient here and is achieved via \eqref{tri2} in Lemma~\ref{LEM:str4}.

\begin{proposition}[Uniform bound with large probability]
\label{PROP:bdd}
Let $T \geq 1$, $1 < \al \leq \frac 32$, and $\eps, \dl > 0$ be sufficiently small. Let $v_N$ be the solution to the truncated equation \eqref{SdNLW_vN} with the truncated enhanced data set $\pmb{\Xi}_N (\vec u_0, \o_2)$ in \eqref{data3x}. Then, given any $\eta > 0$, there exists $C_0 = C_0 (T, \eta) \gg 1$ such that
\begin{align}
\vec \rho_N \otimes \PP_2 \Big( \| v_N \|_{X_T^{\al - \frac 12, \frac 12 + \dl}} > C_0 \Big) < \eta,
\label{bdd_main}
\end{align}

\noi
uniformly in $N \in \N$.
\end{proposition}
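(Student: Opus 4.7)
The starting point is the identity
\[
v_N \;=\; -\,\pi_N \I\bigl(:\!(\pi_N u_N)^3\!:\bigr) \;+\; \<30>_N\qquad\text{on }[0,T],
\]
which is immediate from the expansion \eqref{second}: subtracting the equations for $\<1>$ and $\<30>_N$ from the equation for $u_N$ and noting that all three Cauchy data match at $t=0$ yields this formula. Combining Proposition \ref{PROP:ns} with the cubic contribution of Proposition \ref{PROP:exptail2}, I would fix $K = K(T,\eta)$ and $C_0' = C_0'(T,\eta)$ so that, on a $\vec\rho_N\otimes\PP_2$-event of probability $\geq 1-\eta/4$, one has simultaneously
\[
\|\pmb{\Xi}_N\|_{\mathcal{X}_T^{\al,\eps,\dl}}\leq K\quad\text{and}\quad \bigl\|\pi_N\I(:\!(\pi_N u_N)^3\!:)\bigr\|_{L_T^\infty W_x^{3\al-3-\eps,\infty} + X_T^{\al-\frac12,\frac12+\dl}}\leq C_0',
\]
and thus a matching sum-space bound on $v_N$. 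The crux of the proof is to upgrade this sum-space bound to a genuine $X^{\al-\frac12,\frac12+\dl}_T$-bound on $v_N$.

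The plan is a short-time iteration based on invariance. I would partition $[0,T]$ into $J=\lceil T/T_0\rceil$ subintervals $I_j=[t_j,t_{j+1}]$, with $T_0 = T_0(K)$ chosen small enough that Proposition \ref{THM:1}, together with Remark \ref{RMK:LWP}, produces a solution of $X^{\al-\frac12,\frac12+\dl}_{T_0}$-norm at most $2CK$ whenever the enhanced data restricted to the subinterval has norm $\leq K$. On each $I_j$ I would restart the expansion,
\[
u_N(t+t_j)=\<1>^{(j)}(t)-\<30>^{(j)}_N(t)+v_N^{(j)}(t),\qquad v_N^{(j)}(0)=0,
\]
with the restarted objects built from $\vec u_N(t_j)$ and the shifted Wiener process $\tau_{t_j}\o_2$. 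Invariance of $\vec\rho_N\otimes\PP_2$ under $\ft\Phi^N(t_j)$ (Lemma \ref{LEM:GWP4}) ensures that the law of $\pmb{\Xi}_N^{(j)}$ on $[0,T_0]$ equals that of $\pmb{\Xi}_N$, and similarly for the nonlinear smoothing quantity of Proposition \ref{PROP:ns}. A union bound over $j=0,\dots,J-1$ then produces an event $\Omega^\star$ of probability $\geq 1-\eta$ on which, for every $j$, both the enhanced-data bound and the sum-space bound hold on $I_j$, hence $\|v_N^{(j)}\|_{X^{\al-\frac12,\frac12+\dl}_{T_0}}\leq 2CK$. On each $I_j$ the identity
\[
v_N(t)\;-\;v_N^{(j)}(t-t_j)\;=\;\bigl[\<1>^{(j)}(t-t_j)-\<1>(t)\bigr]\;+\;\bigl[\<30>_N(t)-\<30>^{(j)}_N(t-t_j)\bigr]
\]
holds, and each square bracket is a linear (or linear-plus-Duhamel) evolution of a Cauchy datum constructed from $w_N(t_j):=v_N(t_j)-\<30>_N(t_j)$, which is controlled in the sum space by Proposition \ref{PROP:ns}. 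Bounding this discrepancy in $X^{\al-\frac12,\frac12+\dl}_{I_j}$ and gluing via Lemma \ref{LEM:glue} then produces $\|v_N\|_{X^{\al-\frac12,\frac12+\dl}_T}\leq C_0(T,\eta)$ on $\Omega^\star$, which is \eqref{bdd_main}.

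The hard part is exactly this discrepancy estimate. The $L_T^\infty W_x^{3\al-3-\eps,\infty}$-component of $w_N(t_j)$ fails to embed into $H^{\al-\frac12}$ once $1<\al\leq\frac54$, so the homogeneous linear estimate of Lemma \ref{LEM:homo} cannot be applied directly to that piece of the Cauchy datum. My plan is to absorb the linear propagation of this low-regularity component into a modified enhanced data set, treating it as a new ``$\<30>$-type'' input in the local well-posedness framework of Proposition \ref{THM:1}, and then to close the cubic fixed point for $v_N$ on $I_j$ using the improved trilinear estimate \eqref{tri2} of Lemma \ref{LEM:str4}. That bound is precisely tailored to handle one factor of regularity $\al-\frac32-\eps$ or $3\al-3-\eps$ paired with two factors in $X^{\al-\frac12,\frac12+\dl}$, so it bypasses any need to place the low-regularity component of the Cauchy datum into $H^{\al-\frac12}$ --- exactly the technical role of \eqref{tri2} advertised in the discussion following Theorem \ref{THM:GWP}.
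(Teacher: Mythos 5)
Your opening reduction is the same as the paper's: write $v_N = w_N + \<30>_N$ with $w_N = -\pi_N\I\bigl(:\!(\pi_N u_N)^3\!:\bigr)$, and combine Proposition \ref{PROP:ns} with Proposition \ref{PROP:exptail2} to get, with probability $\geq 1-\eta/2$ say, a sum-space bound on $v_N$ together with $\|\pmb{\Xi}_N\|_{\mathcal{X}_T^{\al,\eps,\dl}}\leq C_2$. After that, however, your restart-and-compare scheme leaves a genuine gap exactly at the step you yourself flag as ``the hard part'': the discrepancy estimate is never proved, and the sketched fix would not go through with the tools in the paper. Concretely: (i) the discrepancy is driven by the full Cauchy pair $\vec w_N(t_j)=(w_N(t_j),\dt w_N(t_j))$, but Proposition \ref{PROP:ns} controls only $w_N$ in $L_T^\infty W_x^{3\al-3-\eps,\infty}+X_T^{\al-\frac12,\frac12+\dl}$, and the $L_T^\infty W_x$-component of that sum space carries no information whatsoever about $\dt w_N(t_j)$, so the free evolution $S(t)\vec w_N(t_j)$ is not even controlled by your hypotheses; (ii) promoting $S(t)\vec w_N(t_j)$ to a new ``$\<30>$-type input'' does not reduce to Proposition \ref{THM:1}, because the restarted Wick cube produces mixed objects such as $\<2>_N\cdot S(t)\vec w_N(t_j)$, $\<1>_N\cdot\bigl(S(t)\vec w_N(t_j)\bigr)^2$ and $\<30>_N\<1>_N\cdot S(t)\vec w_N(t_j)$, together with the corresponding operator norms playing the role of $\If_{31}$, $\If_2$; none of these are covered by Lemma \ref{LEM:sto_reg} or Proposition \ref{PROP:exptail2}. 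Deterministically the resonant products fail in part of the range ($S(t)\vec w_N(t_j)\pe\<1>_N$ needs $(3\al-3)+(\al-\tfrac32)>0$, i.e.\ $\al>\tfrac98$, and the pairing with $\<2>_N$ needs $\al>\tfrac65$), and probabilistic renormalization is unavailable because $\vec w_N(t_j)$ depends on the solution, is not independent of the noise on $I_j$, and does not lie in a fixed Wiener chaos, so the Wiener-chaos/random-tensor machinery behind those estimates does not apply. A small additional slip: the feature of \eqref{tri2} you invoke (one rough factor against two $X^{\al-\frac12,\frac12+\dl}$ factors) is actually \eqref{tri3}; the point of \eqref{tri2} is the $L_I^rL_x^r$ factor.

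The restart is also unnecessary, and the paper's proof shows the simpler route: keep the time-$0$ decomposition and the original enhanced data set throughout, and feed the global sum-space bound on $v_N$ directly into the Duhamel formulation \eqref{Duh_vN} on short intervals $I$ with $|I|=T_0$. In every nonlinear term, all but one factor of $v_N$ is measured in the sum space (via \eqref{lwp_est2}--\eqref{lwp_est4}, and via \eqref{tri2} with one factor of $v_N$ in $L_I^rL_x^r$, controlled from the sum space by Strichartz \eqref{Lp_str} and the embedding $W^{3\al-3-\eps,\infty}\subset L^\infty$), so the right-hand side is \emph{linear} in $\|v_N\|_{X_I^{\al-\frac12,\frac12+\dl}}$ with prefactor $C(T)C_1^2C_2^2T_0^\ta$; choosing $T_0=T_0(T,\eta)$ small absorbs this term, and gluing via Lemma \ref{LEM:glue} over the $T/T_0$ intervals gives \eqref{bdd_main}. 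Invariance enters only once, inside Proposition \ref{PROP:ns}; no discrepancy estimate, no restarted enhanced data, and no union bound over subintervals is required. If you want to salvage your scheme, you would first have to supplement Proposition \ref{PROP:ns} with control of $\dt w_N$ and then prove the missing probabilistic estimates for the mixed objects above, which amounts to substantially more work than the direct argument.
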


\begin{proof}
We define 
\begin{align*}
w_N = u_N - \<1> (\vec u_0, \o_2).
\end{align*}

\noi
Note that $w_N$ satisfies the equation
\begin{align*}
(\dt^2 + \dt + (1 - \Dl)^\al) w_N = - \pi_N \big( :\! (\pi_N u_N)^3 \!: \big)
\end{align*}

\noi
with zero initial data, which can be written as
\begin{align*}
w_N (t) = \pi_N \I \big( :\! (\pi_N u_N)^3 \!: \big) (t)
\end{align*}

\noi
for $t \geq 0$, where $\I$ is the Duhamel operator as defined in \eqref{lin1}. Let $\al > 1$ and $\eps = \eps(\al) > 0$ be sufficiently small. By Proposition \ref{PROP:ns}, there exists $C_1 = C_1 (T, \eta) \gg 1$ such that
\begin{align}
\begin{split}
&\vec \rho_N \otimes \PP_2 \Big( \| w_N \|_{L_T^\infty W_x^{3\al - 3 - \eps, \infty} + X_T^{\al - \frac 12, \frac 12 + \dl}} > \frac{C_1}{2} \Big) \\
&= \vec \rho_N \otimes \PP_2 \Big( \big\| \pi_N \I \big( :\! (\pi_N u_N)^3 \!: \big) \|_{L_T^\infty W_x^{3\al - 3 - \eps, \infty} + X_T^{\al - \frac 12, \frac 12 + \dl}} > \frac{C_1}{2} \Big) < \frac{\eta}{4}
\end{split}
\label{wN_bdd}
\end{align}

\noi
uniformly in $N \in \N$. Note that from \eqref{second}, we have
\begin{align*}
v_N = u_N - \<1> (\vec u_0, \o_2) + \<30>_N (\vec u_0, \o_2) = w_N + \<30>_N (\vec u_0, \o_2).
\end{align*}

\noi
Thus, by \eqref{wN_bdd} and Proposition \ref{PROP:exptail2}, we have
\begin{align}
\begin{split}
\vec \rho_N &\otimes \PP_2 \Big( \| v_N \|_{L_T^\infty W_x^{3\al - 3 - \eps, \infty} + X_T^{\al - \frac 12, \frac 12 + \dl}} > C_1 \Big) \\
&\leq \vec \rho_N \otimes \PP_2 \Big( \| w_N \|_{L_T^\infty W_x^{3\al - 3 - \eps, \infty} + X_T^{\al - \frac 12, \frac 12 + \dl}} > \frac{C_1}{2} \Big) \\
&\quad + \vec \rho_N \otimes \PP_2 \Big( \| \<30>_N \|_{L_T^\infty W_x^{3\al - 3 - \eps, \infty} + X_T^{\al - \frac 12, \frac 12 + \dl}} > \frac{C_1}{2} \Big) < \frac{\eta}{2}.
\end{split}
\label{vN_bdd}
\end{align}

\noi
Furthermore, from Proposition \ref{PROP:exptail2}, there exists $C_2 = C_2 (T,  \eta) \geq 1$ such that
\begin{align}
\vec \rho_N &\otimes \PP_2 \Big( \| \pmb{\Xi}_N (\vec u_0, \o_2) \|_{\mathcal{X}_T^{\al, \eps, \dl}} > C_2 \Big) < \frac{\eta}{2}.
\label{data_bdd}
\end{align}

\noi
In the following, we work on the event
\begin{align}
\| v_N \|_{L_T^\infty W_x^{3\al - 3 - \eps, \infty} + X_T^{\al - \frac 12, \frac 12 + \dl}} \leq C_1 \quad \text{and} \quad \| \pmb{\Xi}_N (\vec u_0, \o_2) \|_{\mathcal{X}_T^{\al, \eps, \dl}} \leq C_2.
\label{bdd_set}
\end{align}

We now consider the truncated equation \eqref{SdNLW_vN} for $v_N$, which we can write as
\begin{align}
\begin{split}
v_N &= - \pi_N \I \big( (\pi_N v_N)^3 \big) + 3 \pi_N \I \big( \<30>_N (\pi_N v_N)^2 \big) - 3 \pi_N \I \big( \<1>_N (\pi_N v_N)^2 \big) \\
&\quad - 3 \pi_N \I \big( \<30>_N^2 \pi_N v_N \big) + 6 \If^{\<31>_N} (\pi_N v_N) - 3 \If^{\<2>_N} (\pi_N v_N) \\
&\quad + \pi_N \big( \<30>_N^3 \big) - 3 \<70>_N + 3 \<320>_N.
\end{split}
\label{Duh_vN}
\end{align}

\noi
Let $I \subset [0,T]$ be a closed interval with $0 < |I| \leq 1$, and we denote $T_0 = |I|$. Using the estimates in \eqref{lwp_est1}, \eqref{lwp_est2}, \eqref{lwp_est3}, \eqref{lwp_est4}, and \eqref{bdd_set}, obtain
\begin{align}
\begin{split}
\big\| &\pi_N \I \big( (\pi_N v_N)^3 \big) \big\|_{X_I^{\al - \frac 12, \frac 12 + \dl}} + \big\| \pi_N \I \big( \<30>_N (\pi_N v_N)^2 \big) \big\|_{X_I^{\al - \frac 12, \frac 12 + \dl}} \\
&\quad + \big\| \pi_N \I \big( \<30>_N \pi_N v_N \big) \big\|_{X_I^{\al - \frac 12, \frac 12 + \dl}} + \big\| \pi_N \I \big( \<30>_N^3 \big) \big\|_{X_I^{\al - \frac 12, \frac 12 + \dl}} \\
&\leq C(T) T_0^{\ta} \Big( \| v_N \|_{L_T^\infty W_x^{3\al - 3 - \eps, \infty} + X_T^{\al - \frac 12, \frac 12 + \dl}}^2 \| v_N \|_{X_I^{\al - \frac 12, \frac 12 + \dl}} \\
&\quad + \| \<30>_N \|_{L_T^\infty W_x^{3\al - 3 - \eps, \infty} + X_T^{\al - \frac 12, \frac 12 + \dl}} \| v_N \|_{L_T^\infty W_x^{3\al - 3 - \eps, \infty} + X_T^{\al - \frac 12, \frac 12 + \dl}} \| v_N \|_{X_I^{\al - \frac 12, \frac 12 + \dl}} \\
&\quad + \| \<30>_N \|_{L_T^\infty W_x^{3\al - 3 - \eps, \infty} + X_T^{\al - \frac 12, \frac 12 + \dl}}^2 \| v_N \|_{X_I^{\al - \frac 12, \frac 12 + \dl}} \\
&\quad + \| \<30>_N \|_{L_T^\infty W_x^{3\al - 3 - \eps, \infty} + X_T^{\al - \frac 12, \frac 12 + \dl}}^3 \Big) \\
&\leq C(T) C_1^2 C_2^2 T_0^\ta \| v_N \|_{X_I^{\al - \frac 12, \frac 12 + \dl}} + C(T) C_2^3.
\end{split}
\label{bdd1}
\end{align}

\noi
for some $\ta > 0$ and some constant $C(T) > 0$. By the inhomogeneous linear estimate in Lemma~\ref{LEM:nhomo}, the time regularization estimate \eqref{time2} in Lemma~\ref{LEM:time}, \eqref{tri2} in Lemma~\ref{LEM:str4} (with $r \geq \frac{10}{3}$ satisfying $\al - \frac 12 > \frac 32 - \frac{3 + \al}{r}$), \eqref{bdd_set}, and the Strichartz estimate \eqref{Lp_str} with $p = r$, we have
\begin{align}
\begin{split}
\big\| &\pi_N \I \big( \<1>_N (\pi_N v_N)^2 \big) \big\|_{X_I^{\al - \frac 12, \frac 12 + \dl}} \\
&\leq C(T) T_0^{3 \dl} \big\| \<1>_N (\pi_N v_N)^2 \big\|_{X_I^{- \frac 12, - \frac 12 + 4 \dl}} \\
&\leq C(T) T_0^{3 \dl} \| \<1>_N \|_{L_I^\infty W_x^{\al - \frac 32 - \eps, \infty}} \| v_N \|_{L_I^r L_x^r} \| v_N \|_{X_I^{\al - \frac 12, \frac 12 + \dl}} \\
&\leq C(T) C_2 T_0^{3 \dl} \| v_N \|_{L_T^\infty W_x^{\al - 1 - \eps, \infty} + X_I^{\al - \frac 12, \frac 12 + \dl}} \| v_N \|_{X_I^{\al - \frac 12, \frac 12 + \dl}} \\
&\leq C(T) C_1 C_2 T_0^{3 \dl} \| v_N \|_{X_I^{\al - \frac 12, \frac 12 + \dl}},
\end{split}
\label{bdd4}
\end{align}

\noi
where we used $3\al - 3 - \eps > 0$ for $\eps > 0$ sufficiently small. Also, using similar steps as in \eqref{lwp5-1} and \eqref{lwp5-2}, we get
\begin{align}
\begin{split}
\big\| \If^{\<31>_N} ( \pi_N v_N ) \big\|_{X_I^{\al - \frac 12, \frac 12 + \dl}} &\les C(T) C_2 T_0^\ta \| v_N \|_{X_I^{\al - \frac 12, \frac 12 + \dl}}, \\
\big\| \If^{\<2>_N} (\pi_N v_N) \big\|_{X_I^{\al - \frac 12, \frac 12 + \dl}} &\les C(T) C_2 T_0^\ta \| v_N \|_{X_I^{\al - \frac 12, \frac 12 + \dl}}.
\end{split}
\label{bdd5}
\end{align}

\noi
Thus, by using \eqref{Duh_vN} and combining \eqref{bdd1}, \eqref{bdd4}, \eqref{bdd5}, and \eqref{bdd_set}, we obtain
\begin{align*}
\| v_N \|_{X_I^{\al - \frac 12, \frac 12 + \dl}} \leq C (T) C_2^3 + C (T) C_1^2 C_2^2 T_0^\ta \| v_N \|_{X_I^{\al - \frac 12, \frac 12 + \dl}}
\end{align*}

\noi
We can then choose $T_0 = T_0 (T, \eta) > 0$ small enough such that $C(T) C_1^2 C_2^2 T_0^\ta \ll 1$, so that
\begin{align*}
\| v_N \|_{X_I^{\al - \frac 12, \frac 12 + \dl}} \leq 2 C(T) C_2^3.
\end{align*}

\noi
We can further shrink $T_0$ so that $\frac{T}{T_0}$ is an integer. Thus, we can write
\begin{align*}
[0, T] = \bigcup_{j = 0}^{\frac{T}{T_0} - 2} [j T_0, (j + 2) T_0],
\end{align*}

\noi
and so by using the gluing lemma (Lemma~\ref{LEM:glue}) repetitively, there exists a constant $C_0 = C_0 (T, \eta) > 0$ such that
\begin{align*}
\| v_N \|_{X_T^{\al - \frac 12, \frac 12 + \dl}} \leq C_0,
\end{align*}

\noi
and so the desired tail bound \eqref{bdd_main} follows from \eqref{vN_bdd} and \eqref{data_bdd}.
\end{proof}

\subsection{Proof of Theorem \ref{THM:GWP}}
\label{SUBSEC:gwp}
In this subsection, we prove Theorem~\ref{THM:GWP}. The argument in this subsection is based on \cite{BDNY}.

\begin{proof}[Proof of Theorem~\ref{THM:GWP}]
Let us first consider the almost sure global well-posedness of the fractional hyperbolic $\Phi_3^4$-model. Let $\vec u_N = (u_N, \dt u_N)$ be the solution to the truncated equation \eqref{SNLW3a}. Our goal is to show that for any $T \geq 1$, there exists some $\ta > 0$ such that
\begin{align*}
\lim_{N_0 \to \infty} \vec \rho \otimes \mathbb{P}_2 \Big( \| \vec u_{N_1} - \vec u_{N_2} \|_{C_T \H_x^{\al - \frac 32 - \eps}} \leq N_2^{- \ta} \text{ for all } N_1 \geq N_2 \geq N_0 \Big) = 1.
\end{align*}

\noi
Note that it suffices to show
\begin{align*}
\lim_{N_0 \to \infty} \lim_{M \to \infty} \vec \rho \otimes \mathbb{P}_2 \Big( \| \vec u_{N_1} - \vec u_{N_2} \|_{C_T \H_x^{\al - \frac 32 - \eps}} \leq N_2^{- \ta} \text{ for all } M \geq N_1 \geq N_2 \geq N_0 \Big) = 1.
\end{align*}

\noi
By the weak convergence of the truncated Gibbs measure $\vec \rho_N$ to the Gibbs measure $\vec \rho$ from Theorem~\ref{THM:Gibbs}, it suffices to show
\begin{align}
\begin{split}
\lim_{N_0 \to \infty} \lim_{M \to \infty} \limsup_{N \to \infty} \vec \rho_N \otimes \mathbb{P}_2 \Big( &\| \vec u_{N_1} - \vec u_{N_2} \|_{C_T \H_x^{\al - \frac 32 - \eps}} \leq N_2^{- \ta} \\
&\text{ for all } M \geq N_1 \geq N_2 \geq N_0 \Big) = 1.
\end{split}
\label{GWP_goal2}
\end{align}

\noi
By using the second order expansion \eqref{second}, Proposition~\ref{PROP:exptail2} for the $\<30>$ term, and the embedding of the $X^{s, b}$-spaces in Lemma~\ref{LEM:HsXsb}, we only need to show
\begin{align}
\begin{split}
\lim_{N_0 \to \infty} \lim_{M \to \infty} \limsup_{N \to \infty} \vec \rho_N \otimes \mathbb{P}_2 \Big( &\| v_{N_1} - v_{N_2} \|_{X_T^{\al - \frac 12, \frac 12 + \dl}} \leq N_2^{- \ta} \\
&\text{ for all } M \geq N_1 \geq N_2 \geq N_0 \Big) = 1,
\end{split}
\label{GWP_goal}
\end{align}

\noi
where $v_N$ is the solution to the truncated equation \eqref{SdNLW_vN}. Here, we used the uniqueness of the solution $u_N$ to \eqref{SNLW3a}; see \cite[Remark~6.3~(ii)]{OOT2}.

Let $\eta > 0$ be arbitrary. From Proposition~\ref{PROP:exptail2} and Chebyshev's inequality, there exist $K = K(T, \eta) \gg 1$, $N_0 = N_0 (T, \eta) \gg 1$ sufficiently large, and $\ta > 0$ sufficiently small such that
\begin{align}
\vec \rho_N \otimes \mathbb{P}_2 \Big( \| \pmb{\Xi}_N \|_{\mathcal{X}_T^{\al, \eps, \dl}} > K \Big) < \frac{\eta}{6}
\label{gwp1}
\end{align}

\noi
and
\begin{align}
\vec \rho_N \otimes \mathbb{P}_2 \Big( \| \pmb{\Xi}_{N} - \pmb{\Xi}_{N'} \|_{\mathcal{X}_T^{\al, \eps, \dl}} > (N')^{- 2 \ta} \text{ for some } N \geq N' \geq N_0 \Big) < \frac{\eta}{6}
\label{gwp2}
\end{align}

\noi 
for any $N \geq N_0$, where the $\mathcal{X}_T^{\al, \eps, \dl}$-norm is as defined in \eqref{Xae}. By Proposition~\ref{PROP:bdd}, there exists $C_0 = C_0 (T, \eta) \gg 1$ such that
\begin{align}
\vec \rho_N \otimes \mathbb{P}_2 \Big( \| v_N \|_{X_T^{\al - \frac 12, \frac 12 + \dl}} > C_0 \Big) < \frac{\eta}{6}
\label{gwp3}
\end{align}

\noi
for any $N \geq N_0$. Thus, by Proposition~\ref{PROP:stb} along with \eqref{gwp1}, \eqref{gwp2}, and \eqref{gwp3}, we can take $N_0 = N_0 (T, \eta) \gg 1$ to be sufficiently large such that
\begin{align*}
\vec \rho_N \otimes \mathbb{P}_2 \Big( \| v_N - v_{N_1} \|_{X_T^{\al - \frac 12, \frac 12 + \dl}} > \tfrac{1}{2} N_1^{- \ta} \text{ for some } N \geq N_1 \geq N_0 \Big) < \frac{\eta}{2},
\end{align*}
\begin{align*}
\vec \rho_N \otimes \mathbb{P}_2 \Big( \| v_N - v_{N_2} \|_{X_T^{\al - \frac 12, \frac 12 + \dl}} > \tfrac{1}{2} N_2^{- \ta} \text{ for some } N \geq N_2 \geq N_0 \Big) < \frac{\eta}{2}.
\end{align*}

\noi
This shows that for any $N \geq N_0$,
\begin{align*}
\vec \rho_N \otimes \mathbb{P}_2 \Big( \| v_{N_1} - v_{N_2} \|_{X_T^{\al - \frac 12, \frac 12 + \dl}} > N_2^{- \ta} \text{ for some } N \geq N_1 \geq N_2 \geq N_0 \Big) < \eta,
\end{align*}

\noi
which implies the desired \eqref{GWP_goal}.

\medskip
It remains to prove the invariance part of Theorem~\ref{THM:GWP}. Let $t \in \R_+$ and $F : \mathcal{C}^{-100} (\T^3) \times \mathcal{C}^{-100} (\T^3) \to \R$ be bounded and Lipschitz. From the above steps, we can let $\vec u$ be the $\vec \rho \otimes \mathbb{P}_2$-a.s. limit of the truncated solution $\vec u_N$. Our goal is to show that
\begin{align}
\int F (\vec u (t)) d (\vec \rho \otimes \mathbb{P}_2) = \int F (\vec u_0) d \vec \rho (\vec u_0).
\label{inv_goal}
\end{align}

By the dominated convergence theorem, the weak convergence of the truncated Gibbs measure $\vec \rho_N$ to the Gibbs measure $\vec \rho$ from Theorem~\ref{THM:Gibbs}, and \eqref{GWP_goal2}, we have
\begin{align}
\begin{split}
\int F (\vec u (t)) d (\vec \rho \otimes \mathbb{P}_2) &= \lim_{M \to \infty} \int F (\vec u_M (t)) d (\vec \rho \otimes \mathbb{P}_2) \\
&= \lim_{M \to \infty} \lim_{N \to \infty} \int F (\vec u_M (t)) d (\vec \rho_N \otimes \mathbb{P}_2) \\
&= \lim_{N \to \infty} \int F (\vec u_N (t)) d (\vec \rho_N \otimes \mathbb{P}_2).
\end{split}
\label{inv1}
\end{align}

\noi
By Lemma \ref{LEM:GWP4}, we have
\begin{align}
\int F (\vec u_N (t)) d (\vec \rho_N \otimes \mathbb{P}_2) = \int F (\vec u_0) d \vec \rho_N (\vec u_0).
\label{inv2}
\end{align}

\noi
Thus, by \eqref{inv1}, \eqref{inv2}, and the weak convergence of $\vec \rho_N$ to  $\vec \rho$, we obtain the desired \eqref{inv_goal}. Thus, we have finished the proof of Theorem~\ref{THM:GWP}.
\end{proof}

\section{Dynamical weak universality}
\label{SEC:wu}
In this section, we prove our weak universality result in Theorem~\ref{THM:wu}.

As mentioned in Subsection~\ref{SUBSEC:wu}, in the case $\frac 98 < \al < \frac 32$, the cubic stochastic object $\<30>$ in Lemma~\ref{LEM:sto_reg}~(iii) has higher regularity than the scaling subcritical regularity of the cubic fractional NLW. Thus, we shall proceed as in \cite{STzX} with the first order expansion $u_N = \<1> + w_N$ with $\<1>$ being defined in \eqref{loli}. By plugging this ansatz into the nonlinear terms in $V_N (\pi_N u_N)$ in \eqref{defVN}, we see many Wick orderings of $\<1>_N = \pi_N \<1>$, whose regularities are to be determined in various cases.

\subsection{Regularities of stochastic objects}
In this subsection, we show preliminary results on regularity properties of general Wick-ordered stochastic objects mentioned above. We perform more delicate analysis (in Lemma~\ref{LEM:sto_k2} and Lemma~\ref{LEM:sto_k3}) than that in \cite{STzX} in order to cover the full range $\al > \frac 98$ in which we have strong convergence of Gibbs measures in Proposition~\ref{PROP:Gibbs2} defined in \eqref{HkX}.

We start with the following regularity estimate for Wick orderings of $\<1>_N$.
\begin{lemma}
\label{LEM:sto_k}
Let $1 < \al < \frac 32$, $k \geq 1$ be an integer. Let $\eps = 0$ if $k = 1$ and $\eps > 0$ be sufficiently small if $k \geq 2$. Then, for any $1 \leq p < \infty$ and any closed interval $I \subset \R_+$, we have the tail estimate
\begin{align*}
\PP \Big( N^{(k - 1) (\al - \frac 32) + \frac{\eps}{2} } \| :\! \<1>_N^{k} \!: \|_{L_I^p W_x^{\al - \frac 32 - \eps, \infty}} > \ld \Big) \leq C(I) \exp (- c (I) \ld^{\frac{2}{k}} )
\end{align*}

\noi
for any $\ld > 0$ and some constants $C(I), c(I) > 0$, uniformly in $N \in \N$.
\end{lemma}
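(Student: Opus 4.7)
The plan is to combine Wick's formula with the convolution counting machinery to obtain a uniform-in-$N$ bound on the Fourier coefficient variance of $:\!\<1>_N^k\!:$, pass this to a $W^{s,\infty}$-regularity statement via Lemma~\ref{LEM:reg} at the natural regularity threshold $k(\al - \tfrac{3}{2})$, and then exploit the spatial frequency truncation at scale $N$ through a Bernstein-type inequality to trade regularity for a power of $N$. The desired prefactor $N^{(k-1)(\al - 3/2) + \eps/2}$ is exactly what is needed to absorb the resulting Bernstein cost.

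More concretely, since $:\!\<1>_N^k\!: \in \H_{\leq k}$ has Fourier coefficients whose variance, by Lemma~\ref{LEM:Wick2}, takes the form
\begin{align*}
\E\big[ |\F(:\!\<1>_N^k\!:)(n, t)|^2 \big] = k!\sum_{\substack{n_1 + \cdots + n_k = n \\ |n_j| \leq N}} \prod_{j=1}^k b_N(n_j, t),
\end{align*}
with $b_N(n_j, t) \les \jb{n_j}^{-2\al}$ uniform in $N$ and $t$, iterating Lemma~\ref{LEM:SUM}(i) $k-1$ times (valid under $\al > 1$ for the relevant $k$) yields the uniform variance bound $\jb{n}^{-3 - 2k(\al - 3/2)}$. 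A corresponding bound for time differences, with an extra factor $|h|^{\s}$ for some small $\s > 0$, follows from the mean value theorem applied to the kernel $e^{-t/2}\sin(t\jbb{n})/\jbb{n}$ together with Remark~\ref{REM:dt}. Invoking Lemma~\ref{LEM:reg}(ii) then delivers the uniform-in-$N$ bound $:\!\<1>_N^k\!: \in L^q(\O; C(I; W^{k(\al - 3/2) - \eps/2, \infty}_x))$ for any finite $q$, which in particular controls the $L^p_I W^{k(\al - 3/2) - \eps/2, \infty}_x$-norm.

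To obtain the claimed bound in the rougher space $W^{\al - 3/2 - \eps, \infty}$ with the compensating prefactor, the plan is to use the spatial Fourier support property $\supp \F(:\!\<1>_N^k\!:) \subset \{ |n| \les k N \}$: for $s_1 > s_2$, a Littlewood-Paley decomposition gives the Bernstein-type estimate
\begin{align*}
\| :\!\<1>_N^k\!: \|_{W^{s_1, \infty}} \les N^{s_1 - s_2} \log(1+N)\cdot \| :\!\<1>_N^k\!: \|_{W^{s_2, \infty}}.
\end{align*}
Choosing $s_1 = \al - \tfrac{3}{2} - \eps$ and $s_2 = k(\al - \tfrac{3}{2}) - \tfrac{\eps}{2}$, which satisfies $s_1 > s_2$ for $k \geq 2$, $\al < \tfrac{3}{2}$, and $\eps$ small enough, produces exactly a factor of $N^{(k-1)(3/2 - \al) - \eps/2}$, up to a harmless logarithm which is absorbed into a minor adjustment of $\eps$. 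Multiplying through by $N^{(k-1)(\al - 3/2) + \eps/2}$ and combining with the previous step gives the desired bound. The $k = 1$ case is essentially Lemma~\ref{LEM:sto_reg}(i), since no Bernstein transfer is needed.

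Finally, the tail bound $C(I) \exp(-c(I) \ld^{2/k})$ will follow from Wiener chaos hypercontractivity (Lemma~\ref{LEM:hyp}): since $:\!\<1>_N^k\!: \in \H_{\leq k}$, its $L^q(\O)$ norms scale as $q^{k/2}$ times the $L^2$-norm, and optimizing Chebyshev's inequality over $q$ yields the claimed Gaussian-type tail with exponent $2/k$. The main technical obstacle is verifying the iterated convolution bound in Lemma~\ref{LEM:SUM}(i) across the full range $1 < \al < \tfrac{3}{2}$ as $k$ grows: each iteration requires a condition of the form $\al > \tfrac{3(j-1)}{2j}$ at step $j$, which approaches $\tfrac{3}{2}$ as $j \to \infty$. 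For the finite integer values of $k$ corresponding to the stochastic objects appearing in this paper this poses no real difficulty, but it is precisely the step at which the upper bound $\al < \tfrac{3}{2}$ (and the consequent non-trivial scaling in $N$) enters essentially.
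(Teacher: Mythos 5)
There is a genuine gap at your first step, and it is not the peripheral ``technical obstacle'' you describe but the heart of the lemma. The uniform-in-$N$ variance bound $\E\big[|\F(:\!\<1>_N^k\!:)(n,t)|^2\big]\les \jb{n}^{-3-2k(\al-\frac32)}$ that you propose to obtain by iterating Lemma \ref{LEM:SUM} is false once $k$ is large relative to $\al$: the $j$-th convolution step needs $\al>\frac{3(j-1)}{2j}$, and when this fails the cut-off sum does not merely escape the hypotheses of the lemma, it genuinely grows in $N$. Concretely, for $k=4$ and $1<\al<\frac98$ the counting sum at the zero mode, $\sum_{n_1+\cdots+n_4=0,\,|n_j|\le N}\prod_{j=1}^4\jb{n_j}^{-2\al}$, is comparable to $\sum_{|m|\les N}\jb{m}^{2(3-4\al)}\sim N^{9-8\al}\to\infty$, and since $\|u\|_{W^{s,\infty}}\ges|\ft u(0)|$ for every $s\in\R$, no uniform-in-$N$ bound of $:\!\<1>_N^4\!:$ in \emph{any} space $W^{s,\infty}$ (let alone at the natural regularity $4(\al-\frac32)-\frac\eps2$) can hold in that range. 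Your dismissal that this ``poses no real difficulty'' for the $k$'s occurring in the paper is incorrect on two counts: the lemma is stated, and used, for all $1<\al<\frac32$ and for $k$ up to $2m-1$ with the degree $2m$ of $V$ arbitrary, so arbitrarily large $k$ do occur; and even inside the weak-universality range $\frac98<\al<\frac32$ the threshold already fails for $k=5$ when $\al\le\frac65$. Since your step (a) is false in this regime, the subsequent Bernstein transfer has nothing to act on, and the argument cannot be repaired by adjusting $\eps$ or absorbing logarithms.

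The correct route — and the paper's — is to build the permitted loss $N^{(k-1)(\frac32-\al)-\frac\eps2}$ into the counting itself rather than trying to recover it afterwards by Bernstein: using $|n_j|\le N$ one writes $\jb{n_j}^{-2\al}\le N^{2\g}\jb{n_j}^{-2\al-2\g}$ and distributes just enough powers of $N$ among the factors to push every exponent above the summability threshold of Lemma \ref{LEM:SUM}\,(ii). This is precisely the content of Lemma \ref{LEM:YNk}, applied with $s=\frac32-\al+\eps$ and $\be=(k-1)(\frac32-\al)-\frac\eps2$, whose hypotheses hold for every $k\ge1$ and every $1<\al<\frac32$; the paper's proof of the present lemma is then just Lemma \ref{LEM:YNk} together with the Wiener-chaos moment growth $q^{k/2}$ (Lemma \ref{LEM:hyp}) and Chebyshev's inequality, after noting that $\<1>(t)$ and $Y$ have the same spatial regularity at each fixed time and that, since $p<\infty$, Minkowski's integral inequality reduces the $L_I^p$-norm to fixed-time moments, so no time-difference estimate or Lemma \ref{LEM:reg}\,(ii) argument is needed. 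Your hypercontractivity/Chebyshev tail step is fine and matches the paper; the unrecoverable flaw is the claimed uniform bound at the natural regularity $k(\al-\frac32)$.
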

\begin{proof}
The tail estimate follows directly from Lemma~\ref{LEM:YNk} and Chebyshev's inequality, since $\<1>$ and $Y$ in \eqref{defY} have similar structures and have the same spatial regularity for a fixed $t > 0$.
\end{proof}

By exploit the smoothing from the dispersion via the $X^{s, b}$-space, we gain extra spatial regularity as follows.
\begin{lemma}
\label{LEM:sto_k2}
Let $1 < \al < \frac 32$, $k \geq 3$ be an integer, and $\dl > 0$ be sufficiently small. Let $\eps = 0$ if $k = 3$ and $\eps > 0$ be sufficiently small if $k \geq 4$. Let $s \in \R$ be such that $\al - 1 \leq s < 3 \al - 3$. Then, for any closed interval $I \subset \R_+$, we have the tail estimate
\begin{align*}
\PP \Big( N^{(k - 3) (\al - \frac 32) + \eps } \| \I (:\! \<1>_N^{k} \!:) \|_{X_I^{s, \frac 12 + \dl}} > \ld \Big) \leq C(I) \exp (- c (I) \ld^{\frac{2}{k}} )
\end{align*}

\noi
for any $\ld > 0$ and some constants $C(I), c(I) > 0$, uniformly in $N \in \N$.
\end{lemma}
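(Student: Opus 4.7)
My plan is to adapt the argument for $\<30>_N$ from Lemma~\ref{LEM:sto_reg}(iii), which treats precisely the case $k = 3$ at the (essentially) endpoint regularity, and extend it to arbitrary $k \geq 3$. The tail bound with rate $\exp(-c\lambda^{2/k})$ will follow from an $L^p(\Omega)$ bound of the form $p^{k/2} e^{C|I|} N^{-(k-3)(\al - \frac 32) - \eps}$ via Chebyshev's inequality and optimization over $p$. By Lemma~\ref{LEM:nhomo} combined with \eqref{time2} in Lemma~\ref{LEM:time}, this in turn reduces to a matching bound on $\|:\!\<1>_N^k\!:\|_{X_I^{s - \al, -\frac 12 + \dl}}$ in $L^p(\Omega)$.

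\emph{Multiple stochastic integral representation.} Using Remark~\ref{RMK:gbn}, I decompose $\<1>_N = I_1^0[g] + I_1^1[h] + I_1[f]$ over three independent Brownian families and apply the Hermite identities \eqref{herm_decomp1}--\eqref{herm_decomp} to expand $:\!\<1>_N^k\!: = H_k(\<1>_N; \sigma_N)$ as a finite sum of products $I_{k_1}^0[g^{\otimes k_1}] \cdot I_{k_2}^1[h^{\otimes k_2}] \cdot I_{k_3}[f^{\otimes k_3}]$ with $k_1 + k_2 + k_3 = k$. Independence lets each such product be identified with a single $k$-th order multiple stochastic integral $I_k[G_{n, t}]$, exactly as in the passage \eqref{sto3_expand}--\eqref{sto3_simp}, so it suffices to treat one such term.

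\emph{Ito isometry, counting, and dyadic summation.} For each dyadic tuple $(N_1, \dots, N_k)$ with $N_j \leq N$, inserting $\ind_{[0,T]}$, applying Lemma~\ref{LEM:Fub}, the equivalent-norm characterization \eqref{equi_norm}, the Ito isometry of Lemma~\ref{LEM:nua}(iii), expanding the damped sines as complex exponentials, and invoking Lemma~\ref{LEM:conv} (as in the passage from \eqref{cub_step1} to \eqref{cub_step2}) reduces the $L^2(\Omega)$-squared of the corresponding dyadic piece of $:\!\<1>_N^k\!:$ in $X_T^{s - \al, -\frac 12 - \dl}$ to the level-set sum
\begin{align*}
\sup_{m \in \Z} \sum_{n_1, \dots, n_k \in \Z^3} \Bigl(\prod_{j=1}^k \ind_{\{|n_j| \sim N_j\}} \jbb{n_j}^{-2}\Bigr) \langle n_1 + \cdots + n_k \rangle^{2(s - \al)} \ind_{\{|\kappa(\bar n) - m| \leq 1\}}.
\end{align*}
The hypothesis $s \geq \al - 1$ matches the assumption of Lemma~\ref{LEM:CS}, which bounds this by $N_{\max}^{(k - 3)(3 - 2\al) + 2s - 6\al + 6} = N_{\max}^{(k - 3)(3 - 2\al) - 2\eps'}$ with $\eps' := 3\al - 3 - s > 0$. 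Interpolating this with the trivial $X_T^{0, 0}$-bound $\les T \prod_j N_j^{3 - 2\al}$ upgrades the temporal index from $-\frac 12 - \dl$ to $-\frac 12 + \dl$ at arbitrarily small cost in $\eps'$; upgrading to $L^p(\Omega)$ via the Wiener chaos estimate (Lemma~\ref{LEM:hyp}) produces the $p^{k/2}$ factor. Summing the per-dyadic bounds by the triangle inequality on $X^{s - \al, -\frac 12 + \dl}$ and using $N_{\max} \leq N$ to absorb the positive factor $N_{\max}^{(k - 3)(3/2 - \al)}$ into $N^{(k - 3)(3/2 - \al)}$ leaves a tail $\sum_{N_{\max}} \log^{k - 1}(N_{\max}) N_{\max}^{-\eps'/4}$ that converges to a constant when $k = 3$ (giving $\eps = 0$) and loses only an $N^{\eps}$ factor when $k \geq 4$, absorbed by the free parameter in the statement.

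\emph{Main obstacle.} The counting estimate Lemma~\ref{LEM:CS} is sharp at the endpoint $s = 3\al - 3$, where the exponent degenerates to $(k - 3)(3 - 2\al) \geq 0$ and no dyadic summability remains. The strict open condition $s < 3\al - 3$ therefore provides the entire margin, which must be split among the interpolation loss, the logarithmic overhead from summing the smaller scales, and the net $N$-power required by the statement. Bookkeeping these three charges against the single margin $\eps'$—particularly in the borderline $k = 3$ case where $\eps = 0$ is asserted—constitutes the delicate part of the proof.
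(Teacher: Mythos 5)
Your proposal is correct and follows essentially the same route as the paper: the paper's proof is a one-line reference back to the argument for $\<30>_N$ in Lemma \ref{LEM:sto_reg}\,(iii) (multiple stochastic integral expansion, Ito isometry, level-set reduction, interpolation with the $X^{0,0}$ bound, Wiener chaos estimate, Chebyshev), with the only modification being the use of the summation counting estimate Lemma \ref{LEM:CS} for general $k \geq 3$, which is exactly what you carry out. Your bookkeeping of the margin $\eps' = 3\al - 3 - s$ (summability of the dyadic pieces for $k = 3$ with $\eps = 0$, top-scale domination for $k \geq 4$) matches the intended argument.
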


\begin{proof}
The tail estimate follows from the proof of Lemma \ref{LEM:sto_reg} for $\<30>$ with minor modifications. The main difference is that we now need to apply the summation counting estimate Lemma~\ref{LEM:CS} for a general $k \geq 3$.
\end{proof}

Furthermore, we have an operator norm estimate, which is also new compared to \cite{STzX}. For $k \in \N$, we define the operator
\begin{align*}
\If_N^k (u) = \I ( :\! \<1>_N^{k} \!: u ).
\end{align*}

\noi
We recall that the $\L_I^{s_1, b_1, s_2, b_2}$-norm is as defined in \eqref{Op2}.
\begin{lemma}
\label{LEM:sto_k3}
Let $1 < \al < \frac 32$, $k \geq 3$ be an integer, and $\dl > 0$ be sufficiently small. Let $\eps = 0$ if $k = 3$ and $\eps > 0$ be sufficiently small if $k \geq 4$. Let $s \in \R$ be such that $\max ( -\al + \frac 32, \al - 1 ) < s < 2 \al - \frac 32$. Then, for any closed interval $I \subset \R_+$, we have the tail estimate
\begin{align*}
\PP \Big( N^{(k - 3) (\al - \frac 32) + \eps } \| \If_N^{k - 1} \|_{\L_I^{s, \frac 12 + \dl, s, \frac 12 + \dl}} > \ld \Big) \leq C(I) \exp (- c (I) \ld )
\end{align*}

\noi
for any $\ld > 0$ and some constants $C(I), c(I) > 0$, uniformly in $N \in \N$.
\end{lemma}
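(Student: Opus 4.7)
The plan is to mimic the treatment of the random operator $\If^{\<2>_N}$ in Lemma \ref{LEM:sto_reg}\,(vii), but with a $(k-1)$-fold multiple stochastic integral replacing the bilinear one, and to invoke the deterministic tensor estimate Lemma \ref{LEM:ten1} to convert the $N$-power loss of $:\!\<1>_N^{k-1}\!:$ into the claimed loss $N^{-(k-3)(\al-\frac32)-\eps}$ at the operator level.

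First I would reduce to the required multilinear bound. By \eqref{Op2}, Lemma \ref{LEM:nhomo}, and \eqref{time2} in Lemma \ref{LEM:time}, the operator norm is controlled once we establish, for some $\ta>0$,
\begin{align*}
\Big\|\,\sup_{\|v\|_{X_I^{s,\frac12+\dl}}\le1}\big\|\,\ind_{[0,T]}(t)\cdot :\!\<1>_N^{k-1}\!:\cdot v\,\big\|_{X^{s-\al+\eps',-\frac12-\dl}}\Big\|_{L^p(\O)}\les_T p^{\frac{k-1}{2}}\,N^{-(k-3)(\al-\frac32)-2\eps},
\end{align*}
where we allow a small loss $\eps'>0$ in spatial regularity. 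Expanding $:\!\<1>_N^{k-1}\!:$ according to the decomposition of $\<1>_N$ into the three independent Gaussian families of Remark \ref{RMK:gbn} (as in \eqref{sto3_expand}--\eqref{sto3_simp}), we are reduced to bounding the top-order term $I_{k-1}[f_{n-n_k,t}^{\otimes(k-1)}]$; all lower-order (contracted) terms obey strictly better counting estimates and are treated identically.

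Next I would run the tensor-based framework of Lemma \ref{LEM:sto_reg}\,(vii). After computing the space-time Fourier transform via Lemma \ref{LEM:Fub}, applying Minkowski, the Cauchy--Schwarz inequality in an auxiliary modulation variable $\tau_k$, and the decomposition \eqref{vN3} to unfold $\ft v$, the estimate reduces to controlling
\begin{align*}
\Big\|\,\|H_{\bar N}(n,n_k,\tau,\tau_k)\|_{\ell^2_{n_k}\to\ell^2_n}\Big\|_{L^p(\O)},
\end{align*}
uniformly in $\tau,\tau_k\in\R$, where $H_{\bar N}$ is a $(k-1)$-fold multiple stochastic integral whose kernel, after integrating out the time variable $t$, is bounded by the deterministic tensor
\begin{align*}
\mathfrak{h}^m_{n\,n_1\cdots n_k}=\ind_{\{n=\sum_{j=1}^k n_j\}}\bigg(\prod_{j=1}^k\ind_{\{|n_j|\sim N_j\}}\ind_{\{|n_j|\le N\}}\bigg)\ind_{\{|\kappa(\bar n)-m|\le1\}}\,\frac{\jb{n}^{s-\al+\eps'}}{\prod_{j=1}^{k-1}\jbb{n_j}\cdot\jb{n_k}^s}
\end{align*}
with phase $\kappa(\bar n)=\eps_0\jbb{n}+\sum_{j=1}^k\eps_j\jbb{n_j}$, multiplied by a factor $\jb{\tau-\tau_k-m}^{-1}$ coming from the $t$-integration. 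The random tensor estimate (Lemma \ref{LEM:rand_ten}) then yields an $L^p(\O)$ bound of the form $p^{(k-1)/2}N^\ta\max_{(B_1,B_2)}\|\mathfrak{h}^m\|_{n_k n_{B_1}\to n n_{B_2}}$, and Lemma \ref{LEM:ten1} (applied with $k$ in its statement equal to our $k$, which requires exactly the range $\max(-\al+\tfrac32,\al-1)<s<2\al-\tfrac32$) gives
\begin{align*}
\max_{(B_1,B_2)}\|\mathfrak{h}^m\|_{n_k n_{B_1}\to n n_{B_2}}\les N^{(k-3)(\frac32-\al)-\eps}(\max\{N_1,\dots,N_k\})^{-\dl}.
\end{align*}

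Finally I would collect the pieces. Summing over $|m|\les N_{\max}^\al$ absorbs the $\jb{\tau-\tau_k-m}^{-1}$ factor and produces only an $N^\ta$ loss; interpolating with the trivial $L^2_{t,x}$-bound (analogous to \eqref{cub2}) manufactures the short-time factor $|I|^\ta$ needed for the $\L^{...}_I$-norm; the dyadic sum converges thanks to the gain $(\max N_j)^{-\dl}$. The Wiener chaos estimate (Lemma \ref{LEM:hyp}) and Chebyshev's inequality then deliver the advertised tail bound, where I use that for $k\ge3$ a $(k-1)$-chaos moment bound gives at worst a stretched exponential tail of order $\exp(-c\ld^{2/(k-1)})$, which for $\ld\ge1$ is absorbed into $\exp(-c\ld)$ only when $k=3$; for larger $k$ one interprets the stated bound with the natural chaos exponent. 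The main obstacle I expect is the third step: verifying that the bound of Lemma \ref{LEM:ten1} is sharp enough across \emph{all} partitions $(B_1,B_2)$ of $\{1,\dots,k-1\}$, particularly the cases where the two largest frequencies both sit in $B_2$, so that the $N$-power loss $N^{(k-3)(\frac32-\al)}$ (arising from the uniform cutoff $|n_j|\le N$) is not worsened and the $s$-range $\max(-\al+\tfrac32,\al-1)<s<2\al-\tfrac32$ remains the only constraint.
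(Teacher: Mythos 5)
Your proposal matches the paper's argument: the paper proves this lemma precisely by repeating the proof of Lemma \ref{LEM:sto_reg}\,(vii) for $\If^{\<2>_N}$ (Fourier expansion, unfolding of $v$, the random tensor estimate Lemma \ref{LEM:rand_ten}, summation over the modulation parameter $m$, interpolation and Chebyshev), with the deterministic tensor estimate Lemma \ref{LEM:ten1} now invoked for general $k\ge 3$ — exactly the scheme you carry out, including the identification of the range $\max(-\al+\tfrac32,\al-1)<s<2\al-\tfrac32$ as the constraint coming from Lemma \ref{LEM:ten1}. Your closing caveat about the tail exponent (a chaos-order-$(k-1)$ moment bound $p^{(k-1)/2}$ naturally yields $\exp(-c\ld^{2/(k-1)})$ rather than $\exp(-c\ld)$ when $k\ge4$) is a fair reading of the stated bound and is harmless for the way the lemma is used later, where any quantitative tail decay suffices.
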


\begin{proof}
The tail estimate follows from the proof of Lemma \ref{LEM:sto_reg} for $\If^{\<2>}$ with minor modifications. The main difference is that we now need to apply the deterministic tensor estimate Lemma~\ref{LEM:ten1} for a general $k \geq 3$.
\end{proof}

\subsection{Invariance of the Gibbs measures and a-priori bounds}
\label{SUBSEC:inv}
From now on, we restrict our attention to the range $\frac 98 < \al < \frac 32$. In this range, we recall from Proposition~\ref{PROP:Gibbs2} that the Gibbs measure $\vec \nu = \nu \otimes \mu_0$ is equivalent to (i.e.~mutually absolutely continuous with respect to) the base Gaussian measure $\vec \mu_\al = \mu_\al \otimes \mu_0$ in \eqref{gauss2}.

As in \cite{STzX}, our proof of dynamical convergence relies on the invariance of both the truncated Gibbs measure and the limiting Gibbs measure under their corresponding flows. We first state the result on almost sure global well-posedness of the equation \eqref{fNLWw} and invariance of the truncated Gibbs measure $\vec \nu_N = \nu_N \otimes \mu_0$ (see \eqref{Gibbs_nuN} and \eqref{gauss2}) under the flow of \eqref{fNLWw}, which is in the same spirit as Lemma~\ref{LEM:GWP4}. For a proof, we refer the readers to  \cite[Proposition~5.1]{ORTz} and \cite[Lemma~9.3]{OOTol1}. 
 
\begin{lemma}
\label{LEM:inv_N}
Let $\frac 98 < \al < \frac 32$ and $\eps > 0$. Fix $N \in \N$. Then, the macroscopic model for the fractional stochastic nonlinear damped wave equation \eqref{fNLWw} is almost surely globally well-posed with random initial data distributed by the truncated Gibbs measure $\vec \nu_N$, and $\vec \nu_N$ is invariant under the resulting dynamics. More precisely, there exists $\Sigma_N \subset \H^{\al - \frac 32 - \eps}$ with $\vec \nu_N (\Sigma_N) = 1$ such that for all $\vec u_0 \in \Sigma_N$, the solution $u_N = u_N (\vec u_0)$ exists globally-in-time and $\Law (u_N (t), \dt u_N (t)) = \vec \nu_N$ for any $t \in \R_+$.
\end{lemma}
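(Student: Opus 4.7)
The plan is to exploit the frequency projection $\pi_N$ to decompose the truncated dynamics \eqref{fNLWw} into two independent pieces: a linear damped stochastic wave equation on the high frequencies $\{|n| > N\}$, and a finite dimensional SDE on the low frequencies $\{|n| \le N\}$. Writing $\vec u_N = \pi_N \vec u_N + \pi_N^\perp \vec u_N$ and noting that $\pi_N V_N'(u_N)$ has Fourier support contained in $\{|n|\le N\}$, the high frequency component satisfies exactly the same linear SDE as $\pi_N^\perp \<1>(\vec u_0,\o_2)$ in \eqref{enh0a}, which is globally defined and admits the pushforward $(\pi_N^\perp)_\#\vec\mu$ as an invariant measure (this follows from the usual Ornstein--Uhlenbeck representation applied mode-by-mode). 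The low frequency component $\vec u_N^{\textup{low}} = (\pi_N u_N, \pi_N \dt u_N)$ is then the unique strong solution of a finite dimensional SDE on $E_N\times E_N$, where $E_N = \pi_N L^2(\T^3)$, driven by finitely many independent Brownian motions. Since $V_N'$ is polynomial and $\pi_N$ is a bounded operator on $E_N$, the drift is smooth and locally Lipschitz on every ball, which yields local strong existence and pathwise uniqueness by standard Cauchy--Lipschitz theory for SDEs.

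Next, I would carry out Bourgain's invariant measure argument in the finite dimensional setting to extend the local solution globally. This reduces to showing that the finite dimensional Gibbs measure $\rhoo_N^{\textup{low}}$, with density proportional to $\exp(-E_N^{\textup{low}}(\vec u^{\textup{low}}))$ on $E_N\times E_N$, is formally invariant under the truncated flow, where
\[
E_N^{\textup{low}}(\vec u) = \tfrac12\|\jb{\nb}^\al u\|_{L^2}^2 + \tfrac12\|\dt u\|_{L^2}^2 + \int_{\T^3} V_N(u)\,dx.
\]
The dynamics on $E_N\times E_N$ is the superposition of the finite dimensional Hamiltonian flow
\[
\dt^2 u + (1-\Dl)^\al u + \pi_N V_N'(u) = 0
\]
which preserves $E_N^{\textup{low}}$ and Lebesgue measure on $E_N\times E_N$ by Liouville's theorem, and the Ornstein--Uhlenbeck dynamics $\dt v = -v + \sqrt{2}\,\pi_N \xi$ on the velocity variable $v = \dt u$, which preserves the Gaussian measure $(\pi_N)_\#\mu_0$. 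An infinitesimal generator computation (or equivalently a Trotter-type splitting argument) confirms that $\rhoo_N^{\textup{low}}$ is stationary for the combined dynamics, and the classical Bourgain argument then promotes almost sure local to almost sure global well-posedness with respect to $\rhoo_N^{\textup{low}}$, together with invariance under the resulting flow.

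Finally, I would combine the two components. The invariance of $(\pi_N^\perp)_\#\vec\mu$ on the high frequency side and of $\rhoo_N^{\textup{low}}$ on the low frequency side, together with the independence of the two pieces built into $\xi$ and the initial data, yield the invariance of the product measure, which is precisely $\vec\nu_N = \nu_N\otimes\mu_0$ after unwinding the definitions \eqref{Gibbs_nuN}--\eqref{vec_nuN}. The set $\Sigma_N\subset \H^{\al-\frac32-\eps}$ of full measure is then obtained as the intersection of the full measure set given by the Bourgain argument on the low frequencies with the full measure set on which the linear high frequency evolution is well defined. The main (mild) obstacle in this plan is to justify cleanly that Bourgain's argument applies in the stochastic setting with a dissipative drift, i.e.\ that the Gibbs measure is truly preserved by the combined Hamiltonian plus OU flow; this is standard in finite dimensions once one writes down the generator, but requires some care to match the normalization constants in \eqref{Gibbs_nuN} and to ensure that sets of measure zero on which uniqueness could fail are not traversed by the flow.
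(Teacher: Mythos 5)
Your proposal is correct and follows essentially the same route as the paper, which does not give details but appeals to the argument sketched around Lemma \ref{LEM:GWP4} and the proof of Lemma 9.3 in \cite{OOTol1}: the frequency projector decouples the dynamics into a linear high-frequency evolution with invariant Gaussian law and a finite-dimensional low-frequency system, viewed as a superposition of a Hamiltonian flow (Liouville) and an Ornstein--Uhlenbeck flow on the velocity, to which Bourgain's invariant measure argument is applied for globalization and invariance. The factorization of $\vec\nu_N$ into the low-frequency Gibbs measure and the high-frequency Gaussian, which you use to combine the two pieces, is exactly how the cited proof proceeds.
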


We now use the above invariance result to prove the following uniform bound of $u_N$ with large probability. Below, we denote $u_N (\vec \phi) (t)$ as the flow of the equation \eqref{fNLWw} at time $t \in \R_+$ starting from the initial data $\vec \phi = (\phi, \dt \phi)$. We also recall from the beginning of Section~\ref{SEC:GWP} that the randomness coming from the space-time white noise $\xi$ depends only on $\o_2 \in \O_2$ equipped with probability measure $\PP_2$.

\begin{proposition}[$L^q$-based a-priori bound]
\label{PROP:uN_bdd}
Let $T > 0$, $\frac 98 < \al < \frac 32$, $2 \leq p, q < \infty$, and $\eps, \g > 0$. Let $u_N$ be the solution to the macroscopic model \eqref{fNLWw}. Then, given any $\eta > 0$, there exists $N_0 = N_0 (T, \eta, \g) \gg 1$ such that
\begin{align*}
\vec \mu_\al \otimes \PP_2 \Big( \| u_N \|_{L_T^p W_x^{\al - \frac 32 - \eps, q}} > N^{\g} \textup{ for some } N \geq N_0 \Big) < \eta.
\end{align*}
\end{proposition} 

\begin{proof}
By the equivalence of the Gibbs measure $\vec \nu$ and the base Gaussian measure $\vec \mu_\al$ in Proposition~\ref{PROP:Gibbs2}, it suffices to show that there exists $N_0 = N_0 (T, \eta') \gg 1$ such that
\begin{align*}
\vec \nu \otimes \PP_2 \Big( \| u_N \|_{L_T^p W_x^{\al - \frac 32 - \eps, q}} > N^{\g} \textup{ for some } N \geq N_0 \Big) < \eta',
\end{align*}

\noi
for some $\eta' = \eta' (\eta) > 0$.
We note that it is enough to show
\begin{align}
\vec \nu \otimes \PP_2 \Big( \| u_N \|_{L_T^p W_x^{\al - \frac 32 - \eps, q}} > N^{\g} \Big) < \frac{C (T, \g)}{N^{1 +}}
\label{nuN_goal0}
\end{align}

\noi
for any $N \geq N_0$ with $N_0 = N_0 (T, \g) \gg 1$ sufficiently large, where $1 +$ is any number larger than 1 and $C(T, \g) > 0$ is some constant independent of $N$, because by the union bound we have
\begin{align*}
\vec \nu &\otimes \PP_2 \Big( \| u_N \|_{L_T^p W_x^{\al - \frac 32 - \eps, q}} > N^{\g} \textup{ for some } N \geq N_0 \Big) \\
&\leq \sum_{N \geq N_0} \vec \nu \otimes \PP_2 \Big( \| u_N \|_{L_T^p W_x^{\al - \frac 32 - \eps, q}} > N^{\g} \Big) \\
&< \sum_{N \geq N_0} \frac{C (T, \g)}{N^{1 +}} < \eta'.
\end{align*}

\noi
We also know from Proposition~\ref{PROP:Gibbs2} that $\vec \nu_N = \nu_N \otimes \mu_0$ converges to $\vec \nu$ in total variation. Thus, in order to obtain \eqref{nuN_goal0}, we only need to show
\begin{align}
\vec \nu_N \otimes \PP_2 \Big( \| u_N \|_{L_T^p W_x^{\al - \frac 32 - \eps, q}} > N^{\g} \Big) < \frac{\eta'}{2}.
\label{nuN_goal}
\end{align}

Let $r > \max (p, q, \frac{1}{\g})$. By Chebyshev's inequality and Minkowski's integral inequality, we obtain
\begin{align}
\begin{split}
\vec \nu_N &\otimes \PP_2 \Big( \| u_N \|_{L_T^p W_x^{\al - \frac 32 - \eps, q}} > N^\g \Big) \\
&\leq \frac{1}{N^{\g r}} \int \| u_N (\vec \phi) (t) \|_{L_T^p W_x^{\al - \frac 32 - \eps, q}}^r d (\vec \nu_N \otimes \PP_2) (\vec \phi, \o_2) \\
&\leq \frac{1}{N^{\g r}} \bigg\| \bigg( \int \big| \jb{\nabla}^{\al - \frac 32 - \eps} u_N (\vec \phi) (t) \big|^r d (\vec \nu_N \otimes \PP_2) (\vec \phi, \o_2) \bigg)^{1/r} \bigg\|_{L_T^p L_x^{q}}^r.
\end{split}
\label{uN_bdd1}
\end{align}

\noi
We recall the definition of $\mathcal{Z}_N$ from \eqref{ZN} and note that by Jensen's inequality,
\begin{align}
-\log \mathcal{Z}_N = -\log \int e^{- \mathcal{R}_N (u)} d \mu (u) \leq \int \mathcal{R}_N (u) d \mu(u) = 0.
\label{ZN_bdd}
\end{align}

\noi
Then, by the invariance part in Lemma~\ref{LEM:inv_N}, the Cauchy-Schwarz inequality, \eqref{ZN_bdd}, the uniform $L^p$-bound \eqref{Lp_bdd2}, and the Gaussian hypercontractivity, we have
\begin{align}
\begin{split}
\int \big| &\jb{\nabla}^{\al - \frac 32 - \eps} u_N (\vec \phi) (t) \big|^r d (\vec \nu_N \otimes \PP_2) (\vec \phi, \o_2) \\
&= \int \big| \jb{\nabla}^{\al - \frac 32 - \eps} \phi \big|^r d \nu_N (\phi) \\
&\leq \frac{1}{\mathcal{Z}_N} \bigg( \int e^{- 2 \mathcal{R}_N (\phi)} d \mu (\phi) \bigg)^{1/2} \bigg( \int \big| \jb{\nabla}^{\al - \frac 32 - \eps} \phi \big|^{2 r} d \mu (\phi) \bigg)^{1/2} \\
&\les r^{\frac{r}{2}} \bigg( \int \big| \jb{\nabla}^{\al - \frac 32 - \eps} \phi \big|^{2} d \mu (\phi) \bigg)^{r / 2} \les r^{\frac{r}{2}}.
\end{split}
\label{uN_bdd2}
\end{align}

\noi
Combining \eqref{uN_bdd1} and \eqref{uN_bdd2}, we obtain
\begin{align*}
\vec \nu_N \otimes \PP_2 \Big( \| u_N \|_{L_T^p W_x^{\al - \frac 32 - \eps, q}} > N^{\g} \Big) \les \frac{r^{\frac{r}{2}} T^{\frac{r}{p}}}{N^{\g r}},
\end{align*}

\noi
and so the desired estimate \eqref{nuN_goal} follows since $\g r > 1$.
\end{proof}

We now exploit invariance of Gibbs measures from another perspective. Let us consider the solution $u_N^\dagger$ to the frequency truncated cubic SdfNLW \eqref{fNLWw_v}. Similar to $\rho_N$ in \eqref{GibbsN1}, we define the frequency truncated Gibbs measure $\nu_N^\dagger$ as
\begin{align*}
d \nu_N^\dagger (u) \deff (\mathcal{Z}_N^\dagger)^{-1} \exp \bigg( - \kappa \int_{\T^3} :\! (\pi_N u)^2 \!: dx - \cj{a}_2 \int_{\T^3} :\! (\pi_N u)^4 \!: dx \bigg) d \mu_\al (u),
\end{align*}

\noi
where $\mathcal{Z}_N^\dagger$ is a normalizing factor. Similar to Lemma~\ref{LEM:GWP4} and Lemma~\ref{LEM:inv_N}, we know that the equation \eqref{fNLWw_v} is globally well-posed and that $\nu_N^\dagger$ is invariant under the dynamics of \eqref{fNLWw_v}. Furthermore, similar to Theorem~\ref{THM:Gibbs}~(i), we know that $\nu_N^\dagger$ converges to $\nu^\dagger$ in \eqref{Gibbs_nu} in total variation as $N \to \infty$.

As alluded at the beginning of the section, we proceed with the following first order expansion
\begin{align*}
u_N^\dagger = \<1> + w_N^\dagger,
\end{align*}

\noi
where $\<1>$ is defined in \eqref{loli} and $w_N^\dagger$ satisfies 
\begin{align}
\begin{split}
w_N^\dagger &= - \kappa \I (\<1>_N + \pi_N w_N^\dagger) \\
&\quad - 4 \cj{a}_2 \Big( \pi_N \<30>_N + 3 \If^{\<2>_N} (\pi_N w_N^\dagger) + 3 \pi_N \I \big( \<1>_N (\pi_N w_N^\dagger)^2 \big) + \pi_N (\pi_N w_N^\dagger)^3 \Big).
\end{split}
\label{eq_wN}
\end{align}

\noi
Here, $\I$ is the Duhamel operator as defined in \eqref{lin1}, $\<30>_N$ is as defined in \eqref{so4b}, and $\If^{\<2>_N}$ is as defined in \eqref{ran1}.

We now show the following proposition regarding the convergence of $w_N^\dagger$ as $N \to \infty$ and probabilistic a-priori bounds.

\begin{proposition}[$X^{s,b}$-based a-priori bound]
\label{PROP:w_bdd}
Let $T > 0$, $\frac 98 < \al < \frac 32$, and $\dl > 0$ be small. Let $s \in \R$ satisfying
\begin{align}
\max \Big( - \al + \frac 32, - \frac{3}{10} \al + \frac 35 \Big) < s < \min \Big( \frac{13}{10} \al - \frac 35, 2 \al - \frac 32, 3 \al - 3 \Big),
\label{s_bdd}
\end{align}

\noi
which is possible given $\al > \frac 98$. Given $N \in \N$, let $w_N^\dagger$ be the solution of the equation \eqref{eq_wN}. Then, the following statements hold true.

\smallskip \noi
\textup{(i)} For any $N \in \N$ and $\eta > 0$, there exists $C_0 = C_0 (T, \eta) \geq 1$ independent of $N$ such that
\begin{align}
\vec \nu_N^\dagger \otimes \PP_2 \Big( \| w_N^\dagger \|_{X_T^{s, \frac 12 + \dl}} > C_0 \Big) < \eta. 
\label{wN_nuN}
\end{align} 

\smallskip \noi
\textup{(ii)} The sequence $\{ w_N^\dagger \}_{N \in \N}$ converges $(\vec \nu \otimes \PP_2)$-almost surely to a limiting process $w^\dagger$ in $X_T^{s, \frac 12 + \dl}$. Moreover, there exists $\ta > 0$ such that
\begin{align}
\lim_{N \to \infty} \vec \nu \otimes \PP_2 \Big( \| w_N^\dagger - w^\dagger \|_{X_T^{s, \frac 12 + \dl}} \leq N^{- \ta} \Big) = 1.
\label{wN_lim}
\end{align}

\smallskip \noi
Consequently, for any $\eta > 0$, there exists $C_0 = C_0 (T, \eta) \geq 1$ such that
\begin{align}
\vec \mu_\al \otimes \PP_2 \Big( \| w^\dagger \|_{X_T^{s, \frac 12 + \dl}} > C_0 \Big) < \eta.
\label{w_bdd}
\end{align}
\end{proposition}

\begin{proof}
The proof for (i) and (ii) follows similarly from the steps in Section~\ref{SEC:LWP} and Section~\ref{SEC:GWP}, and so we will be brief.

We first present local well-posedness of \eqref{fNLWw_v} with Gaussian initial data distributed by $\vec \mu_\al = \mu_\al \otimes \mu_0$ as in \eqref{gauss2}, which is the counterpart of Proposition~\ref{PROP:LWP}. 
By the inhomogeneous linear estimate in Lemma \ref{LEM:nhomo} and the time localization estimate \eqref{time2} in Lemma \ref{LEM:time}, we have
\begin{align}
\| \I (\<1>_N) \|_{X_{T_0}^{s, \frac 12 + \dl}} \les T_0^{3 \dl} \| \<1>_N \|_{X_{T_0}^{s - \al, -\frac 12 + 4 \dl}} \les T_0^{3 \dl} \| \<1>_N \|_{L_{T_0}^2 H_x^{\al - \frac 32 - \eps}},
\label{u_bdd1}
\end{align}

\noi
where $\dl > 0$ is small and we used $s < 2 \al - \frac 32 - \eps$ for $\eps > 0$ sufficiently small. By Lemma \ref{LEM:nhomo} and \eqref{time2} in Lemma~\ref{LEM:time}, we have
\begin{align}
\| \I (\pi_N w_N^\dagger) \|_{X_{T_0}^{s, \frac 12 + \dl}} \les T_0^{3 \dl} \| w_N^\dagger \|_{X_{T_0}^{s - \al, - \frac 12 + 4 \dl}} \les T_0^{3 \dl} \| w_N^\dagger \|_{X_{T_0}^{s, \frac 12 + \dl}}.
\label{u_bdd2}
\end{align}

\noi
By \eqref{Op2}, we have
\begin{align}
\| \If^{\<2>_N} (\pi_N w_N^\dagger) \|_{X_{T_0}^{s, \frac 12 + \dl}} \les T_0^{\ta} \| \If^{\<2>_N} \|_{\L_{T_0}^{s, \frac 12 + \dl, s, \frac 12 + \dl}} \| w_N^\dagger \|_{X_{T_0}^{s, \frac 12 + \dl}}
\label{u_bdd4}
\end{align}

\noi
for some $\ta > 0$. By Lemma \ref{LEM:nhomo}, \eqref{time2} in Lemma \ref{LEM:time}, and \eqref{tri3} in Lemma \ref{LEM:str4}, we have
\begin{align}
\big\| \I \big( \<1>_N (\pi_N w_N^\dagger)^2 \big) \big\|_{X_{T_0}^{s, \frac 12 + \dl}} \les T_0^{3 \dl} \| \<1>_N \|_{L_{T_0} W_x^{\al - \frac 32 - \eps, \infty}} \| w_N^\dagger \|_{X_{T_0}^{s, \frac 12 + \dl}},
\label{u_bdd5}
\end{align}

\noi
where we require $\max (- \al + \frac 32, - \frac{3}{10} \al + \frac 35) < s < \min (\frac{13}{10} \al - \frac 35, 2 \al - \frac 32)$. Lastly, by Lemma \ref{LEM:nhomo}, \eqref{time2} in Lemma \ref{LEM:time}, and Lemma \ref{LEM:str_var} with $v \equiv 1$, we have
\begin{align}
\big\| \I \big( (\pi_N w_N^\dagger)^3 \big) \big\|_{X_{T_0}^{s, \frac 12 + \dl}} \les T_0^{\ta} \| w_N^\dagger \|_{X_{T_0}^{s, \frac 12 + \dl}}^3,
\label{u_bdd6}
\end{align}

\noi
where we require $\max (- \al + \frac 32, - \frac{3}{10} \al + \frac 35) < s < \frac{13}{10} \al - \frac 35$.
Combining \eqref{u_bdd1}--\eqref{u_bdd6}, and Lemma~\ref{LEM:sto_reg} for $\<1>_N$, $\If^{\<2>_N}$, and $\<30>_N$, we easily obtain local well-posedness of \eqref{eq_wN} via a contraction argument. 

We now briefly describe the globalization procedure. The counterpart of the uniform exponential integrability in Proposition~\ref{PROP:exptail2} for $\<1>_N$, $\If^{\<2>_N}$, and $\<30>_N$ follows directly from the equivalence of the Gibbs measure $\vec \nu$ and the base Gaussian measure $\vec \mu_\al$ in Proposition~\ref{PROP:Gibbs2}, the strong convergence of $\vec \nu_N$ to $\vec \nu$ in Proposition~\ref{PROP:Gibbs2}, and Lemma~\ref{LEM:sto_reg}. The counterpart of the stability estimate in Proposition~\ref{PROP:stb} follows from a similar procedure. The counterparts of the nonlinear smoothing in Proposition~\ref{PROP:ns} and the uniform bound with large probability in Proposition~\ref{PROP:bdd} also follow similarly. Note that the uniform bound with large probability leads to \eqref{wN_nuN} in part (i).
We can then follow the same steps in Subsection~\ref{SUBSEC:gwp} to obtain
\begin{align*}
\lim_{N_0 \to \infty} \vec \nu \otimes \PP_2 \Big( \| w_{N_1}^\dagger - w_{N_2}^\dagger \|_{X_T^{s, \frac 12 + \dl}} \leq N_2^{- \ta} \text{ for all } N_1 \geq N_2 \geq N_0 \Big) = 1
\end{align*}

\noi
for some $\ta > 0$, and so $w_N^\dagger$ converges $(\vec \nu \otimes \PP_2)$-almost surely to a process $w^\dagger$ in $X_T^{s, \frac 12 + \dl}$. This leads to part (ii) and also the identity \eqref{wN_lim}.

It remains to prove \eqref{w_bdd}. By the equivalence of the Gibbs measure $\vec \nu$ and the base Gaussian measure $\vec \mu_\al$ as stated in Proposition~\ref{PROP:Gibbs2}, we only need to show that for all $\eta' > 0$, there exists $C_0 = C_0 (T, \eta') \geq 1$ such that
\begin{align}
\vec \nu \otimes \PP_2 \Big( \| w^\dagger \|_{X_T^{s, \frac 12 + \dl}} > C_0 \Big) < \eta'.
\label{w_bdd2}
\end{align}

\noi
From \eqref{wN_lim} in part (ii), we know that there exists $N_0 \in \N$ such that
\begin{align}
\vec \nu \otimes \PP_2 \Big( \| w_N^\dagger - w^\dagger \|_{X_T^{s, \frac 12 + \dl}} > N^{- \ta} \Big) < \frac{\eta'}{3}.
\label{w_step1}
\end{align}

\noi
From \eqref{wN_nuN} in part (i), we know that for any $N \in \N$, there exists $C_1 = C_1 (T, \eta) \geq 1$ such that
\begin{align}
\vec \nu_N^\dagger \otimes \PP_2 \Big( \| w_N^\dagger \|_{X_T^{s, \frac 12 + \dl}} > C_1 \Big) < \frac{\eta'}{3}.
\label{w_step2}
\end{align}

\noi
By enlarging $N_0$ if necessary, we deduce from the convergence of $\nu_N^\dagger$ to $\nu$ in total variation that
\begin{align}
\Big| (\vec \nu_N^\dagger \otimes \PP_2 - \vec \nu \otimes \PP_2) \Big( \| w_N^\dagger \|_{X_T^{s, \frac 12 + \dl}} > C_1 \Big) \Big| < \frac{\eta'}{3}.
\label{w_step3}
\end{align}

\noi
Thus, by taking $N \geq N_0$, we combine \eqref{w_step1}, \eqref{w_step2}, and \eqref{w_step3} to obtain
\begin{align*}
\vec \nu &\otimes \PP_2 \Big( \| w^\dagger \|_{X_T^{s, \frac 12 + \dl}} > C_1 + 1 \Big) \\
&\leq \vec \nu \otimes \PP_2 \Big( \| w_N^\dagger \|_{X_T^{s, \frac 12 + \dl}} > C_1 \Big) 
+ \vec \nu \otimes \PP_2 \Big( \| w_N^\dagger - w^\dagger \|_{X_T^{s, \frac 12 + \dl}} > N^{- \ta} \Big) \\
&< \frac{\eta'}{3} + \vec \nu_N^\dagger \otimes \PP_2 \Big( \| w_N^\dagger \|_{X_T^{s, \frac 12 + \dl}} > C_1 \Big) + \frac{\eta'}{3} < \eta',
\end{align*}

\noi
which proves \eqref{w_bdd2}.
\end{proof}

\subsection{Proof of Theorem \ref{THM:wu}}
In this subsection, we present the proof of Theorem~\ref{THM:wu}, the dynamical weak universality result. For this purpose, we proceed with the first order expansion
\begin{align*}
u_N = \<1> + w_N,
\end{align*}

\noi
where $N \in \N$, $u_N$ is the solution to \eqref{fNLWw}, $\<1>$ is as defined in \eqref{loli}, and $w_N$ is the remainder term satisfying the following Duhamel formulation (with the use of \eqref{HkX} and \eqref{herm_decomp1}):
\begin{align}
\begin{split}
&w_N = 2 \cj{a}_{1, N} N^{2 (\frac 32 - \al)} \pi_N \I (\<1>_N + w_N) + 4 \cj{a}_{2, N} \<30>_N + 12 \cj{a}_{2, N} \If^{\<2>_N} (\pi_N w_N)  \\
&\quad + 12 \cj{a}_{2, N} \pi_N \I \big( \<1>_N (\pi_N w_N)^2 \big) + 4 \cj{a}_{2, N} \pi_N \I \big( (\pi_N w_N)^3 \big) + \sum_{\l = 0}^{2m - 1} \pi_N \big( \mathcal{G}_{N, \l} \cdot (\pi_N w_N)^\l \big),
\end{split}
\label{wN_eq}
\end{align}

\noi
where
\begin{align*}
\mathcal{G}_{N, \l} = \sum_{j = 3 \vee \lceil \frac{\l + 1}{2} \rceil}^m 2j \binom{2j - 1}{\l} \cj{a}_{j, N} N^{- (2j - 4) (\frac 32 - \al)} :\! \<1>_N^{2j - \l - 1} \!:.
\end{align*}

\noi
Here, $\I$ is the Duhamel operator as defined in \eqref{lin1}, $\<1>_N = \pi_N \<1>$, $\<30>_N$ is as defined in \eqref{so4b}, and $\If^{\<2>_N}$ is as defined in \eqref{ran1}.

We also recall the solution $w_N^\dagger$ to the equation \eqref{eq_wN}. From Proposition~\ref{PROP:w_bdd}, we see that $w_N^\dagger$ converges as $N \to \infty$ to a process $w^\dagger$ which satisfies
\begin{align}
w^\dagger = - \kappa \I (\<1> + w^\dagger) - 4 \cj{a}_2 \Big( \<30> + 3 \If^{\<2>} (w^\dagger) + 3 \I \big( \<1> (w^\dagger)^2 \big) + \I \big( (w^\dagger)^3 \big) \Big),
\label{w_eq}
\end{align}

\noi
where $\<30>$ and $\If^{\<2>}$ are limiting stochastic objects of $\<30>_N$ and $\If^{\<2>_N}$, respectively, as $N \to \infty$ given by Lemma~\ref{LEM:sto_reg}.

Our goal is to prove the following more precise statement of Theorem \ref{THM:wu}. We recall that $\vec \mu_\al = \mu_\al \otimes \mu_0$ is the Gaussian measure defined in \eqref{gauss2} and that $\PP_2$ is the underlying probability measure for the space-time white noise $\xi$.
\begin{proposition}
\label{PROP:wu}
Let $\frac 98 < \al < \frac 32$, $T \geq 1$, and $\dl > 0$ be sufficiently small. Let $s \in \R$ satisfying \eqref{s_bdd}.
Let $w_N$ be the solution to the equation \eqref{wN_eq} and let $w^\dagger$ be the solution to the equation \eqref{w_eq}. Then, $w_N$ converges $(\vec \mu_\al \otimes \PP_2)$-almost surely to $w^\dagger$ in the space $X_T^{s, \frac 12 + \dl}$.
\end{proposition}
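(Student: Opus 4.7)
The plan is to bound the difference $\dl w_N=w_N-w^\dagger$ in $X_T^{s,\frac 12+\dl}$ by a quantity vanishing as $N\to\infty$ on an event of probability $\ge 1-\eta$, and then to upgrade to almost sure convergence by choosing $\eta$ summable along a dyadic subsequence and invoking Borel--Cantelli. First I would establish uniform a priori bounds for both $w_N$ and $w^\dagger$ in $X_T^{s,\frac 12+\dl}$, independent of $N$, with probability close to $1$: the bound on $w^\dagger$ is exactly Lemma \ref{LEM:u_bdd}, and for $w_N$ one adapts the argument in the proofs of Proposition \ref{PROP:ns} and Lemma \ref{LEM:u_bdd} to the equation \eqref{wN_eq}, using invariance of the truncated Gibbs measure $\vec\nu_N$ (Lemma \ref{LEM:inv_N}), the uniform $L^p$-bound \eqref{Lp_bdd2} on $e^{-\mathcal R_N}$, and the equivalence $\vec\nu_N\sim\vec\mu$ with density uniformly controlled in $N$.

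Setting $\kappa_N:=2\overline a_{1,N}N^{2(\frac 32-\al)}$ so that $\kappa_N\to\kappa$ by \eqref{a1conv}, I would subtract \eqref{w_eq} from \eqref{wN_eq} and telescopically add and subtract to decompose the Duhamel identity for $\dl w_N$ into: (a)~coefficient errors of size $|\kappa_N-\kappa|$ and $|\overline a_{2,N}-\overline a_2|$, vanishing as $N\to\infty$ by \eqref{a1conv} and $\overline a_{j,N}\to\overline a_j$; (b)~stochastic-object differences $\<1>_N-\<1>$, $\<30>_N-\<30>$, $\If^{\<2>_N}-\If^{\<2>}$, each bounded with algebraic rate $N^{-\gamma}$ by the difference tail estimate \eqref{tail_diff} of Lemma \ref{LEM:sto_reg}; (c)~frequency-projection errors involving $\pi_N-\textup{Id}$, recovered by sacrificing a small amount of regularity within the open range \eqref{s_bdd} of admissible $s$ in exchange for a factor $N^{-\gamma'}$; (d)~a Lipschitz-in-$\dl w_N$ contribution controlled by the multilinear estimates of Section~2 (Lemma \ref{LEM:str_var}, Lemma \ref{LEM:str4}, together with Lemma \ref{LEM:nhomo}) combined with the a priori bounds on $w_N$ and $w^\dagger$, via a short-time contraction glued by Lemma \ref{LEM:glue}; and (e)~the remainder sum $\sum_{\l=0}^{2m-1}\pi_N\I(\mathcal G_{N,\l}(\pi_N w_N)^\l)$ that appears in \eqref{wN_eq} but has no counterpart in \eqref{w_eq}.

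The main obstacle is (e). Each summand carries the prefactor $N^{-(2j-4)(\frac 32-\al)}$ from $\mathcal G_{N,\l}$. For $\l=0$, Lemma \ref{LEM:sto_k2} with $k=2j-1$ gives $\|\I(:\!\<1>_N^{2j-1}\!:)\|_{X_T^{s,\frac 12+\dl}}\les N^{(2j-4)(\frac 32-\al)-\eps}$, so the net contribution is $O(N^{-\eps})$. For $\l=1$, Lemma \ref{LEM:sto_k3} applied to the operator $u\mapsto\I(:\!\<1>_N^{2j-2}\!:u)$ gives $\|\If_N^{2j-2}\|_{\L_T^{s,\frac 12+\dl,s,\frac 12+\dl}}\les N^{(2j-4)(\frac 32-\al)-\eps}$, yielding $O(N^{-\eps})\|w_N\|_{X_T^{s,\frac 12+\dl}}$. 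For $\l\ge 2$, Lemma \ref{LEM:sto_k} bounds $\|:\!\<1>_N^{2j-\l-1}\!:\|_{L_T^\infty W_x^{\al-\frac 32-\eps,\infty}}$ by $N^{(2j-\l-2)(\frac 32-\al)-\eps/2}$, and the trilinear estimate \eqref{tri2} of Lemma \ref{LEM:str4} (or its higher-multilinear analog via Lemma \ref{LEM:str_ext}) together with the Strichartz bound \eqref{Lp_str} on the $L^r_TL^r_x$-norm of $w_N$ distributes the remaining factors; the net exponent of $N$ is $(2-\l)(\frac 32-\al)-\eps/2\le-\eps/2$, multiplied by an a priori power of $\|w_N\|_{X_T^{s,\frac 12+\dl}}$. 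Summing in $j,\l$ and combining with (a)--(d) yields, on an event of probability $\ge 1-\eta$,
\begin{align*}
\|\dl w_N\|_{X_T^{s,\frac 12+\dl}}\le C(T,\eta)\,N^{-\theta}\big(1+\|\dl w_N\|_{X_T^{s,\frac 12+\dl}}\big)
\end{align*}
for some $\theta>0$, whence $\|\dl w_N\|_{X_T^{s,\frac 12+\dl}}\les N^{-\theta}$ for $N$ large. Choosing $\eta=2^{-k}$ along a dyadic subsequence and applying Borel--Cantelli upgrades this to the desired almost sure convergence.
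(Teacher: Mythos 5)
Your decomposition of the difference into coefficient errors, stochastic-object differences, projection errors, Lipschitz terms, and the extra sum (e), as well as your treatment of the $\l=0,1$ (and essentially $\l=2,3$) contributions via Lemmas \ref{LEM:sto_k2}, \ref{LEM:sto_k3}, \ref{LEM:sto_k} and the trilinear estimates, matches the paper's Proposition \ref{PROP:conv}, and the high-probability-plus-summable-exceptional-sets upgrade at the end is the same as the paper's construction of the full-measure set. However, there is a genuine gap in how you close the high-power terms $\l\ge 4$ in (e), and, relatedly, in your step~1 a priori bound on $w_N$. For $\l\ge 4$ your exponent count $(2-\l)(\tfrac32-\al)-\eps/2$ implicitly feeds the Wick power $:\!\<1>_N^{2j-\l-1}\!:$ into the multilinear estimate at negative regularity $W^{\al-\frac32-\eps,\infty}$; but the only available higher-order estimate, Lemma \ref{LEM:str_ext}, requires that factor (and $\l-3-\eta$ copies of $w_N$) to be placed in $L^{1/\eta}_{I,x}$ with $1/\eta$ huge. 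Such high Lebesgue norms of $w_N$ are \emph{not} reachable from the $X_T^{s,\frac12+\dl}$ bound via the Strichartz estimate \eqref{Lp_str}, since \eqref{Lp_str} only covers exponents $p$ with $s>\frac32-\frac{3+\al}{p}$ and $s<2\al-\frac32<\al$; and Bernstein from $H^s$ costs $N^{\frac32-s}$ per factor, which exceeds the available gain $N^{-(\frac32-\al)}$ per factor because $s<\al$. This is exactly where the paper's Lemma \ref{LEM:uN_bdd} enters: invariance of $\vec\nu_N$ plus Gaussian moment bounds give $\|u_N\|_{L^p_TW^{\al-\frac32-\eps,q}_x}\le N^{\g}$ with large probability, which (using that $w_N$ is frequency-localized) yields the first bound in \eqref{apriori}, $\|w_N\|_{L^{1/\eps}_TL^\infty_x}\le N^{\frac32-\al+\eps+\g}$; with this input the $\l\ge 4$ terms are only \emph{borderline}, and the paper's \eqref{BN} rescues them by putting $3+\eps'$ copies of $w_N$ in $X^{s,\frac12+\dl}$ and tuning $\eps,\g\ll\eps'$. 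Your proposal never invokes Lemma \ref{LEM:uN_bdd}, so these terms do not close.

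The same omission undermines your claimed uniform-in-$N$ bound $\|w_N\|_{X_T^{s,\frac12+\dl}}\le C_0$ obtained by "adapting Proposition \ref{PROP:ns} and Lemma \ref{LEM:u_bdd} to \eqref{wN_eq}": the nonlinear-smoothing/gluing argument presupposes a uniform-in-$N$ short-time theory for \eqref{wN_eq}, and that local theory requires precisely the $\l\ge4$ estimates discussed above, so the argument as proposed is circular. The paper avoids this by deriving only the weak invariance-based bound of Lemma \ref{LEM:uN_bdd} (which needs no local theory, only stationarity from Lemma \ref{LEM:inv_N}), and then obtaining the $X^{s,\frac12+\dl}$ control of $w_N$ \emph{a posteriori}, inside Proposition \ref{PROP:conv}, via a continuity/bootstrap argument that exploits the bound on $w^\dagger$ from Lemma \ref{LEM:u_bdd} together with the $N^{-\ta}$ difference estimate. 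To repair your proof you should replace your step~1 for $w_N$ by Lemma \ref{LEM:uN_bdd} (transferred to $w_N$ by Bernstein), rework the $\l\ge4$ case along the lines of \eqref{BN} with the $\eps'$-splitting, and recover the $X^{s,\frac12+\dl}$ bound on $w_N$ through the bootstrap rather than assuming it at the outset.
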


Note that Proposition \ref{PROP:wu}, combined with the convergence of $\<1>_N$ to $\<1>$ as in Lemma \ref{LEM:sto_reg}, implies Theorem \ref{THM:wu}.

\medskip
We now show the convergence of $w_N$ to $w$ by assuming some a-priori bounds established in previous two subsections.
\begin{proposition}
\label{PROP:conv}
Let $\frac 98 < \al < \frac 32$, $T \geq 1$, and $\eps, \g, \dl > 0$ be sufficiently small. Let $s \in \R$ satisfying \eqref{s_bdd}. Assume that there exists a constant $C_0 \gg 1$ such that
\begin{align}
\begin{split}
&\| w_N \|_{L^{\frac{1}{\eps}}_T L_x^{\infty}} \leq N^{- \al + \frac 32 + \eps + \g}, \qquad \| w^\dagger \|_{X_T^{s, \frac 12 + \dl}} \leq C_0, \\
&\| \<1> \|_{L^{2m}_T W_x^{\al - \frac 32 - \eps, \infty}} \leq C_0, \qquad \| \<1>_N - \<1>  \|_{L^{2m}_T W_x^{\al - \frac 32 - \eps, \infty}} \leq C_0 N^{- \frac{\eps}{2}}, \\
&\| \<30> \|_{X_T^{s, \frac 12 + \dl}} \leq C_0, \qquad \| \<30>_N - \<30> \|_{X_T^{s, \frac 12 + \dl}} \leq C_0 N^{- \eps}, \\
&\| \If^{\<2>} \|_{\L_T^{s, \frac 12 + \dl, s, \frac 12 + \dl}} \leq C_0, \qquad \| \If^{\<2>_N} - \If^{\<2>} \|_{\L_T^{s, \frac 12 + \dl, s, \frac 12 + \dl}} \leq C_0 N^{- \eps}, \\
&\| :\! \<1>_N^k \!: \|_{L_T^{\frac{1}{\eps}} W_x^{\al - \frac 32 - \eps, \infty}} \leq C_0 N^{(k - 1) (\frac 32 - \al) - \frac{\eps}{4}}, \\
&\| \I ( :\! \<1>_N^{k'} \!: ) \|_{X_T^{s, \frac 12 + 4 \dl}} \leq C_0 N^{(k' - 3) (\frac 32 - \al) - \frac{\eps}{2}}, \\
&\| \If_N^{k' - 1} \|_{\L_T^{s, \frac 12 + \dl, s, \frac 12 + \dl}} \leq C_0 N^{(k' - 3) (\frac 32 - \al) - \frac{\eps}{2}},
\end{split}
\label{apriori}
\end{align}

\noi
where $k \geq 2$ and $k' \geq 4$.
Then, there exist $\ta > 0$ small and a constant $C_1 > 0$ independent of $N \in \N$ such that
\begin{align*}
\| w_N - w^\dagger \|_{X_T^{s, \frac 12 + \dl}} < C_1 N^{-\ta}
\end{align*}

\noi
for $N$ sufficiently large.
\end{proposition}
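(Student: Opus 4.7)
The plan is to set $\delta w = w_N - w^\dagger$, subtract \eqref{w_eq} from \eqref{wN_eq}, and estimate $\delta w$ in $X_I^{s,\frac12+\delta}$ on short subintervals $I\subseteq[0,T]$ of length $|I|=T_0\ll1$, then iterate the bound via Lemma~\ref{LEM:glue} to cover all of $[0,T]$. Since \eqref{wN_eq} and \eqref{w_eq} are Duhamel formulations with zero initial data, $\delta w(0)=0$, which fits naturally with the $X^{s,\frac12+\delta}$-framework.

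After subtracting, the resulting equation for $\delta w$ splits into three groups. The first consists of \emph{coefficient defects and projection errors} such as $[2\overline{a}_{1,N}N^{2(\frac32-\alpha)}-\kappa]\I(\<1>+w^\dagger)$, $4[\overline{a}_{2,N}-\overline{a}_2]\<30>$, $4\overline{a}_{2,N}[\<30>_N-\<30>]$, $12[\overline{a}_{2,N}\If^{\<2>_N}-\overline{a}_2\If^{\<2>}]w^\dagger$, and $[\pi_N-\mathrm{Id}]$-differences of the nonlinear terms in $w^\dagger$. Using \eqref{a1conv}, the coefficient convergence $\overline{a}_{j,N}\to\overline{a}_j$, the explicit $N^{-\varepsilon}$-difference bounds in \eqref{apriori}, the a-priori bound $\|w^\dagger\|_{X_T^{s,\frac12+\delta}}\le C_0$, and the trilinear estimate \eqref{tri3} of Lemma~\ref{LEM:str4} for products involving $\<1>$, each of these is $O(N^{-\theta_1})$ in $X_T^{s,\frac12+\delta}$ for some $\theta_1>0$. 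The second group consists of \emph{common-shape terms paired with $\delta w$}: $\kappa\I(\delta w)$, $12\overline{a}_2\If^{\<2>}(\delta w)$, $12\overline{a}_2\I(\<1>(w_N+w^\dagger)\delta w)$, and $4\overline{a}_2\I((w_N^2+w_Nw^\dagger+(w^\dagger)^2)\delta w)$. Combining Lemma~\ref{LEM:nhomo}, \eqref{time2} of Lemma~\ref{LEM:time}, \eqref{tri3} of Lemma~\ref{LEM:str4}, and Lemma~\ref{LEM:str_var} (with $v\equiv1$), together with the a-priori bounds on $\<1>$, $\If^{\<2>}$, $w_N$, and $w^\dagger$ from \eqref{apriori}, each is bounded by $C_\star T_0^{\theta_2}\|\delta w\|_{X_I^{s,\frac12+\delta}}$, to be absorbed on the left.

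The hard part will be the third group: the \emph{higher-order renormalized terms} $\pi_N\I(\mathcal{G}_{N,\ell}(\pi_N w_N)^\ell)$ which appear in \eqref{wN_eq} but are absent from \eqref{w_eq}, and therefore must themselves vanish as $N\to\infty$. For $\ell=0$, the bound on $\|\I(:\!\<1>_N^{2j-1}\!:)\|_{X_T^{s,\frac12+4\delta}}$ from \eqref{apriori} combined with the prefactor $N^{-(2j-4)(\frac32-\alpha)}$ hidden inside $\mathcal{G}_{N,0}$ leaves an $N^{-\varepsilon/2}$ gain; $\ell=1$ is analogous via the operator-norm bound on $\If_N^{2j-2}$. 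For $\ell=2,3$, \eqref{tri3} of Lemma~\ref{LEM:str4} is applied with the $L_T^{1/\varepsilon}W_x^{\alpha-\frac32-\varepsilon,\infty}$-estimate on $:\!\<1>_N^{2j-\ell-1}\!:$ and the a-priori $X_T^{s,\frac12+\delta}$-bound on $w_N$. The genuinely delicate case is $\ell\geq 4$, which is exactly what Lemma~\ref{LEM:str_ext} was tailored for: three copies of $w_N$ sit in $X_T^{s,\frac12+\delta}$, while the remaining $\ell-3-\eta$ copies and the Wick power $:\!\<1>_N^{2j-\ell-1}\!:$ go into $L_T^{1/\eta}L_x^{1/\eta}$. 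Interpolating $\|w_N\|_{L_T^{1/\eta}L_x^{1/\eta}}$ between $X_T^{s,\frac12+\delta}$ and $L_T^{1/\varepsilon}L_x^\infty$ and inserting the a-priori bound $\|w_N\|_{L_T^{1/\varepsilon}L_x^\infty}\leq N^{-\alpha+\frac32+\varepsilon+\gamma}$ transfers essentially $\ell-3$ factors of this decay into the estimate. Balancing this gain against the blow-up $N^{(2j-\ell-2)(\frac32-\alpha)}$ of the Wick power and the renormalization $N^{-(2j-4)(\frac32-\alpha)}$ of $\mathcal{G}_{N,\ell}$ yields a net $N^{-\theta_3}$, provided $\gamma$ and $\varepsilon$ are chosen sufficiently small; this is the precise reason for the smallness assumption on $\gamma$ and $\varepsilon$ built into \eqref{apriori}.

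Combining the three groups yields, on each subinterval $I\subseteq[0,T]$ of length $T_0$ chosen small enough that $C_\star T_0^{\theta_2}\leq\tfrac12$, a bound of the form $\|\delta w\|_{X_I^{s,\frac12+\delta}}\leq C_1(T)N^{-\theta}+\tfrac12\|\delta w\|_{X_I^{s,\frac12+\delta}}$, hence $\|\delta w\|_{X_I^{s,\frac12+\delta}}\leq 2C_1(T)N^{-\theta}$. Concatenating $O(T/T_0)$ such subintervals via Lemma~\ref{LEM:glue} promotes this to a bound on all of $[0,T]$ with a constant depending on $T$, $\alpha$, $m$, and $C_0$, finishing the proof.
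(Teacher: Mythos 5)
Your overall strategy — subtract \eqref{w_eq} from \eqref{wN_eq}, split into coefficient/projection defects, contraction terms to be absorbed, and the higher-order $\mathcal{G}_{N,\ell}$ terms handled via Lemma \ref{LEM:str_ext} by trading the $L_T^{1/\eps}L_x^\infty$ bound on $w_N$ against the growth of the Wick powers, then glue short intervals via Lemma \ref{LEM:glue} — is essentially the paper's decomposition. But there is a genuine gap: at several points you invoke an a-priori $X_T^{s,\frac 12+\dl}$-bound on $w_N$ (explicitly for the $\ell=2,3$ terms, implicitly when absorbing $\I\big(\<1>(w_N+w^\dagger)\,(w_N-w^\dagger)\big)$ and $\I\big((w_N^2+w_N w^\dagger+(w^\dagger)^2)(w_N-w^\dagger)\big)$, and when placing ``three copies of $w_N$'' in $X_T^{s,\frac12+\dl}$ for $\ell\ge4$), yet no such bound appears among the hypotheses \eqref{apriori}: the only control on $w_N$ assumed there is $\|w_N\|_{L_T^{1/\eps}L_x^\infty}\le N^{-\al+\frac32+\eps+\g}$, which is a different norm and in fact grows in $N$ since $\al<\frac32$. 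If you try to substitute that bound instead, the constant $C_\star$ in your contraction step grows with $N$, so $T_0$ must shrink with $N$, the number of glued subintervals grows with $N$, and the final $N^{-\ta}$ gain is destroyed. So as written the absorption and the $\ell\ge2$ estimates cannot be closed with $N$-independent constants.

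The paper closes exactly this gap with a bootstrap/continuity argument that you omit: it first proves the difference estimate with $\|w_N\|_{X_I^{s,\frac12+\dl}}$ left as an unknown on the right-hand side, and then argues by continuity of $T\mapsto\|w_N\|_{X_T^{s,\frac12+\dl}}$ that if $T_*<T$ were the maximal time with $\|w_N\|_{X_{T_*}^{s,\frac12+\dl}}\le C_0^{10}$, then on a slightly larger interval the difference estimate together with $\|w^\dagger\|_{X_T^{s,\frac12+\dl}}\le C_0$ gives $\|w_N\|_{X_{T_*+a}^{s,\frac12+\dl}}\le C_1 N^{-\ta}+C_0\le C_0^{10}$ for $N$ large, contradicting maximality; hence $\|w_N\|_{X_T^{s,\frac12+\dl}}\le 2C_0^{10}$ uniformly in large $N$, and only then does the contraction-plus-gluing argument yield the stated $N^{-\ta}$ bound. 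You need to add this step (or some equivalent uniform-in-$N$ control of $\|w_N\|_{X_T^{s,\frac12+\dl}}$) before your proof is complete; the remaining term-by-term estimates in your proposal are otherwise in line with the paper's.
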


\begin{proof}
Let $I \subset [0, T]$ be a closed interval with $0 < |I| \leq 1$ and we denote $T_0 = |I|$. By using \eqref{wN_eq} and \eqref{w_eq}, we obtain
\begin{align}
\| w_N - w^\dagger \|_{X_I^{s, \frac 12 + \dl}} \les A_N (I) + \sum_{\l = 4}^{2m - 1} B_{N, \l} (I) + \sum_{\l = 0}^3 \sum_{j = 3}^m C_{N, j, \l} (I),
\label{wN_diff}
\end{align}

\noi
where 
\begin{align*}
A_N (I) &\overset{\text{def}}{=} \big\| 2 \cj{a}_{1, N} N^{2 (\frac 32 - \al)} \pi_N \I (\<1>_N + w_N) - \kappa \I (\<1> + w^\dagger) \big\|_{X_I^{s, \frac 12 + \dl}} \\
&\quad +  4 \big\| \cj{a}_{2, N} \<30>_N - \cj{a}_2 \<30> \big\|_{X_I^{s, \frac 12 + \dl}} + 12 \big\| \cj{a}_{2, N} \If^{\<2>_N} (\pi_N w_N) - \cj{a}_2 \If^{\<2>} (w^\dagger) \big\|_{X_I^{s, \frac 12 + \dl}} \\
&\quad + 12 \big\| \cj{a}_{2, N} \pi_N \I \big( \<1>_N (\pi_N w_N)^2 \big) - \cj{a}_2 \I \big( \<1> (w^\dagger)^2 \big) \big\|_{X_I^{s, \frac 12 + \dl}} \\
&\quad + 4 \big\| \cj{a}_{2, N} \pi_N \I \big( (\pi_N w_N)^3 \big) - \cj{a}_2 \I \big( (w^\dagger)^3 \big) \big\|_{X_I^{s, \frac 12 + \dl}}, \\
B_{N, \l} (I) &\overset{\text{def}}{=} \big\| \pi_N \I \big( \mathcal{G}_{N, \l} \cdot (\pi_N w_N)^\l \big) \big\|_{X_I^{s, \frac 12 + \dl}}, \\
C_{N, j, \l} (I) &\overset{\text{def}}{=} N^{- (2j - 4) (\frac 32 - \al)} \big\| \pi_N \I ( :\! \<1>_N^{2 j - \l - 1} \!: (\pi_N w_N)^\l ) \big\|_{X_I^{s, \frac 12 + \dl}}.
\end{align*}

For $A_N (I)$, by using slight modifications of \eqref{u_bdd1}--\eqref{u_bdd6} along with the bounds in \eqref{apriori}, we obtain that for $N \in \N$ sufficiently large,
\begin{align}
\begin{split}
A_N (I) &\les  |2 \cj{a}_{1, N} N^{2 (\frac 32 - \al)} - \kappa| C_0 + \kappa C_0 N^{- \ta} + T_0^{\ta'} \kappa  \| w_N - w^\dagger \|_{X_I^{s, \frac 12 + \dl}} \\
&\quad + |\cj{a}_{2, N} - \cj{a}_2| C_0^3 + C_0^3 N^{- \ta} + T_0^{\ta'} \big( C_0^2 + \| w_N \|_{X_I^{s, \frac 12 + \dl}}^2 \big) \| w_N - w^\dagger \|_{X_I^{s, \frac 12 + \dl}}
\end{split}
\label{AN}
\end{align}

\noi
for some $\ta, \ta' > 0$ sufficiently small, where we used the fact that $\cj{a}_{2, N} \to \cj{a}_2$ as $N \to \infty$. Here, to deal with $\Id - \pi_N = \pi_N^\perp$, we can always create a negative power of $N$ be slightly losing some spatial regularity (see Lemma \ref{LEM:str_var} and Lemma \ref{LEM:str4}, where we can always create some room for slight regularity loss).

For $B_{N, \l} (I)$, we have $4 \leq \l \leq 2m - 1$. By the inhomogeneous linear estimate in Lemma~\ref{LEM:nhomo}, Lemma \ref{LEM:str_ext}, and the bounds in \eqref{apriori}, we have
\begin{align}
\begin{split}
B_{N, \l} (I) &\les \sum_{j = 3 \vee \lceil \frac{\l + 1}{2} \rceil}^m  \cj{a}_{j, N} N^{- (2j - 4) (\frac 32 - \al)} \| :\! \<1>_N^{2j - \l - 1} \!: (\pi_N w_N)^\l \|_{X_I^{s - \al, - \frac 12 + \dl}} \\
&\les \sum_{j = 3 \vee \lceil \frac{\l + 1}{2} \rceil}^m  \cj{a}_{j, N} N^{- \eps_j} \| w_N \|_{X_I^{s, \frac 12 + \dl}}^{3 + \eps'} N^{(\l - 3 - \eps') (\al - \frac 32 - \eps - \g)} \| w_N \|_{L_I^{\frac{1}{\eps'}} L_x^{\frac{1}{\eps'}}}^{\l - 3 - \eps'} \\
&\quad \times N^{(2j - \l - 1) (\al - \frac 32 - \eps)} \| :\! \<1>_N^{2j - \l - 1} \!: \|_{L_I^{\frac{1}{\eps'}} L_x^{\frac{1}{\eps'}}} \\
&\leq C_0^{\l - 2 - \eps'} \| w_N \|_{X_I^{s, \frac 12 + \dl}}^{3 + \eps'} \sum_{j = 3 \vee \lceil \frac{\l + 1}{2} \rceil}^m  \cj{a}_{j, N} N^{- \eps_j},
\end{split}
\label{BN}
\end{align}

\noi
where $\eps' > 0$ is small and
\begin{align*}
\eps_j = \Big( \frac 32 - \al \Big) \eps' - (2j - 4 - \eps') \eps - (\l - 3 - \eps') \g > 0
\end{align*}

\noi
for all $j$ if we require $0 < \eps, \g \ll \eps'$.

For $C_{N, j, \l} (I)$, we apply \eqref{tri3} in Lemma \ref{LEM:str4} (for $\l = 2$), Lemma \ref{LEM:str_var} (for $\l = 3$), and the bounds in \eqref{apriori} to obtain
\begin{align}
C_{N, j, \l} (I) \les C_0 N^{- \ta} \| w_N \|_{X_I^{s, \frac 12 + \dl}}^\l
\label{CN}
\end{align}

\noi
for some $\ta > 0$ sufficiently small. Combining \eqref{wN_diff}, \eqref{AN}, \eqref{BN}, \eqref{CN} and using the fact that $\cj{a}_{j, N} \to \cj{a}_{j}$ for all $2 \leq j \leq m$, and $2 \cj{a}_{1, N} N^{2 (\frac 32 - \al)} \to \kappa$ as $N \to \infty$ (see Subsection \ref{SUBSEC:wu}), we obtain
\begin{align}
\begin{split}
\| w_N - w^\dagger \|_{X_I^{s, \frac 12 + \dl}} &\les  (\kappa C_0 + C_0^3) N^{- \ta} + C_0 N^{- \ta} \big( 1 + \| w_N \|_{X_I^{s, \frac 12 + \dl}}^3 \big) \\
&\quad + T_0^{\ta'} (\kappa + C_0^2 + \| w_N \|_{X_I^{s, \frac 12 + \dl}}^2) \| w_N - w^\dagger \|_{X_I^{s, \frac 12 + \dl}} \\
&\quad + C_0^{\l - 2 - \eps'} N^{-\ta} \| w_N \|_{X_I^{s, \frac 12 + \dl}}^{3 + \eps'} \sum_{j = 3 \vee \lceil \frac{\l + 1}{2} \rceil}^m  \cj{a}_{j}.
\end{split}
\label{wN_diffest}
\end{align}

We now use a bootstrap argument as in \cite[Section~7.2]{STz21} and \cite[Lemma~4.5]{STzX}. Suppose that we have the bound
\begin{align}
\| w_N \|_{X_T^{s, \frac 12 + \dl}} \leq 2 C_0^{10}
\label{wN_bdd_goal}
\end{align}

\noi
uniformly in $N \geq N_0$ for some $N_0 = N_0 (C_0, T) \in \N$ sufficiently large. Then, from \eqref{wN_diffest} and \eqref{wN_bdd_goal}, there exists a constant $C > 0$ such that
\begin{align*}
\| w_N - w^\dagger \|_{X_I^{s, \frac 12 + \dl}} \leq C C_0^{40} N^{- \ta} + C C_0^{20} T_0^{\ta'} \| w_N - w^\dagger \|_{X_I^{s, \frac 12 + \dl}}.
\end{align*}

\noi
We choose $T_0 > 0$ small enough such that $C C_0^{20} T_0^{\ta'} \ll 1$, so that
\begin{align*}
\| w_N - w^\dagger \|_{X_I^{s, \frac 12 + \dl}} \leq 2 C C_0^{40} N^{- \ta}.
\end{align*}

\noi
We can further shrink $T_0$ so that $\frac{T}{T_0}$ is an integer. Thus, we can write
\begin{align*}
[0, T] = \bigcup_{j = 0}^{\frac{T}{T_0} - 2} [j T_0, (j + 2) T_0],
\end{align*}

\noi
and so by using Lemma \ref{LEM:glue} repetitively, there exists a constant $C_1 = C_1 (C_0, T) > 0$ such that 
\begin{align}
\| w_N - w^\dagger \|_{X_T^{s, \frac 12 + \dl}} \leq C_1 N^{- \ta}.
\label{wN_desired}
\end{align}

\noi
This is the desired estimate.

It remains to show that 
\begin{align*}
\| w_N \|_{X_T^{s, \frac 12 + \dl}} \leq C_0^{10}
\end{align*}

\noi
uniformly in $N \geq N_0$ for some $N_0 = N_0 (C_0, T) \in \N$ sufficiently large. Assume that $T_* < T$ is the largest number such that
\begin{align*}
\| w_N \|_{X_{T_*}^{s, \frac 12 + \dl}} \leq C_0^{10}.
\end{align*}

\noi
By continuity of the function\footnote{See, for example, \cite[Lemma 2.7]{BLLZ}.}
\begin{align*}
T \mapsto \| w_N \|_{X_T^{s, \frac 12 + \dl}},
\end{align*}

\noi
there exists $a > 0$ such that $T_* + a \leq T$ and
\begin{align*}
\| w_N \|_{X_{T_* + a}^{s, \frac 12 + \dl}} \leq 2C_0^{10}.
\end{align*}

\noi
By running the same argument above, we then obtain \eqref{wN_desired} with $T$ replaced by $T_* + a$. Then, by the bounds in \eqref{apriori}, we have
\begin{align*}
\| w_N \|_{X_{T_* + a}^{s, \frac 12 + \dl}} &\leq \| w_N - w^\dagger \|_{X_{T_* + a}^{s, \frac 12 + \dl}} + \| w^\dagger \|_{X_{T_* + a}^{s, \frac 12 + \dl}} \\
&\leq C_1 (C_0, T) N^{- \ta} + C_0 \\
&\leq C_0^{10},
\end{align*}

\noi
as long as $N = N(C_0, T) \in \N$ is large enough. This contradicts the definition of $T_*$, so that we must have $T_* = T$. Thus, we finish the proof of the proposition. 
\end{proof}

We are now ready to prove our main proposition. We recall that $\O_2$ is the underlying probability space equipped with the probability measure $\PP_2$ for the space-time white noise $\xi$.
\begin{proof}[Proof of Proposition \ref{PROP:wu}]
By using the tail bounds in Lemma \ref{LEM:sto_reg}, Lemma \ref{LEM:sto_k}, Lemma \ref{LEM:sto_k2}, Lemma \ref{LEM:sto_k3}, Proposition~\ref{PROP:uN_bdd}, and Proposition~\ref{PROP:w_bdd}, we know that for any $T \geq 1$ and $\eta > 0$, there exists $\Sigma_{T, \eta} \subset \H^{\al - \frac 32 - \eps} \times \O_2$ such that 
$$\vec \mu_\al \otimes \PP_2 (\Sigma_{T, \eta}^c) < \eta$$ and for each initial data $\vec \phi \in \Sigma_{T, \eta}$, the bounds in \eqref{apriori} in Proposition \ref{PROP:conv} hold. By Proposition~\ref{PROP:conv}, the bounds in \eqref{apriori} imply that $w_N$ converges to $w^\dagger$ in $X_T^{s, \frac 12 + \dl}$ as $N \to \infty$. It remains to define
\begin{align*}
\Sigma = \bigcup_{k = 1}^\infty \bigcap_{j = 1}^\infty \Sigma_{2^j, 2^{-j} k^{-1}},
\end{align*}

\noi
since we have $\vec \mu_\al \otimes \PP_2 (\Sigma) = 1$ and for each $\vec \phi \in \Sigma$, $w_N$ converges to $w^\dagger$ in $X_T^{s, \frac 12 + \dl}$ as $N \to \infty$.
\end{proof}

\begin{ackno}\rm
The authors would like to thank Prof.~Tadahiro Oh for proposing the problem and for helpful suggestions. The authors also thank Chenmin Sun, Tomoyuki Tanaka, and Leonardo Tolomeo for helpful discussions. R.L. was supported by the European Research Council (grant no. 864138 ``SingStochDispDyn'') and also funded by the Deutsche Forschungsgemeinschaft (DFG, German Research Foundation) - Project-ID 211504053 - SFB 1060. N.T. was partially supported by the ANR projet Smooth ``ANR-22-CE40-0017''. Y.W. was supported by the EPSRC New Investigator Award (grant no. EP/V003178/1). 
\end{ackno}

\end{document}